\documentclass[11 pt]{amsart}

\usepackage{amsmath,amssymb,amsthm,amsfonts,setspace,hyperref,dsfont,yhmath,euscript}

\usepackage{hyperref}\hypersetup{colorlinks=true, citecolor=green, linkcolor=blue, hypertexnames=false}

\usepackage[letterpaper,margin=1.4in]{geometry}

\usepackage{color}

\newcommand{\NN}{\mathbb{N}}
\newcommand{\RR}{\mathbb{R}}
\newcommand{\R}{\mathbb{R}}

\newcommand{\CC}{\mathbb{C}}

\newcommand{\TT}{\mathbb{T}}

\newcommand{\ZZ}{\mathbb{Z}}
\newcommand{\Z}{\mathbb{Z}}

\newcommand{\norm}[1]{\lVert#1\rVert}

\newtheorem{theorem}{Theorem}[section]
\newtheorem{corollary}[theorem]{Corollary}

\newtheorem{lemma}[theorem]{Lemma}
\newtheorem{proposition}[theorem]{Proposition}

\newtheorem{conjecture}[theorem]{Conjecture}

\newcommand{\comment}[1]{}
\theoremstyle{definition}

\newtheorem{remark}[theorem]{Remark}

\numberwithin{equation}{section}

\makeatletter
\renewcommand*\env@matrix[1][*\c@MaxMatrixCols c]{%
  \hskip -\arraycolsep
  \let\@ifnextchar\new@ifnextchar
  \array{#1}}
\makeatother

\begin{document}
\title[Local rigidity]
{Global periodic-data rigidity for irreducible toral automorphisms}

\thanks{ $^1$ Based on research supported by NSF grant DMS-2452194}

\thanks{{\em Key words and phrases:} Hyperbolic toral automorphism, conjugacy, global rigidity, linear cocycle}

\author[]{ Zhenqi Jenny Wang$^1$ }

\address{Department of Mathematics\\
        Michigan State University\\
        East Lansing, MI 48824,USA}
\email{wangzq@math.msu.edu}

\begin{abstract}
We prove a global \(C^{1+\text{H\"older}}\)-rigidity theorem for Anosov
diffeomorphisms of tori with irreducible linearization. Let
\(f:\mathbb T^N\to\mathbb T^N\) be a \(C^2\) Anosov diffeomorphism with
linearization \(A\in GL(N,\mathbb Z)\), and assume that \(A\) is irreducible.
If, for every periodic point \(p=f^n p\), the linear maps \(Df_p^n\) and
\(A^n\) are conjugate, then the Franks--Manning conjugacy between \(f\) and
\(A\) is \(C^{1+\text{H\"older}}\). Thus, in the irreducible
case, periodic data completely characterize global
\(C^{1+\text{H\"older}}\)-rigidity.

The proof does not assume conformality, uniform quasiconformality, simplicity
of the spectrum, or any restriction on Lyapunov multiplicities. The main
ingredient is a new partial-to-global rigidity mechanism combining geometric
and analytic arguments. We first obtain partial cocycle rigidity on canonical conformal
layers inside the Lyapunov blocks by geometric methods, and then promote this partial rigidity to full
regularity of the conjugacy along the Lyapunov blocks by analytic methods. The same method
yields a local rigidity theorem for \(C^1\)-small
\(C^{1+\text{H\"older}}\) perturbations of \(A\).

\end{abstract}


\maketitle

\setcounter{tocdepth}{2}

\tableofcontents
\section{Introduction}
Hyperbolic toral automorphisms are the fundamental algebraic models of
uniformly hyperbolic dynamics. By the Franks--Manning classification theorem \cite{Franks}, \cite{man},
every Anosov diffeomorphism \(f:\mathbb T^N\to\mathbb T^N\) is topologically
conjugate to its linearization \(A\in GL(N,\mathbb Z)\). More precisely, there exists a homeomorphism \(H\) such that
\begin{align*}
 H\circ f=A\circ H.
\end{align*}
Any two such conjugacies differ by an affine automorphism of $\TT^N$
commuting with $A$ \cite{W};  in particular, they have the same regularity. The conjugacy is always bi-H\"older, but in general it need not be $C^1$.

The central rigidity problem is to decide when this topological conjugacy has
higher regularity. The most elementary \(C^1\)-obstruction is given by
periodic data. Suppose that \(H\) is a \(C^1\) conjugacy between \(f\) and
\(A\). If \(p=f^n p\), then \(H(p)\) is an \(n\)-periodic point of \(A\), and
differentiating
\[
        H\circ f^n=A^n\circ H
\]
at \(p\) gives
\[
        DH_p\circ Df_p^n=A^n\circ DH_p .
\]
Thus \(Df_p^n\) and \(A^n\) must be linearly conjugate. We say that \(f\) and
\(A\) have \emph{the same periodic data} if this condition holds for every
periodic point \(p=f^n p\).

The main question
is whether, for irreducible toral automorphisms, this necessary condition is
also sufficient. The irreducibility assumption is essential: in the reducible case, de la
Llave constructed examples showing that periodic data alone do not imply
$C^1$ rigidity. The periodic-data rigidity problem for Anosov automorphisms goes back to the
foundational work of de la Llave-Marco-Moriy\'on in the 1980s. In the
irreducible case it has led to the following conjecture.
\begin{conjecture}\label{con:1}
Let \(f:\mathbb T^N\to\mathbb T^N\) be a \(C^{s}\), $s>1$ Anosov diffeomorphism with
linearization \(A\). Suppose that \(A\) is irreducible. Then:
\begin{enumerate}
\item \emph{Local rigidity.} If $f$ is a $C^1$-small perturbation of $A$, and if \(f\) and \(A\) have
the same periodic data, then \(f\) is
\(C^{1+\text{H\"older}}\) conjugate to \(A\).

\smallskip

  \item \emph{Global rigidity.} If $s$ is sufficiently large and if \(f\) and \(A\) have
the same periodic data, then the topological conjugacy between \(f\) and \(A\)
is  \(C^{1+\text{H\"older}}\).

\smallskip
  \item\label{for:3} \emph{$C^\infty$ bootstrapping.} If $f\in C^\infty(\TT^N)$ and the conjugacy is $C^1$, then the conjugacy is in fact $C^\infty$.
\end{enumerate}
\end{conjecture}
 The $C^\infty$ bootstrapping statement, item~\ref{for:3}, was recently proved in
\cite{wang2}. In this paper we prove the periodic-data rigidity statements.
Our first main theorem is the following global rigidity result.

\begin{theorem}\label{th:6}
 Let $f$ be a $C^2$ Anosov diffeomorphism on $\mathbb{T}^N$, and let $A$ be its
linearization. Suppose that $A$ is an irreducible automorphism of
$\mathbb{T}^N$. If $f$ and $A$ have the same periodic data, then $f$ is
$C^{1+\text{H\"older}}$ conjugate to $A$.
\end{theorem}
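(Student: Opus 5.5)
The plan is to prove that \(H\) is \(C^{1+\text{H\"older}}\) along each Lyapunov block of \(A\) separately, and then assemble these into global regularity by Journé's lemma. We begin with the Franks--Manning conjugacy \(H\), \(H\circ f=A\circ H\), and split the spectrum of \(A\) by modulus of eigenvalues. This gives a dominated splitting \(\mathbb R^N=\bigoplus_i E_i\), where \(E_i\) collects the generalized eigenspaces whose eigenvalues have modulus \(\rho_i\).

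The first step is to transfer this splitting to \(f\). Periodic data rigidity means \(Df^n_p\) is conjugate to \(A^n\) at every periodic point. By Livšic-type arguments and the Kalinin--Sadovskaya theory of fiber bunched and linear cocycles, the Lyapunov exponents of every invariant measure of \(f\) then coincide with the \(\log\rho_i\). Hence \(Df\) admits a continuous dominated splitting \(T\mathbb T^N=\bigoplus_i E^f_i\) with uniform exponents.

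Next, irreducibility enters. Since \(A\) is irreducible, the foliations along any \(E_i\), and along any sum of blocks that is not all of \(\mathbb R^N\), have dense leaves and are minimal. Using the standard Gogolev-type induction over the flags of the stable and unstable directions (weakest to strongest), together with the exact matching of exponents, we show that the fast and slow flag foliations of \(f\) are integrable. We also show that \(H\) maps them to the corresponding linear foliations. The result is a family of \(C^{1+\text{H\"older}}\) foliations \(W^f_i\) with \(H(W^f_i)=W^A_i\).

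The heart of the argument is regularity of \(H\) along each \(W^f_i\). We do this in two stages, following the abstract. In the geometric stage, inside \(E_i\) we decompose \(A|_{E_i}\) into its canonical conformal layers: the real Jordan structure gives an \(A\)-invariant filtration \(0\subset V^1\subset\cdots\subset V^k=E_i\) on which the graded pieces are conformal up to the scalar \(\rho_i\). We then show that the derivative cocycle \(Df|_{E^f_i}\) preserves a Hölder filtration and is uniformly quasiconformal on the top graded piece. The tool is periodic data together with Kalinin--Sadovskaya's criterion that a cocycle whose periodic data are conformal is conjugate to a conformal one. This makes \(H\) restricted to the corresponding sub-leaves Lipschitz and then \(C^{1+\text{H\"older}}\), via nonstationary normal forms along \(W^f_i\) (Guysinsky--Katok and Kalinin--Sadovskaya).

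In the analytic stage, we promote this partial rigidity to the whole block. In normal-form coordinates, \(H|_{W^f_i}\) is conjugated to a polynomial map between leaves. What remains is an upper-triangular unipotent correction, which satisfies a twisted cohomological equation over \(f\) with unipotent linear part. We solve this equation degree by degree along the Jordan filtration. At each level the obstructions are periodic-orbit sums, and these vanish by periodic data. The solutions are then Hölder by a Livšic theorem for twisted cocycles, and smooth along leaves by an exponential-decay/Fourier argument on the minimal foliation.

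I expect this promotion step to be the main obstacle. The non-conformal Jordan part is exactly where uniform quasiconformality fails, and we must avoid any assumption on multiplicities. Once each \(H|_{W^f_i}\) is uniformly \(C^{1+\text{H\"older}}\), repeated application of Journé's lemma along the transverse pairs of foliations gives \(H\in C^{1+\text{H\"older}}\). Applying the same argument to \(H^{-1}\), or invoking the inverse function theorem since \(DH\) is invertible because it matches Jacobians along blocks, shows that \(H\) is a \(C^{1+\text{H\"older}}\) diffeomorphism.
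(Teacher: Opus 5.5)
Your analytic promotion step fails because it puts the non-conformality in the wrong place. Since $A$ is irreducible, its characteristic polynomial has simple roots. So $A$ is diagonalizable over $\CC$ with simple spectrum, and $\rho_i^{-1}A_i$ is an isometry for a suitable inner product on $E_i$. There is no Jordan filtration on the linear side and nothing unipotent anywhere. The non-conformality lives entirely in $Df|_{\mathcal E_i}$, which a priori carries only the Kalinin--Sadovskaya flag $\mathcal F_{i,j}$ with conformal quotients.

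Once the diagonal blocks are conjugated to constants (as in Theorem~\ref{th:9}), your off-diagonal correction $\phi$ would have to solve $A_{i,1}\phi(x)-\phi(fx)A_{i,2}=\theta(x)$. Here $A_{i,1},A_{i,2}$ are the restrictions of $A_i$ to complementary invariant subspaces. After normalizing by $\rho_i$, the twist $\phi\mapsto M\phi L^{-1}$ is an isometry whose eigenvalues are ratios of distinct eigenvalues of $A_i$: unimodular, different from $1$, and in general not roots of unity. When no ratio is a root of unity, the equation at every periodic point is a uniquely solvable Sylvester equation, so periodic data impose no condition at all. Yet the equation has infinitely many obstructions: already over a hyperbolic automorphism, each nonzero orbit of $A^{\tau}$ on $\ZZ^N$ carries a twisted invariant distribution that must annihilate $\theta$. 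Equivalently, a continuous solution would make $\rho_i^{-n}Df^n|_{\mathcal F_{i,2}}$ uniformly bounded, which is already the uniform quasiconformality you are trying to avoid. So no Liv\v{s}ic theorem for twisted cocycles closes this step; this is exactly the non-abelian Liv\v{s}ic failure the paper stresses.

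The paper never solves this equation over $f$. It works on the bottom layer $\mathcal F_{i,1}$, an invariant subbundle; your ``top graded piece'' is a quotient, along which $H$ cannot even be differentiated. There it obtains a H\"older conjugacy $\mathcal C_{i,1}$ to $A_i|_{V_{i,1}}$ from Proposition~\ref{po:5}, whose bounded-normalized-iterates hypothesis holds only on the conformal quotients. Since $\mathcal F_{i,1}$ is not known to be integrable, nor to be sent by $H$ to a linear foliation, normal forms along $\mathcal W_i^f$ give nothing here. The paper instead proves curve differentiability (Lemma~\ref{th:7}) by tube averaging and mixing of $A$. Only then does it deduce integrability and $H(\mathcal W_{\mathcal F_{i,1}})=W^L$ with $W=B_i(V_{i,1})$.

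The promotion is done on the manifold-level equation $A_ih_i-h_i\circ f=R_i$, whose twist is expanding. Irreducibility makes $W$ Diophantine (Lemma~\ref{le:4}). Together with $C^{1+\alpha}$ regularity of $H^{-1}$ along $W$, this kills the negative endpoint of the telescoping sum (Lemma~\ref{le:8}), giving $h_i^-=h_i$ as distributions. The resulting two-sided representation of $D_{\mathcal V}h_i$ is then combined with the triangular form, whose polynomial growth is beaten by exponential contraction along $E^{s,A}$ and $E^{u,A}$. This yields the translation estimates needed for Lemma~\ref{le:7}.

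Finally, $H(\mathcal W_i^f)=\mathcal W_i^A$ for the faster blocks is not available up front, as your step~3 asserts. It requires $C^{1+\text{H\"older}}$ regularity along the slower blocks first, so it must be interleaved with the regularity induction.
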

This theorem is global: no smallness assumption is imposed on $f$. The $C^2$
regularity assumption is used to guarantee the existence of the dominated
splittings needed in the proof. In the local setting, this regularity
assumption can be reduced.
\begin{theorem}\label{th:5}
 Let $A$ be an irreducible Anosov automorphism of $\mathbb T^N$, and let $f$
be a $C^1$-small $C^{1+\text{H\"older}}$ perturbation of $A$. If $f$ and $A$ have the same periodic data, then $f$ is
$C^{1+\text{H\"older}}$ conjugate to $A$.
\end{theorem}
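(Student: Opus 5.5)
The plan is to run the same partial-to-global scheme as for Theorem~\ref{th:6}, replacing the $C^2$ assumption by $C^1$-closeness wherever it was used. That assumption served only to produce dominated splittings, and near $A$ these come for free. Write the spectrum of $A$ in terms of the distinct moduli $r_1<\dots<r_k$ of its eigenvalues, with Lyapunov blocks $E_1\oplus\dots\oplus E_k$ of $A$. Irreducibility of $A$ over $\mathbb Q$ forces every block to be nontrivial in every rational direction, so no $E_i$ or partial flag is rational. Since $f$ is $C^1$-close to $A$, the finest dominated splitting $E^f_1\oplus\dots\oplus E^f_k$ of $A$ persists for $f$ by cone-field arguments. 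The bundles $E^f_i$ are H\"older, and so are the partial flags $E^f_{\le i}$, $E^f_{\ge i}$. These flags integrate to foliations $W^f_{\le i}$, $W^f_{\ge i}$, and each $W^f_i$ is obtained as the intersection of $W^f_{\le i}$ and $W^f_{\ge i}$. The $C^{1+\text{H\"older}}$ regularity of $f$ gives uniformly $C^{1+\text{H\"older}}$ leaves. The first step is to show that the Franks--Manning conjugacy $H$ sends $W^f_i$ to the linear foliations of $E_i$. Working within fast/slow sub-foliations of the stable and unstable foliations, I would characterize $W^f_{\le i}$ topologically by growth rates, which $C^1$-closeness pins near $\log r_i$. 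Periodic data, via Livshits applied to the Jacobians $\log|\det Df^n|_{E^f_i}|$ at periodic points, then make these rates exactly $\log r_i$.

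The second step is partial cocycle rigidity on conformal layers. Within each block $E_i$, $A|_{E_i}$ decomposes into Jordan blocks together with rotation parts, and this gives a canonical flag of conformal layers $L_{i,1}\subset L_{i,2}\subset\cdots$ on which $A$ acts conformally (up to unipotent terms). Periodic data say that $Df^n_p|_{E^f_i}$ is conjugate to $A^n|_{E_i}$, so the same Jordan structure appears at every periodic point. I would first show that the derivative cocycle restricted to the lowest invariant sub-bundle (the $f$-analogue of $L_{i,1}$) is uniformly quasiconformal, since all periodic eigenvalues there have equal modulus and no Jordan growth. A Kalinin--Sadovskaya type argument then gives a H\"older invariant conformal structure, and hence the restriction is $C^{1+\text{H\"older}}$ cohomologous to a conformal cocycle. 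Quotienting by this layer and repeating produces a H\"older reduction of $Df|_{E^f_i}$ to a block upper-triangular cocycle whose diagonal blocks are conformal with the same periodic data as $A$. By the normal forms on $W^f_i$ this is geometric, so $H$ is $C^{1+\text{H\"older}}$ along each canonical layer.

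The third step promotes this partial rigidity to full regularity of $H$ along $W^f_i$ by analytic means. Here I would write $H$ restricted to a leaf in normal-form coordinates as the identity plus a correction solving a twisted cohomological equation over $A$. The off-diagonal terms of the triangular cocycle produce the inhomogeneity. Using exponential mixing of $A$ on H\"older functions and irreducibility, which ensures that no rational subtorus is invariant, I would show that these off-diagonal obstructions vanish and that the correction is $C^{1+\text{H\"older}}$ along the layers. This step is the main obstacle: non-conformal Jordan parts and large multiplicities mean the regularity must be bootstrapped layer by layer through a twisted equation, and the unipotent drift has to be absorbed by a subexponential error. The last step is a Journ\'e-type argument: $H$ is uniformly $C^{1+\text{H\"older}}$ along each $W^f_i$, the $W^f_i$ are transverse H\"older foliations with smooth leaves, and iterating over the flags shows that $H$ is $C^{1+\text{H\"older}}$ along $W^s$ and $W^u$, and hence on $\mathbb T^N$.

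In the local setting the $C^1$-smallness also guarantees that all the constants (bunching, domination gaps, closeness of periodic rates) are uniform. That uniformity is what lets the $C^{1+\text{H\"older}}$ hypothesis replace $C^2$ in the existence and regularity of the intermediate foliations.
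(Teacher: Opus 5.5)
\textbf{There is a genuine gap in your third step.} Your outline has the right overall shape: conformal layers inside each block, analytic promotion up the flag, then Journ\'e. But the step you call the main obstacle is where the proof actually lives, and the tool you propose for it cannot work.

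Write $H_i=p_i+h_i$ with $A_ih_i-h_i\circ f=R_i$. The positive-time series $h_i=\sum_{m\ge 0}A_i^{-(m+1)}R_i\circ f^m$ gives translation estimates only along stable directions. For the expanding directions you need the negative-time series $h_i^-=-\sum_{m\le -1}A_i^{-(m+1)}R_i\circ f^m$ to define a distribution equal to $h_i$. That is, you need $A_i^{l}\,(h_i\circ H^{-1})\circ A^{-l}\to 0$ weakly as $l\to\infty$. The twist has norm of order $\rho_i^{l}$. In general, exponential mixing of $A$ for H\"older (or even $C^2$) observables does not decay fast enough to beat it. Irreducibility in the form ``no invariant rational subtorus'' adds nothing here, since mixing holds for every hyperbolic automorphism.

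The missing idea is how irreducibility really enters.
\begin{itemize}
\item Geometric rigidity on the \emph{first} layer $\mathcal F_{i,1}$ alone yields an $A$-invariant subspace $W=B_i(V_{i,1})\subset E_i$, the image of $DH|_{\mathcal F_{i,1}}$. Along the linear foliation tangent to $W$, $H^{-1}$ is $C^{1+\alpha}$.
\item Katznelson's lemma makes $W$ Diophantine. Integrating by parts along $W$ against $e_n$ then costs only a factor $\|n\|^{(1+\alpha)N}$ and gains $\|A^{-l}|_W\|^{1+\alpha}\sim\rho_i^{-(1+\alpha)l}$. This beats the twist and gives $h_i^-=h_i$.
\item Only then can the positive-time (stable) and negative-time (unstable) translation estimates, transported by $H$ to the linear torus, be combined in a distribution-to-H\"older criterion and pushed up the triangular form by induction.
\end{itemize}
Your plan to ``show that the off-diagonal obstructions vanish'' has no mechanism behind it. The off-diagonal terms of the triangular cocycle need not vanish; they become controllable only once both one-sided representations are available.

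\textbf{The earlier steps also need repair.}

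Jordan structure: an irreducible $A$ has simple spectrum, so $A|_{E_i}$ is diagonalizable over $\mathbb C$ and conformal. There are no Jordan blocks or unipotent drift on the linear side. The non-conformal flag is the Kalinin--Sadovskaya flag $\mathcal F_{i,j}$ of $Df|_{\mathcal E_i}$, which comes from constant periodic data.

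Quotient conjugacies: on each quotient you need a H\"older conjugacy to a \emph{fixed} block of $A_i$, not merely to some conformal cocycle with the same periodic data. The constancy is used later, for example to extract $B_i$ via $\rho_i^{-k_n}A_i^{k_n}\to I$ and mixing. The paper proves this directly from the fixed point, stable/unstable extensions and homoclinic closing, not by a Liv\v{s}ic black box.

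Regularity along layers: your claim that normal forms make this ``geometric, so $H$ is $C^{1+\text{H\"older}}$ along each canonical layer'' is unsupported. $\mathcal F_{i,1}$ is not known to be integrable. The paper needs curve differentiability via foliation-box averaging plus mixing to show three things:
\begin{itemize}
\item $DH|_{\mathcal F_{i,1}}=B_i\circ\mathcal C_{i,1}$;
\item $B_i$ is injective;
\item $H$ maps the resulting foliation to the linear foliation $W^L$ tangent to $W$.
\end{itemize}
For $j\ge2$ the cocycle on $\mathcal F_{i,j}$ is only block triangular, so no conformal structure is available there for a geometric argument. That is precisely why the analytic step is needed.

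Intertwining of foliations: universal-cover growth rates characterize only the slow flags $\mathcal W^f_{i_0,i}$. For $i>i_0$, the identity $H(\mathcal W^f_i)=\mathcal W^A_i$ is not available up front. It is obtained block by block from irreducibility and the regularity of $H$ already established along the previous block. So the induction must interleave intertwining and regularity rather than establishing all intertwinings first.
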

Together with the $C^\infty$ bootstrapping theorem of \cite{wang2}, these results
settle Conjecture \ref{con:1} in the stated regularity classes.

\subsection{History and motivation}

In dimension two, the \(C^\infty\) local periodic-data rigidity problem for
Anosov automorphisms was solved by de la Llave--Marco--Moriy\'on in their
pioneering work \cite{L0,L1,LM}, building on \cite{LMM}. In higher dimensions the situation is
more subtle: reducibility creates genuine obstructions that are invisible to
periodic data.

Recall that a toral automorphism is irreducible if its characteristic
polynomial is irreducible over \(\mathbb Q\), equivalently, if it has no
nontrivial rational invariant subspace. De la Llave constructed examples of
reducible Anosov automorphisms and \(C^\infty\) perturbations with the same
periodic data that are only H\"older conjugate. Moreover, for every
\(k\in\mathbb N\) and every \(N\geq4\), there are reducible Anosov
automorphisms on \(\TT^N\) with \(C^\infty\) perturbations having the same
periodic data but whose conjugacies are only \(C^k\), and not \(C^{k+1}\)
\cite{L1}. These counterexamples show that irreducibility is an essential
hypothesis and led to Conjecture~\ref{con:1}.

The regularity problem naturally splits into two parts. The first asks whether
the periodic-data obstruction implies \(C^1\), or more precisely
\(C^{1+\text{H\"older}}\), regularity of the conjugacy. The second asks whether
a \(C^1\) conjugacy between \(C^\infty\) systems automatically bootstraps to a
\(C^\infty\) conjugacy. The \(C^\infty\) bootstrapping problem is the third
part of Conjecture~\ref{con:1}. A positive answer on \(\TT^3\) was given in
\cite{G17}. The result of \cite{wang1} gives such a bootstrapping theorem when
\(f\) is sufficiently \(C^l\)-close to \(A\), for very large \(l\), under a
weak irreducibility assumption on \(A\). Recently, a complete answer was given
in \cite{wang2} under a much weaker irreducibility assumption. The present
paper addresses the periodic-data rigidity parts of Conjecture~\ref{con:1}.

There has been substantial partial progress toward the local rigidity part of
Conjecture~\ref{con:1} for large classes of irreducible automorphisms \(A\);
see \cite{KS03,L3,L2,G08,KS09,GKS11,KS13,S15}. More recently, some of these
local results were globalized in \cite{WA}. These results typically require
additional spectral or geometric assumptions, often ensuring that the relevant
Lyapunov blocks admit invariant conformal structures or are uniformly
quasiconformal; for instance, such assumptions are often satisfied when the
relevant Lyapunov blocks are one- or two-dimensional.

\subsection{Comparison with previous methods}

The present results remove all of these additional assumptions: no smallness
assumption, no simplicity assumption on the spectrum, no restriction on
Lyapunov multiplicities, and no full invariant conformal structure are
imposed. The
only algebraic hypothesis is irreducibility of \(A\).

We next explain why the previously available methods do not suffice in this
generality.

\subsubsection{Geometric obstruction: full conformality}

A common strategy, which we call \emph{the geometric method}, is first to prove
\(C^{1+\text{H\"older}}\) regularity of the conjugacy along invariant Lyapunov
foliations and then to use Journ\'e's lemma to obtain global regularity.
Conformal or uniformly quasiconformal structures on the Lyapunov subspaces
play a crucial role in this approach, because they allow one to control the
growth rates of \(Df^n\) along the corresponding Lyapunov foliations. This
control leads to \(C^{1+\text{H\"older}}\) regularity of the conjugacy along those
foliations. To the best of the author's knowledge, previously known approaches to this
periodic-data rigidity problem rely, at the decisive first step, on such a
geometric mechanism.


Consequently, this approach is especially effective when the Lyapunov blocks
are one- or two-dimensional, or more generally when they carry invariant
conformal structures. In the general irreducible case, however, Lyapunov blocks
may have higher dimension and need not be conformal. To ensure the existence
of such conformal structures, one has to impose restrictive spectral
assumptions, for instance that the automorphism has no more than three
eigenvalues with the same modulus. Little was known without such geometric or
spectral assumptions.

\subsubsection{Analytic methods obstruction:  twisted negative-time series}\label{sec:32}

The main analytic ingredient in the proof concerns twisted cohomological
equations. After passing from tangent-bundle cocycles to the manifold-level
conjugacy equation, the component \(H_i=p_i\circ H\) of the Franks--Manning
conjugacy satisfies an equation of the form
\begin{align*}
 H_i=p_i+h_i,
        \qquad
        A_i h_i-h_i\circ f=R_i
\end{align*}
where \(A_i=A|_{E_i}\) and \(R_i\) is the \(E_i\)-component of the nonlinear
error. Thus the regularity of the conjugacy is reduced, in part, to the
regularity of solutions of a twisted cohomological equation.

For an unstable Lyapunov block, this equation gives the positive-time
representation
\[
        h_i=
        \sum_{m=0}^{\infty} A_i^{-(m+1)}R_i\circ f^m .
\]
This representation is well adapted to differentiating \(h_i\) along the
stable foliations of \(f\), because the positive iterates of \(f\) contract
stable directions. To obtain differentiability along the unstable foliations,
one would like to use instead the formal negative-time expression
\[
        h_i^-:=
        -\sum_{m=-1}^{-\infty} A_i^{-(m+1)}R_i\circ f^m .
\]
The desired strategy is first to prove that \(h_i^-\) is a well-defined
distribution and has the required distributional derivatives along unstable
directions, and then to prove that
\[
        h_i^-=h_i
        \qquad\text{as distributions}.
\]
Once this is achieved, differentiability in both stable and unstable
directions can be combined by an elliptic-regularity-type theorem for
distributions along foliations. This step generally needs
the error term \(R_i\) to have sufficiently high regularity. For this reason, existing analytic approaches usually require
\(f\in C^\infty(\TT^N)\), or at least sufficiently high regularity.

This type of analytic strategy is common in the study of \(C^\infty\) rigidity
for higher-rank actions \cite{FKS},\cite{KS94}, but its application to rank-one systems is not
standard in the literature. In higher-rank settings, one often has the freedom
to choose a group element for which the twist in the cohomological equation is
neutral, or nearly neutral, relative to the relevant directions. In that case,
convergence of the negative-time distribution can often be obtained from decay
of correlations.

For rank-one actions, this flexibility is absent. The twist \(A_i\) is tied to
a chosen Lyapunov block and in general it cannot be replaced by
a more favorable, nearly neutral twist. In
attempting to prove convergence of \(h_i^-\), one must balance the growth of
\[
        \bigl\|A_i^{-(m+1)}\bigr\|
        \qquad (m\to -\infty)
\]
against the decay of correlations for \(f\). For a nonlinear \(f\), the
available decay estimates are obtained through the H\"older conjugacy with
\(A\), and the resulting rate generally does not dominate the growth coming
from the twist. Even for the algebraic automorphism \(A\), standard exponential mixing
arguments compensate for the twist only when the observables have sufficiently
high regularity. The required order depends on \(A\), and in general
\(C^2\) regularity is far from sufficient.   Consequently, it is difficult in general to prove that
\(h_i^-\) is even a distribution by this method.

There is a further obstruction. Even if \(h_i^-\) is known to be a
distribution, proving its differentiability along unstable foliations by this
analytic approach remains problematic. One difficulty is that the
elliptic-regularity-type argument requires control of distributional
derivatives of high order. In the present setting, however, \(f\) is only
assumed to be \(C^2\), so the error term \(R_i\) has limited regularity. Even in the \(C^\infty\) setting, higher regularity of \(f\) alone does not
remove the obstruction caused by the twist. Existing techniques for overcoming this twist require an a priori
\(C^{1+\text{H\"older}}\) regularity assumption on the conjugacy \(H\). This
assumption is used to transfer growth estimates from the linear iterates
\(A^n\) to the nonlinear iterates \(Df^n\); see \cite{wang2}. However,
obtaining such growth estimates for \(Df^n\) is precisely one of the core
difficulties in proving \(C^1\) rigidity.

Thus the usual analytic approach does not provide a viable starting point for
rank-one rigidity in this setting. New ingredients, substantially different
from the standard higher-rank method, are needed to handle the negative-time
series and to prove the desired rigidity.

\subsubsection{The linear cocycle obstruction: non-abelian Liv\v{s}ic failure}
The main geometric ingredient in the proof concerns establishing linear
cocycle rigidity. The cocycle-theoretic input is closely related to
Liv\v{s}ic theory for linear cocycles. We recall this background in order to
explain why the argument cannot rely on a standard cocycle-rigidity theorem.

Let \(g:\mathcal X\to\mathcal X\) be a hyperbolic system and let \(G\) be a
topological group. Given two H\"older cocycles
\(\mathcal F_1,\mathcal F_2:\mathcal X\to G\), one asks whether they are
cohomologous, namely whether there exists a map
\(\mathcal C:\mathcal X\to G\) such that
\[
        \mathcal C(gx)\cdot \mathcal F_1(x)
        =
        \mathcal F_2(x)\cdot \mathcal C(x),
        \qquad x\in\mathcal X .
\]
A necessary condition is conjugacy of periodic data: for every periodic point
\(p=g^n p\), there should exist \(\mathcal C(p)\in G\) such that
\[
        \mathcal C(p)\cdot \mathcal F_1^n(p)
        =
        \mathcal F_2^n(p)\cdot \mathcal C(p),
\]
where
\[
        \mathcal F_i^n(p)=
        \mathcal F_i(g^{n-1}p)\cdot
        \mathcal F_i(g^{n-2}p)\cdots
        \mathcal F_i(p),
        \qquad i=1,2 .
\]
For abelian groups, the classical Liv\v{s}ic theorem gives a positive answer:
the periodic obstruction is sufficient for cohomology
\cite{livsic,livsic1}. The case in which one cocycle is trivial is also well
understood in several important settings, including compact groups
\cite{Parry} and some non-abelian groups \cite{K11,AA}.

The general non-abelian problem is much more delicate. It cannot be reduced to
the case of a trivial cocycle, and Liv\v{s}ic theory fails in this generality
even under boundedness assumptions on the conjugacies over periodic orbits
\cite{S13}. Moreover, even for cocycles that are close to one another,
conjugate periodic data need not imply continuous cohomology. Indeed, examples
of \(SL(3,\mathbb R)\)-valued cocycles with conjugate periodic data which are
not continuously cohomologous were constructed in \cite{GKS11}.

This obstruction is directly relevant to the present paper. In our setting,
one would like to compare the nonlinear cocycle \(Df|_{\mathcal E_i}\) with
the linear cocycle \(A_i\) over a Lyapunov block \(\mathcal E_i\). The
preceding discussion shows that conjugate periodic data are not sufficient, in
general, to obtain full cohomology of such cocycles. Thus full cocycle
rigidity on the whole Lyapunov block cannot be used as a black box.

The first new ingredient of the paper is therefore a partial cocycle rigidity
theorem. Although the full cocycle \(Df|_{\mathcal E_i}\) need not be
cohomologous to \(A_i\), we show that the periodic-data assumption implies
rigidity on a canonical conformal layer inside this block.

At first sight, obtaining such partial cocycle rigidity is not easier than
obtaining full cocycle rigidity. First, the conformal layer arises from an
abstract construction. Hence it is not a priori clear that it is close to any
\(A\)-invariant subspace, even when \(f\) is \(C^1\)-close to \(A\). This makes
it unclear where such a conformal subbundle should be sent by a transfer map,
and therefore makes it difficult even to formulate the relevant linear cocycle
equation. Second, as explained above, standard Liv\v{s}ic theory does not
extend to general linear cocycles. Thus it is not reasonable to expect to solve
a cohomology equation without first identifying an appropriate solvability
condition.

Consequently, a new scheme, substantially different from the traditional
cocycle-rigidity approach, is needed. The partial rigidity result obtained here
is the starting point of the partial-to-global mechanism used in the current
paper.

\subsection{The new partial-to-global mechanism}
We introduce
a different mechanism for proving periodic-data rigidity, which combines geometric and analytic methods. The phrase
``partial-to-global'' refers to two distinct transitions. The first transition
is from partial linear cocycle rigidity to partial differentiability of the
conjugacy. More precisely, we prove cocycle rigidity for \(Df\) only on a
canonical conformal subbundle, and then show that this subbundle integrates to
a foliation along which \(H\) is \(C^{1+\text{H\"older}}\). This is the main
geometric input of the paper. Up to this stage, the proof uses only the
conformality of the subbundle, the periodic-data assumption, and the
corresponding conformal structure for \(A\); irreducibility of \(A\) is not
used.

The second transition occurs only after passing from tangent-bundle cocycles
to the manifold-level cohomological equation for the component
\(H_i=p_i\circ H\) of the conjugacy. At this level, irreducibility of \(A\)
enters. The first transition gives partial \(C^{1+\text{H\"older}}\)
regularity of \(H^{-1}\) along a linear foliation tangent to an
\(A\)-invariant subspace \(W\subset E_i\). Irreducibility of \(A\) implies
that \(W\) is Diophantine. This Diophantine property controls the small
divisors that arise when one integrates by parts against Fourier modes. Since
the small-divisor loss is only polynomial, the contraction of \(A^{-n}\) along
the same foliation gives the exponential gain needed to control the formal
negative-time series.
This allows us to identify
\[
        h_i^-=h_i
        \qquad\text{as distributions}.
\]
Differentiating this identity along an arbitrary H\"older vector field
\(\mathcal V\) tangent to \(\mathcal E_i\), we obtain
\[
        D_{\mathcal V}h_i=D_{\mathcal V}h_i^-
\]
as distributions. Thus the \(\mathcal E_i\)-directional distributional derivatives of \(h_i\)
have both positive- and negative-time representations, not merely the
representation coming from the initial conformal subbundle.

The remaining analytic step is to upgrade this distributional information to
H\"older regularity. Using the triangular reduction of
\(Df|_{\mathcal E_i}\), the positive-time representation gives translation
estimates in stable directions, while the negative-time representation gives
the corresponding estimates in unstable directions. Since the two
distributional derivatives above are equal, these one-sided estimates combine
to give the two-sided estimates required by the distribution-to-H\"older
criterion. Hence \(D_{\mathcal V}h_i\) is represented by a H\"older map. Since
\(\mathcal V\) is arbitrary in \(\mathcal E_i\), it follows that \(h_i\), and
therefore the corresponding component \(H_i\), is
\(C^{1+\text{H\"older}}\) along the whole Lyapunov foliation
\(\mathcal W_i^f\). This is the analytic promotion from the initial conformal
layer to the entire Lyapunov block.

\section{Proof strategy}

We now describe the main ideas in the proof of Theorems~\ref{th:6} and
\ref{th:5}. We first reduce the problem to leafwise regularity. Let
\[
        E_A^u=E_{i_0}\oplus E_{i_0+1}\oplus\cdots\oplus E_\ell
\]
be the unstable subspace of \(A\), decomposed according to the moduli of its
eigenvalues, and let
\[
        \mathcal E_f^u=
        \mathcal E_{i_0}\oplus \mathcal E_{i_0+1}
        \oplus\cdots\oplus \mathcal E_\ell
\]
be the corresponding dominated splitting for the unstable subspace of \(f\).

The key step in the proof is the following. Suppose that
\[
        H(\mathcal W_i^f)=\mathcal W_i^A
        \qquad\text{for some } i_0\leq i\leq\ell .
\]
Then \(H\) is \(C^{1+\text{H\"older}}\) along \(\mathcal W_i^f\). Since
\(H_i=p_i\circ H=p_i+h_i\), this amounts to proving that the solution \(h_i\)
of the twisted cohomological equation
\[
        A_i h_i-h_i\circ f=R_i
\]
is \(C^{1+\text{H\"older}}\) along \(\mathcal W_i^f\). As explained in
Section~\ref{sec:32}, the proof has two main parts.

\smallskip

\noindent\(\hypertarget{o.1}{(\mathcal P_1)}\)
We prove that \(h_i^-\) is a well-defined distribution and that
\[
        h_i^-=h_i
        \qquad\text{as distributions}.
\]

\smallskip

\noindent\(\hypertarget{o.2}{(\mathcal P_2)}\)
For every \(\alpha\)-H\"older vector field \(\mathcal V\) taking values in
\(\mathcal E_i\), we prove that the distributional derivative
\[
        D_{\mathcal V}h_i=D_{\mathcal V}h_i^-
\]
is represented by a H\"older map. Therefore \(h_i\) is
\(C^{1+\text{H\"older}}\) along \(\mathcal W_i^f\).

The proof has three main stages. First, we obtain partial cocycle rigidity on
a canonical conformal layer inside a Lyapunov block. Second, we convert this
partial cocycle rigidity into partial differentiability of the conjugacy and
use irreducibility to control the negative-time distributional series. Third,
we use a distribution-to-H\"older argument and an induction over the canonical
flag to promote this partial regularity to full
\(C^{1+\text{H\"older}}\) regularity along the whole Lyapunov block. Repeating
this over all Lyapunov blocks and applying Journ\'e's lemma gives the global
result.

\subsection{Proof of \(\mathcal P_1\)} In this part, we explain how \hyperlink{o.1}{\((\mathcal P_1)\)} is proved.

\subsubsection{Establishing partial linear cocycle rigidity inside a subbundle of \(\mathcal E_i\)}

This part summarizes the contents of Sections~\ref{sec:33} and~\ref{sec:7}.

Although the full cocycle \(Df|_{\mathcal E_i}\) need not be conformal and, in
general, cannot be cohomologous to the linear cocycle \(A_i\), there is a
canonical invariant flag
\[
        0=\mathcal F_{i,0}\subset \mathcal F_{i,1}\subset\cdots
        \subset \mathcal F_{i,j_i}=\mathcal E_i
\]
such that each quotient \(\mathcal F_{i,j+1}/\mathcal F_{i,j}\) carries an
invariant conformal structure; see \eqref{for:33} of
Section~\ref{sec:12}. The first step is to prove cocycle rigidity on the first
conformal layer. More precisely, for a suitable \(A\)-invariant subspace
\(V_{i,1}\subset E_i\), we construct a H\"older bundle isomorphism $\mathcal C_{i,1}:\mathcal F_{i,1}\longrightarrow V_{i,1}$
satisfying
\begin{align}\label{for:106}
 \mathcal C_{i,1}(fx)\circ Df_x|_{\mathcal F_{i,1}(x)}
        =
        A_i|_{V_{i,1}}\circ \mathcal C_{i,1}(x).
\end{align}
The method is different from the standard cocycle-rigidity approach. Instead
of prescribing \(V_{i,1}\) in advance and trying to solve a linear cocycle
cohomology equation with values in \(GL(V_{i,1})\), we write
\[
        \mathcal C_{i,1}(x)
        =
        p_i|_{\mathcal F_{i,1}(x)}+q(x)
\]
and reduce the problem to the twisted cohomological equation
\[
        A_iq(x)-q(fx)\circ Df_x|_{\mathcal F_{i,1}(x)}
        =
        (\mathcal R_i)_{\mathcal F_{i,1}(x)} .
\]
This formulation avoids the need to know the target subspace \(V_{i,1}\) in
advance. Once \(q\) is constructed, the subspace \(V_{i,1}\) is obtained as the
image of \(\mathcal C_{i,1}\).

The construction of \(q\) is explicit. The periodic-data assumption allows us
to start from a solution at a fixed point. Stable and unstable holonomies then
extend this solution along the corresponding invariant manifolds, and
conjugacy of periodic data implies that the stable and unstable extensions
agree on homoclinic points. This gives a global H\"older solution of the
partial cocycle equation; see the proof of Proposition~\ref{po:5}.

We then apply a similar construction to each conformal quotient
\(\mathcal F_{i,j+1}/\mathcal F_{i,j}\). This yields a bundle isomorphism
\[
        \mathcal C_i(x):\mathcal E_i(x)\to E_i
\]
under which \(Df|_{\mathcal E_i}\) takes a block upper triangular form
\(\widetilde A_i\) at every \(x\in\mathbb T^N\), with diagonal blocks
coinciding with the corresponding diagonal blocks of \(A_i\); see
Theorem~\ref{th:9}.

\subsubsection{Establishing partial regularity of \(H\)}

This part summarizes the contents of Sections~\ref{sec:34} and~\ref{sec:35}.
In this step, we convert partial cocycle rigidity into partial regularity of the conjugacy. More precisely, we show that \(\mathcal F_{i,1}\) is uniquely integrable to a foliation with uniformly \(C^{1+\text{H\"older}}\) leaves, denoted by \(\mathcal W_{\mathcal F_{i,1}}\), and that \(H\) is \(C^{1+\text{H\"older}}\) along \(\mathcal W_{\mathcal F_{i,1}}\). Moreover, if \(W\subset E_i\) is the corresponding \(A\)-invariant subspace and \(W^L\) denotes the linear foliation tangent to \(W\),
\[
        H\bigl(\mathcal W_{\mathcal F_{i,1}}\bigr)=W^L .
\]
The precise statement is given in Theorem~\ref{th:1}.

Since the integrability of \(\mathcal F_{i,1}\) is not known a priori, we
cannot begin with leafwise differentiability. Instead, we introduce the notion
of curve differentiability; see Section~\ref{sec:36}. We prove that, for any
\(C^{1+\text{H\"older}}\) curve \(\gamma(t)\) satisfying
\(\gamma'(t)\in\mathcal F_{i,1}\), there exists \(\delta>0\) such that, for
every \(C^1\) function \(\omega\) compactly supported in
\((-\delta,\delta)\),
\begin{align}\label{for:103}
 \int_{-\delta}^{\delta} H\circ\gamma(t)\omega'(t)\,dt
 =
 -\int_{-\delta}^{\delta}
 \mathcal D_i(\gamma(t))(\gamma'(t))\omega(t)\,dt .
\end{align}
Here \(\mathcal D_i\) is \(\mathcal C_{i,1}\), up to a constant linear
isomorphism.

The main difficulty in proving \eqref{for:103} is that the relevant estimates
come from the partial cocycle rigidity relation for \(\mathcal C_{i,1}\), while
the expression above is supported on a one-dimensional curve. Formally, one
would like to differentiate the positive-time series for \(H_i=p_i+h_i\) along
\(\gamma\). Since
\(\gamma'(t)\in\mathcal F_{i,1}(\gamma(t))\), the partial cocycle rigidity
relation \eqref{for:106} should control the resulting derivatives. However, the series is
difficult to estimate directly on the curve. The key idea is therefore to
approximate the curve distribution by normalized averages over small plaques
in a foliation box. This converts the curve-level problem into an averaged
problem on \(\mathbb T^N\), where the previously established estimates can be
applied.

Identity \eqref{for:103} implies that \(DH|_{\mathcal F_{i,1}}\) is given,
up to a constant linear isomorphism, by \(\mathcal C_{i,1}\). It follows that
\(\mathcal F_{i,1}\) is uniquely integrable and that \(H\) is
\(C^{1+\text{H\"older}}\) along the resulting foliation.

Up to this point, the argument works at the tangent-bundle level and does not
use irreducibility of \(A\). The subspace \(W\) obtained here will be used in
the next step: irreducibility of \(A\) implies that \(W\) has the Diophantine
property needed to control the negative-time distribution.

\subsubsection{Identifying \(h_i^-\) with \(h_i\)}

This part summarizes the contents of Section~\ref{sec:37}. The negative-time
series \(h_i^-\) is handled using the partial \(C^{1+\alpha}\) regularity of
\(H^{-1}\) along the linear foliation tangent to the subspace \(W\subset E_i\)
obtained above. The Diophantine property of \(W\) then allows us to integrate
by parts against Fourier modes and to control the small divisors that appear.
This shows that the negative endpoint in the telescoping argument tends to
zero. Consequently,
\[
        h_i^-\circ H^{-1}=h_i\circ H^{-1} \qquad\text{as distributions}.
\]
Changing variables back through \(H\), we obtain
\[
        h_i^-=h_i
        \qquad\text{as distributions}.
\]
This is one place where irreducibility of \(A\) is used in an essential way: it guarantees the Diophantine property for the invariant subspaces that arise in the proof. A new feature of the argument is that the decay needed to control the formal negative-time expression is not obtained from standard exponential mixing estimates for toral automorphisms. Instead, partial \(C^{1+\alpha}\) regularity along a Diophantine linear foliation is combined with a Fourier integration-by-parts argument. The Diophantine property controls the small divisors with only polynomial loss, while the contraction of \(A^{-n}\) along the same foliation provides the exponential gain needed to realize the formal negative-time expression as a distribution and identify it with \(h_i\circ H^{-1}\).

\subsection{Proof of $\mathcal{P}_2$} Section~\ref{sec:38} proves \(\mathcal P_2\). We take distributional
derivatives along an \(\alpha\)-H\"older vector field \(\mathcal V\) taking
values in \(\mathcal E_i\). The equality \(h_i^-=h_i\) implies
\[
        D_{\mathcal V}h_i
        =
        D_{\mathcal V}h_i^-
\]
as distributions. Hence the derivative of \(h_i\) along the full Lyapunov
block \(\mathcal E_i\) has two distributional representations: a positive-time
one and a negative-time one. More precisely,
\[
        \lim_{m\to\infty}
        A_i^{-m}p_i\circ Df_z^m(\mathcal V_z)
        =
        p_i(\mathcal V_z)+D_{\mathcal V}h_i
\]
and
\[
        \lim_{m\to-\infty}
        A_i^{-m}p_i\circ Df_z^m(\mathcal V_z)
        =
        p_i(\mathcal V_z)+D_{\mathcal V}h_i^-
\]
in the sense of distributions.

The triangular reduction of \(Df|_{\mathcal E_i}\) is then used inductively to
control these two limits. The positive-time expression gives stable-direction
translation estimates, while the negative-time expression gives
unstable-direction translation estimates. After conjugating by \(H\), these
become translation estimates along the linear stable and unstable foliations
of \(A\). Since the positive- and negative-time distributions are equal, these
one-sided estimates combine to give the two-sided estimates required by the
distribution-to-H\"older criterion. Therefore \(D_{\mathcal V}h_i\) is represented by a H\"older map. Since
\(\mathcal V\) was arbitrary in \(\mathcal E_i\), this gives
\(C^{1+\text{H\"older}}\) regularity of \(H\) along \(\mathcal W_i^f\).

\subsection{Scope of the method}

The method developed in this paper may be viewed as an important step toward the broader
rank-one rigidity program, including both periodic-data rigidity and Lyapunov
rigidity on general nilmanifolds. A notable feature of the argument is that it does not rely on a full
conformal structure. Instead, it combines partial cocycle rigidity with an
analytic promotion mechanism for twisted cohomological equations. This provides
a new perspective on rigidity problems in which conformality or uniform
quasiconformality is not available.

In the nilmanifold setting, however, new difficulties arise. Existing rigidity
results often require stronger spectral assumptions, such as simplicity of the
spectrum \cite{Witt}. One reason is that, when the spectrum is not simple, the
intermediate foliations used in the toral case may no longer be available in a
natural form. Since these intermediate foliations are the starting point of the
present argument, this creates a genuine obstruction to a direct extension of
the method.

The curve-differentiability approach introduced here may provide a way to
address this difficulty. It allows one to extract differentiability information
from distributional identities without assuming, at the outset, the existence
of all the intermediate smooth foliations that appear in the toral proof. This
suggests that the partial-to-global mechanism developed in this paper could be
useful in future work on rigidity problems beyond tori.

The same mechanism may also be relevant to rigidity problems for higher-rank
actions. In higher rank, several known rigidity results rely on semisimplicity
assumptions; see, for example, \cite{GKS23,KS97}. The strategy developed here,
which combines partial cocycle rigidity with an analytic promotion argument
for twisted cohomological equations, suggests a possible route toward treating
more general nonsemisimple actions.

\smallskip

\noindent{\bf Acknowledgements.}
The author is deeply grateful to A. Gogolev for numerous clarifying
conversations. The author is also grateful to B. Kalinin and V. Sadovskaya for
referring her to their paper on partial conformal structures. The author thanks
J. DeWitt, F. Yang, and H. Hu for many helpful conversations.





\section{Preliminaries}\label{sec:5}
We work in one of the following two settings:
\begin{enumerate}
  \item \emph{Global setting.} Let $f$ be a $C^2$ Anosov diffeomorphism on
  $\mathbb T^N$, and let $A\in GL(N,\mathbb Z)$ be its linearization.

\smallskip
  \item \emph{Local setting.} Let $A\in GL(N,\mathbb Z)$ be an Anosov
  automorphism of $\mathbb T^N$, and let $f$ be a
  $C^{1+\text{H\"older}}$ diffeomorphism on $\mathbb T^N$ which is a
  $C^1$-small perturbation of $A$.
\end{enumerate}
In either case, we assume that $A$ is irreducible and that $f$ and $A$ have
the same periodic data. Let
\[
\rho_1 < \cdots <\rho_{i_0-1}<1<\rho_{i_0}<\cdots \rho_\ell
\]
be the distinct moduli of the eigenvalues of \(A\) and  let
\begin{align}\label{for:2}
 \mathbb{R}^N = E_1 \oplus \cdots \oplus E_\ell
\end{align}
be the corresponding \(A\)-invariant splitting.

Then there is a bi $\eta$-H\"older conjugacy $H$  between $f$ and $A$ \cite{Franks}, i.e.,
\begin{align}\label{for:140}
 A\circ H= H \circ f.
\end{align}
By \cite{W}, any two such conjugacies differ by an affine automorphism of
\(\mathbb T^N\) commuting with \(A\). Consequently, the regularity of one such
conjugacy implies the same regularity for all of them.

In what follows, $C$ will denote any constant that depends only
  on the given  action  $f$ and the manifolds $\TT^N$.   $C_{x,y,z,\cdots}$ will denote any constant that in addition to the
above depends also on parameters $x, y, z,\cdots$. We point out that the constant $C$ has been changing throughout the proof.

\subsection{Same periodic data}\label{sec:12}

We list several consequences of the assumption that $f$ and $A$ have the same
periodic data. We use the same notation in both the global and local
settings. Items~\ref{for:26}--\ref{for:86} are consequences of \cite{WA} in
the global setting and of \cite{GKS11} in the local setting. The flag statement
in item~\ref{for:33} follows from Theorems~3.9 and~3.10 of \cite{KS13}.

\begin{enumerate}
  \item\label{for:26} \emph{Dominated splitting.} There exists $\alpha>0$ such
that $f$ admits a $Df$-invariant splitting
\[
        T\mathbb T^N=\mathcal E_1\oplus\cdots\oplus\mathcal E_\ell
\]
into $\alpha$-H\"older continuous subbundles. Moreover, for every sufficiently
small $\epsilon>0$, there is a H\"older metric on each $\mathcal E_i$ such
that, for every $v\in\mathcal E_i$,
\begin{align}\label{for:129}
 \rho_ie^{-\epsilon} \|v\|
\leq
\|Df v\|
\leq
\rho_ie^{\epsilon} \|v\|.
\end{align}

\item\label{for:23} \emph{Unique integrability of the bundles $\mathcal E_i$.} Each $\mathcal E_i$
is uniquely integrable to a foliation with uniformly $C^{1+\alpha}$ leaves,
which we denote by $\mathcal{W}^f_i$. The weak flag of bundles
\begin{align*}
 \mathcal E_{i, j}
=
\mathcal E_{i}\oplus\cdots\oplus\mathcal E_j,\quad i\geq i_0
\end{align*}
is uniquely integrable to a weak foliation with uniformly
$C^{1+\alpha}$ leaves, which we denote by
$\mathcal{W}^f_{i,j}$.

\smallskip
  \item\label{for:87} \emph{Intertwining of foliations.} For each $i\geq i_0$, the conjugacy $H$ maps the foliation
  $\mathcal W^f_{i_0,i}$ to the linear foliation $\mathcal W^A_{i_0,i}$ tangent to
  $E_{i_0}\oplus \cdots \oplus E_i$.

  \smallskip

  \item \label{for:86} \emph{Intertwining of foliations under extra assumption.} Fix \(i_0\leq i<j\leq \ell\). Suppose that:
  \begin{enumerate}
  \item $A$ is irreducible.

    \item $H(\mathcal W^f_{i,j})=\mathcal W_{i,j}^A$, where $\mathcal W_{i,j}^A$ denotes the linear foliation tangent to  $E_{i}\oplus \cdots \oplus E_j$,
        \smallskip

            \item $H(\mathcal{W}^f_i) = \mathcal{W}^A_i$ and $H$ is a $C^{1+\text{H\"older}}$ diffeomorphism along $\mathcal{W}^f_{i}$, where $\mathcal W_{i}^A$ denotes the linear foliation tangent to  $E_{i}$.
  \end{enumerate}
  Then $H(\mathcal W^f_{i+1,j})=\mathcal W_{i+1,j}^A$.

\smallskip
  \item\label{for:33} \emph{Flags of $\alpha$-H\"older  $Df$-invariant sub-bundles}. Suppose $\mathcal{S}$ is a $Df$-invariant subbundle.
For every $f$-periodic point $p$ denote by $\mu_p$ the invariant measure
 on its orbit. Set
\begin{align*}
\lambda_{+,\mathcal{S}}(Df,\mu_p)&=\lim_{n\to\infty}\frac{\log\norm{Df^n|_{\mathcal{S}_p}}}{n}
\qquad\text{and}\\
\lambda_{-,\mathcal{S}}(Df,\mu_p)&=\lim_{n\to\infty}\frac{\log\norm{Df^{-n}|_{\mathcal{S}_p}}^{-1}}{n}.
\end{align*}
Since $f$ and $A$ have the same
periodic data, $Df$ has \emph{constant periodic data} with exponent $\log \rho_i$ along $\mathcal E_i$, i.e.,
\begin{align*}
 \lambda_{+,\mathcal E_i}(\rho_i^{-1}df,\mu_p)=\lambda_{-,\mathcal E_i}(\rho_i^{-1}Df,\mu_p)=0
\end{align*}
for every periodic point $p$. Then it follows from Theorems~3.9 and~3.10 of \cite{KS13} that the following hold:
  there exists a flag of $\alpha$-\text{H\"older} $Df$-invariant
  subbundles
  \begin{equation*}
  \{0\}
  =
  \mathcal F_{i,0}
  \subset
  \mathcal F_{i,1}
  \subset
  \cdots
  \subset
  \mathcal F_{i,j_i}
  =
  \mathcal E_i
  \end{equation*}
  and $\alpha$-\text{H\"older} Riemannian metrics on the quotient bundles
  \[
  \mathcal F_{i,m}/\mathcal F_{i,m-1},
  \qquad m=1,\ldots,j_i,
  \] such that, for some positive $\alpha$-H\"older function
$\phi_i : \TT^N \to \RR$ the quotient-cocycles induced by the cocycle
$\phi_i Df$ on
  $\mathcal F_{i,m}/\mathcal F_{i,m-1}$ are isometries. Moreover, for any $x\in \TT^N$
\begin{align*}
 \norm{Df^n|_{\mathcal F_{i,m}}}\leq C\rho_i^n|n|^{m-1},\qquad \forall\,1\leq m\leq j_i,\,\,\forall\,n\in\ZZ\backslash\{0\}.
\end{align*}

\end{enumerate}

\subsection{Stable/unstable foliations and closing lemma}\label{sec:8} Since $A$ is diagonalizable over $\CC$ and all eigenvalues of $A$ on $E_i$ have modulus $\rho_i$, we have
\begin{align}\label{for:93}
 \norm{A^n|_{E_i}}\leq C\rho_i^n, \qquad \forall\,n\in\ZZ.
\end{align}
Define
\[
 E^{s,A}=\oplus_{\rho_i<1}E_i,\qquad
E^{u,A}=\oplus_{\rho_i>1} E_i.
\]
Denote the corresponding linear foliations by $W^{s,A}$ and  $W^{u,A}$ respectively, which are called stable and unstable foliations. Set
\begin{align*}
 \nu_0=\max \{\rho_{i_0-1},\rho_{i_0}^{-1}\}^{\frac{1}{2}},\qquad \nu_0<\nu<1.
\end{align*}
Then for $n\geq0$ we have
\begin{align}\label{for:77}
 \norm{A^n|_{E^{s,A}}}\leq C\nu_0^{n},\qquad \norm{A^{-n}|_{E^{u,A}}}\leq C\nu_0^{n}.
\end{align}
Similarly, define
\[
\mathcal E^{s,f}=\oplus_{\rho_i<1}\mathcal E_i,\qquad
\mathcal E^{u,f}=\oplus_{\rho_i>1}\mathcal E_i.
\]
Denote the corresponding foliations by $\mathcal{W}^{s,f}$ and  $\mathcal{W}^{u,f}$ respectively, which are called stable and unstable foliations.  The foliations
$\mathcal W^{s,f}$ and $\mathcal W^{u,f}$ have uniformly $C^{1+\alpha}$ leaves.

By \eqref{for:129}, for any \(v\in\mathcal E^{s,f}(x)\) and \(n>0\), or any
\(v\in\mathcal E^{u,f}(x)\) and \(n<0\), we have
\begin{align*}
 \norm{Df^n_x(v)}\leq C\nu^{|n|}\norm{v}.
\end{align*}
We denote by $\mathcal{W}^{s,f}_{loc}(x)$ (resp. $\mathcal{W}^{u,f}_{loc}(x)$) a sufficiently
small ball around $x$ in the leaf $\mathcal{W}^{s,f}(x)$ (resp. $\mathcal{W}^{u,f}(x)$).
   For any $y\in \mathcal{W}^{s,f}_{loc}(x)$ (resp. $y\in \mathcal{W}^{u,f}_{loc}(x)$) and any $n\geq0$ (resp. $n\leq0$) we have
\begin{align}\label{for:130}
 &d(f^nx,\,f^ny)\leq C\nu^nd(x,y)\qquad \big(\text{resp. } d(f^nx,\,f^ny)\leq C\nu^{-n}d(x,y)\big).
\end{align}
We have a local product structure: there is $\delta>0$ such that for any $x,\,y\in \TT^N$ with $d(x,y)\leq \delta$, there is $z\in \mathcal{W}^{s,f}_{loc}(x)\bigcap \mathcal{W}^{u,f}_{loc}(y)$ satisfying
\begin{align}\label{for:44}
 d(x,z)+d(y,z)<Cd(x,y).
\end{align}
The next result is a standard fact for Anosov diffeomorphisms \cite{KH}.
\begin{theorem}\label{th:2} (Closing Lemma)  There exists constant $\delta>0$  such that for any $x\in\TT^N$ and
$n\in\NN$, if $d(x,f^nx)<\delta$, then there exists a periodic point $p\in \TT^N$ with $f^np=p$ such that
\begin{align*}
  d(f^ix,f^ip)\leq Cd(x,f^nx)\nu^{\min\{i,n-i\}},\qquad \forall\,0\leq i\leq n.
\end{align*}

\end{theorem}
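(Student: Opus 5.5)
The plan is the standard shadowing argument via the local product structure \eqref{for:44} and the stable/unstable contraction estimates \eqref{for:130}. Assume \(d(x,f^nx)<\delta\) with \(\delta\) small, and look for the periodic point \(p\) as a fixed point of a map built from \(f^n\) near \(x\). I would work in a local chart around \(x\) in which the foliations \(\mathcal W^{s,f}\) and \(\mathcal W^{u,f}\) have uniformly \(C^{1+\alpha}\) leaves and are uniformly transverse. In that chart I would identify a small neighborhood of \(x\) with \(\mathcal W^{s,f}_{loc}(x)\times\mathcal W^{u,f}_{loc}(x)\) via \((a,b)\mapsto [a,b]:=\mathcal W^{s,f}_{loc}(b)\cap\mathcal W^{u,f}_{loc}(a)\); this bracket is well defined and bi-Lipschitz-comparable to \(d\) by \eqref{for:44}.

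The core step is a graph-transform / contraction-mapping argument. I would define \(T\) on a small box \(B=[\mathcal W^{s,f}_{\varepsilon}(x),\mathcal W^{u,f}_{\varepsilon}(x)]\) as follows. First, \(f^n\) maps stable plaques through \(B\) into stable plaques of diameter \(\le C\nu^n\varepsilon\) near \(f^nx\). Second, \(f^{-n}\) maps unstable plaques near \(f^nx\) into unstable plaques of diameter \(\le C\nu^n\varepsilon\) near \(x\). Choosing \(n\) large enough that \(C\nu^n<1/2\), the map \(z\mapsto[f^n(z)_s\text{-part},\,f^{-n}(z)_u\text{-part}]\) is a contraction of the product metric on \(B\). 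Its fixed point \(p\) then satisfies \(f^np=p\). Equivalently, I could apply the Anosov shadowing lemma to the periodic pseudo-orbit \(x,fx,\dots,f^{n-1}x,x,\dots\) and use expansivity to get uniqueness and hence periodicity. For small \(n\), where \(C\nu^n\ge 1/2\), I would shrink \(\delta\) and invoke the hyperbolic fixed point theorem for \(f^n\) near \(x\) directly, since only finitely many \(n\) are involved.

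For the estimate, set \(y=[p,x]\in\mathcal W^{s,f}_{loc}(p)\cap\mathcal W^{u,f}_{loc}(x)\). Since \(p\) is fixed by the contraction, whose displacement at \(x\) is \(\lesssim d(x,f^nx)\), we get \(d(x,p)\le Cd(x,f^nx)\). Hence \(d(p,y)\) and \(d(x,y)\) are both \(\le Cd(x,f^nx)\), and by construction \(d(f^n y,f^n x)\le C d(x,f^nx)\) as well. For \(0\le i\le n\), the triangle inequality gives \(d(f^ix,f^ip)\le d(f^ip,f^iy)+d(f^iy,f^ix)\). By \eqref{for:130}, the first term is \(\le C\nu^i d(p,y)\) because \(y\in\mathcal W^{s,f}_{loc}(p)\). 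The second term is bounded by applying \eqref{for:130} backward from time \(n\), which gives \(\le C\nu^{n-i}d(f^nx,f^ny)\). Both bounds are \(\le C d(x,f^nx)\nu^{\min\{i,n-i\}}\).

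I expect the main obstacle to be the bookkeeping that keeps all iterates inside the local leaves. One must verify that \(f^iy\) stays in \(\mathcal W^{u,f}_{loc}(f^ix)\) and \(f^iy\) stays in \(\mathcal W^{s,f}_{loc}(f^ip)\) for all \(0\le i\le n\), so that \eqref{for:130} applies at every step. I would handle this by choosing \(\delta\) small relative to the local-leaf size and using the geometric decay bounds inductively. This result is standard, see \cite{KH}, so I would cite it rather than redo these details.
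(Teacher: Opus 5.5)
The paper does not prove this statement. It quotes it as a standard fact about Anosov diffeomorphisms and cites \cite{KH}, so your decision to cite \cite{KH} matches what the paper does. Your final step is also the standard way to turn a uniform closing bound into the factor $\nu^{\min\{i,n-i\}}$: take $y\in\mathcal W^{s,f}_{loc}(p)\cap\mathcal W^{u,f}_{loc}(x)$, then use forward contraction from time $0$ and backward contraction from time $n$ via \eqref{for:130}.

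If you write out the sketch itself, the contraction step is not right as stated. First, the map is not well defined. For $z\in B$ whose unstable coordinate is not exponentially small, $f^nz$ leaves any fixed neighborhood of $x$, so its ``stable part'' in the chart at $x$ is undefined. The same happens to $f^{-n}z$ in the stable direction. The well-defined map behind your description is the product map $T(a,b)=(\mathcal W^{s,f}_{loc}(x)\cap\mathcal W^{u,f}_{loc}(f^na),\ f^{-n}(\mathcal W^{u,f}_{loc}(f^nx)\cap\mathcal W^{s,f}_{loc}(b)))$ on $\mathcal W^{s,f}_\varepsilon(x)\times\mathcal W^{u,f}_\varepsilon(x)$. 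Passing between the product structures at $x$ and at $f^nx$ is an unstable (respectively stable) holonomy. For a general $C^2$ Anosov diffeomorphism these holonomies, and the chart $(a,b)\mapsto[a,b]$ itself, are only H\"older. Estimate \eqref{for:44} says the bracket point is $Cd(x,y)$-close to $x$ and $y$; it does not say the bracket depends Lipschitz on them. So neither $\mathrm{Lip}(T)\le C\nu^n$ nor $d(x,p)\le 2d(Tx,x)$ is justified.

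The fix is cheap. For $\delta$ small and $n$ large, each component of $T$ is a continuous self-map of a closed disk, so Brouwer gives a fixed point $(a^*,b^*)$; uniqueness is not needed. The fixed-point equations together with \eqref{for:44} and \eqref{for:130} then give $d(a^*,x)\le Cd(x,f^nx)$ and $d(b^*,x)\le C\nu^n d(x,f^nx)$ directly. Alternatively, use your shadowing-plus-expansivity route (in its Lipschitz form), or the textbook hyperbolic-fixed-point argument on orbit segments in exponential charts, which never uses holonomies.

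Also, $d(x,p)\le Cd(x,f^nx)$ alone does not settle the bookkeeping you flag. Your $y$ lies on the unstable leaf of $x$ and is pushed forward. Ambient closeness of $f^ny$ to $f^nx$ does not place $f^ny$ in $\mathcal W^{u,f}_{loc}(f^nx)$, because $f^n(\mathcal W^{u,f}_{loc}(x))$ can wrap around the torus. What you need is the uniform bound $d(f^jx,f^jp)\le Cd(x,f^nx)$ for all $0\le j\le n$. Shadowing gives this immediately. In the product construction it follows from $b^*\in f^{-n}(\mathcal W^{u,f}_{loc}(f^nx))$ and $p\in\mathcal W^{s,f}_{loc}(b^*)$, and the same bound is what shows $f^np=p$.
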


\subsection{Invariant measure}\label{sec:13}  Let \(\mathfrak m\) denote
    Lebesgue measure on \(\mathbb T^N\). Let $\mu:=(H^{-1})_*\mathfrak m$.
As $H$ is a topological conjugacy between $f$ and $A$, the measure
$\mu$ is $f$-invariant.

  In fact, $\mu$ is the
Bowen-Margulis measure of maximal entropy for $f$, since $\mathfrak m$ is the
Bowen-Margulis measure of maximal entropy for $A$. Indeed, denoting
topological entropy by $h_{\mathrm{top}}$ and metric entropy with respect
to $\mu$ by $h_{\mu}$, we have
\[
h_{\mu}(f)
=
h_{\mathfrak m}(A)
=
h_{\mathrm{top}}(A)
=
h_{\mathrm{top}}(f).
\]
In particular, $\mu$ is ergodic, has full support, and has
local product structure.

 \eqref{for:129} of Section \ref{sec:12} implies that the Lyapunov exponents
$l_i^{f,\mu}$ of $\mu$ for
$f$ coincide with the Lyapunov exponents $l_i^A$ of $A$.
Therefore, the sum of the positive Lyapunov exponents, counted with
multiplicity, is equal to the entropy:
\[
h_{\mu}(f)
=
h_{\mathfrak{m}}(A)
=
\sum_{l_i^A>0} l_i^A
=
\sum_{l_i^{f,\mu}>0} l_i^{f,\mu}.
\]
Thus equality holds in Ruelle's inequality, i.e. \(\mu\) satisfies Pesin's
entropy formula. It follows that
$\mu$ has absolutely continuous conditional measures on the
unstable foliation of $f$ \cite{L}. Applying the same argument
to $f^{-1}$, we obtain absolutely continuous conditional measures on the stable
foliation of $f$. Since $\mu$ has local product structure, we conclude that
$\mu$ itself is absolutely continuous with respect to $\mathfrak m$.

Moreover, the density $\kappa(x):=\frac{d\mu}{d\mathfrak{m}}(x)$
is $C^{\alpha}$ and positive. Indeed, $\kappa$
satisfies the measurable coboundary equation
$\frac{\kappa(fx)}{\kappa(x)}=|\det Df(x)|^{-1}.$
By measurable Liv\v{s}ic regularity, $\kappa$ has a $C^{\alpha}$ positive
representative. Consequently, $\mu$ is equivalent to $\mathfrak m$.

\subsection{Decomposed twisted cohomological equation}\label{sec:14}

After changing coordinates, we may assume that \(0\) is a common fixed point of
both \(f\) and \(A\). We choose the unique conjugacy \(H\) in the homotopy class
of the identity satisfying \(H(0)=0\).

Lift \(f\) and \(H\) to maps on \(\mathbb R^N\), denoted by \(\bar f\) and
\(\bar H\), such that
\[
\bar H=\operatorname{Id}+\bar h,
\qquad
\bar f=A+\bar R,
\]
where \(\bar h,\bar R:\mathbb R^N\to\mathbb R^N\) are
\(\mathbb Z^N\)-periodic functions satisfying $\bar h(0)=0$ and $\bar R(0)=0$.

The lifted conjugacy equation $A\circ \bar H=\bar H\circ \bar f$ gives
\[
A\bar h-\bar h\circ \bar f=\bar R.
\]
This equation projects to the following equation on \(\mathbb T^N\):
\begin{equation}\label{for:78}
Ah-h\circ f=R,
\end{equation}
where \(h,R:\mathbb T^N\to\mathbb R^N\) satisfy $h(0)=0$ and $R(0)=0$.  Moreover, \(R\) is \(C^{1+\alpha}\), while \(h\) is $C^\beta$ continuous.

Let \(p_i:\mathbb R^N\to E_i\) denote the projection associated to the
splitting \eqref{for:2}.
Set
\[
h_i=p_i h,
\qquad
R_i=p_i R,
\qquad
A_i=A|_{E_i}.
\]
Since the splitting is \(A\)-invariant, equation \eqref{for:78} decomposes into
the twisted cohomological equations
\begin{equation}\label{for:156}
A_i h_i-h_i\circ f=R_i,
\qquad 1\leq i\leq \ell.
\end{equation}
If $i\geq i_0$, we have
\begin{align}\label{for:92}
 h_i=\sum_{m=0}^{\infty} A_i^{-(m+1)}R_i\circ f^m.
\end{align}
\section{Rigidity for normalized cocycles}\label{sec:33}
In this section, we prove an abstract cocycle rigidity criterion for a
$Df$-invariant H\"older subbundle.  We recall that $0$ is a fixed point for $f$; see Section \ref{sec:14}.
\begin{proposition}\label{po:5} Suppose $\mathcal{L}$ is a $Df$-invariant $\alpha$-H\"older subbundle over $\TT^N$ and $E$ is a subspace of $\RR^{N}$ with $\dim E=\dim \mathcal{L}$.  Let $\mathcal A:E\to E$ be a linear automorphism. Suppose that there exist $\rho>0$ and a
Riemannian metric $\|\cdot\|$ on $\mathcal L$ such that the following
conditions hold.
\begin{enumerate}
\item\label{for:124} $\mathcal A$ is diagonalizable over $\CC$ and all eigenvalues of $\mathcal A$ have modulus $\rho$;

\smallskip
  \item\label{for:126} At the fixed point $0$ of $f$, we have
  \begin{align*}
   P_{E}Df_0|_{\mathcal{L}_0}=\mathcal{A}P_E|_{\mathcal{L}_0},
  \end{align*}
  where $P_E$ denotes the projection from $\RR^N$ to $E$. Moreover, $P_E:\mathcal{L}_0\to E$ is an isomorphism.


  \item\label{for:123} For every $n\in\mathbb Z$ and every $x\in\mathbb T^N$, we have
  \[
  \bigl\|\rho^{-n}Df_x^n|_{\mathcal L_x}\bigr\|\leq C.
  \]
  \item\label{for:88} There exists a sequence $k_n\to +\infty$ such that:
\begin{enumerate}
  \item\label{for:36} $\rho^{-k_n}\mathcal{A}^{k_n}\to I_{id}|_{E}$ and $\rho^{k_n}\mathcal{A}^{-k_n}\to I_{id}|_{E}$ as $n\to +\infty$;

  \smallskip
  \item\label{for:28} for any $\epsilon>0$ there is $l_\epsilon\in\NN$ such that: for any $k_n$ with $n\geq l_\epsilon$ and any $x\in\TT^N$ with $f^{2k_n}x=x$ we have
  \begin{align*}
   \Big\|\rho^{-k_n}Df^{k_n}_{f^{k_n}x}|_{\mathcal{L}_{f^{k_n}(x)}}-\rho^{k_n}Df^{-k_n}_{f^{k_n}x}|_{\mathcal{L}_{f^{k_n}(x)}}\Big\|\leq \epsilon.
  \end{align*}
  \end{enumerate}

\end{enumerate}
Then the linear cocycle equation
\begin{align}\label{for:9}
 \mathcal{K}(fx)\circ Df_x|_{\mathcal{L}_x}=\mathcal{A}\circ \mathcal{K}(x),\qquad \forall\,x\in \TT^N
\end{align}
has an $\alpha$-H\"older solution $\mathcal{K}$ such that $\mathcal{K}(x):\mathcal{L}_x\to E$ is an isomorphism for any $x\in \TT^N$.  Moreover,
\begin{align*}
 \norm{(\mathcal{K}(x))^{-1}u}\leq C\norm{u},\qquad \forall\,x\in \TT^N,\,\,\forall\,u\in E.
\end{align*}
In particular, $\mathcal K$ is an $\alpha$-H\"older bundle isomorphism with
$\alpha$-H\"older inverse.
\end{proposition}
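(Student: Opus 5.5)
The plan is to build $\mathcal K$ by extending its value at the fixed point $0$ along the stable and unstable manifolds of $0$. The natural initial value is $\mathcal K(0):=P_E|_{\mathcal L_0}$, which by \eqref{for:126} is an isomorphism satisfying \eqref{for:9} at $0$. I fix a H\"older local trivialization of $\mathcal L$ near $0$, for instance via $P_E$, which is an isomorphism on $\mathcal L_y$ for $y$ near $0$ by H\"older continuity of $\mathcal L$.

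\textbf{Step 1: unstable extension.} For $x\in\mathcal W^{u,f}(0)$ set
\[
\mathcal K_n^u(x)=\mathcal A^{n}\,P_E\,Df^{-n}_x|_{\mathcal L_x}.
\]
Put $y=f^{-n}x$, so $d(y,0)\le C\nu^n$ by \eqref{for:130}. Then
\[
\mathcal K^u_{n+1}(x)-\mathcal K^u_n(x)=\mathcal A^{n}\bigl(\mathcal A P_E Df^{-1}_{y}|_{\mathcal L_y}-P_E|_{\mathcal L_y}\bigr)Df^{-n}_x|_{\mathcal L_x}.
\]
By \eqref{for:126} the bracket vanishes at $y=0$. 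Since $\mathcal L$ is $\alpha$-H\"older and $f\in C^{1+\alpha}$, the bracket is $O(d(y,0)^\alpha)=O(\nu^{n\alpha})$. Condition \eqref{for:124} gives $\|\mathcal A^n\|\le C\rho^n$, and \eqref{for:123} gives $\|Df^{-n}|_{\mathcal L}\|\le C\rho^{-n}$. Hence the increments are $O(\nu^{n\alpha})$, and the limit $\mathcal K^u(x)$ exists.

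This limit satisfies \eqref{for:9} on $\mathcal W^{u,f}(0)$. The same telescoping applied to two points $x,x'$ on one unstable plaque shows that $\mathcal K^u$ is uniformly $\alpha$-H\"older along unstable leaves. The step uses the standard holonomy estimate: compare at level $n\approx\log d(x,x')^{-1}$ and bound the tail.

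\textbf{Step 2: stable extension.} Symmetrically, on $\mathcal W^{s,f}(0)$ set
\[
\mathcal K^s(x)=\lim_{n\to\infty}\mathcal A^{-n}P_E Df^n_x|_{\mathcal L_x},
\]
which is uniformly H\"older along stable leaves.

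\textbf{Step 3: agreement at homoclinic points.} I expect this to be the main obstacle. Take $z\in\mathcal W^{u,f}(0)\cap\mathcal W^{s,f}(0)$. For large $k=k_n$, both $f^{-k}z$ and $f^{k}z$ lie within $C\nu^{k}$ of $0$. The closing lemma (Theorem~\ref{th:2}) applied to $f^{-k}z$ with return time $2k$ gives a periodic point $x$ with $f^{2k}x=x$. Its orbit shadows that of $f^{-k}z$, with $d(f^kx,z)\le C\nu^{\min}$-type control.

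Up to errors that vanish as $k\to\infty$ (by Step 1 and Step 2 tail estimates and shadowing), we have
\[
\mathcal K^s(z)\approx \mathcal A^{-k}P_E Df^{k}_z,
\qquad
\mathcal K^u(z)\approx \mathcal A^{k}P_E Df^{-k}_z.
\]
Using \eqref{for:36}, these are close to $P_E\,\rho^{-k}Df^k_{z}$ and $P_E\,\rho^{k}Df^{-k}_z$, respectively.

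Next I replace $z$ by $f^kx$. This needs a bounded-distortion estimate for the normalized cocycle $\rho^{\mp k}Df^{\pm k}|_{\mathcal L}$ along exponentially shadowing orbits, which follows from \eqref{for:123} and H\"older continuity. Then \eqref{for:28} makes the two expressions $\epsilon$-close. Letting $n\to\infty$ gives $\mathcal K^u(z)=\mathcal K^s(z)$. The delicate point is arranging all error terms, namely the shadowing, the tails, and the choice of $k_n$ along the given subsequence, to vanish uniformly.

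\textbf{Step 4: global H\"older extension and the inverse bound.} The homoclinic points of $0$ are dense. Take nearby homoclinic points $z,z'$ and connect them, via local product structure \eqref{for:44}, through a stable and an unstable plaque meeting at a point $w$. I will check that $w$ can be chosen homoclinic: $w\in\mathcal W^{s,f}(z)\cap\mathcal W^{u,f}(z')$ lies in $\mathcal W^{s,f}(0)\cap\mathcal W^{u,f}(0)$. Then the leafwise H\"older bounds of Steps 1 and 2 give $\|\mathcal K(z)-\mathcal K(z')\|\le Cd(z,z')^\alpha$. So $\mathcal K$ extends $\alpha$-H\"older to $\mathbb T^N$, and \eqref{for:9} passes to the limit.

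For invertibility, the same construction on the inverse cocycle gives
\[
\mathcal K(x)^{-1}=\lim_n Df^{n}_{f^{-n}x}\,(P_E|_{\mathcal L_{f^{-n}x}})^{-1}\mathcal A^{-n}
\]
on $\mathcal W^{u,f}(0)$. By \eqref{for:123} and $\|\mathcal A^{-n}\|\le C\rho^{-n}$, this is bounded by $C$. That bound yields the stated estimate and a H\"older inverse.
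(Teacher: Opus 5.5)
Your proposal is correct and is essentially the paper's proof. Your approximants $\mathcal A^{n}P_EDf^{-n}_x|_{\mathcal L_x}$ and $\mathcal A^{-n}P_EDf^{n}_x|_{\mathcal L_x}$ are exactly the paper's $P_E|_{\mathcal L_x}+q^{[-n]}(x)$ and $P_E|_{\mathcal L_x}+q^{[n-1]}(x)$ (see \eqref{for:18} and \eqref{for:89}), your homoclinic comparison (closing lemma at $f^{-k_n}z$, bounded distortion, condition (4)(b)) and your density and local-product extension are its Steps 5--7, and your absolute-summability convergence argument and uniform bound on the inverses of the approximants are harmless variants of the paper's Cauchy estimate and of its forward propagation of a lower bound from a neighbourhood of $0$.

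One correction to the leafwise H\"older step: the distortion estimate $\bigl\|\mathcal I_{f^{-n}x,f^{-n}x'}\circ\rho^{n}Df^{-n}_{x}-\rho^{n}Df^{-n}_{x'}\circ\mathcal I_{x,x'}\bigr\|\le C\,d(x,x')^{\alpha}$ holds uniformly in $n$ by condition (3) and H\"older continuity, so you should let $n\to\infty$ in it directly. Cutting at $n\approx\log d(x,x')^{-1}$ and bounding the tails by $\sum_{k\ge n}d(f^{-k}x,0)^\alpha$ gives constants that depend on how far $x$ is from $0$ along the leaf, whereas your Step 4 needs them uniform.
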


\subsection{Role of Proposition~\ref{po:5}} Proposition~\ref{po:5} will be used to prove Theorem~\ref{th:9}. More
precisely, Proposition~\ref{po:5} produces H\"older conjugacies for the
quotient cocycles. These quotient conjugacies are then assembled to obtain
the block upper-triangular reduction of $Df$ over the Lyapunov block
$\mathcal E_i$.

The hypotheses are formulated in terms of
uniform boundedness of the normalized cocycle and a periodic approximation
condition. In the proof of Theorem~\ref{th:9}, these hypotheses will be
verified for the quotient cocycles $\mathcal F_{i,j}/\mathcal F_{i,j-1}$,
which carry invariant conformal structures (see \eqref{for:33} of  Section~\ref{sec:12}).

\subsection{Proof strategy}\label{sec:2}

We begin by summarizing the main steps in the proof. We first reduce the linear cocycle equation \eqref{for:63} to the
twisted cohomological equation \eqref{for:49}. More precisely, we look for a
solution of \eqref{for:63} in the form
\[
        \mathcal K_x=P_E|_{\mathcal L_x}+q(x),
\]
which leads to the equation
\[
        \mathcal A\circ q(x)-q(fx)\circ Df_x|_{\mathcal L_x}
        =
        \mathbf r(x).
\]
At the fixed point \(0\), condition~\eqref{for:126} gives
\(\mathbf r(0)=0\). Hence the finite positive- and negative-time sums used to
define \(q^+\) and \(q^-\) vanish at \(0\); see Remark~\ref{re:3}.

We then construct two one-sided solutions. The positive-time series
defines a solution $q^+$ on the stable leaf $\mathcal W^{s,f}(0)$, while the
negative-time series defines a solution $q^-$ on the unstable leaf
$\mathcal W^{u,f}(0)$; see Section \ref{sec:10}. We also prove that $q^+$ is
H\"older along $\mathcal W^{s,f}(0)$ and that $q^-$ is H\"older along
$\mathcal W^{u,f}(0)$; see Section \ref{sec:4}.

 The next step is to prove that the two one-sided solutions agree on the
homoclinic set
\[
        \mathcal S_0
        =
        \mathcal W^{s,f}(0)\cap \mathcal W^{u,f}(0).
\]
This is where the periodic-data assumption is used, through the closing
argument and condition~\eqref{for:88}; see Section \ref{sec:21}. Once
$q^+=q^-$ on $\mathcal S_0$, we obtain a well-defined H\"older section
$q$ on $\mathcal S_0$; see Section \ref{sec:22}.

 Since $\mathcal S_0$ is dense in $\mathbb T^N$, the section $q$ extends
uniquely to a H\"older section
\[
        \widetilde q\in C^\alpha(\mathbb T^N,\operatorname{Hom}(\mathcal L,E)).
\]
We then show that $\widetilde q$ solves the twisted cohomological equation on
all of $\mathbb T^N$; see Section \ref{sec:23}.

\smallskip

 Finally, setting
\[
        \mathcal K(x)=P_E|_{\mathcal L_x}+\widetilde q(x),
\]
we obtain a H\"older solution of the original linear cocycle equation. The
last step is to prove that $\mathcal K(x):\mathcal L_x\to E$ is an
isomorphism for every $x\in\mathbb T^N$, with uniformly bounded inverse; see Section \ref{sec:24}.

\subsection{Notation and basic facts} We list notation and basic facts that will be used in this section.

\subsubsection{Local charts for $\mathcal{L}$}\label{sec:6}   Since $\mathcal L$ is an $\alpha$-H\"older subbundle of
$T\TT^N\cong \TT^N\times\RR^N$, we may choose a finite open cover
$\{U_i\}_{i\in I}$ of $\TT^N$ and, on each $U_i$, an
$\alpha$-H\"older local trivialization $\Theta_i(x):\RR^{\text{rank}\mathcal L}\to \mathcal L_x$. For $x,y\in U_i$, define
\[
\mathcal I_{x,y}
:=
\Theta_i(y)\circ \Theta_i(x)^{-1}
:\mathcal L_x\to \mathcal L_y .
\]
Then
\begin{align}\label{for:39}
  \mathcal{I}_{x,y}=\mathcal{I}_{y,x}^{-1}, \quad\text{ and }\quad \norm{\mathcal{I}_{x,y}u-u}\leq Cd(x,y)^\alpha\norm{u}, \quad x,y\in U_i,\quad u\in \mathcal L_x
\end{align}
where both fibers are viewed as subspaces of $\RR^N$.

After decreasing $\alpha>0$ if necessary, we may assume that both
$\mathcal L$ and $Df$ are $\alpha$-H\"older. Since $\mathcal L$ is
$Df$-invariant, the restricted cocycle $Df|_{\mathcal L}$ is
$\alpha$-H\"older with respect to the identifications $\mathcal I_{x,y}$. More precisely,
whenever $x,y$ lie in a common chart and $fx,fy$ lie in a common chart, we have
\[
\left\|
Df_x(u)
-
\mathcal I_{fy,fx}
\circ Df_y
\circ \mathcal I_{x,y}(u)
\right\|
\leq
C d(x,y)^\alpha \|u\|,
\qquad u\in\mathcal L_x.
\]
For simplicity, we write this as
\[
\left\|
Df_x|_{\mathcal L_x}
-
Df_y|_{\mathcal L_y}
\right\|
\leq
C d(x,y)^\alpha,
\]
where the two maps are compared using the identifications
$\mathcal I_{x,y}$ and $\mathcal I_{fy,fx}$.

\subsubsection{Fiber bunched property of $\mathcal{L}$}\label{for:85}
Let $\xi:\TT^N\to GL(N,\RR)$ be a map. The $GL(N,\mathbb R)$-valued
cocycle over $f$ generated by $\xi$ is the map
\[
        \mathcal B:\mathbb T^N\times\mathbb Z\to GL(N,\mathbb R)
\]
defined by $\mathcal B(x,0)=\operatorname{Id}$ and, for $n\in\mathbb N$,
\[
        \mathcal B(x,n)
        =
        \xi(f^{n-1}x)\circ\cdots\circ \xi(x),
        \qquad
        \mathcal B(x,-n)
        =
        \mathcal B(f^{-n}x,n)^{-1}.
\]
We say that a $\alpha$-H\"older cocycle $\mathcal{B}$ over $f$ is \emph{fiber bunched} (see \cite{S15}) if
there exists $0<\theta < 1$ such that for all $x \in \TT^N$ and $n \in \mathbb{N}$,
\begin{equation*}
    \|\mathcal{B}^n_x\| \cdot \|(\mathcal{B}^n_x)^{-1}\| \cdot \nu^{n\alpha} < C\theta^n
    \quad \text{and} \quad
    \|\mathcal{B}^{-n}_x\| \cdot \|(\mathcal{B}^{-n}_x)^{-1}\| \cdot \nu^{n\alpha} < C\theta^n,
\end{equation*}
where $\nu\in(0,1)$ is the contraction constant fixed in Section~\ref{sec:8}.

It follows from condition \eqref{for:123} of Proposition \ref{po:5} that for any $n\geq 0$
\begin{align*}
 \norm{Df^n|_{\mathcal{L}}}\cdot \norm{Df^{-n}|_{\mathcal{L}}}\leq C.
\end{align*}
Consequently, both $Df|_{\mathcal L}$ and $Df^{-1}|_{\mathcal L}$ are
fiber-bunched. This allows us to compare iterates of the cocycle along orbit
segments which remain exponentially close.
\begin{proposition}\label{po:1} \cite[proposition 4.2]{KS13}
There is $\delta>0$ such that for any $0<\gamma<\delta$, any $m\in\NN$ and any $x,\,y\in \TT^N$, if
  \begin{align*}
   d(f^ix,f^iy)\leq C\gamma\nu^i,\qquad 0\leq i\leq m,
  \end{align*}
then we have
\begin{align*}
\big\| (Df^{m}_y)^{-1}\circ \mathcal{I}_{f^mx,f^my}\circ Df^m_x(u)-\mathcal{I}_{x,y}(u)\big\|\leq C\gamma^\alpha\norm{u}
\end{align*}
for any $u\in \mathcal{L}_x$.
\end{proposition}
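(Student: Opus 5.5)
The plan is a telescoping argument along the two orbit segments. The comparison error is controlled one step at a time by the H\"older continuity of $Df|_{\mathcal L}$. The growth of the cocycle is absorbed using the uniform bound \eqref{for:123} of Proposition~\ref{po:5}, which is a strong form of fiber bunching.

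First choose $\delta>0$ so small that $C\delta$ is less than the Lebesgue number of the cover $\{U_i\}$ of Section~\ref{sec:6}. Then the hypothesis $d(f^ix,f^iy)\leq C\gamma\nu^i$ places $f^ix$ and $f^iy$ in a common chart for every $0\leq i\leq m$. Hence all the identifications $\mathcal I_{f^ix,f^iy}$ are defined. For $0\leq k\leq m$ set
\[
\Delta_k:=(Df^{k}_y)^{-1}\circ \mathcal I_{f^kx,f^ky}\circ Df^k_x-\mathcal I_{x,y}\colon \mathcal L_x\to\mathcal L_y,
\]
so that $\Delta_0=0$, and the quantity to be estimated is $\Delta_m$. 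I will write $\Delta_m=\sum_{k=0}^{m-1}(\Delta_{k+1}-\Delta_k)$. A direct computation gives
\[
\Delta_{k+1}-\Delta_k=(Df^{k+1}_y)^{-1}\circ\Bigl[\mathcal I_{f^{k+1}x,f^{k+1}y}\circ Df_{f^kx}-Df_{f^ky}\circ \mathcal I_{f^kx,f^ky}\Bigr]\circ Df^k_x .
\]
Here I use $(Df^{k}_y)^{-1}=(Df^{k+1}_y)^{-1}\circ Df_{f^ky}$ on $\mathcal L$, which holds because $\mathcal L$ is $Df$-invariant.

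Next I bound each factor. The bracket is exactly the local H\"older comparison of $Df|_{\mathcal L}$ from Section~\ref{sec:6}, after composing with the $\mathcal I$'s, which are uniformly bounded by \eqref{for:39}. It is therefore bounded in norm by
\[
C\,d(f^kx,f^ky)^\alpha\leq C\gamma^\alpha\nu^{k\alpha}.
\]
The outer factors are bounded using condition~\eqref{for:123}:
\[
\|Df^k_x|_{\mathcal L}\|\leq C\rho^k,\qquad \|(Df^{k+1}_y)^{-1}|_{\mathcal L}\|=\|Df^{-(k+1)}_{f^{k+1}y}|_{\mathcal L}\|\leq C\rho^{-(k+1)}.
\]
Their product is therefore bounded by $C\rho^{-1}=C$, uniformly in $k$. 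Consequently
\[
\|\Delta_{k+1}-\Delta_k\|\leq C\gamma^\alpha\nu^{k\alpha},
\]
and summing the geometric series gives $\|\Delta_m\|\leq C\gamma^\alpha(1-\nu^\alpha)^{-1}=C\gamma^\alpha$, uniformly in $m$. This is the claimed estimate.

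The only delicate point is bookkeeping rather than analysis. One must check that every comparison is made inside a single chart, so that the H\"older estimate for $Df|_{\mathcal L}$ applies with the identifications $\mathcal I_{f^kx,f^ky}$ and $\mathcal I_{f^{k+1}x,f^{k+1}y}$. This is where the smallness of $\delta$ is used. In the general fiber-bunched setting of \cite{KS13}, the outer factors would only satisfy a bound of the form $C\theta^k\nu^{-k\alpha}$, and the same telescoping sum would still converge. Here condition~\eqref{for:123} makes that step trivial, so I expect no serious obstacle.
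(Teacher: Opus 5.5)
Your proof is correct, but it takes a different route from the paper, which gives no proof of this proposition. The paper only observes that \eqref{for:123} makes $Df|_{\mathcal L}$ and $Df^{-1}|_{\mathcal L}$ fiber bunched, and then quotes \cite[Proposition 4.2]{KS13}.

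The steps of your argument check out:
\begin{itemize}
\item The formula for $\Delta_{k+1}-\Delta_k$ uses only the $Df$-invariance of $\mathcal L$.
\item The bracket is bounded by $C\,d(f^kx,f^ky)^\alpha$ whichever charts the identifications are taken from, since each $\mathcal I$ is within $Cd^\alpha$ of the identity and $Df$ is $\alpha$-H\"older.
\item Applying \eqref{for:123} at $x$ with $n=k$ and at $f^{k+1}y$ with $n=-(k+1)$ bounds the outer product by $C\rho^{-1}$, uniformly in $k$.
\end{itemize}

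The standard proof behind the citation is the same telescoping sum. Under fiber bunching alone, however, the product $\|Df^k_x|_{\mathcal L}\|\,\|(Df^{k+1}_y)^{-1}|_{\mathcal L}\|$ mixes two base points, while fiber bunching only controls $\|\mathcal B^k\|\,\|(\mathcal B^k)^{-1}\|$ at a single point. One then needs an extra comparison along the two orbits, typically by induction using the estimate being proved. So your closing remark that the general case goes through ``the same way'' is a little quick.

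Your use of the uniform two-sided bound \eqref{for:123} removes that issue and costs the paper nothing. The proposition is only applied where \eqref{for:123} holds:
\begin{itemize}
\item to $\mathcal L$ inside Proposition~\ref{po:5};
\item to $Df^{-1}|_{\mathcal L}$ in Corollary~\ref{cor:1}, where \eqref{for:123} holds with $\rho^{-1}$;
\item to the quotient bundles in Theorem~\ref{th:9}, where \eqref{for:123} is Corollary~\ref{cor:3}.
\end{itemize}

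The citation buys generality, covering arbitrary fiber-bunched H\"older cocycles. Your route buys a short, self-contained argument whose constants depend only on those in \eqref{for:123} and the H\"older data.
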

\begin{corollary}\label{cor:1} There is $\delta>0$ such that for any $0<\gamma<\delta$, any $m\in\NN$ and any $x,\,y\in \TT^N$,  if
\begin{align*}
d(f^jx,f^jy)\leq C\gamma\nu^{m-j},\qquad \forall\,\,0\leq j\leq m,
\end{align*}
then we have
\begin{align*}
\Big\| (Df^{-m}_{f^m(y)})^{-1}\circ \mathcal{I}_{x,y}\circ Df^{-m}_{f^m(x)}(u)-\mathcal{I}_{f^m(x),f^m(y)}(u)\Big\|\leq C\gamma^\alpha\norm{u}
\end{align*}
for any $u\in \mathcal{L}_{f^{m}(x)}$.

\end{corollary}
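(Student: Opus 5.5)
Corollary~\ref{cor:1} is the time-reversed form of Proposition~\ref{po:1}, and I would deduce it by applying Proposition~\ref{po:1} to \(f^{-1}\) in place of \(f\). The subbundle \(\mathcal L\) is \(Df^{-1}\)-invariant. By the fiber-bunching remark in Section~\ref{for:85}, \(Df^{-1}|_{\mathcal L}\) is also fiber bunched, since \(\|Df^{n}|_{\mathcal L}\|\cdot\|Df^{-n}|_{\mathcal L}\|\leq C\) is symmetric in \(n\mapsto -n\). The constants \(\nu\) and \(\alpha\) are the same for \(f\) and \(f^{-1}\). So Proposition~\ref{po:1}, as \cite[Proposition 4.2]{KS13} proves it for any fiber-bunched H\"older cocycle over an Anosov map, applies verbatim to the cocycle \(Df^{-1}|_{\mathcal L}\) over \(f^{-1}\), possibly after shrinking \(\delta\).

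Next set \(x'=f^m x\) and \(y'=f^m y\). Then \(f^{-i}x'=f^{m-i}x\), and the hypothesis \(d(f^jx,f^jy)\leq C\gamma\nu^{m-j}\) with \(j=m-i\) reads
\[
d(f^{-i}x',f^{-i}y')\leq C\gamma\nu^{i},\qquad 0\leq i\leq m .
\]
This is exactly the hypothesis of Proposition~\ref{po:1} for \(f^{-1}\) at the points \(x',y'\). The proposition then gives, for \(u\in\mathcal L_{x'}\),
\[
\big\|\big(D(f^{-1})^m_{y'}\big)^{-1}\circ\mathcal I_{f^{-m}x',f^{-m}y'}\circ D(f^{-1})^m_{x'}(u)-\mathcal I_{x',y'}(u)\big\|\leq C\gamma^\alpha\|u\|.
\]

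Finally I translate back. We have \(D(f^{-1})^m_{x'}=Df^{-m}_{f^mx}\) and \(D(f^{-1})^m_{y'}=Df^{-m}_{f^my}\), and \(f^{-m}x'=x\), \(f^{-m}y'=y\). Substituting these gives precisely the claimed inequality
\[
\Big\|(Df^{-m}_{f^m y})^{-1}\circ\mathcal I_{x,y}\circ Df^{-m}_{f^m x}(u)-\mathcal I_{f^m x,f^m y}(u)\Big\|\leq C\gamma^\alpha\|u\|,\qquad u\in\mathcal L_{f^m x}.
\]

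The only point needing care is that Proposition~\ref{po:1} is genuinely usable for \(f^{-1}\), not just for \(f\). If one prefers not to rely on the symmetric form of \cite{KS13}, the alternative is to repeat its telescoping argument directly. One writes the difference as a sum over \(0\leq k<m\) of terms of the form
\[
(Df^{-k})\circ\big(Df^{-1}_{f^{m-k}y}\,\mathcal I-\mathcal I\,Df^{-1}_{f^{m-k}x}\big)\circ(Df^{-(m-k-1)}),
\]
and bounds each term using the H\"older estimate \(Cd(f^{m-k}x,f^{m-k}y)^\alpha\leq C\gamma^\alpha\nu^{k\alpha}\) together with the bounded distortion \(\|Df^{n}|_{\mathcal L}\|\cdot\|Df^{-n}|_{\mathcal L}\|\leq C\). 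The resulting geometric series \(\sum_k\nu^{k\alpha}\) gives the bound \(C\gamma^\alpha\).
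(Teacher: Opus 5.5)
Your proposal is correct and is exactly the paper's proof: with $X=f^mx$, $Y=f^my$ the hypothesis becomes $d(f^{-i}X,f^{-i}Y)\leq C\gamma\nu^{i}$ for $0\leq i\leq m$. Proposition~\ref{po:1}, applied to the fiber-bunched cocycle $Df^{-1}|_{\mathcal L}$ over $f^{-1}$, then gives the estimate after substituting $D(f^{-1})^m_X=Df^{-m}_{f^mx}$ and $f^{-m}X=x$.

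The telescoping fallback is not needed. If you do write it out, keep the outer factor $(Df^{-m}_{f^my})^{-1}=Df^m_y$, so that each term is a forward iterate composed with a backward one. The uniform bound from condition~\eqref{for:123} of Proposition~\ref{po:5} then controls each term.
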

\begin{proof}
Let $X=f^m x$ and $Y=f^m y$. By assumption, for $0\leq j\leq m$,
\begin{align*}
  d\big(f^{-j}X,f^{-j}Y\big)=d(f^{m-j}x,f^{m-j}y)\leq C\gamma\nu^{m-(m-j)}=C\gamma\nu^{j},\quad 0\leq j\leq m,
\end{align*}
Thus the pair $X,Y$ satisfies the hypothesis of Proposition~\ref{po:1} for
the inverse map $f^{-1}$. Applying Proposition~\ref{po:1} to the
fiber-bunched cocycle $Df^{-1}|_{\mathcal L}$, with $x,y$ replaced by
$X,Y$, gives the desired estimate.
\end{proof}

The following two lemmas are useful reformulations of
Proposition~\ref{po:1} and Corollary~\ref{cor:1} for the normalized cocycles
$\rho^{-m}Df^m|_{\mathcal L}$ and $\rho^mDf^{-m}|_{\mathcal L}$.

\begin{lemma}\label{le:5} There is $\delta>0$ such that for any $0<\gamma<\delta$, any $m\in\NN$ and any $x,\,y\in \TT^N$, if
  \begin{align*}
   d(f^ix,f^iy)\leq C\gamma\nu^i,\qquad 0\leq i\leq m,
  \end{align*}
then we have
\begin{align}\label{for:148}
\Big\|\mathcal{I}_{f^m(x),f^m(y)}\circ (\rho^{-m}Df^m_x)(u)-(\rho^{-m}Df^{m}_y)\circ\mathcal{I}_{x,y}(u)\Big\|\leq C\gamma^\alpha\norm{u}
\end{align}
for any $u\in \mathcal{L}_x$; and
\begin{align}\label{for:151}
\Big\|(\rho^{m}Df^{-m}_{f^m(y)})\circ\mathcal{I}_{f^m(x),f^m(y)}(u)-\mathcal{I}_{x,y}\circ (\rho^{m}Df^{-m}_{f^m(x)})(u)\Big\|
\leq C\gamma^\alpha\norm{u}
\end{align}
for any $u\in \mathcal{L}_{f^m(x)}$.
\end{lemma}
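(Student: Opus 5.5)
The plan is to derive both estimates directly from Proposition~\ref{po:1}, under the stated forward-closeness hypothesis $d(f^ix,f^iy)\leq C\gamma\nu^i$ for $0\leq i\leq m$. The only extra input is the uniform bound from condition~\eqref{for:123} of Proposition~\ref{po:5}: $\|\rho^{-n}Df^n|_{\mathcal L}\|\leq C$ for all $n\in\ZZ$. This bound lets us multiply through by the normalized cocycle at the cost of only a constant. We take $\delta$ to be the $\delta$ of Proposition~\ref{po:1}.

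\emph{First estimate \eqref{for:148}.} Fix $u\in\mathcal L_x$ and set
\[
e:=(Df^m_y)^{-1}\circ\mathcal I_{f^mx,f^my}\circ Df^m_x(u)-\mathcal I_{x,y}(u)\in\mathcal L_y .
\]
Proposition~\ref{po:1} gives $\|e\|\leq C\gamma^\alpha\|u\|$. We apply $\rho^{-m}Df^m_y$, which maps $\mathcal L_y$ to $\mathcal L_{f^my}$, to both sides. This yields
\[
\mathcal I_{f^mx,f^my}\circ(\rho^{-m}Df^m_x)(u)-(\rho^{-m}Df^m_y)\circ\mathcal I_{x,y}(u)=\rho^{-m}Df^m_y(e).
\]
By condition~\eqref{for:123}, the right-hand side has norm at most $C\|e\|\leq C\gamma^\alpha\|u\|$.

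\emph{Second estimate \eqref{for:151}.} We reparametrize the same inequality by the endpoint vector. Take $w\in\mathcal L_{f^mx}$ and put $u:=Df^{-m}_{f^mx}(w)\in\mathcal L_x$, so that $Df^m_x u=w$. We use $(Df^m_y)^{-1}=Df^{-m}_{f^my}$. Then Proposition~\ref{po:1}, applied to this $u$ with the same $x,y$, gives
\[
\big\|Df^{-m}_{f^my}\circ\mathcal I_{f^mx,f^my}(w)-\mathcal I_{x,y}\circ Df^{-m}_{f^mx}(w)\big\|\leq C\gamma^\alpha\|Df^{-m}_{f^mx}(w)\|.
\]
Next we multiply by $\rho^m$. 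Condition~\eqref{for:123} with $n=-m$ gives $\|\rho^mDf^{-m}_{f^mx}(w)\|\leq C\|w\|$. Hence the left side of \eqref{for:151} is bounded by $C\gamma^\alpha\|w\|$, which is the claim with $u$ renamed $w$.

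There is no real obstacle here. The one point to check is that both normalizations are absorbed by condition~\eqref{for:123}. This holds because the bound there is two-sided in time, so it applies to $n=m$ in the first estimate and to $n=-m$ in the second, and it yields constants independent of $m$.
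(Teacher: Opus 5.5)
Your proof is correct and matches the paper's argument for \eqref{for:148}: both apply $\rho^{-m}Df^m_y$ to the error vector from Proposition~\ref{po:1} and then invoke condition~\eqref{for:123}. For \eqref{for:151} the paper takes a slightly different path, sandwiching the operator difference from \eqref{for:148} between $\rho^mDf^{-m}_{f^my}$ and $\rho^mDf^{-m}_{f^mx}$; you instead substitute $u=Df^{-m}_{f^mx}(w)$ directly into Proposition~\ref{po:1} and use the uniform bound only once, which is equivalent and slightly more economical.
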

\begin{proof}
\eqref{for:148}: For any $u\in \mathcal{L}_x$ we have
\begin{align*}
 &\Big\|\mathcal{I}_{f^m(x),f^m(y)}\circ (\rho^{-m}Df^m_x)(u)-(\rho^{-m}Df^{m}_y)\circ\mathcal{I}_{x,y}(u)\Big\|\\
 &= \Big\| (\rho^{-m}Df^{m}_y)\circ \big((Df^{m}_y)^{-1}\circ \mathcal{I}_{f^m(x),f^m(y)}\circ Df^m_x(u)-\mathcal{I}_{x,y}(u)\big)\Big\|\\
 &\leq \big\|\rho^{-m}Df^{m}_y|_{\mathcal{L}_y}\big\|\cdot \Big\|(Df^{m}_y)^{-1}\circ \mathcal{I}_{f^m(x),f^m(y)}\circ Df^m_x(u)-\mathcal{I}_{x,y}(u)\Big\|\\
 & \overset{\text{(1)}}{\leq} C\cdot C\gamma^\alpha\norm{u}=C_{1}\gamma^\alpha\norm{u}.
\end{align*}
Here in $(1)$ we use condition \eqref{for:123} of Proposition \ref{po:5} and Proposition \ref{po:1}. Thus we get \eqref{for:148}.

\smallskip

\eqref{for:151}: Let
\begin{align}\label{for:19}
 \mathcal{Y}=\mathcal{I}_{f^m(x),f^m(y)}\circ (\rho^{-m}Df^m_x)|_{\mathcal{L}_x}-(\rho^{-m}Df^{m}_y)\circ\mathcal{I}_{x,y}.
\end{align}
It follows from \eqref{for:148} that
\begin{align*}
 \norm{\mathcal{Y}}\leq C\gamma^\alpha.
\end{align*}
Then we have
\begin{align*}
 &\Big\|(\rho^{m}Df^{-m}_{f^m(y)})\circ\mathcal{I}_{f^m(x),f^m(y)}(u)-\mathcal{I}_{x,y}\circ (\rho^{m}Df^{-m}_{f^m(x)})(u)\Big\|\\
 &=\Big\|(\rho^{m}Df^{-m}_{f^m(y)})\circ \mathcal{Y}\circ (\rho^{m}Df^{-m}_{f^m(x)})(u)\Big\|\notag\\
 &\leq \big\|\rho^{m}Df^{-m}_{f^m(y)}|_{\mathcal{L}_{f^m(y)}} \big\|\cdot \big\|\mathcal{Y}\big\|\cdot \big\|(\rho^{m}Df^{-m}_{f^m(x)})(u)\big\|\notag\\
 &\overset{\text{(1)}}{\leq} C\cdot C\gamma^\alpha \cdot C\norm{u}.
\end{align*}
Here in $(1)$ we use condition \eqref{for:123} of Proposition \ref{po:5} and \eqref{for:19}. Thus we get \eqref{for:151}.

\end{proof}

Using Corollary \ref{cor:1} and condition \eqref{for:123} of Proposition \ref{po:5}, the following lemma  follows in the same way.

\begin{lemma}\label{le:1} There is $\delta>0$ such that for any $0<\gamma<\delta$, any $m\in\NN$ and any $x,\,y\in \TT^N$,  if
\begin{align*}
d(f^jx,f^jy)\leq C\gamma\nu^{m-j},\qquad \forall\,\,0\leq j\leq m,
\end{align*}
then we have
\begin{align}\label{for:149}
&\Big\|\mathcal{I}_{x,y}\circ (\rho^{m}Df^{-m}_{f^m(x)})(u)-(\rho^{m}Df^{-m}_{f^m(y)})\circ \mathcal{I}_{f^m(x),f^m(y)}(u)\Big\|\leq C\gamma^\alpha\norm{u}
\end{align}
for any $u\in \mathcal{L}_{f^m(x)}$; and
\begin{align}\label{for:152}
&\Big\|(\rho^{-m}Df^{m}_{y})\circ\mathcal{I}_{x,y}(u)-\mathcal{I}_{f^m(x),f^m(y)}\circ (\rho^{-m}Df^{m}_{x})(u)\Big\|\leq C\gamma^\alpha\norm{u}
\end{align}
for any $u\in \mathcal{L}_x$.
\end{lemma}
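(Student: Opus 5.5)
The plan is to mirror the proof of Lemma~\ref{le:5}, replacing Proposition~\ref{po:1} by Corollary~\ref{cor:1}. We prove \eqref{for:149} first and then obtain \eqref{for:152} from it by the same inversion trick that produced \eqref{for:151} from \eqref{for:148}.

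For \eqref{for:149}, fix $u\in\mathcal L_{f^m(x)}$ and set $X=f^mx$, $Y=f^my$. Factor out the normalized backward cocycle at $Y$:
\begin{align*}
&\mathcal I_{x,y}\circ(\rho^{m}Df^{-m}_{X})(u)-(\rho^{m}Df^{-m}_{Y})\circ\mathcal I_{X,Y}(u)\\
&=(\rho^{m}Df^{-m}_{Y})\circ\Big((Df^{-m}_{Y})^{-1}\circ\mathcal I_{x,y}\circ Df^{-m}_{X}(u)-\mathcal I_{X,Y}(u)\Big).
\end{align*}
This is legitimate because $Df^{-m}_Y$ maps $\mathcal L_Y$ isomorphically onto $\mathcal L_y$, by $Df$-invariance of $\mathcal L$. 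By condition~\eqref{for:123} of Proposition~\ref{po:5}, $\|\rho^mDf^{-m}_Y|_{\mathcal L_Y}\|\le C$. By Corollary~\ref{cor:1}, whose hypothesis is exactly $d(f^jx,f^jy)\le C\gamma\nu^{m-j}$, the bracket has norm at most $C\gamma^\alpha\|u\|$. Multiplying the two bounds gives \eqref{for:149}.

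For \eqref{for:152}, define
\[
\mathcal Y=\mathcal I_{x,y}\circ(\rho^mDf^{-m}_X)|_{\mathcal L_X}-(\rho^mDf^{-m}_Y)\circ\mathcal I_{X,Y},
\]
so that $\|\mathcal Y\|\le C\gamma^\alpha$ by \eqref{for:149}. A direct computation gives, for $u\in\mathcal L_x$,
\[
(\rho^{-m}Df^m_y)\circ\mathcal I_{x,y}(u)-\mathcal I_{X,Y}\circ(\rho^{-m}Df^m_x)(u)=(\rho^{-m}Df^m_y)\circ\mathcal Y\circ(\rho^{-m}Df^m_x)(u).
\]
To verify it, expand the right side. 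The first term is $(\rho^{-m}Df^m_y)\mathcal I_{x,y}(u)$, since $\rho^mDf^{-m}_X\circ\rho^{-m}Df^m_x=\mathrm{Id}$. The second term is $\mathcal I_{X,Y}(\rho^{-m}Df^m_x)(u)$, since $\rho^{-m}Df^m_y\circ\rho^mDf^{-m}_Y=\mathrm{Id}$. Applying condition~\eqref{for:123} twice, to bound $\|\rho^{-m}Df^m_y|_{\mathcal L_y}\|$ and $\|\rho^{-m}Df^m_x|_{\mathcal L_x}\|$, yields the bound $C\cdot C\gamma^\alpha\cdot C\|u\|$.

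The only point needing care is bookkeeping of fibers. The identification maps act between the correct fibers ($\mathcal I_{x,y}:\mathcal L_x\to\mathcal L_y$ and $\mathcal I_{X,Y}:\mathcal L_X\to\mathcal L_Y$), and the points must lie in common charts. The latter holds because $d(x,y)\le C\gamma\nu^m\le C\gamma$ and $d(X,Y)\le C\gamma$, with $\gamma<\delta$ small. Beyond this there is no real obstacle.
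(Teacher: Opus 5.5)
Your proof is correct and is the argument the paper intends. The paper says Lemma~\ref{le:1} follows ``in the same way'' as Lemma~\ref{le:5}, using Corollary~\ref{cor:1} and condition~\eqref{for:123}. You carry this out exactly: you factor out $\rho^{m}Df^{-m}_{f^m(y)}$ to get \eqref{for:149} from Corollary~\ref{cor:1}, then conjugate the resulting error by the normalized forward iterates to get \eqref{for:152}, mirroring how \eqref{for:151} was derived from \eqref{for:148}.
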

\subsection{Proof of Proposition \ref{po:5}}

\subsubsection{Step 1:  Reduction to a twisted cohomological equation}\label{sec:9}  We look for a solution $\mathcal K_x:\mathcal L_x\to E$ of
\eqref{for:9} in the form
\[
\mathcal K_x=P_E|_{\mathcal L_x}+q(x),
\]
where $q(x)\in \text{Hom}(\mathcal L_x,E)$. Let
\begin{align*}
 \textbf{r}(x)
        =
        P_E\circ Df_x|_{\mathcal L_x}
        -
        \mathcal A\circ P_E|_{\mathcal L_x},\qquad x\in \TT^N.
\end{align*}
Then the linear cocycle equation \eqref{for:9} is equivalent to the twisted cohomological equation:
\begin{align}\label{for:49}
 \mathcal{A}\circ q(x)-q(fx)\circ Df_x|_{\mathcal{L}_x}=\textbf{r}(x), \qquad x\in \TT^N.
\end{align}
Condition \eqref{for:126} of Proposition \ref{po:5} gives that
\begin{align}\label{for:188}
 \textbf{r}(0)=0.
\end{align}
Let
\begin{align*}
J(x)=-P_{E}|_{\mathcal{L}_x}.
\end{align*}
Since $\mathcal{K}(x)\equiv0$ is a trivial solution of equation \eqref{for:9}, $J$ solves
\begin{align}\label{for:16}
 \mathcal{A}\circ J(x)-J(fx)\circ Df|_{\mathcal{L}_x}=\textbf{r}(x), \qquad x\in \TT^N.
\end{align}
\subsubsection{Step 2: Preliminaries } In this step, we define series of linear maps and list some facts that will be frequently used later.
For any $m\geq0$ (resp. $n\leq-1$) define
\begin{align}
 &q^{[m]}(x)=\sum_{j=0}^{m} \mathcal{A}^{-(j+1)}\circ \textbf{r}(f^jx)\circ Df^j|_{\mathcal{L}_x} \label{for:193}\\
 \big(\text{resp. }& q^{[n]}(x)=-\sum_{j=n}^{-1} \mathcal{A}^{-(j+1)}\circ \textbf{r}(f^jx)\circ Df^j|_{\mathcal{L}_x}\big)\label{for:194}
\end{align}
Iterating \eqref{for:16},  for any $m\geq0$ we have
\begin{align}\label{for:18}
 q^{[m]}(x)=\mathcal{A}^{-(m+1)}\circ P_{E}\circ Df^{m+1}|_{\mathcal{L}_x}+J(x),\quad \forall\,x\in\TT^N.
\end{align}
Consequently, for any $m\geq0$  we have
\begin{align}\label{for:99}
 \big\|q^{[m]}(x)\big\|&\leq \big\| \mathcal{A}^{-(m+1)}\circ P_{E}\circ Df^{m+1}|_{\mathcal{L}_x}\big\|+\norm{J(x)}\notag\\
 &\leq \big\| (\rho^{m+1}\mathcal{A}^{-(m+1)})\big\|\cdot \big\| (\rho^{-(m+1)}Df^{m+1})|_{\mathcal{L}_x}\big\|+C\notag\\
 &\overset{\text{(1)}}{\leq} C\cdot C+C
\end{align}
for any $x\in \TT^N$.  Here in $(1)$ we use conditions \eqref{for:124} and \eqref{for:123} of Proposition \ref{po:5}.

Similarly, iterating \eqref{for:16} backwards for any $n\leq-1$ we obtain
\begin{align}\label{for:89}
 q^{[n]}(x)=\mathcal{A}^{-n}\circ P_{E}\circ Df^{n}|_{\mathcal{L}_x}+J(x),\quad \forall\,x\in\TT^N.
\end{align}
Similar to \eqref{for:99}, for any $n\leq -1$ we have
\begin{align}\label{for:31}
 \big\|q^{[n]}(x)\big\|\leq C, \qquad \forall\,x\in\TT^N.
\end{align}
From \eqref{for:18} and \eqref{for:89} for any $m\geq0$ and $n\leq-1$ we have
\begin{align}\label{for:12}
 &\mathcal{A}^{-(m+1)}\circ P_{E}\circ Df^{m+1}|_{\mathcal{L}_x}-\mathcal{A}^{-n}\circ P_{E}\circ Df^n|_{\mathcal{L}_x}\notag\\
 &=\sum_{j=n}^{m} \mathcal{A}^{-(j+1)}\circ \textbf{r}(f^jx)\circ Df^j|_{\mathcal{L}_x},
\end{align}
for any $x\in \TT^N$.

Since $0$ is a fixed point of $f$ and $\textbf{r}(0)=0$ (see \eqref{for:188}), we have $q^{[k]}(0)=0$ for any $k\in\ZZ$. From \eqref{for:18} and \eqref{for:89}  we see  that
\begin{align}\label{for:50}
 \mathcal{A}^{-k}\circ P_{E}\circ Df^{k}|_{\mathcal{L}_{0}}+J(0)=0,\qquad \forall \,k\in\ZZ.
\end{align}
\begin{remark} Both definition \eqref{for:193} (resp. \eqref{for:194}) and the equivalent expression \eqref{for:18} (resp. \eqref{for:89}), will be used below.
\end{remark}
\subsubsection{Step 3: Construction of the solutions $q^+$ and $q^-$}\label{sec:10}

In this step, we show that, for every $x\in \mathcal W^{s,f}(0)$, the limit
\begin{equation}\label{for:27}
q^+(x):=\lim_{n\to+\infty} q^{[n]}(x)\qquad\text{exists.}
\end{equation}
Similarly, for every $x\in \mathcal W^{u,f}(0)$, the limit
\begin{equation}\label{for:40}
q^-(x):=\lim_{n\to-\infty} q^{[n]}(x)\qquad \text{exists.}
\end{equation}
Moreover,
\begin{equation}\label{for:98}
\|q^+(x)\|\leq C,
\qquad x\in \mathcal W^{s,f}(0),
\end{equation}
and
\begin{equation}\label{for:30}
\|q^-(x)\|\leq C,
\qquad x\in \mathcal W^{u,f}(0).
\end{equation}
 \begin{remark}\label{re:3}
 Since $q^{[k]}(0)=0$ for every $k\in\ZZ$, the limits defined in
\eqref{for:27} and \eqref{for:40} satisfy $q^+(0)=0$ and $q^-(0)=0$.

 Since $\textbf{r}(0)=0$, this value indeed satisfies the twisted equation
\eqref{for:49} at the fixed point $0$. The purpose of this step is to extend
this solution to the whole stable leaf $\mathcal W^{s,f}(0)$, respectively to
the whole unstable leaf $\mathcal W^{u,f}(0)$.
 \end{remark}
We first prove \eqref{for:27}. Fix $x\in \mathcal W^{s,f}(0)$. First, we show that for  any unit vector $v\in \mathcal{L}_x$ and any $m_1> m_2\geq1$
\begin{align}\label{for:1}
 \Big\|\sum_{j=m_2}^{m_1-1} \mathcal{A}^{-(j+1)}\circ \textbf{r}(f^jx)\circ Df^j_x(v)\Big\|\leq C\big\|q^{[m_1-m_2-1]}(f^{m_2}(x))\big\|.
\end{align}
To see this we rewrite
\begin{align}\label{for:20}
 &\sum_{j=m_2}^{m_1-1} \mathcal{A}^{-(j+1)}\circ \textbf{r}(f^jx)\circ Df^j_x(v)\notag\\
 &=(\rho^{m_2}\mathcal{A}^{-m_2})\circ \big(\sum_{j=m_2}^{m_1-1}  \mathcal{A}^{-(j-m_2+1)}\circ \textbf{r}(f^jx)\circ Df^{j-m_2}_{f^{m_2}(x)}\big)\circ\big(\rho^{-m_2}Df_x^{m_2}(v)\big)\notag\\
 &\overset{\text{(1)}}{=}(\rho^{m_2}\mathcal{A}^{-m_2})\circ \big(q^{[m_1-m_2-1]}(f^{m_2}(x))\big)\circ\big(\rho^{-m_2}Df_x^{m_2}(v)\big).
\end{align}
Here in $(1)$ we recall \eqref{for:193} and note that
\begin{align*}
 &\sum_{j=m_2}^{m_1-1}  \mathcal{A}^{-(j-m_2+1)}\circ \textbf{r}(f^jx)\circ Df^{j-m_2}_{f^{m_2}(x)}|_{\mathcal{L}_{f^{m_2}(x)}}\\
 &=\sum_{j=0}^{m_1-m_2-1}  \mathcal{A}^{-(j+1)}\circ \textbf{r}\big(f^{j}(f^{m_2}x)\big)\circ Df^{j}_{f^{m_2}x}|_{\mathcal{L}_{f^{m_2}(x)}}\\
 &=q^{[m_1-m_2-1]}(f^{m_2}(x)).
\end{align*}
Then \eqref{for:1} follows from \eqref{for:20} and  conditions \eqref{for:124} and \eqref{for:123} of Proposition \ref{po:5}.

By \eqref{for:193}, from \eqref{for:1} we see that to prove \eqref{for:27}, it suffices to show that
\begin{align*}
 \big\|q^{[m_1-m_2-1]}(f^{m_2}(x))\big\|
\end{align*}
is arbitrarily small whenever $m_2$ is sufficiently large, uniformly in
$m_1>m_2$.

By \eqref{for:18} we have
\begin{align}\label{for:182}
&
\left\|q^{[m_1-m_2-1]}(f^{m_2}x)\right\| \notag\\
&=
\left\|
\mathcal A^{-(m_1-m_2)}
\circ
P_E
\circ
Df_{f^{m_2}x}^{m_1-m_2}|_{\mathcal L_{f^{m_2}x}}
+
J(f^{m_2}x)
\right\| \notag\\
&\overset{\text{(1)}}{=}
\left\|
\left(
\mathcal A^{-(m_1-m_2)}
\circ
P_E
\circ
Df_{f^{m_2}x}^{m_1-m_2}|_{\mathcal L_{f^{m_2}x}}
+
J(f^{m_2}x)
\right)
\right. \notag\\
&\qquad\left.
-
\left(
\mathcal A^{-(m_1-m_2)}
\circ
P_E
\circ
Df_0^{m_1-m_2}|_{\mathcal L_0}
+
J(0)
\right)
\circ
\mathcal I_{f^{m_2}x,0}
\right\| \notag\\
&\overset{\text{(2)}}{\leq}
\|\mathcal Y\|
+
\left\|
J(f^{m_2}x)
-
J(0)\circ \mathcal I_{f^{m_2}x,0}
\right\|,
\end{align}
Here in $(1)$ we use \eqref{for:50}; in $(2)$ we set
\begin{align*}
 \mathcal Y
:={}&
\mathcal A^{-(m_1-m_2)}
\circ
P_E
\circ
Df_{f^{m_2}x}^{m_1-m_2}|_{\mathcal L_{f^{m_2}x}}-
\mathcal A^{-(m_1-m_2)}
\circ
P_E
\circ
Df_0^{m_1-m_2}
\circ
\mathcal I_{f^{m_2}x,0}.
\end{align*}
We first estimate the second term in \eqref{for:182}.  We recall local identifications between fibers in Section \ref{sec:6}.  Suppose $x\in \mathcal{W}^{s,f}(0)$.  For any $\epsilon>0$ there is $l=l(x,\epsilon)\in\NN$ such that
\begin{align}\label{for:21}
 d(f^{p+n}(x),0)<\epsilon\nu^n,\qquad p\geq l,\quad n\geq0,
\end{align}
(see \eqref{for:130} of Section \ref{sec:8}).  Suppose $m_1>m_2\geq l$. We have
\begin{align}\label{for:90}
 &\big\|J(f^{m_2}(x))-J(0)\circ (\mathcal{I}_{f^{m_2}(x),0}) \big\|\overset{\text{(1)}}{\leq} Cd(f^{m_2}(x),0)^\alpha\overset{\text{(2)}}{<} C\epsilon^\alpha.
\end{align}
Here in $(1)$ we use the fact that $\mathcal{L}$ is $\alpha$-H\"{o}lder and thus $J$ is $\alpha$-H\"{o}lder along $\mathcal{L}$, see Section \ref{sec:6}; in $(2)$ we use \eqref{for:21} by letting $p=m_2$ and $n=0$.

Next, we estimate $\big\|\mathcal{Y}\big\|$. Let $s=m_1-m_2$. We rewrite
$\mathcal Y$ as
\[
\mathcal Y
=
(\rho^s\mathcal A^{-s})
\circ
P_E
\circ
(\mathcal Y_1+\mathcal Y_2),
\]
where
\[
\mathcal Y_1
=
(I_{id}-\mathcal I_{f^{m_1}x,0})
\circ
\left(
\rho^{-s}Df_{f^{m_2}x}^{s}|_{\mathcal L_{f^{m_2}x}}
\right)
\]
and
\[
\mathcal Y_2
=
\mathcal I_{f^{m_1}x,0}
\circ
\left(
\rho^{-s}Df_{f^{m_2}x}^{s}|_{\mathcal L_{f^{m_2}x}}
\right)
-
\left(
\rho^{-s}Df_0^{s}
\right)
\circ
\mathcal I_{f^{m_2}x,0}.
\]
From \eqref{for:21} we have
\begin{align*}
 d\big(f^{n}(f^{m_2}x),f^n(0)\big)=d(f^{m_2+n}(x),0)<\epsilon\nu^n,\qquad \forall\,n\geq0.
\end{align*}
Next, we estimate $\|\mathcal{Y}_2\|$. Taking $m_2$ sufficiently large, we may apply \eqref{for:148} of Lemma \ref{le:5} with
$f^{m_2}x$ and $0$ to estimate $\mathcal{Y}_2$.  Thus we have:
  \begin{align*}
&\| \mathcal{Y}_2\|\leq C\epsilon^\alpha.
\end{align*}
Now we estimate $\|\mathcal{Y}_1\|$.
\begin{align*}
 \| \mathcal{Y}_1\|\leq \norm{I_{id}-\mathcal I_{f^{m_1}x,0}}\cdot \|\rho^{-s}Df_{f^{m_2}x}^{s}\|\overset{\text{(1)}}{\leq} Cd(f^{m_1}x,0)^\alpha \cdot C\overset{\text{(2)}}{\leq} C_1\epsilon^\alpha.
\end{align*}
Here in $(1)$ we use \eqref{for:39} and    condition \eqref{for:123}  of Proposition \ref{po:5}; in $(2)$ we use \eqref{for:21} by letting $p=m_1$ and $n=0$.

As a consequence, we have
\begin{align}\label{for:22}
\big\|\mathcal{Y}\big\|&\leq \|\rho^s\mathcal A^{-s}\|
\|P_E\|
(
\|\mathcal Y_1\|+\|\mathcal Y_2\|
)\overset{\text{(1)}}{\leq}C(\norm{\mathcal{Y}_1}+\norm{\mathcal{Y}_2})\leq C_1\epsilon^\alpha.
\end{align}
Here in $(1)$ we use condition \eqref{for:124}  of Proposition \ref{po:5}.

From \eqref{for:182}, \eqref{for:90} and \eqref{for:22},  we have
\begin{align}\label{for:25}
  &\big\|q^{[m_1-m_2-1]}(f^{m_2}(x))\big\|\leq C\epsilon^\alpha.
 \end{align}
 This together with \eqref{for:1} gives
 \begin{align*}
  &\Big\|\sum_{j=m_2}^{m_1-1} \mathcal{A}^{-(j+1)}|_{E}\circ \textbf{r}(f^jx)\circ Df^j_x|_{\mathcal{L}_x}\Big\|\leq C\epsilon^\alpha.
 \end{align*}
 This implies \eqref{for:27}.

 The proof of \eqref{for:40} is similar. Finally, \eqref{for:98} follows
directly from \eqref{for:27} and \eqref{for:99}, while \eqref{for:30} follows
from \eqref{for:40} and \eqref{for:31}.

 \subsubsection{Step 4:  H\"older regularity of $q^+(x)$ and $q^-(x)$}\label{sec:4}
 In this step, we show that there exists $\delta_0>0$ such that, for any
$x,y\in\mathcal W^{s,f}(0)$ with $d(x,y)<\delta_0$, we have
\begin{align}\label{for:47}
\|q^+(x)-q^+(y)\circ\mathcal I_{x,y}\|
\leq
C d(x,y)^\alpha.
\end{align}
Similarly, for any $x,y\in\mathcal W^{u,f}(0)$ with $d(x,y)<\delta_0$, we have
\begin{align}\label{for:46}
\|q^-(x)-q^-(y)\circ\mathcal I_{x,y}\|
\leq
C d(x,y)^\alpha.
\end{align}
 We first prove \eqref{for:47}. Let $x,y\in\mathcal W^{s,f}(0)$ with
$d(x,y)$ sufficiently small.
For any $m>0$ we have
\begin{align}\label{for:43}
 &\big\|q^{[m-1]}(x)-q^{[m-1]}(y)\circ \mathcal{I}_{x,y}\big\|\notag\\
 &\overset{\text{(1)}}{= }\Big\|(\rho^{m}\mathcal{A}^{-m})\circ P_{E}\circ (\rho^{-m}Df^{m}_x)|_{\mathcal{L}_{x}}+J(x)\notag\\
 &-\big((\rho^{m}\mathcal{A}^{-m})\circ P_{E}\circ(\rho^{-m}Df^{m}_y)\circ \mathcal{I}_{x,y}+J(y)\circ \mathcal{I}_{x,y}\big)\Big\|\notag\\
 &\leq \|\rho^{m}\mathcal{A}^{-m}\|\cdot\big\|\mathcal{Y}\big\|+\big\|J(x)-J(y)\circ \mathcal{I}_{x,y} \big\|\notag\\
 &\overset{\text{(2)}}{\leq } C\big\|\mathcal{Y}\big\|+Cd(x,y)^\alpha,
\end{align}
where
\begin{align*}
\mathcal{Y}=P_{E}\circ(\rho^{-m}Df^{m}_x)|_{\mathcal{L}_{x}}-P_{E}\circ(\rho^{-m}Df^{m}_y)\circ \mathcal{I}_{x,y}.
\end{align*}
Here in $(1)$ we use \eqref{for:18}; in $(2)$ we use condition \eqref{for:124}  of Proposition \ref{po:5} and
the $\alpha$-\text{H\"older} of $J$ along $\mathcal{L}$.

Next, we estimate $\big\|\mathcal{Y}\big\|$. We rewrite
\begin{align*}
 \mathcal{Y}&=P_{E}\circ(\rho^{-m}Df^{m}_x)|_{\mathcal{L}_{x}}-P_{E}\circ\mathcal{I}_{f^{m}(x),f^{m}(y),\mathcal{L}}\circ (\rho^{-m}Df^{m}_x)|_{\mathcal{L}_{x}}\\
 &+P_{E}\circ\mathcal{I}_{f^{m}(x),f^{m}(y),\mathcal{L}}\circ (\rho^{-m}Df^{m}_x)|_{\mathcal{L}_{x}}-P_{E}\circ(\rho^{-m}Df^{m}_y)\circ \mathcal{I}_{x,y}.
\end{align*}
For the first part, we have
\begin{align*}
 &\big\|P_{E}\circ(\rho^{-m}Df^{m}_x)|_{\mathcal{L}_{x}}-P_{E}\circ\mathcal{I}_{f^{m}(x),f^{m}(y)}\circ (\rho^{-m}Df^{m}_x)|_{\mathcal{L}_{x}}\big\|\\
 &\leq C\big\|\mathcal{I}_{f^{m}(x),f^{m}(y)}-I_{id}\big\|\cdot\big\| (\rho^{-m}Df^{m})|_{\mathcal{L}_{x}}\big\|\\
 &\overset{\text{(1)}}{\leq } Cd(f^{m}(x),f^{m}(y))^\alpha\cdot C\\
 &\overset{\text{(2)}}{\leq }C_1\nu^{\alpha m}d(x,y)^\alpha.
\end{align*}
Here in $(1)$ we use \eqref{for:39} and condition \eqref{for:123}  of Proposition \ref{po:5}; in $(2)$ we use the contraction along stable leaves, see \eqref{for:130} of Section \ref{sec:8}.

Now we estimate the second part.   By the contraction along stable leaves, for
$d(x,y)$ sufficiently small, apply \eqref{for:148} of Lemma \ref{le:5} we have
 \begin{align*}
 \big\| \mathcal{I}_{f^{m}(x),f^{m}(y)}\circ (\rho^{-m}Df^{m}_x)|_{\mathcal{L}_{x}}-(\rho^{-m}Df^{m}_y)\circ \mathcal{I}_{x,y} \big\| \leq Cd(x,y)^\alpha.
\end{align*}
Hence, it follows that
\begin{align*}
 \big\|P_{E}\circ\mathcal{I}_{f^{m}(x),f^{m}(y)}\circ (\rho^{-m}Df^{m}_x)|_{\mathcal{L}_{x}}-P_{E}\circ(\rho^{-m}Df^{m}_y)\circ \mathcal{I}_{x,y}\big\|\leq Cd(x,y)^\alpha.
\end{align*}
Hence, we have
\begin{align}\label{for:97}
 \big\|\mathcal{Y}\big\|\leq C_1\nu^{\alpha m}d(x,y)^\alpha+Cd(x,y)^\alpha \leq C_{2}d(x,y)^\alpha.
\end{align}
\eqref{for:97} and \eqref{for:43} give
\begin{align*}
 \big\|q^{[m-1]}(x)-q^{[m-1]}(y)\circ \mathcal{I}_{x,y}\big\|\leq Cd(x,y)^\alpha
\end{align*}
 for any $m>0$. Let $m\to\infty$.  It follows from \eqref{for:27} that
\begin{align*}
 \big\|q^+(x)-q^+(y)\circ \mathcal{I}_{x,y}\big\|\leq Cd(x,y)^\alpha.
\end{align*}
Hence we get \eqref{for:47}.

We now prove the unstable estimate \eqref{for:46}. Let
$x,y\in\mathcal W^{u,f}(0)$ with $d(x,y)$ sufficiently small.  By using \eqref{for:89}, similar to \eqref{for:43}, for any $m>0$ we have
\begin{align}\label{for:161}
 &\big\|q^{[-m]}(x)-q^{[-m]}(y)\circ \mathcal{I}_{x,y}\big\|\notag\\
 &=\Big\|(\rho^{-m}\mathcal{A}^{m})\circ P_{E}\circ (\rho^{m}Df^{-m}_x)|_{\mathcal{L}_{x}}+J(x)\notag\\
 &-\big((\rho^{-m}\mathcal{A}^{m})\circ P_{E}\circ(\rho^{m}Df^{-m}_y)\circ \mathcal{I}_{x,y}+J(y)\circ \mathcal{I}_{x,y}\big)\Big\|\notag\\
&\leq C\big\|\mathcal{Y}_1\big\|+Cd(x,y)^\alpha,
\end{align}
 where
\begin{align*}
\mathcal{Y}_1=P_{E}\circ(\rho^{m}Df^{-m}_x)|_{\mathcal{L}_{x}}
 -P_{E}\circ(\rho^{m}Df^{-m}_y)\circ \mathcal{I}_{x,y}.
\end{align*}
Since $x,y$ lie on the same unstable leaf, backward iterates contract. Thus,
for $d(x,y)$ sufficiently small, we have
\begin{align*}
 d\big(f^j(f^{-m}x),&\,f^j(f^{-m}y)\big)=d(f^{j-m}x,\,f^{j-m}y)\leq C\nu^{m-j}d(x,y).
\end{align*}
Applying  \eqref{for:149} of Lemma \ref{le:1} to the pair
$f^{-m}x,f^{-m}y$, we have
\begin{align}\label{for:160}
\Big\|\mathcal{I}_{f^{-m}(x),f^{-m}(y)}\circ (\rho^{m}Df^{-m}_{x})|_{\mathcal{L}_{x}}-(\rho^{m}Df^{-m}_{y})\circ \mathcal{I}_{x,y}\Big\|\leq Cd(x,y)^\alpha.
\end{align}
By using \eqref{for:160}, similar to \eqref{for:97}, we can show that
\begin{align*}
 \big\|\mathcal{Y}_1\big\|\leq Cd(x,y)^\alpha.
\end{align*}
This, together with \eqref{for:161}, gives
\begin{align*}
 \big\|q^{[-m]}(x)-q^{[-m]}(y)\circ \mathcal{I}_{x,y}\big\|\leq Cd(x,y)^\alpha
\end{align*}
 for any $m>0$. Let $m\to\infty$.  It follows from \eqref{for:40} that
 \begin{align*}
 \big\|q^-(x)-q^-(y)\circ \mathcal{I}_{x,y}\big\|\leq Cd(x,y)^\alpha.
\end{align*}
Hence we get \eqref{for:46}. Taking $\delta_0$ to be the minimum of the local
constants required above completes the proof.

\subsubsection{Step 5:  Coincidence of  $q^+$ and $q^-$ on  $\mathcal{W}^{s,f}(0)\bigcap \mathcal{W}^{u,f}(0)$ }\label{sec:21} Let
\begin{align*}
 \mathcal{S}_{0}=\mathcal{W}^{s,f}(0)\bigcap \mathcal{W}^{u,f}(0).
\end{align*}
 In this step, we show that
\begin{align}\label{for:45}
 q^+(x)=q^-(x),\qquad \forall \, x\in \mathcal{S}_{0}.
\end{align}

\smallskip
\noindent Recall the sequence $k_n\to+\infty$ from condition \eqref{for:88} of Proposition \ref{po:5}. We first prove
that, for every $x\in\mathcal S_0$,
\begin{align}\label{for:84}
 \lim_{n\to\infty}\big(q^{[k_n-1]}(x)-q^{[-k_n]}(x)\big)=0.
\end{align}
Since $x\in\mathcal S_0$, the limits defining $q^+(x)$ and $q^-(x)$ both
exist. Hence \eqref{for:45} follows directly from \eqref{for:84}.

Fix $x\in\mathcal S_0$. The key step in proving \eqref{for:84} is the following: for any $\epsilon>0$, there is $\tau_{x,\epsilon}\in \NN$ such that for all  $k_n\geq \tau_{x,\epsilon}$:
\begin{align}\label{for:32}
 \big\|P_{E}\circ(\rho^{-k_n}Df^{k_n})|_{\mathcal{L}_{x}}-P_{E}\circ(\rho^{k_n}Df^{-k_n})|_{\mathcal{L}_{x}}\big\|\leq C\epsilon^\alpha.
\end{align}
 As a direct consequence of \eqref{for:32} we have
\begin{align}\label{for:42}
 \lim_{n\to \infty}\big\|P_{E}\circ(\rho^{-k_n}Df^{k_n})|_{\mathcal{L}_{x}}-P_{E}\circ(\rho^{k_n}Df^{-k_n})|_{\mathcal{L}_{x}}\big\|=0.
\end{align}
Before we proceed to the proof of \eqref{for:32}, we show how it implies \eqref{for:84}.

For any $x\in\TT^N$, we have
\begin{align}\label{for:41}
 q^{[k_n-1]}(x)-q^{[-k_n]}(x)
&\overset{\text{(1)}}{= }
(\rho^{k_n}\mathcal A^{-k_n})
\circ P_E\circ(\rho^{-k_n}Df_x^{k_n})|_{\mathcal L_x}\notag \\
&\quad -
(\rho^{-k_n}\mathcal A^{k_n})
\circ P_E\circ(\rho^{k_n}Df_x^{-k_n})|_{\mathcal L_x} \notag\\
&\overset{\text{(2)}}{= }
\mathcal Y_1+\mathcal Y_2+\mathcal Y_3,
\end{align}
Here in $(1)$ we use \eqref{for:18} and \eqref{for:89}; in $(2)$
\begin{align*}
 \mathcal{Y}_1&=(\rho^{k_n}\mathcal{A}^{-k_n})\circ P_{E}\circ (\rho^{-k_n}Df^{k_n})|_{\mathcal{L}_{x}}-P_{E}\circ(\rho^{-k_n}Df^{k_n})|_{\mathcal{L}_{x}},\\
 &=(\rho^{k_n}\mathcal{A}^{-k_n}-I_{id}|_{E})\circ P_{E}\circ(\rho^{-k_n}Df^{k_n})|_{\mathcal{L}_{x}}\\
 \mathcal{Y}_2&=P_{E}\circ(\rho^{-k_n}Df^{k_n})|_{\mathcal{L}_{x}}-P_{E}\circ(\rho^{k_n}Df^{-k_n})|_{\mathcal{L}_{x}},\\
 \mathcal{Y}_3&=P_{E}\circ(\rho^{k_n}Df^{-k_n})|_{\mathcal{L}_{x}}-(\rho^{-k_n}\mathcal{A}^{k_n})\circ P_{E}\circ (\rho^{k_n}Df^{-k_n})|_{\mathcal{L}_{x}}\\
 &=(I_{id}|_{E}-\rho^{-k_n}\mathcal{A}^{k_n})\circ P_{E}\circ(\rho^{k_n}Df^{-k_n})|_{\mathcal{L}_{x}}.
\end{align*}
By using condition \eqref{for:123} of Proposition \ref{po:5} we have
\begin{align*}
\norm{\mathcal{Y}_1}\leq C\big\|\rho^{k_n}\mathcal{A}^{-k_n}-I_{id}|_{E}\big\|,\quad \norm{\mathcal{Y}_3}\leq C\big\|I_{id}|_{E}-\rho^{-k_n}\mathcal{A}^{k_n}\big\|.
\end{align*}
By condition \eqref{for:36} of Proposition \ref{po:5}, these two quantities tend to zero. Together with \eqref{for:42}, this proves \eqref{for:84}.

It remains to prove \eqref{for:32}. Since $x\in \mathcal{S}_{0}$, for any $\epsilon>0$ there is $\tau_{x,\epsilon}\in \NN$ such that
\begin{align*}
 d(f^{m}(x),f^{-m}(x))\leq \epsilon,\qquad \forall\,m\geq \tau_{x,\epsilon}.
\end{align*}
Assume $\epsilon>0$ is small enough so that the Anosov Closing lemma (see Theorem \ref{th:2}) applies.
For any $k_n\geq \tau_{x,\epsilon}$, since
\begin{align*}
 d(f^{k_n}(x),f^{-k_n}(x))\leq \epsilon,
\end{align*}
there is $y\in\TT^N$ satisfying $y=f^{2k_n}(y)$ and
\begin{align}\label{for:29}
 d(f^{-k_n+j}(x),\,f^j(y))&=d\big(f^{j}(f^{-k_n}x),f^j(y)\big)\leq C\nu^{\min\{j,2k_n-j\}}d(f^{k_n}(x),f^{-k_n}(x))\notag\\
 &\leq C\nu^{\min\{j,2k_n-j\}}\epsilon,
\end{align}
for any $0\leq j\leq 2k_n$.

Increasing $\tau_{x,\epsilon}$ if necessary, we may assume
$k_n\geq l_\epsilon$ (see condition \eqref{for:28} of Proposition \ref{po:5}). Applying \eqref{for:28} to the periodic point $y$, we obtain
\begin{align}\label{for:35}
 \Big\|(\rho^{-k_n}Df^{k_n})|_{\mathcal{L}_{f^{k_n}(y)}}-(\rho^{k_n}Df^{-k_n})|_{\mathcal{L}_{f^{k_n}(y)}}\Big\|\leq \epsilon.
\end{align}
Moreover, taking $j=0$ and $j=2k_n$ in \eqref{for:29}, we get
\begin{align}\label{for:154}
 d(f^{-k_n}(x),\,y)\leq C\epsilon\quad\text{and}\quad d(f^{k_n}(x),\,y)\leq C\epsilon.
\end{align}
We now estimate
\begin{align*}
P_{E}\circ(\rho^{-k_n}Df^{k_n})|_{\mathcal{L}_{x}}-P_{E}\circ(\rho^{k_n}Df^{-k_n})|_{\mathcal{L}_{x}}=\mathcal{X}_1+\mathcal{Y}+\mathcal{X}_2,
\end{align*}
where
\begin{gather*}
\mathcal{X}_1=P_{E}\circ(I_{id}-\mathcal{I}_{f^{k_n}(x),y})\circ (\rho^{-k_n}Df^{k_n}_x)|_{\mathcal{L}_{x}},\\
\mathcal{Y}=P_{E}\circ\mathcal{I}_{f^{k_n}(x),y}\circ(\rho^{-k_n}Df^{k_n}_x)|_{\mathcal{L}_x}
  -P_{E}\circ\mathcal{I}_{f^{-k_n}(x),y}\circ(\rho^{k_n}Df^{-k_n}_x)|_{\mathcal{L}_x},\\
  \mathcal{X}_2=P_{E}\circ(\mathcal{I}_{f^{-k_n}(x),y}-I_{id})\circ (\rho^{k_n}Df^{-k_n}_x)|_{\mathcal{L}_x}.
\end{gather*}
By using condition \eqref{for:123} of Proposition, we have
\begin{align}\label{for:157}
 \max\{\norm{\mathcal{X}_1},\,\norm{\mathcal{X}_2}\}&\leq C\max\{\norm{I_{id}-\mathcal{I}_{f^{k_n}(x),y}},\,\norm{\mathcal{I}_{f^{-k_n}(x),y}-I_{id}}\}\notag\\
 & \overset{\text{(1)}}{\leq }C_1d(f^{k_n}(x),y)^\alpha+C_1d(f^{-k_n}(x),y)^\alpha\overset{\text{(2)}}{\leq }C_{2}\epsilon^\alpha.
\end{align}
 Here in $(1)$ we use \eqref{for:39}; in $(2)$ we use \eqref{for:154}.

It remains to estimate $\mathcal Y$. Taking $j=k_n$ in \eqref{for:29}, we get
\begin{align*}
 d(x,\,f^{k_n}(y))\leq C\nu^{k_n}\epsilon.
\end{align*}
Thus, the closeness  of $x$ and $f^{k_n}(y)$ allows us to rewrite $\mathcal{Y}$ as
\begin{align*}
 \mathcal{Y}=P_{E}\mathcal{Y}_1+P_{E}\mathcal{Y}_2+P_{E}\mathcal{Y}_3,
\end{align*}
where
\begin{align*}
\mathcal{Y}_1&= \mathcal{I}_{f^{k_n}(x),y}\circ (\rho^{-k_n}Df^{k_n}_x)|_{\mathcal{L}_x}- (\rho^{-k_n}Df^{k_n}_{f^{k_n}(y)})\circ\mathcal{I}_{x,f^{k_n}(y)},\\
\mathcal{Y}_2&=(\rho^{-k_n}Df^{k_n}_{f^{k_n}(y)})\circ\mathcal{I}_{x,f^{k_n}(y)}-(\rho^{k_n}Df^{-k_n}_{f^{k_n}(y)})\circ\mathcal{I}_{x,f^{k_n}(y)},\\
\mathcal{Y}_3&=(\rho^{k_n}Df^{-k_n}_{f^{k_n}(y)})\circ \mathcal{I}_{x,f^{k_n}(y)}-\mathcal{I}_{f^{-k_n}(x),y}\circ (\rho^{k_n}Df^{-k_n}_x)|_{\mathcal{L}_x}.
\end{align*}
First, by \eqref{for:35} and the uniform boundedness of the identifications:
\begin{align*}
 \big\|\mathcal{Y}_2\big\|&\leq \Big\|(\rho^{-k_n})Df^{k_n}_{f^{k_n}(y)}|_{\mathcal{L}_{f^{k_n}(y)}}
 -(\rho^{k_n}Df^{-k_n}_{f^{k_n}(y)})|_{\mathcal{L}_{f^{k_n}(y)}}\Big\|\cdot\big\|\mathcal{I}_{x,f^{k_n}(y)}\big\|\leq C\epsilon.
\end{align*}
Next,  \eqref{for:29} shows that for any $0\leq j\leq k_n$, we have
\begin{align*}
 d\big(f^{j}(x)&,f^{j}(f^{k_n}y)\big)=d\big(f^{-k_n+(j+k_n)}(x),f^{j+k_n}(y)\big)\notag\\
 &\leq C\nu^{\min\{j+k_n,2k_n-(j+k_n)\}}\epsilon=C\nu^{k_n-j}\epsilon.
\end{align*}
Thus the pair $x,f^{k_n}y$ satisfies the hypothesis of Lemma~\ref{le:1}.
Using \eqref{for:152}, we obtain
\begin{align*}
 \big\| \mathcal{Y}_1 \big\|\leq C(C\epsilon)^\alpha.
\end{align*}
Similarly, \eqref{for:29} shows that for any $0\leq j\leq k_n$, we have
\begin{align*}
d(f^{j}(f^{-k_n}(x)),f^j(y))=d(f^{-k_n+j}(x),f^j(y))&\leq C\nu^{j}\epsilon.
\end{align*}
Thus the pair $f^{-k_n}x,y$ satisfies the hypothesis of Lemma~\ref{le:5}.
Using \eqref{for:151}, we obtain
\begin{align*}
 \big\| \mathcal{Y}_3 \big\|\leq C(C\epsilon)^\alpha.
\end{align*}
Hence, for \(\epsilon>0\) sufficiently small,
\begin{align*}
 \big\|\mathcal{Y}\big\|\leq C(C\epsilon)^\alpha+C\epsilon+C(C\epsilon)^\alpha\leq C_1\epsilon^\alpha.
\end{align*}
This, together with \eqref{for:157}, gives
\begin{align*}
\big\|P_E\circ (\rho^{-k_n}Df^{k_n})|_{\mathcal{L}_{x}}-P_E\circ (\rho^{k_n}Df^{-k_n})|_{\mathcal{L}_{x}}\big\|\leq C\epsilon^\alpha.
\end{align*}
This implies \eqref{for:32}.

\subsubsection{Step 6:  H\"older regularity of $q(x)$ on  $\mathcal{S}_{0}$ }\label{sec:22}  Let
\begin{align}\label{for:24}
 q(x)=q^+(x)=q^-(x),\qquad \forall \, x\in \mathcal{S}_{0}.
\end{align}
In this step, we show that:
\begin{enumerate}
  \item\label{for:192} on $\mathcal{S}_{0}$ we have $\norm{q(x)}\leq C$;

  \item\label{for:38} there is $\delta_{0,1}>0$ such that for any $x,\,y\in \mathcal{S}_{0}$, if $d(x,y)<\delta_{0,1}$, then
\begin{align*}
\big\|q(x)-q(y)\circ \mathcal{I}_{x,y}\big\|\leq Cd(x,y)^\alpha.
\end{align*}
\end{enumerate}
The bound in \eqref{for:192} follows directly from \eqref{for:98}. We now
prove \eqref{for:38}. Choose $\delta_{0,1}>0$ sufficiently small so
that the following two conditions hold. First, the local product structure
applies at scale $\delta_{0,1}$ (see \eqref{for:44} of Section \ref{sec:8}). Second, whenever
$d(x,y)<\delta_{0,1}$ and
\begin{align*}
 z\in \mathcal W^{s,f}_{\mathrm{loc}}(x)
        \cap
        \mathcal W^{u,f}_{\mathrm{loc}}(y)
\end{align*}
is the local product point, the three points $x,y,z$ lie in a common
trivializing chart for $\mathcal L$. In particular,
\begin{align}\label{for:58}
 \mathcal I_{x,z}
        =
        \mathcal I_{y,z}\circ\mathcal I_{x,y}.
\end{align}
Let $x,y\in\mathcal S_0$ with $d(x,y)<\delta_{0,1}$, and let
\begin{align*}
   z\in \mathcal W^{s,f}_{\mathrm{loc}}(x)
        \cap
        \mathcal W^{u,f}_{\mathrm{loc}}(y)
\end{align*}
be the local product point. Since $x\in\mathcal W^{s,f}(0)$ and
$z\in\mathcal W^{s,f}_{\mathrm{loc}}(x)$, we have
$z\in\mathcal W^{s,f}(0)$. Similarly, since
$y\in\mathcal W^{u,f}(0)$ and
$z\in\mathcal W^{u,f}_{\mathrm{loc}}(y)$, we have
$z\in\mathcal W^{u,f}(0)$. Hence $z\in\mathcal S_0$.

By the local product estimates,
\[
        d(x,z)+d(y,z)\leq C d(x,y).
\]
Using the stable H\"older regularity of $q^+$ and the unstable H\"older
regularity of $q^-$ from Step~4, we obtain
\begin{align*}
\|q(x)&-q(y)\circ\mathcal I_{x,y}\|
\leq
\|q(x)-q(z)\circ\mathcal I_{x,z}\|+
\|q(z)\circ\mathcal I_{x,z}
      -q(y)\circ\mathcal I_{x,y}\|\\
&\overset{\text{(a)}}{= } \|q(x)-q(z)\circ\mathcal I_{x,z}\|+\|(q(z)\circ\mathcal I_{y,z}-q(y))\circ\mathcal I_{x,y}\|\\
&\leq \|q(x)-q(z)\circ\mathcal I_{x,z}\|+C\|q(z)\circ\mathcal I_{y,z}-q(y)\|\\
&\leq
C d(x,z)^\alpha+C_1 d(z,y)^\alpha\\
&\leq
C_2 d(x,y)^\alpha .
\end{align*}
Here in $(a)$ we use \eqref{for:58}. This proves \eqref{for:38}.
\begin{remark}\label{re:1} Since $A$ is transitive,  $f$ is also transitive. Hence $\mathcal{S}_{0}$ is dense in $\TT^N$, see \cite{B}. Then \eqref{for:38}, together with the usual extension
argument in local trivializations implies that $q$ extends uniquely to an
$\alpha$-\text{H\"older} section
\[
\widetilde q\in C^\alpha(\TT^N,\operatorname{Hom}(\mathcal L,E)).
\]
Since $0\in\mathcal{S}_{0}$, Remark \ref{re:3} gives $\tilde{q}(0)=0$.

\end{remark}
\subsubsection{Step 7:  Solvability of \eqref{for:49}}\label{sec:23} In this step,  we show that $\tilde{q}$ solves \eqref{for:49}, i.e.,
\begin{align}\label{for:52}
 \mathcal{A}\circ \tilde{q}(x)-\tilde{q}(fx)\circ Df|_{\mathcal{L}_x}=\textbf{r}(x), \qquad \forall\,x\in \TT^N.
\end{align}
\smallskip
\noindent As $\tilde{q}$ is H\"{o}lder on $\TT^N$ and $\mathcal{S}_{0}$ is dense in $\TT^N$, to show that \eqref{for:52} holds, it suffices to show that
$\tilde{q}$ solves \eqref{for:49} on $\mathcal{S}_{0}$, i.e.,
\begin{align}\label{for:53}
 \mathcal{A}\circ \tilde{q}(x)-\tilde{q}(fx)\circ Df|_{\mathcal{L}_x}=\textbf{r}(x), \qquad \forall\,x\in \mathcal{S}_{0}.
\end{align}
Fix $x\in\mathcal S_0$. Since $\mathcal S_0$ is $f$-invariant, we have
$fx\in\mathcal S_0$. Hence
\[
\widetilde q(x)=q(x),
\qquad
\widetilde q(fx)=q(fx).
\]
Using the definition of $q$ on $\mathcal S_0$ and the convergence
$q^{[n]}\to q^+$ on stable leaves, we have
\begin{align}\label{for:54}
 &\mathcal{A}\circ q(x)-q(fx)\circ Df|_{\mathcal{L}_x}\notag\\
 &\overset{\text{(1)}}{=}\lim_n\big(\mathcal{A}\circ q^{[k_n-1]}(x)-q^{[k_n-1]}(fx)\circ Df|_{\mathcal{L}_x}\big)\notag\\
 &\overset{\text{(2)}}{=}\lim_n\big(\sum_{j=0}^{k_n-1}
\mathcal A^{-j}
\circ
\mathbf r(f^j x)
\circ
Df_x^j|_{\mathcal L_x}-\sum_{j=0}^{k_n-1}
\mathcal A^{-(j+1)}
\circ
\mathbf r(f^{j+1}x)
\circ
Df_x^{j+1}|_{\mathcal L_x}\big)\notag\\
 &=\textbf{r}(x)-\lim_n\mathcal{A}^{-k_n}\circ \textbf{r}(f^{k_n}x)\circ Df^{k_n}|_{\mathcal{L}_{x}}.
\end{align}
Here in $(1)$ we use \eqref{for:27}; in $(2)$ we recall definition \eqref{for:193} of $q^{[m]}$ and the resulting telescoping identity.

Next, we show that
\begin{align}\label{for:51}
 \lim_n\mathcal{A}^{-k_n}\circ \textbf{r}(f^{k_n}x)\circ Df^{k_n}|_{\mathcal{L}_{x}}=0,\qquad \forall\,x\in \mathcal{S}_{0}.
\end{align}
In fact, we have
\begin{align*}
 &\big\|\mathcal{A}^{-k_n}\circ \textbf{r}(f^{k_n}x)\circ Df^{k_n}|_{\mathcal{L}_{x}}\big\|\\
 &=\big\|(\rho^{k_n}\mathcal{A}^{-k_n})\circ \textbf{r}(f^{k_n}x)\circ (\rho^{-k_n}Df^{k_n})|_{\mathcal{L}_{x}} \big\|\\
 &\leq\big\|(\rho^{k_n}\mathcal{A}^{-k_n})\big\|\cdot \big\|\textbf{r}(f^{k_n}x)\big\|\cdot\big\|(\rho^{-k_n}Df^{k_n})|_{\mathcal{L}_{x}}\big\|\\
 &\overset{\text{(1)}}{\leq}C\cdot\big\|\textbf{r}(f^{k_n}x)\big\|\overset{\text{(2)}}{\rightarrow} C\cdot \norm{\textbf{r}(0)}=0,\qquad\text{as }n\to \infty.
\end{align*}
Here in $(1)$ we use conditions \eqref{for:124} and \eqref{for:123} of Proposition \ref{po:5}; in $(2)$ we use $f^{k_n}x\to 0$ as $n\to \infty$ and
recall \eqref{for:188}.

Hence we get \eqref{for:51}. This, together with \eqref{for:54},  implies \eqref{for:53}.
Hence  we proved \eqref{for:52}.

\subsubsection{Step 8: Invertibility  of $\mathcal{K}(x)$}\label{sec:24} In this step, we show that
\begin{align}\label{for:55}
 \norm{\mathcal{K}(x)u}\geq C^{-1}\norm{u},\qquad \forall\,x\in \TT^N,\,\,\forall\,u\in \mathcal{L}_{x}.
\end{align}
Let
\begin{align*}
  \mathcal{K}(x)=P_E|_{\mathcal L_x}+\tilde{q}(x) \qquad \forall\,x\in \TT^N.
\end{align*}
From discussion in Section \ref{sec:9}, we see that $\mathcal{K}$ solves \eqref{for:9}. Since $\tilde{q}(0)=0$ (see Remark \ref{re:1}),
$\mathcal{K}(0):\,\mathcal{L}_{0}\to E$ is injective (see \eqref{for:126} of Proposition \ref{po:5}). By continuity of $\mathcal{K}$ and $\mathcal{L}$, there is a neighbourhood $U$ of $0$ such that
\begin{align}\label{for:37}
 \norm{\mathcal{K}(x)u}\geq C^{-1}\norm{u},\qquad \forall\,x\in U,\quad \forall\,u\in \mathcal{L}_{x}.
\end{align}
We now propagate this estimate along forward iterates.  From \eqref{for:9}, we have
\[
\mathcal K_{f^n x}\circ Df_x^n|_{\mathcal{L}_x}
=
\mathcal A^n\circ \mathcal K_x.
\]
Equivalently,
\[
\mathcal K_{f^n x}
=
(\rho^{-n}\mathcal A^n)
\circ
\mathcal K_x
\circ
(\rho^n Df_{f^n x}^{-n})|_{\mathcal{L}_{f^nx}}.
\]
For any $x\in U$, $u\in \mathcal{L}_{f^n(x)}$ and $n\geq0$ we have
\begin{align}\label{for:56}
  \norm{\mathcal{K}(f^nx)(u)}&=\Big\|(\rho^{-n}\mathcal{A}^n)\circ \mathcal{K}(x)\circ (\rho^{n}Df^{-n})|_{\mathcal{L}_{f^n(x)}}(u)\Big\|\notag\\
  &\overset{\text{(1)}}{\geq}C^{-1}\big\|\mathcal{K}(x)\circ (\rho^{n}Df^{-n})|_{\mathcal{L}_{f^n(x)}}(u)\big\|\notag\\
  &\overset{\text{(2)}}{\geq} C^{-1}C^{-1}\norm{(\rho^{n}Df^{-n})|_{\mathcal{L}_{f^n(x)}}(u)}\notag\\
  &\overset{\text{(3)}}{\geq}C^{-1}C^{-1}\cdot C^{-1}\norm{u}.
\end{align}
Here in $(1)$ we use the uniform invertibility of
\(\rho^{-n}\mathcal A^n\), which follows from condition~\eqref{for:124} of Proposition \ref{po:5};
 in $(2)$ we use \eqref{for:37}; in $(3)$  we use that
$(\rho^n Df_{f^n x}^{-n})^{-1}=\rho^{-n}Df_x^n$ and condition~\eqref{for:123} of Proposition \ref{po:5}.

\eqref{for:56} shows that \eqref{for:55} holds on $\bigcup_{n\geq0}f^n(U)$, which is a dense set by transitivity of $f$. By continuity of $\mathcal{K}$,
 \eqref{for:55} holds on $\TT^N$.

 Since $\dim\mathcal L_x=\dim E$, estimate~\eqref{for:55} implies that
$\mathcal K(x):\mathcal L_x\to E$ is an isomorphism for every $x$. Moreover,
\eqref{for:55} gives the uniform inverse bound. Since
$\widetilde q$ and $\mathcal L$ are $\alpha$-H\"older, the bundle map
$\mathcal K$ is $\alpha$-H\"older; the uniform inverse bound then implies that
$\mathcal K^{-1}$ is also $\alpha$-H\"older. This completes the proof of
Proposition~\ref{po:5}.

\section{Reduction of the derivative cocycle }\label{sec:7}
\subsection{Notation} We recall the following notations.
\begin{enumerate}
  \item The numbers $\rho_i$, the Lyapunov blocks $\mathcal E_i$, and the
  linear subspaces $E_i$ are defined in Section~\ref{sec:12}.

  \item The maps $A_i$ and $p_i$ are defined in Section~\ref{sec:14}.

  \item The subbundles $\mathcal F_{i,j}$ are defined in item~\ref{for:33}
  of Section~\ref{sec:12}.
\end{enumerate}
In this section, we prove a cocycle-reduction result over the Lyapunov blocks
$\mathcal E_i$. This reduction will play a crucial role in the proofs of
Theorems~\ref{th:6} and~\ref{th:5}.

Since $A$ is irreducible, $A$ is diagonalizable over $\mathbb C$. Thus, for each
$i$, after a fixed real linear change of coordinates on $E_i$, the map
$A_i$ has a real block-diagonal form.
\begin{theorem}\label{th:9} For every \(1\leq i\leq \ell\), there exists a flag of \(A_i\)-invariant
subspaces
\begin{align}\label{for:63}
\{0\}
=
V_{i,0}
\subset
V_{i,1}
\subset
\cdots
\subset
V_{i,j_i}
=
E_i
\end{align}
and an $\alpha$-H\"older bundle isomorphism $\mathcal C_i:\mathcal E_i\to E_i$, whose inverse is also $\alpha$-H\"older, such that, for every \(x\in\TT^N\), the map
\[
\mathcal C_i(x):\mathcal E_i(x)\to E_i
\]
is a linear isomorphism satisfying the following properties.
\begin{enumerate}
\item For every \(0\leq j\leq j_i-1\), there is an $A_i$-invariant subspace
$W_{i,j+1}\subset E_i$ such that
\begin{align*}
 V_{i,j+1}=V_{i,j}\oplus W_{i,j+1}.
\end{align*}

  \item For every \(0\leq j\leq j_i\),
  \[
  \mathcal C_i(x)(\mathcal F_{i,j}(x))=V_{i,j}.
  \]
  We denote the restriction $\mathcal C_{i,j}(x):=\mathcal C_i(x)|_{\mathcal F_{i,j}(x)}$.

  \item\label{for:91}  There exists an $\alpha$-H\"older map
\[
        \widetilde A_i:\mathbb T^N\to \operatorname{End}(E_i),
\]
such that, for every $x\in\mathbb T^N$,
\[
        \mathcal C_i(fx)\circ Df_x|_{\mathcal E_i(x)}
        =
        \widetilde A_i(x)\circ \mathcal C_i(x).
\]
With respect to the decomposition
\[
        E_i=W_{i,1}\oplus\cdots\oplus W_{i,j_i},
\]
the linear map $\widetilde A_i(x)$ is block upper triangular for every
$x\in\mathbb T^N$, and its diagonal blocks coincide with the corresponding
diagonal blocks of $A_i$.

  \end{enumerate}

In particular, for the first subbundle in the flag, we have
\begin{align}\label{for:70}
\mathcal C_{i,1}(fx)\circ Df_x|_{\mathcal F_{i,1}(x)}
=
A_i|_{V_{i,1}}\circ \mathcal C_{i,1}(x),
\qquad x\in\TT^N.
\end{align}
\end{theorem}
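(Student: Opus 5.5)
We argue by induction on $j$, building the flag $V_{i,j}$ and the restrictions $\mathcal C_{i,j}$ one conformal layer at a time, and applying Proposition~\ref{po:5} to each quotient cocycle on $\mathcal F_{i,j+1}/\mathcal F_{i,j}$. We fix a complement: since $A_i$ is diagonalizable over $\CC$, every $A_i$-invariant subspace has an $A_i$-invariant complement. At stage $j$ we will have an $A_i$-invariant $V_{i,j}$ and a H\"older isomorphism $\mathcal C_{i,j}:\mathcal F_{i,j}\to V_{i,j}$ intertwining $Df$ with a block upper triangular $\widetilde A_i|_{V_{i,j}}$. We then choose an $A_i$-invariant complement $U_j$ of $V_{i,j}$ in $E_i$ and a H\"older complement $\mathcal G_{j+1}\subset\mathcal F_{i,j+1}$ of $\mathcal F_{i,j}$. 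The latter is not $Df$-invariant, but we identify it with the quotient $\mathcal F_{i,j+1}/\mathcal F_{i,j}$. The quotient cocycle $\overline{Df}$ is then realized on the invariant subbundle $\mathcal L=\mathcal G_{j+1}$ of the quotient bundle.

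\textbf{Step 1: choose $E$ and $\mathcal A$ and verify the hypotheses of Proposition~\ref{po:5}.} Proposition~\ref{po:5} is stated for subbundles of $T\TT^N$, but its proof only uses H\"older charts, fiber bunching and the closing lemma, so it applies verbatim to the quotient cocycle. At the fixed point $0$, the same periodic data gives $Df_0|_{\mathcal E_i(0)}$ conjugate to $A_i$. By conformality of the quotient structure, $Df_0$ acts on $\mathcal F_{i,j+1}(0)/\mathcal F_{i,j}(0)$ as $\rho_i$ times an isometry, so it is diagonalizable with all eigenvalues of modulus $\rho_i$. Hence it is conjugate to the restriction of $A_i$ to an $A_i$-invariant subspace $W_{i,j+1}\subset U_j$ of the right dimension, obtained by matching eigenvalues with multiplicity. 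We take $E=W_{i,j+1}$ and $\mathcal A=A_i|_{W_{i,j+1}}$, after a linear change of coordinates so that condition~\eqref{for:126} holds with $P_E$ an isomorphism at $0$.

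\textbf{Step 2: the remaining hypotheses.} Condition~\eqref{for:124} holds since $A$ is diagonalizable. Condition~\eqref{for:123} holds after adjusting by the function $\phi_i$ of item~\eqref{for:33}: at periodic points the product of $\phi_i$ along the orbit must equal $\rho_i^{-n}$ (constant periodic data), so Liv\v{s}ic makes $\phi_i\rho_i$ a H\"older coboundary, and the normalized quotient cocycle is then uniformly bounded. For condition~\eqref{for:88}, pick $k_n$ with $(\rho_i^{-1}A_i)^{k_n}\to I$, which exists by compactness of the closure of the torus generated by $\rho_i^{-1}A_i$. For a point of period $2k_n$, the periodic data make $\rho_i^{-2k_n}Df^{2k_n}$ on $\mathcal E_i$ conjugate to $(\rho_i^{-1}A_i)^{2k_n}$, which is close to the identity. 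This is the expected main obstacle: conjugacy by an unbounded matrix does not preserve closeness. On the quotient, however, the cocycle is isometric in the H\"older metric, so the return map is an isometry whose eigenvalues are close to $1$; an isometry with eigenvalues near $1$ is itself near $I$. This gives the estimate in~\eqref{for:28} uniformly, after possibly passing to a subsequence of $k_n$.

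\textbf{Step 3: assembly.} Proposition~\ref{po:5} produces $\mathcal K_{j+1}:\mathcal G_{j+1}\to W_{i,j+1}$ intertwining the quotient cocycle with $A_i|_{W_{i,j+1}}$. We define $\mathcal C_{i,j+1}=\mathcal C_{i,j}\circ\pi_{\mathcal F_{i,j}}+\mathcal K_{j+1}\circ\pi_{\mathcal G_{j+1}}$, mapping onto $V_{i,j+1}=V_{i,j}\oplus W_{i,j+1}$. Conjugating $Df$ by this map gives a block triangular matrix: the old blocks on $V_{i,j}$, a new diagonal block $A_i|_{W_{i,j+1}}$, and an off-diagonal H\"older block coming from the non-invariance of $\mathcal G_{j+1}$. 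H\"older regularity of the inverse follows from the uniform bounds. After the final stage, $\widetilde A_i(x):=\mathcal C_i(fx)Df_x\mathcal C_i(x)^{-1}$ is the required block upper triangular map, and~\eqref{for:70} is exactly the case $j=1$.
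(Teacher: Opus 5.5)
Your proposal is correct and follows essentially the paper's proof: apply Proposition~\ref{po:5}, in its vector-bundle form, to each conformal quotient $\mathcal F_{i,j+1}/\mathcal F_{i,j}$. You verify uniform boundedness by Liv\v{s}ic applied to $\rho_i\phi_i$ (as in Corollary~\ref{cor:3}) and the periodic approximation condition via the isometric quotient structure (as in Lemmas~\ref{ob:1} and~\ref{for:8}), then assemble the quotient conjugacies through a H\"older splitting subordinate to the flag. The differences are only cosmetic. You target $A_i|_{W_{i,j+1}}$ directly instead of $\overline{Df_0}$ followed by $P_{i,j}$, and you get near-identity of the return map from ``an isometry with eigenvalues near $1$ is near $I$'' rather than from the uniformly bounded conjugacies $\mathcal T_{i,j,p}$ of Lemma~\ref{ob:1}.
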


\begin{remark}
From \eqref{for:70}, it is natural to think of $\mathcal C_{i,1}$ as
$DH$ along $\mathcal F_{i,1}$. However, $\mathcal F_{i,1}$ is constructed
abstractly and is not necessarily integrable. Even if $\mathcal F_{i,1}$ is
integrable, the differentiability of $H$ along $\mathcal F_{i,1}$ is not
straightforward.
\end{remark}

\subsection{Role of Theorem~\ref{th:9}}

Theorem~\ref{th:9} will be used twice later, in the proofs of
Theorems~\ref{th:6} and~\ref{th:5}.

The first use is in the proof of Proposition~\ref{po:2}, where we obtain
differentiability of $H$ along curves tangent to $\mathcal F_{i,1}$. This
eventually leads to the integrability of $\mathcal F_{i,1}$ and to the
differentiability of $H$ along $\mathcal F_{i,1}$; see
Theorem~\ref{th:1}. In this step, the conjugacy relation
\eqref{for:70} plays a crucial role.

The second use is in the proof of Theorem~\ref{th:4}, where we obtain
differentiability of $H$ along the whole block $\mathcal E_i$. In that step,
the full block upper-triangular reduction in \eqref{for:91} plays a
crucial role.

\subsection{Proof strategy}
 Fix $1\leq i\leq\ell$. The flag
\begin{align*}
  \{0\}
        =
        \mathcal F_{i,0}
        \subset
        \mathcal F_{i,1}
        \subset
        \cdots
        \subset
        \mathcal F_{i,j_i}
        =
        \mathcal E_i
\end{align*}
reduces the problem to the quotient cocycles on $\mathcal B_{i,j}
        =
        \mathcal F_{i,j}/\mathcal F_{i,j-1}$.
Each quotient cocycle carries an invariant conformal structure, and the
periodic-data estimates verified below allow us to apply
Proposition~\ref{po:5}. This gives a H\"older conjugacy on each quotient
bundle. We then assemble these quotient conjugacies, using a H\"older
splitting subordinate to the flag, to obtain the block upper-triangular
reduction of $Df$ over $\mathcal E_i$.

\subsection{Notations and basic facts} We list notations and basic facts that will be used in this section. Recall notations in \eqref{for:33} of Section~\ref{sec:12}.

\subsubsection{Periodic data}

 \begin{lemma}\label{ob:1}
 Fix $1\leq i\leq \ell$. For every $1\leq j\leq j_i$, every periodic point $p$ of $f$ with period $n$, there exist an $A_i^n$-invariant subspace
$s_{i,j,p}\subset E_i$ and a linear isomorphism
\[
\mathcal T_{i,j,p}:
(\mathcal F_{i,j})_p/(\mathcal F_{i,j-1})_p
\to
s_{i,j,p}
\]
such that
\[
\mathcal T_{i,j,p}\circ \overline{Df_p^n}
=
A_i^n|_{s_{i,j,p}}\circ \mathcal T_{i,j,p},
\]
where $\overline{Df_p^n}$ denotes the map induced by $Df_p^n$ on the quotient
$(\mathcal F_{i,j})_p/(\mathcal F_{i,j-1})_p$. Moreover,
\[
\max\left\{
\|\mathcal T_{i,j,p}\|,
\|\mathcal T_{i,j,p}^{-1}\|
\right\}
\leq C.
\]
\end{lemma}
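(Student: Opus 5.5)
The plan is to get the spectral information from the periodic-data hypothesis. The uniform bound on $\mathcal T_{i,j,p}$ will then come from the conformal structure on the quotient bundles in item~\eqref{for:33} of Section~\ref{sec:12}, not from the (uncontrolled) conjugacy between $Df^n_p$ and $A^n$.

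\emph{Step 1 (spectrum of the quotient map).} Let $p=f^np$ and fix $L_p$ with $L_p\circ Df^n_p=A^n\circ L_p$. The subspace $\mathcal E_i(p)$ is $Df^n_p$-invariant. By \eqref{for:129}, applied along the orbit, all eigenvalues of $Df^n_p|_{\mathcal E_i(p)}$ have modulus in $[\rho_i^ne^{-n\epsilon},\rho_i^ne^{n\epsilon}]$, so $\mathcal E_i(p)$ is the sum of generalized eigenspaces of $Df^n_p$ with moduli in that window. The same holds for $A^n$ and $E_i$ with modulus exactly $\rho_i^n$. Hence $L_p(\mathcal E_i(p))=E_i$ and $Df^n_p|_{\mathcal E_i(p)}$ is conjugate to $A_i^n$. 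Since $A$ is irreducible, $A_i^n$ is diagonalizable over $\mathbb C$ with all eigenvalues of modulus $\rho_i^n$, and therefore so is $Df^n_p|_{\mathcal E_i(p)}$. The subspace $(\mathcal F_{i,j-1})_p\subset(\mathcal F_{i,j})_p$ is invariant under this diagonalizable map, so it has an invariant complement. Consequently $\overline{Df^n_p}$ is diagonalizable, and its characteristic polynomial divides that of $A_i^n$; equivalently, its spectral multiset is a sub-multiset of the spectrum of $A_i^n$.

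\emph{Step 2 (normal forms).} By item~\eqref{for:33}, $\phi_iDf$ acts isometrically on $\mathcal F_{i,j}/\mathcal F_{i,j-1}$ for an $\alpha$-H\"older metric. This metric is uniformly comparable, with a constant independent of $p$, to the metric induced from $\RR^N$. Hence $\overline{Df^n_p}=c_p\,O_p$, where $O_p$ is an isometry of the quotient metric and $c_p=\prod_{k<n}\phi_i(f^kp)^{-1}>0$. Since the eigenvalues of $\overline{Df^n_p}$ have modulus $\rho_i^n$ by Step~1, we get $c_p=\rho_i^n$. Similarly, fix once and for all an inner product on $E_i$ in which $A_i=\rho_iU$ with $U$ orthogonal; this is possible because $A_i$ is diagonalizable with eigenvalues of modulus $\rho_i$. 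Then $A_i^n=\rho_i^nU^n$.

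\emph{Step 3 (choice of $s_{i,j,p}$ and $\mathcal T_{i,j,p}$).} The map $U^n$ is orthogonal, so $E_i$ splits orthogonally into real $U^n$-invariant pieces: eigenlines for eigenvalues $\pm1$ and rotation planes for each conjugate pair. By Step~1, the spectrum of $O_p$ is a sub-multiset of that of $U^n$. Choose $s_{i,j,p}$ as an orthogonal sum of such lines and planes realizing exactly the spectral multiset of $O_p$. Then $s_{i,j,p}$ is $A_i^n$-invariant. Two orthogonal maps of equal real dimension with the same spectrum are conjugate by an isometry, so there is an isometry $\mathcal T_{i,j,p}$ from the quotient metric to the fixed inner product with $\mathcal T_{i,j,p}O_p=U^n|_{s_{i,j,p}}\mathcal T_{i,j,p}$. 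Multiplying by $\rho_i^n$ gives the intertwining relation of Lemma~\ref{ob:1}. The bound $\max\{\|\mathcal T_{i,j,p}\|,\|\mathcal T_{i,j,p}^{-1}\|\}\le C$ follows from the uniform comparability of the two metrics with the standard ones, and does not depend on $p$ or $n$.

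The main obstacle is Step~1: one must make sure that the invariant-complement and quotient argument really yields divisibility of characteristic polynomials with multiplicities, and that $L_p$ maps $\mathcal E_i(p)$ onto $E_i$, so that the window from \eqref{for:129} separates the blocks for small $\epsilon$. I would handle both by working with generalized eigenspaces over $\mathbb C$, which $L_p$ preserves.
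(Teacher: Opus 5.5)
Your proof is correct and follows the same route as the paper. You normalize the quotient map at $p$ to an isometry using the product of the $\phi_i$, which must equal $\rho_i^{-n}$. You then use the periodic data to match its spectrum with a sub-multiset of the spectrum of $A_i^n$, and conjugate the two isometries orthogonally; the uniform bound comes from comparability of the H\"older quotient metric with the background metric. Your Step~1 uses the spectral windows from \eqref{for:129} to show that the periodic-data conjugacy carries $\mathcal E_i(p)$ onto $E_i$, which makes explicit a point the paper passes over when it asserts that the quotient eigenvalues have modulus $\rho_i^n$.
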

\begin{proof}
By the construction of the flag $\mathcal F_{i,j}$, the quotient cocycle
induced by $\phi_i Df$ on $\mathcal F_{i,j}/\mathcal F_{i,j-1}$
is an isometry with respect to a H\"older Riemannian metric (see \eqref{for:33} of Section \ref{sec:12}). Hence, for every
periodic point $p$ of period $n$, the map
\[
\phi_i(f^{n-1}p)\cdots \phi_i(p)\,
\overline{Df_p^n}
\]
is an isometry on $(\mathcal F_{i,j})_p/(\mathcal F_{i,j-1})_p$.

Since all eigenvalues of $\overline{Df_p^n}$ have modulus $\rho_i^n$, we have
\begin{align}\label{for:14}
\phi_i(f^{n-1}p)\cdots \phi_i(p)=\rho_i^{-n}.
\end{align}
Thus $\rho_i^{-n}\overline{Df_p^n}$ is an isometry on $(\mathcal F_{i,j})_p/(\mathcal F_{i,j-1})_p$.

On the other hand, since $A_i$ is diagonalizable over $\mathbb C$ and all
of its eigenvalues have modulus $\rho_i$, we may choose an inner product on
$E_i$ with respect to which $\rho_i^{-1}A_i$ is an isometry.

By the periodic-data assumption, $Df_p^n$ is conjugate to $A^n$. Since
$\mathcal F_{i,j}(p)$ and $\mathcal F_{i,j-1}(p)$ are $Df_p^n$-invariant, the
eigenvalues of the quotient map $\overline{Df_p^n}$ are among the eigenvalues
of $Df_p^n|_{\mathcal E_i(p)}$, counted with multiplicity. Hence they agree
with a subcollection of the eigenvalues of $A_i^n$, counted with
multiplicity. Let $s_{i,j,p}\subset E_i$ be the real $A_i^n$-invariant
subspace corresponding to this subcollection. Then
$A_i^n|_{s_{i,j,p}}$ and $\overline{Df_p^n}$ have the same eigenvalues,
counted with multiplicity.

Therefore the normalized maps
\[
        \rho_i^{-n}\overline{Df_p^n}
        \qquad\text{and}\qquad
        \rho_i^{-n}A_i^n|_{s_{i,j,p}}
\]
are isometries with the same complex eigenvalues, counted with multiplicity.
Real isometries are orthogonally conjugate precisely when they have the same
complex eigenvalues with multiplicity. Hence there exists an isometry
\[
        \mathcal T_{i,j,p}:
        (\mathcal F_{i,j})_p/(\mathcal F_{i,j-1})_p
        \to
        s_{i,j,p}
\]
such that
\[
        \mathcal T_{i,j,p}\circ \rho_i^{-n}\overline{Df_p^n}
        =
        \rho_i^{-n}A_i^n|_{s_{i,j,p}}\circ \mathcal T_{i,j,p}.
\]
Equivalently,
\[
        \mathcal T_{i,j,p}\circ \overline{Df_p^n}
        =
        A_i^n|_{s_{i,j,p}}\circ \mathcal T_{i,j,p}.
\]
Finally, the H\"older metrics on the quotient bundles are uniformly equivalent
to the background metric, and the metric on $E_i$ is fixed. Hence the
isometries $\mathcal T_{i,j,p}$ and their inverses have uniformly bounded
operator norms. Therefore
\[
\max\left\{
\|\mathcal T_{i,j,p}\|,
\|\mathcal T_{i,j,p}^{-1}\|
\right\}
\leq C.
\]

\end{proof}
\begin{corollary}\label{cor:3} Fix \(1\le i\le \ell\) and \(1\le j\le j_i\). For every \(n\in\ZZ\), we have
\begin{align*}
 \Big\|
\rho_i^{-n}\overline{Df_x^n}
\Big\|
\leq C,
\qquad x\in\TT^N,
\end{align*}
where \(\overline{Df_x^n}\) denotes the cocycle induced by \(Df_x^n\) on the
quotient bundle $\mathcal F_{i,j}/\mathcal F_{i,j-1}$.
\end{corollary}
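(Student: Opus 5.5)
We prove Corollary~\ref{cor:3} by factoring the normalized quotient cocycle into an isometry and a scalar correction. We will then show that the scalar correction is uniformly bounded, using the periodic identity \eqref{for:14} from the proof of Lemma~\ref{ob:1} together with the Liv\v{s}ic theorem.

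By item~\eqref{for:33} of Section~\ref{sec:12}, there is a positive $\alpha$-H\"older function $\phi_i$ such that $\phi_i Df$ induces isometries on $\mathcal F_{i,j}/\mathcal F_{i,j-1}$ with respect to an $\alpha$-H\"older metric. This metric is uniformly equivalent to the background metric. Writing $\Phi_n(x)=\phi_i(f^{n-1}x)\cdots\phi_i(x)$ for $n\ge 0$ and $\Phi_{-n}(x)=\Phi_n(f^{-n}x)^{-1}$, we have
\[
\rho_i^{-n}\overline{Df_x^n}=\big(\rho_i^{-n}\Phi_n(x)^{-1}\big)\cdot\big(\Phi_n(x)\overline{Df_x^n}\big).
\]
The second factor is an isometry in the H\"older metric, so its background operator norm is bounded by a constant independent of $n$ and $x$. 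It therefore suffices to show that $|\log(\rho_i^{n}\Phi_n(x))|\le C$ uniformly in $n\in\ZZ$ and $x\in\TT^N$.

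For this, set $\psi=\log\phi_i+\log\rho_i$, which is $\alpha$-H\"older. Then $\log(\rho_i^n\Phi_n(x))$ is the Birkhoff sum $S_n\psi(x)$ for $n\ge0$, and the analogous expression holds for negative $n$. Identity \eqref{for:14} says exactly that $S_n\psi(p)=0$ for every periodic point $p=f^np$. This identity was derived using only the isometry property and the fact, coming from the periodic data, that all eigenvalues of $\overline{Df^n_p}$ have modulus $\rho_i^n$.

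By the Liv\v{s}ic theorem for the transitive Anosov diffeomorphism $f$, there is a continuous (indeed H\"older) function $u$ with $\psi=u\circ f-u$. Hence $S_n\psi(x)=u(f^nx)-u(x)$, and therefore $|S_n\psi(x)|\le 2\|u\|_{C^0}$ for all $n\in\ZZ$ and all $x$. This gives the bound on the scalar factor, and combined with the isometry bound it proves the corollary.

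The only delicate point is justifying the vanishing of the periodic sums, namely \eqref{for:14}, which was already done in Lemma~\ref{ob:1}. Everything else is a direct application of the classical Liv\v{s}ic theorem and the uniform equivalence of the metrics.
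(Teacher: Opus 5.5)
Your proposal is correct and follows essentially the same route as the paper. Both factor off the isometric cocycle $\phi_i Df$, use \eqref{for:14} to make the periodic Birkhoff sums of $\log(\rho_i\phi_i)$ vanish, and invoke Liv\v{s}ic to obtain a bounded transfer function. Your direct treatment of negative $n$ via $\Phi_{-n}(x)=\Phi_n(f^{-n}x)^{-1}$ is only a more explicit version of the paper's appeal to the inverse quotient cocycle.
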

\begin{proof}
Let
\[
\Phi_i(x):=\ln(\rho_i\phi_i(x)).
\]
\eqref{for:14} in Lemma \ref{ob:1} shows that, for every periodic point \(p\) of
period \(m\), we have
\[
\Pi_{k=0}^{m-1}\phi_i(f^k p)=\rho_i^{-m}.
\]
Hence
\[
\sum_{k=0}^{m-1}\Phi_i(f^k p)
=
\ln\left(\Pi_{k=0}^{m-1}\rho_i\phi_i(f^k p)\right)
=
0.
\]
By the Liv\v{s}ic theorem, there exists a H\"older function
\(\lambda:\TT^N\to\RR\) such that
\[
\lambda(fx)-\lambda(x)=\Phi_i(x).
\]
Therefore, for every \(n\geq 1\),
\[
\Pi_{k=0}^{n-1}\phi_i(f^k x)
=
\rho_i^{-n}e^{\lambda(f^n x)-\lambda(x)}.
\]
Since the quotient cocycle induced by \(\phi_i Df\) on $\mathcal F_{i,j}/\mathcal F_{i,j-1}$
is an isometry with respect to a H\"older Riemannian metric, and since this metric is uniformly equivalent to
the background metric, we have
\begin{align*}
\Big\|
\left(\Pi_{k=0}^{n-1}\phi_i(f^k x)\right)
\overline{Df_x^n}
\Big\|
\leq C
\end{align*}
Then we have
\[
\left\|
\rho_i^{-n}\overline{Df_x^n}
\right\|=e^{\lambda(x)-\lambda(f^n x)}\Big\|
\left(\Pi_{k=0}^{n-1}\phi_i(f^k x)\right)
\overline{Df_x^n}
\Big\|
\leq
Ce^{\lambda(x)-\lambda(f^n x)}
\leq C_1.
\]
The estimate for $n<0$ follows by applying the same argument to the inverse
quotient cocycle. The case $n=0$ is immediate. This completes the proof.

\end{proof}

\subsubsection{Asymptotic behavior} \begin{lemma}\label{for:8} Fix $1\leq i\leq \ell$ and $1\leq j\leq j_i$. Denote by $\overline{Df_x}$
induced cocycle by $Df$ on the quotient bundle $\mathcal B:=\mathcal F_{i,j}/\mathcal F_{i,j-1}$.
Then there exists a sequence $k_n\to+\infty$ such that:
\begin{enumerate}
  \item\label{for:5} we have
  \begin{align*}
   \rho_i^{-k_n}A_i^{k_n}\to I_{id}|_{E_i}
  \quad\text{and}\quad
  \rho_i^{k_n}A_i^{-k_n}\to I_{id}|_{E_i}\quad n\to +\infty;
   \end{align*}

  \item\label{for:13} for any $\epsilon>0$ there is $l_\epsilon\in\NN$ such that: for any $k_n$ with $n\geq l_\epsilon$ and any $x\in\TT^N$ with $f^{2k_n}x=x$ we have
  \begin{align*}
   \Big\|\rho_i^{-k_n}\overline{Df^{k_n}_{f^{k_n}(x)}}-\rho_i^{k_n}\overline{Df^{-k_n}_{f^{k_n}(x)}}\Big\|\leq \epsilon.
  \end{align*}
  \end{enumerate}

\end{lemma}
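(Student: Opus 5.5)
The plan has two parts: use compactness to get the sequence \(k_n\) in \eqref{for:5}, and then reduce \eqref{for:13} to the periodic-data conjugacy of Lemma~\ref{ob:1}, using the identity \(f^{k_n}y=f^{-k_n}y\) on orbits of period \(2k_n\).

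\emph{Choice of \(k_n\).} Since \(A_i\) is diagonalizable over \(\CC\) and all its eigenvalues have modulus \(\rho_i\), the map \(g:=\rho_i^{-1}A_i\) is an isometry for a suitable inner product on \(E_i\). Hence \(\{g^k\}_{k\ge 0}\) lies in a compact group \(K\subset GL(E_i)\). Pick a convergent subsequence \(g^{m_n}\to g_\infty\) with \(m_{n+1}-m_n\to\infty\), and set \(k_n:=m_{n+1}-m_n\). Then \(g^{k_n}=g^{m_{n+1}}(g^{m_n})^{-1}\to I_{id}|_{E_i}\). Continuity of inversion in \(K\) gives \(g^{-k_n}\to I_{id}|_{E_i}\), which proves \eqref{for:5}. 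It also gives \(g^{2k_n}=(g^{k_n})^2\to I_{id}|_{E_i}\), and this is the form used below.

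\emph{Reduction of \eqref{for:13}.} Let \(x\) satisfy \(f^{2k_n}x=x\) and put \(y=f^{k_n}x\), so that \(f^{-k_n}y=f^{k_n}y=x\). By the chain rule on the quotient bundle \(\mathcal B\),
\[
\overline{Df^{k_n}_{y}}=\overline{Df^{2k_n}_{x}}\circ\overline{Df^{-k_n}_{y}},
\]
where we use \(\overline{Df^{-k_n}_y}=(\overline{Df^{k_n}_x})^{-1}\). Consequently
\[
\rho_i^{-k_n}\overline{Df^{k_n}_{y}}-\rho_i^{k_n}\overline{Df^{-k_n}_{y}}
=\bigl(\rho_i^{-2k_n}\overline{Df^{2k_n}_{x}}-I_{id}\bigr)\circ\rho_i^{k_n}\overline{Df^{-k_n}_{y}}.
\]
By Corollary~\ref{cor:3}, the last factor has norm at most \(C\), uniformly in \(x\) and \(n\). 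So it suffices to show that \(\|\rho_i^{-2k_n}\overline{Df^{2k_n}_{x}}-I_{id}\|\to0\) uniformly over all \(2k_n\)-periodic points \(x\).

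\emph{Uniform closeness at periodic points.} Apply Lemma~\ref{ob:1} to \(x\), viewed as a periodic point of period \(2k_n\). This gives an \(A_i^{2k_n}\)-invariant subspace \(s=s_{i,j,x}\) and an isomorphism \(\mathcal T=\mathcal T_{i,j,x}\) with \(\|\mathcal T\|,\|\mathcal T^{-1}\|\le C\) such that
\[
\rho_i^{-2k_n}\overline{Df^{2k_n}_x}=\mathcal T^{-1}\circ\bigl(\rho_i^{-2k_n}A_i^{2k_n}|_{s}\bigr)\circ\mathcal T .
\]
Therefore
\[
\|\rho_i^{-2k_n}\overline{Df^{2k_n}_x}-I_{id}\|\le C^2\,\|g^{2k_n}|_s-I_{id}|_s\|\le C^2\,\|g^{2k_n}-I_{id}|_{E_i}\|.
\]
The right-hand side is independent of \(x\) and tends to \(0\) by the choice of \(k_n\). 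Given \(\epsilon>0\), choose \(l_\epsilon\) so that \(C^3\|g^{2k_n}-I_{id}\|\le\epsilon\) for all \(n\ge l_\epsilon\); this yields \eqref{for:13}.

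The only delicate point is the uniformity in the periodic point. It is supplied by the uniform bound on \(\mathcal T_{i,j,p}\) in Lemma~\ref{ob:1}, which comes from the isometry structure, together with the uniform bound of Corollary~\ref{cor:3}.
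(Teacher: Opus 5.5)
Your proof is correct and follows the paper's argument. You choose \(k_n\) as gaps of a recurrent subsequence of the normalized powers \(\rho_i^{-k}A_i^{k}\). You apply Lemma~\ref{ob:1} at a \(2k_n\)-periodic point to make \(\rho_i^{-2k_n}\overline{Df^{2k_n}}\) uniformly close to the identity. You then factor the difference and bound the remaining factor with Corollary~\ref{cor:3}. The differences are cosmetic: you phrase the choice of \(k_n\) via compactness of the isometry group instead of eigenvalue phases, and you apply Lemma~\ref{ob:1} at \(x\), with the factor on the other side, instead of at \(y=f^{k_n}x\).
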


\begin{proof}
Since $A_i$ is diagonalizable over $\CC$ and all of its eigenvalues have
modulus $\rho_i$, we may write its eigenvalues as
\[
\rho_i e^{\sqrt{-1}a_1},\ldots,\rho_i e^{\sqrt{-1}a_m}.
\]
Choose a sequence $\ell_n\to+\infty$ with $\ell_{n+1}-\ell_n\to+\infty$ such
that
\begin{align*}
 e^{\sqrt{-1}a_r\ell_n}\qquad\text{converges for every } 1\leq r\leq m.
\end{align*}
Let $k_n=\ell_{n+1}-\ell_{n}$.  This proves \eqref{for:5}.

\eqref{for:13}: Suppose that $f^{2k_n}x=x$. Let $y=f^{k_n}x$. Then $f^{2k_n}y=y$. By Lemma \ref{ob:1}, there
exist an $A_i^{2k_n}$-invariant subspace $s_{i,j,y}\subset E_i$ and a linear isomorphism
\[
T_{i,j,y}:\mathcal B_y\to s_{i,j,y}
\]
such that
\[
T_{i,j,y}\circ \overline{Df^{2k_n}_y}
=
A^{2k_n}|_{s_{i,j,y}}\circ T_{i,j,y}.
\]
Hence
\begin{align}\label{for:187}
\left(\rho_i^{-2k_n}\overline{Df_y^{2k_n}}-I_{id}|_{\mathcal{B}_y}\right)
=
T_{i,j,y}^{-1}
\left(
\rho_i^{-2k_n}A^{2k_n}|_{s_{i,j,y}}-I_{id}
\right)
T_{i,j,y}.
\end{align}
By \eqref{for:5}, for every $\epsilon>0$ there exists $l_\epsilon\in\NN$ such
that
\begin{align*}
 \big\|
\rho_i^{-2k_n}A_i^{2k_n}
-
I_{id}|_{E_i}
\big\|
<\epsilon^2,
\qquad n\geq l_\epsilon.
\end{align*}
By using the uniform bounds on $T_{i,j,y}$ and $T_{i,j,y}^{-1}$ from
Lemma~\ref{ob:1}, \eqref{for:187} gives
\begin{align*}
  \big\| \rho_i^{-2k_n}\overline{Df_y^{2k_n}}-I_{id}|_{\mathcal{B}_y}\big\|&\leq C\epsilon^2, \qquad n\geq l_\epsilon.
  \end{align*}
We note that
\[
        \rho_i^{-k_n}\overline{Df_y^{k_n}}
        -
        \rho_i^{k_n}\overline{Df_y^{-k_n}}
        =
        \rho_i^{k_n}\overline{Df_y^{-k_n}}
        \left(
        \rho_i^{-2k_n}\overline{Df_y^{2k_n}}
        -
        Id|_{\mathcal B_y}
        \right).
\]
 This, together with Corollary~\ref{cor:3}, gives
\[
\begin{aligned}
        \left\|
        \rho_i^{-k_n}\overline{Df_y^{k_n}}
        -
        \rho_i^{k_n}\overline{Df_y^{-k_n}}
        \right\|
        &\leq
        \left\|
        \rho_i^{k_n}\overline{Df_y^{-k_n}}
        \right\|
        \left\|
        \rho_i^{-2k_n}\overline{Df_y^{2k_n}}
        -
        \operatorname{Id}_{\mathcal B_y}
        \right\|  \\
        &\leq
        C\cdot C\epsilon^2\leq\epsilon.
\end{aligned}
\]
Since $y=f^{k_n}x$, this is exactly \eqref{for:13}. The proof is complete.

 \end{proof}

\subsection{Proof of Theorem \ref{th:9}} Fix $1\leq i\leq\ell$. Recall that $\mathcal E_i=\mathcal F_{i,j_i}$ and that we have the flag
\[
\{0\}
=
\mathcal F_{i,0}
\subset
\mathcal F_{i,1}
\subset
\cdots
\subset
\mathcal F_{i,j_i}
=
\mathcal E_i,
\]
see \eqref{for:33} of  Section~\ref{sec:12}. For each $1\leq j\leq j_i$, set
\[
\mathcal B_{i,j}
:=
\mathcal F_{i,j}/\mathcal F_{i,j-1},
\]
and denote by
\[
\overline{Df_x}:(\mathcal B_{i,j})_x\to(\mathcal B_{i,j})_{fx}
\]
the quotient cocycle induced by $Df_x$.

Since $Df_0$ is conjugate to $A$, and since $Df_0$ preserves the flag at
$0$, the quotient map
\[
\overline{Df_0}:(\mathcal B_{i,j})_0\to(\mathcal B_{i,j})_0
\]
is diagonalizable over $\mathbb C$, and all of its eigenvalues have modulus
$\rho_i$. After reordering and grouping the real diagonal blocks of
$A_i:=A|_{E_i}$ if necessary, we may choose an $A_i$-invariant block
$W_{i,j}\subset E_i$ and a linear isomorphism
\[
P_{i,j}:(\mathcal B_{i,j})_0\to W_{i,j}
\]
such that, writing $A_{i,j}:=A_i|_{W_{i,j}}$, we have
\begin{align}\label{for:11}
P_{i,j}\circ \overline{Df_0}
=
A_{i,j}\circ P_{i,j}.
\end{align}
We now apply Proposition~\ref{po:5} to the quotient cocycle
$\overline{Df_x}$ on $\mathcal B_{i,j}$, with target space $(\mathcal B_{i,j})_0$ and fixed
linear map $\overline{Df_0}$. More precisely, we use Proposition~\ref{po:5} in its vector-bundle form, with
\[
        \mathcal L=\mathcal B_{i,j},\qquad
        E=(\mathcal B_{i,j})_0,\qquad
        \mathcal A=\overline{Df_0}.
\]
We verify the hypotheses of Proposition~\ref{po:5}.
\begin{enumerate}
\item By \eqref{for:11}, the map $\overline{Df_0}|_E$ is conjugate to
$A_{i,j}$. Hence $\overline{Df_0}$ is diagonalizable over $\mathbb C$, and
all of its eigenvalues have modulus $\rho_i$.

  \item At the fixed point $0$, the fixed-point compatibility condition is
tautological:
\[
\overline{Df_0}=\mathcal{A}\qquad \text{on }(\mathcal B_{i,j})_0.
\]


  \item By Corollary \ref{cor:3}, for every $n\in\ZZ$, we have
  \[
  \bigl\|\rho_i^{-n}\overline{Df_x^n}\bigr\|\leq C,
  \qquad x\in \TT^N.
  \]

  \item Let the sequence $k_n$ be as in Lemma \ref{for:8}. By \eqref{for:11} and \eqref{for:5} of Lemma \ref{for:8}, we have
  \begin{align*}
   \rho_i^{-k_n}(\overline{Df_0})^{k_n}\to I_{id}|_{(\mathcal B_{i,j})_0},
\qquad
\rho_i^{k_n}(\overline{Df_0})^{-k_n}\to I_{id}|_{(\mathcal B_{i,j})_0}.
  \end{align*}
  Moreover, by \eqref{for:13} of Lemma \ref{for:8}, for any $\epsilon>0$ there is $l_\epsilon\in\NN$ such that: for any $k_n$ with $n\geq l_\epsilon$ and any $x\in\TT^N$ with $f^{2k_n}x=x$ we have
  \begin{align*}
   \Big\|(\rho_i^{-k_n}\overline{Df^{k_n})}|_{\mathcal{B}_{f^{k_n}(x)}}-(\rho_i^{k_n}\overline{Df^{-k_n})}|_{\mathcal{B}_{f^{k_n}(x)}}\Big\|\leq \epsilon.
  \end{align*}

\end{enumerate}
Hence all hypotheses of Proposition~\ref{po:5} are satisfied. Then for each $1\leq j\leq j_i$, there exists an
$\alpha$-\text{H\"older} bundle map
\[
\mathcal K_{i,j}(x):(\mathcal B_{i,j})_x\to (\mathcal B_{i,j})_0
\]
such that each $\mathcal K_{i,j}(x)$ is a linear isomorphism and
\[
\mathcal K_{i,j}(fx)\circ \overline{Df_x}
=
\overline{Df_0}\circ \mathcal K_{i,j}(x),
\qquad x\in\mathbb T^N.
\]
Moreover,
\[
\|(\mathcal K_{i,j}(x))^{-1}u\|
\leq
C\|u\|,
\qquad x\in\mathbb T^N,\quad u\in (\mathcal B_{i,j})_0.
\]
Define
\[
\widehat{\mathcal C}_{i,j}(x)
:=
P_{i,j}\circ \mathcal K_{i,j}(x).
\]
Then
\[
\widehat{\mathcal C}_{i,j}(x):(\mathcal B_{i,j})_x\to W_{i,j}
\]
is an \(\alpha\)-\text{H\"older} bundle isomorphism satisfying
\begin{equation}\label{for:quotient-conjugacy}
\widehat{\mathcal C}_{i,j}(fx)\circ \overline{Df_x}
=
A_{i,j}\circ \widehat{\mathcal C}_{i,j}(x),
\qquad x\in\TT^N.
\end{equation}
We now assemble these quotient conjugacies. Choose an
\(\alpha\)-\text{H\"older} splitting subordinate to the flag:
\[
\mathcal E_i
=
\mathcal G_{i,1}\oplus\cdots\oplus\mathcal G_{i,j_i},
\]
where
\[
\mathcal F_{i,j}
=
\mathcal G_{i,1}\oplus\cdots\oplus\mathcal G_{i,j}.
\]
Let
\[
\pi_{i,j}:\mathcal F_{i,j}\to
\mathcal F_{i,j}/\mathcal F_{i,j-1}
=
\mathcal B_{i,j}
\]
be the quotient projection. Define
\[
V_{i,j}
:=
W_{i,1}\oplus\cdots\oplus W_{i,j},
\qquad 0\leq j\leq j_i,
\]
with \(V_{i,0}=\{0\}\). Then
\[
\{0\}=V_{i,0}\subset V_{i,1}\subset\cdots\subset V_{i,j_i}=E_i
\]
is a flag of \(A_i\)-invariant, and hence \(A\)-invariant, subspaces.

For
\[
v=v_1+\cdots+v_{j_i},
\qquad v_j\in\mathcal G_{i,j}(x),
\]
define
\begin{align*}
 \mathcal C_i(x)v
:=
\sum_{j=1}^{j_i}
\widehat{\mathcal C}_{i,j}(x)\bigl(\pi_{i,j}(v_j)\bigr).
\end{align*}
Then
\[
\mathcal C_i(x):\mathcal E_i(x)\to E_i
\]
is a linear isomorphism for every \(x\in\TT^N\). Moreover, for every
\(0\leq j\leq j_i\),
\[
\mathcal C_i(x)(\mathcal F_{i,j}(x))=V_{i,j}.
\]
Thus, if we denote
\[
\mathcal C_{i,j}(x):=\mathcal C_i(x)|_{\mathcal F_{i,j}(x)},
\]
then
\[
\mathcal C_{i,j}(x):\mathcal F_{i,j}(x)\to V_{i,j}
\]
is a bundle isomorphism.

Since \(Df\) preserves the flag
\[
\mathcal F_{i,0}\subset\mathcal F_{i,1}\subset\cdots\subset\mathcal F_{i,j_i},
\]
the map
\[
\mathcal C_i(fx)\circ Df_x|_{\mathcal E_i(x)}\circ \mathcal C_i(x)^{-1}
\]
is block upper triangular with respect to the decomposition
\[
E_i=W_{i,1}\oplus\cdots\oplus W_{i,j_i}.
\]
Its \(j\)-th diagonal block is \(A_{i,j}\), by
\eqref{for:quotient-conjugacy}. Therefore, defining
\[
\widetilde A_i(x)
:=
\mathcal C_i(fx)\circ Df_x|_{\mathcal E_i(x)}\circ \mathcal C_i(x)^{-1},
\]
we obtain a block upper-triangular linear map
\[
\widetilde A_i(x):E_i\to E_i
\]
whose diagonal blocks coincide with the corresponding diagonal blocks of
\(A_i\), and
\[
\mathcal C_i(fx)\circ Df_x|_{\mathcal E_i(x)}
=
\widetilde A_i(x)\circ \mathcal C_i(x),
\qquad x\in\TT^N.
\]

It remains to check the H\"older regularity. Since each
\(\mathcal K_{i,j}\) is \(\alpha\)-H\"older and its inverse is
uniformly bounded, the identity
\[
\mathcal K_{i,j}(x)^{-1}-\mathcal K_{i,j}(y)^{-1}
=
\mathcal K_{i,j}(x)^{-1}
\bigl(\mathcal K_{i,j}(y)-\mathcal K_{i,j}(x)\bigr)
\mathcal K_{i,j}(y)^{-1}
\]
shows that \(\mathcal K_{i,j}^{-1}\) is also \(\alpha\)-H\"older.
Thus each \(\widehat{\mathcal C}_{i,j}\) is a bi-\(\alpha\)-H\"older
bundle isomorphism. Since the splitting
\[
\mathcal E_i
=
\mathcal G_{i,1}\oplus\cdots\oplus\mathcal G_{i,j_i}
\]
and the quotient projections \(\pi_{i,j}\) are \(\alpha\)-H\"older,
the assembled bundle map \(\mathcal C_i\) is bi-\(\alpha\)-H\"older.

Finally, for \(j=1\), we have
\[
\mathcal B_{i,1}
=
\mathcal F_{i,1}/\mathcal F_{i,0}
=
\mathcal F_{i,1}.
\]
Hence
\[
\mathcal C_{i,1}(fx)\circ Df_x|_{\mathcal F_{i,1}(x)}
=
A_i|_{V_{i,1}}\circ \mathcal C_{i,1}(x),
\qquad x\in\TT^N.
\]
This proves the stated assertion for the first subbundle in the flag and
completes the proof of Theorem~\ref{th:9}.

\section{Curve differentiability of $H$}\label{sec:34}
\subsection{Notation}\label{sec:39} We recall the following notation.
\begin{enumerate}
  \item $\mathfrak m$ and $\mu$ are defined in Section~\ref{sec:12}.
  \item The numbers $\rho_i$, the Lyapunov blocks $\mathcal E_i$, and the
  linear subspaces $E_i$ are defined in Section~\ref{sec:12}.

  \item The maps $A_i$ and $p_i$ are defined in Section~\ref{sec:14}.
  \item The foliations $\mathcal W_i^f$ and $\mathcal W_i^A$ are defined in
  \eqref{for:23} of Section~\ref{sec:12}.
  \item The subbundle $\mathcal F_{i,1}$ is defined in \eqref{for:33} of
  Section~\ref{sec:12}.
  \item The map \(\mathcal C_{i,1}\) and the subspace \(V_{i,1}\) are defined in
  Theorem~\ref{th:9}.

  \item\label{for:7} Set \(H_i=p_i\circ H\). After choosing a lift to the universal cover,
  we write
  \[
        H_i=p_i+h_i .
  \]
  Any two choices of lift differ by a constant vector. Hence this choice does
  not affect the differentiability of \(H_i\) or \(H\) in this section.
\end{enumerate}
We fix $i_0\leq i\leq\ell$. Throughout this section, we write $\mathcal F:=\mathcal F_{i,1}$ for simplicity.
\subsection{Curve differentiability along $\mathcal F$}\label{sec:36} We introduce the notion of curve differentiability of $H$ along
$\mathcal F$. Since $H$ is torus-valued, derivatives and integrals of
$H\circ\gamma$ will always be understood after choosing a local lift to
$\mathbb R^N$. This convention is independent of the choice of lift.

\begin{enumerate}
  \item A curve $\gamma:(-1,1)\to\TT^N$ is called a \emph{regular curve along
  $\mathcal F$} if $\gamma$ is a $C^{1+\alpha}$ curve and
$\gamma'(t)\in \mathcal{F}_{\gamma(t)}$ for any $t\in (-1,1)$.  We say that $\gamma$ passes through $x$ if
  $\gamma(0)=x$.

\smallskip
  \item $x\in\TT^N$. Two regular curves $\gamma_1,\gamma_2$ along $\mathcal F$
  passing through $x$ are called \emph{equivalent} if $\gamma_1'(0)=\gamma_2'(0)$.
We denote the equivalence class of $\gamma$ by $[\gamma]_x$.

\smallskip
  \item  Let $x\in\TT^N$ and $u\in\mathcal F_x$. We say that $H$ is
  \emph{curve differentiable along $u$} if there exists a regular curve
  $\gamma$ along $\mathcal F$ such that
  \[
  \gamma(0)=x,
  \qquad
  \gamma'(0)=u,
  \]
  and the derivative
  \[
  \left.\frac{d}{dt}\right|_{t=0} H\circ\gamma(t)
  \]
  exists and is independent of the choice of such \(\gamma\). In this case we define
  \[
  dH_x^{\mathfrak c}(u)
  :=
  \left.\frac{d}{dt}\right|_{t=0} H\circ\gamma(t).
  \]

\end{enumerate}

\subsection{Main result}
The role of Proposition~\ref{po:2} is to provide the first step in the proof of Theorem~\ref{th:1}. More precisely, Proposition~\ref{po:2} proves curve differentiability of \(H\) along \(\mathcal F\), which will later be used to establish the integrability of \(\mathcal F\)
and the differentiability of \(H\) along its integral leaves.
\begin{proposition}\label{po:2}
Suppose that \(H(\mathcal W_i^f)=\mathcal W_i^A\) for some
\(i_0\leq i\leq \ell\). Then, for every \(x\in\TT^N\) and every
\(u\in\mathcal F_x\), we have
\[
dH_x^{\mathfrak c}(u)=\mathcal D_i(x)(u),
\]
where
\[
\mathcal D_i(x):=B_i\circ \mathcal C_{i,1}(x):\mathcal F_x\to E_i.
\]
Here  \(B_i:V_{i,1}\to E_i\) is a fixed linear map.
\end{proposition}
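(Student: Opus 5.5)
The plan is to first reduce the problem to the single component \(H_i=p_i\circ H\). Then I would compute the distributional derivative of \(H_i\circ\gamma\) along an arbitrary regular curve \(\gamma\) along \(\mathcal F\), using the positive-time series for \(h_i\) together with the cocycle relation \eqref{for:70}. The constant map \(B_i\) will come out of an equidistribution argument.

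\textbf{Step 1 (reduction to \(H_i\)).} Since \(\mathcal F=\mathcal F_{i,1}\subset\mathcal E_i\) and \(\mathcal E_i\) is uniquely integrable to \(\mathcal W^f_i\), any \(C^1\) curve \(\gamma\) tangent to \(\mathcal F\) lies in a single leaf of \(\mathcal W^f_i\). By the hypothesis \(H(\mathcal W_i^f)=\mathcal W_i^A\), the lift of \(H\circ\gamma(t)-H\circ\gamma(0)\) lies in \(E_i\). Hence only \(H_i=p_i+h_i\) varies along \(\gamma\), and it suffices to show the following: for some \(\delta>0\) and all \(\omega\in C^1_c(-\delta,\delta)\),
\[
\int H_i\circ\gamma\,\omega'\,dt=-\int \mathcal D_i(\gamma(t))(\gamma'(t))\,\omega(t)\,dt .
\]
This is \eqref{for:103}. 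The right-hand integrand is continuous in \(t\), so \(H_i\circ\gamma\) is \(C^1\) with derivative \(\mathcal D_i(\gamma(t))\gamma'(t)\). At \(t=0\) this depends only on \(\gamma'(0)=u\), which gives both existence and independence of \(dH^{\mathfrak c}_x(u)\).

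\textbf{Step 2 (truncation and the cocycle relation).} The partial sums \(h_i^{[m]}=\sum_{j\le m}A_i^{-(j+1)}R_i\circ f^j\) are \(C^1\) and converge uniformly to \(h_i\) by \eqref{for:92}. So the left side equals the limit of \(\int (p_i+h_i^{[m]})\circ\gamma\,\omega'\), and this holds along any subsequence. Telescoping as in \eqref{for:18}, with \(\bar f=A+\bar R\), gives \(D(p_i+h_i^{[m]})=A_i^{-(m+1)}p_i\,Df^{m+1}\). Now set \(M(y):=p_i\circ\mathcal C_{i,1}(y)^{-1}:V_{i,1}\to E_i\), which is \(\alpha\)-H\"older by Theorem~\ref{th:9}. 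Using \eqref{for:70},
\[
A_i^{-k}p_iDf^k_{\gamma(t)}\gamma'(t)=A_i^{-k}\,M(f^k\gamma(t))\,A_i^{k}\,\mathcal C_{i,1}(\gamma(t))\gamma'(t).
\]
Along the sequence \(k_n\) of Lemma~\ref{for:8} we have \(\rho_i^{-k_n}A_i^{k_n}\to I\). Therefore the conjugation \(A_i^{-k_n}(\cdot)A_i^{k_n}\) is asymptotically trivial on uniformly bounded maps. The problem is then reduced to proving
\[
\int M(f^{k_n}\gamma(t))\,\mathcal C_{i,1}(\gamma(t))\gamma'(t)\,\omega(t)\,dt\;\longrightarrow\;\int B_i\,\mathcal C_{i,1}(\gamma(t))\gamma'(t)\,\omega(t)\,dt,
\]
with \(B_i:=\int_{\mathbb T^N}M\,d\mu\). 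Here \(\mu\) is the measure of Section~\ref{sec:13}; this is my candidate for the fixed linear map.

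\textbf{Step 3 (main obstacle: equidistribution along a curve).} The curve is one-dimensional, so mixing of \(\mu\) cannot be applied to it directly. I would handle this as the paper's strategy suggests. Put \(\gamma\) in a foliation box and replace the curve integral by normalized averages over thin plaques: translate \(\gamma\) along a smooth transversal family of size \(r\). The H\"older continuity of \(M\), \(\mathcal C_{i,1}\) and \(\mathcal F\), together with the fiber-bunching type estimates (Lemmas~\ref{le:5} and~\ref{le:1}), should make the error between the curve integral and the plaque average small. The difficulty is that points which are transversally \(r\)-close separate under \(f^{k}\) in unstable directions. I would therefore decompose the transversal displacement into stable and unstable parts. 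For the stable part, I would use forward contraction together with the boundedness of \(\rho_i^{-k}Df^k|_{\mathcal F}\). The unstable part must be absorbed by averaging: there I would choose \(r=r(k)\) shrinking and compare via the conjugacy \(H\) to the linear picture, where the curve is pushed along \(E_i\)-leaves.

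Once the integrand is a genuine average over an open set, the density of \(\mu\) with respect to \(\mathfrak m\) (Section~\ref{sec:13}) and mixing of \(f\) with respect to \(\mu\) (it is Bernoulli, being conjugate to \(A\) with Lebesgue measure) give convergence to \(\int M\,d\mu\). Controlling the plaque-to-curve error uniformly in \(k_n\) is the step I expect to be hardest. Finally, letting \(r\to0\) and \(n\to\infty\) yields \eqref{for:103} with \(\mathcal D_i=B_i\circ\mathcal C_{i,1}\).
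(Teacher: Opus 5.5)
Your Steps~1 and~2 are correct. Step~2 is a tidy way to obtain
$\int(p_i+h_i^{[k-1]})\circ\gamma\,\omega'\,dt=-\int A_i^{-k}M(f^k\gamma(t))A_i^{k}\,\mathcal C_{i,1}(\gamma(t))\gamma'(t)\,\omega(t)\,dt$ directly on the curve, and your $B_i=\int M\,d\mu$ is exactly the paper's $B_i$. However, this reduction is equivalent to Lemma~\ref{th:7} itself, so Step~3 carries the whole content, and the mechanism you sketch for it does not work.

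You need the weighted curve measure $\gamma_*(\omega\,dt)$ to equidistribute under $f^{k_n}$, and you try to get this by thickening the curve \emph{after} iterating. Since $i\geq i_0$, the curve lies in a leaf of $\mathcal W^{u,f}$. Transverse displacements inside the unstable leaf are therefore expanded by $f^{k_n}$, and $M\circ f^{k_n}$ has no modulus of continuity in those directions that is uniform in $n$. So for any fixed $r>0$ the curve-to-plaque error is not small uniformly in $n$. Lemmas~\ref{le:5} and~\ref{le:1} only compare orbit segments that stay exponentially close (stable-type displacements), so they do not help here. If you shrink $r=r(k_n)$ to exponentially small scale to control the separation, the normalized plaque densities blow up and qualitative mixing says nothing; you would need effective mixing with rates beating the expansion. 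Passing to the linear side does not help either: $H\circ\gamma$ is only a H\"older curve in an $E_i$-leaf, and there is no general reason its images under $A^{k_n}$ equidistribute. (The limit you want does hold here, but only as a consequence of the paper's argument, not as an input.)

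The missing idea is to thicken \emph{before} any dynamics. By continuity of $H_i$ alone, $\int H_i\circ\gamma\,\omega'\,dt=\lim_{\epsilon\to0}c_\epsilon\int_{O_\epsilon}(H_i\circ\Phi_k)\,\omega'\phi_1^\epsilon\phi_2^\epsilon\,d\widehat\mu$. For each fixed $\epsilon$, the paper then expands $h_i$, integrates by parts in $t$, and telescopes along $k_m$. It applies ordinary mixing of $A$, after conjugating by $H$, to a fixed $L^\infty$ density on the tube.

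The price is that the chart field $\mathcal V=D\Phi_k(\partial_t)$ lies in $\mathcal F$ only on the central curve, so \eqref{for:70} is not available inside the tube. One replaces $\mathcal V$ by an $\mathcal F$-valued field $\mathcal U$ with $\|\mathcal V-\mathcal U\|=O(\epsilon^\alpha)$. Because $\|A_i^{-(m+1)}D(R_i\circ f^m)|_{\mathcal E_i}\|$ grows polynomially in $m$, this swap can only be controlled by cutting the series at $n_\epsilon\approx\epsilon^{-\alpha/(2\dim\mathcal E_i)}$. The head is estimated pointwise. The tail is estimated by the leafwise integration-by-parts decay bound of Lemma~\ref{le:6}: smooth the coefficients in the leaf variables and move the derivative onto the test function, gaining $\rho_i^{-(m+1)}$ against the $\epsilon^{-1}$ loss from $\phi_1^\epsilon$. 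This order of limits and this decay estimate are what your proposal lacks.
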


\begin{remark}
From the conjugacy relation \eqref{for:70} in Theorem~\ref{th:9}, it is
natural to think that \(\mathcal C_{i,1}\) represents
\(DH_i|_{\mathcal F}\). However, the solution in \eqref{for:70} is not
canonically normalized. For instance, one may post-compose
\(\mathcal C_{i,1}\) with a constant linear map commuting with \(A_i\) and
obtain another solution of the same conjugacy relation. Proposition~\ref{po:2}
identifies the actual curve derivative of \(H_i\) along \(\mathcal F\): up to
the fixed linear map \(B_i\), it is precisely \(\mathcal C_{i,1}\).
\end{remark}

A key step in the proof of Proposition \ref{po:2} is the following result:
\begin{lemma}\label{th:7} Suppose $H(\mathcal{W}^f_i) = \mathcal{W}^A_i$ for some $i_0\leq i\leq \ell$. Then for any regular curve $\gamma$ along $\mathcal{F}$, there is $\delta>0$ such that for any $C^1$ function $\omega$ compactly supported on $(-\delta,\delta)$, we have
\begin{align*}
 \int_{-\delta}^\delta H\circ \gamma(t)\omega'(t)dt=-\int_{-\delta}^\delta \mathcal D_i(\gamma(t))(\gamma'(t))\omega(t)dt.
\end{align*}
\end{lemma}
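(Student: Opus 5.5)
The plan is to work with the component $H_i=p_i+h_i$ and the positive-time series \eqref{for:92}. For a regular curve $\gamma$ along $\mathcal F$, the finite sums
\[
h_i^{[m]}:=\sum_{k=0}^{m}A_i^{-(k+1)}R_i\circ f^k
\]
are $C^{1+\alpha}$. They converge uniformly to $h_i$, since $\|A_i^{-(k+1)}\|\le C\rho_i^{-k}$ with $\rho_i>1$ for $i\ge i_0$. So for a test function $\omega$ we get
\[
\int H_i\circ\gamma\,\omega'=\lim_m\int (p_i+h_i^{[m]})\circ\gamma\,\omega'=-\lim_m\int D(p_i+h_i^{[m]})_{\gamma(t)}(\gamma'(t))\,\omega(t)\,dt .
\]
Telescoping $A_i h_i-h_i\circ f=R_i$ and using $\bar f=A+\bar R$ gives
\[
D(p_i+h_i^{[m]})_x=A_i^{-(m+1)}p_i\circ Df^{m+1}_x .
\]
The statement therefore reduces to showing that
\[
\int A_i^{-(m+1)}p_i\,Df^{m+1}_{\gamma(t)}(\gamma'(t))\,\omega(t)\,dt\;\longrightarrow\;\int\mathcal D_i(\gamma(t))(\gamma'(t))\,\omega(t)\,dt .
\]
The other components $p_j\circ H$, for $j\neq i$, are handled by the hypothesis $H(\mathcal W^f_i)=\mathcal W^A_i$. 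Once the leaf issue is settled, this hypothesis forces $H\circ\gamma$ to move only in the $E_i$ direction. I would prove this by the same averaging argument applied to the $j$-th components, for which the target derivative is zero.

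To identify the limit, I would compare $A_i^{-n}p_iDf^n|_{\mathcal F}$ with $\mathcal C_{i,1}$ via \eqref{for:70}. Iterating gives
\[
Df^n|_{\mathcal F_x}=\mathcal C_{i,1}(f^nx)^{-1}A_i^n\,\mathcal C_{i,1}(x),
\]
so
\[
A_i^{-n}p_iDf^n_x|_{\mathcal F_x}=A_i^{-n}\,\bigl(p_i\mathcal C_{i,1}(f^nx)^{-1}\bigr)\,A_i^{n}\,\mathcal C_{i,1}(x).
\]
Here $\Psi:=p_i\circ\mathcal C_{i,1}^{-1}$ is a H\"older map $\mathbb T^N\to \operatorname{Hom}(V_{i,1},E_i)$. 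The expression to average is therefore the twisted Birkhoff-type term $A_i^{-n}\Psi(f^n x)A_i^n$. I would take $n=k_m$ along the sequence from Lemma~\ref{for:8}, so that $\rho_i^{-k_m}A_i^{k_m}\to \mathrm{Id}$ and the conjugation by $A_i^{\pm k_m}$ disappears in the limit. The integrand then becomes $\Psi(f^{k_m}\gamma(t))\,\mathcal C_{i,1}(\gamma(t))(\gamma'(t))$. If $f^{k_m}$ equidistributes the measure $\omega\,dt$ pushed along $\gamma$, the limit is
\[
\Bigl(\int\Psi\,d\mu\Bigr)\int\mathcal C_{i,1}(\gamma(t))(\gamma'(t))\,\omega(t)\,dt .
\]
This suggests setting $B_i:=\int\Psi\,d\mu$, extended appropriately. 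The convergence is uniform in $m$ along the full sequence, so passing to the subsequence $k_m$ is harmless.

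The main obstacle is this equidistribution step. The measure lives on a one-dimensional curve lying inside an expanding direction, and $\mu$ is only known to be absolutely continuous. Following the strategy described in the introduction, I would approximate the curve integral by normalized averages over small plaques. Concretely, I would thicken $\gamma$ by a transversal $C^{1+\alpha}$ family of curves in a foliation box to get a $C^1$ density $\varphi_\epsilon\,d\mathfrak m$, and then prove two things.
\begin{enumerate}
\item The integrand $A_i^{-n}p_iDf^n(\gamma')$ varies by $O(\epsilon^\alpha)$ across the thickening, uniformly in $n$. This follows from the fiber-bunching estimates of Lemma~\ref{le:5} and Lemma~\ref{le:1} on $\mathcal F$, which is uniformly bounded after $\rho_i$-normalization by \eqref{for:33}. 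The integrated-by-parts side is also continuous in the thickening parameter.
\item For the thickened density, $\int\Psi(f^n x)\,\Phi(x)\,d\mathfrak m\to\int\Psi\,d\mu\int\Phi\,d\mathfrak m$. This is mixing of $f$ with respect to $\mu$, using that $\mu$ is equivalent to $\mathfrak m$ with a H\"older density (Section~\ref{sec:13}) and that $f$ is H\"older conjugate to the mixing automorphism $A$.
\end{enumerate}

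Finally, I would choose $\delta$ so that $\gamma((-\delta,\delta))$ lies in a single chart where the local lift and the foliation box are defined. Combining the two limits gives the stated identity with $\mathcal D_i=B_i\circ\mathcal C_{i,1}$.
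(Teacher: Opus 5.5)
There is a genuine gap. Your overall architecture matches the paper's: telescoping to $A_i^{-m}p_i\,Df^m$, reducing via $\mathcal C_{i,1}$ to $\Psi(f^{k_m}x)$ along $\rho_i^{-k_m}A_i^{k_m}\to I$, and using mixing to produce $B_i=\int\Psi\,d\mu$. However, your item (1), which is meant to justify the thickening, is false, and this is exactly where the real work lies.

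\emph{Why (1) fails.} Lemmas~\ref{le:5} and~\ref{le:1} compare $\rho_i^{-n}Df^n|_{\mathcal F}$ at two base points only when the orbit segments stay exponentially close, i.e.\ for stable displacements in forward time. A thickening that gives a density with respect to $\mathfrak m$, which you need for (2), must contain displacements with unstable components.
\begin{itemize}
\item Along unstable displacements, $f^n x$ and $f^n\gamma(t)$ separate after $O(\log(1/\epsilon))$ steps, so in general $\Psi(f^nx)$ and $\Psi(f^n\gamma(t))$ are unrelated. The oscillation of $\Psi\circ f^n$ across the tube is precisely what mixing averages out, so it cannot also be $O(\epsilon^\alpha)$ pointwise, uniformly in $n$.
\item $\mathcal F$ is not known to be integrable, so the tangent vectors $\mathcal V$ of your thickened curves lie at best in $\mathcal E_i$, not in $\mathcal F$. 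Off $\mathcal F$ you cannot use \eqref{for:70}. The only a priori bound is $\|A_i^{-n}p_i\,Df^n|_{\mathcal E_i}\|\le C(n+1)^{\dim\mathcal E_i-1}$, coming from \eqref{for:33}. Hence the $O(\epsilon^\alpha)$ discrepancy between $\mathcal V$ and an $\mathcal F$-valued field costs $O(\epsilon^\alpha n^{\dim\mathcal E_i-1})$, which is not uniform in $n$.
\end{itemize}

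\emph{What is correct, and what is missing.} Your remark about the integrated side is the right starting point. Since $h_i^{[m]}\to h_i$ uniformly, the family $\{h_i^{[m]}\}$ is equicontinuous. So the curve integral equals the tube average of $-(p_i+h_i^{[m-1]})\circ\Phi_k\,\omega'$ up to $o_\epsilon(1)$, uniformly in $m$. Integrating by parts in the chart variable $t$ (with the product measure) turns this into the tube average of $A_i^{-m}p_i\,Df^m(\mathcal V)\,\omega$.

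The missing ingredient is a uniform-in-$m$ estimate for replacing $\mathcal V$ by an $\mathcal F$-valued field $\mathcal U$ that equals $\gamma'$ on $\gamma$. The paper obtains it as follows.
\begin{itemize}
\item Write $A_i^{-m}p_iDf^m=p_i+\sum_{n<m}A_i^{-(n+1)}D(R_i\circ f^n)$ and split the sum at $n_\epsilon\approx\epsilon^{-\alpha/(2\dim\mathcal E_i)}$.
\item The first $n_\epsilon$ terms cost at most $(n_\epsilon+1)^{\dim\mathcal E_i}\epsilon^\alpha\le C\epsilon^{\alpha/2}$.
\item Each tail term is integrated by parts inside the tube, after mollifying the H\"older coefficients at scale $\eta=\rho_i^{-(n+1)/2}$. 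This gives the bound of Lemma~\ref{le:6}, namely $C\epsilon^{-1}\rho_i^{-\alpha(n+1)/2}(n+1)^{\dim\mathcal E_i-1}$.
\item In that bound, the exponential factor coming from $\|A_i^{-(n+1)}\|\,\|R_i\|_{C^0}$ beats the $\epsilon^{-1}$ lost on the cutoff, so the tail sums to $O(\epsilon^{\alpha/2})$.
\end{itemize}
Only after this replacement do \eqref{for:70} and mixing apply, for fixed $\epsilon$, followed by $\epsilon\to0$. With this estimate in place of your (1), the rest of your outline goes through.

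\emph{Minor point.} The $j\neq i$ components need no averaging. $\gamma$ is tangent to the uniquely integrable bundle $\mathcal E_i$, so it lies in a leaf of $\mathcal W_i^f$. Hence $H\circ\gamma$ lies in an affine $E_i$-leaf and $p_j\circ H\circ\gamma$ is constant, while $\mathcal D_i$ is $E_i$-valued.
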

The integrals in Lemma~\ref{th:7} are understood componentwise. More precisely,
if \(g\) is vector-valued and \(\varphi\) is scalar-valued, then \(g\varphi\)
denotes their componentwise product, and the integral of \(g\varphi\) is taken
componentwise.

\begin{remark}\label{re:4}
Under the assumption
$H(\mathcal W_i^f)=\mathcal W_i^A$, the components $H_j$, $j\neq i$, are
constant along the leaves of $\mathcal W_i^f$. Hence, for $j\neq i$,
\begin{align*}
 \int_{-\delta}^{\delta}
        H_j\circ\gamma(t)\,\omega'(t)\,dt
        =
        H_j(\gamma(0))
        \int_{-\delta}^{\delta}\omega'(t)\,dt
        =
        0,
\end{align*}
because $\omega\in C_c^1(-\delta,\delta)$. Hence,
\begin{align*}
 \int_{-\delta}^\delta H\circ \gamma(t)\omega'(t)dt=\int_{-\delta}^\delta H_i\circ \gamma(t)\omega'(t)dt.
\end{align*}
\end{remark}

\subsection{Proof of Proposition~\ref{po:2}}

In this subsection, we prove Proposition~\ref{po:2} assuming
Lemma~\ref{th:7}. If \(u=0\), the conclusion is immediate. Hence assume that
\(u\neq0\).

Let \(\gamma\) be any regular curve along \(\mathcal F\) such that
\[
        \gamma(0)=x,
        \qquad
        \gamma'(0)=u.
\]
By Lemma~\ref{th:7}, after shrinking the interval if necessary, for every
\(\omega\in C_c^1((-\delta,\delta))\) we have
\[
\int_{-\delta}^{\delta}
(H\circ\gamma)(t)\omega'(t)\,dt
=
-\int_{-\delta}^{\delta}
\mathcal D_i(\gamma(t))(\gamma'(t))\omega(t)\,dt.
\]
The integrals are understood after choosing a local lift of \(H\circ\gamma\)
to \(\mathbb R^N\). Thus the lifted curve \(H\circ\gamma\) has weak derivative
\[
        t\mapsto \mathcal D_i(\gamma(t))(\gamma'(t)).
\]
Since \(\mathcal D_i\) is H\"older and \(\gamma\) is \(C^{1+\alpha}\), this
weak derivative is continuous. Therefore \(H\circ\gamma\) is \(C^1\), and
\[
        \frac{d}{dt}(H\circ\gamma)(t)
        =
        \mathcal D_i(\gamma(t))(\gamma'(t)).
\]
Evaluating at \(t=0\), we obtain
\[
        \left.\frac{d}{dt}\right|_{t=0}H\circ\gamma(t)
        =
        \mathcal D_i(x)(u).
\]
The right-hand side depends only on \(x\) and \(u\), not on the chosen regular
curve \(\gamma\). Hence \(H\) is curve differentiable along \(u\), and
\[
        dH_x^{\mathfrak c}(u)=\mathcal D_i(x)(u).
\]
This proves Proposition~\ref{po:2}.

It remains to prove Lemma~\ref{th:7}; this will occupy the rest of the section.

\subsection{Proof strategy for Lemma~\ref{th:7}}

We explain the strategy of the proof of Lemma~\ref{th:7}. Since the
integrability of $\mathcal F$ is not known a priori, we cannot begin with
leafwise differentiability. Instead, we prove differentiability along curves.
Fix a regular curve $\gamma:(-\delta,\delta)\to \TT^N$ tangent to
$\mathcal F$. The goal is to prove that, for every
$\omega\in C_c^1((-\delta,\delta))$,
\[
        \int_{-\delta}^{\delta}
        (H_i\circ\gamma)(t)\omega'(t)\,dt
        =
        -
        \int_{-\delta}^{\delta}
        \mathcal D_i(\gamma(t))(\gamma'(t))\omega(t)\,dt .
\]

Formally, using the expansion of $H_i$ from \eqref{for:92}, one would like to
write
\begin{align*}
&\int_{-\delta}^{\delta}
        (H_i\circ\gamma)(t)\omega'(t)\,dt\\
&=
\int_{-\delta}^{\delta}
        p_i(\gamma(t))\omega'(t)\,dt
+
\sum_{m=0}^{\infty}
\int_{-\delta}^{\delta}
        A_i^{-(m+1)}(R_i\circ f^m)(\gamma(t))\omega'(t)\,dt  \\
&=
-\int_{-\delta}^{\delta}
        p_i(\gamma'(t))\omega(t)\,dt
-
\sum_{m=0}^{\infty}
\int_{-\delta}^{\delta}
        A_i^{-(m+1)}
        D(R_i\circ f^m)_{\gamma(t)}(\gamma'(t))
        \omega(t)\,dt .
\end{align*}
Since $\gamma'(t)\in\mathcal F_{\gamma(t)}$, it is natural to use the
conjugacy relation \eqref{for:70} from Theorem~\ref{th:9} to replace the
growth of $Df^m|_{\mathcal F}$ by the linear growth of $A_i^m$. However, the
resulting series is difficult to control directly on the one-dimensional curve.
The main idea is therefore to approximate the curve distribution by normalized
averages over small plaques in a foliation box. This converts the curve-level
problem into an averaged problem on $\TT^N$, where the estimates developed
earlier can be applied.

More precisely, we choose a foliation chart $\Phi_k$ adapted to $\gamma$, so
that
\[
        \Phi_k(t,0,0)=\gamma(t).
\]
Here $t$ is the distinguished direction along the curve, while the remaining
leaf variables and transverse variables are denoted by $w$ and $y$. Let
\[
        \mathcal V_{\Phi_k(t,w,y)}
        :=
        D\Phi_k(t,w,y)(\partial_t).
\]
We then define a normalized averaged expression
$\mathcal N_{\mathcal V,\epsilon}$ (see \eqref{for:15}) over a tube $O_\epsilon$ around $\gamma$.
Here $\epsilon$ measures the size of the tube in the $w$- and $y$-directions,
and \(c_\epsilon\) is the inverse of the corresponding
normalizing volume. The factor
\(\phi_1^\epsilon(w)\phi_2^\epsilon(y)\) in the definition of
\(\mathcal N_{\mathcal V,\epsilon}\) localizes the average near the central
curve. As \(\epsilon\to0\), the normalized tube averages converge to the
corresponding integral along \(\gamma\). In particular, one obtains
\[
        \int_{-\delta}^{\delta}
        H_i(\gamma(t))\omega'(t)\,dt
        =
        \lim_{\epsilon\to0}\mathcal N_{\mathcal V,\epsilon}
        -
        \int_{-\delta}^{\delta}
        p_i(\gamma'(t))\omega(t)\,dt,\qquad \text{see Section }\ref{sec:30}.
\]
However, although \(\mathcal V\) agrees with the curve direction on the central
curve
\begin{align*}
 \Phi_k(t,0,0)=\gamma(t),
\end{align*}
 away from the curve it need not take values
in \(\mathcal F\). Hence \eqref{for:70} from Theorem~\ref{th:9} cannot be
applied directly. We therefore introduce a vector field \(\mathcal U\) which
takes values in \(\mathcal F\) and agrees with \(\mathcal V\) on the central
curve. The difference between \(\mathcal V\) and \(\mathcal U\) on the
\(\epsilon\)-neighborhood is \(O(\epsilon^\alpha)\). Combining this with the
averaging estimate of Lemma~\ref{le:6}, we obtain
\[
        \lim_{\epsilon\to0}\mathcal N_{\mathcal V,\epsilon}
        =
        \lim_{\epsilon\to0}\mathcal N_{\mathcal U,\epsilon}.
\]
Hence we may replace \(\mathcal V\) by the more convenient field
\(\mathcal U\); see Section \ref{sec:31}.

Finally, we compute the limit involving \(\mathcal N_{\mathcal U,\epsilon}\).
After applying the reduction and the approximate-identity argument, the finite
sums split into two terms; see \eqref{for:141}. The first term converges to
\[
        \int_{-\delta}^{\delta}
        p_i(\gamma'(t))\omega(t)\,dt,
        \qquad\text{see }\eqref{for:143}.
\]
The second term is the only place where the mixing property of \(A\) enters.
More precisely, after rewriting the foliation-box integral with respect to
Lebesgue measure on \(\mathbb T^N\), we conjugate by \(H\). The iterates
\(f^{k_m}\) then become the linear iterates \(A^{k_m}\). The mixing of \(A\)
allows us to replace the oscillating factor by its space average; see
\eqref{for:100}. This produces the averaged linear map \(B_i\), and hence the
operator
\[
        \mathcal D_i(x)=B_i\circ\mathcal C_{i,1}(x).
\]
Consequently,
\[
        \lim_{\epsilon\to0}\mathcal N_{\mathcal U,\epsilon}
        =
        \int_{-\delta}^{\delta}
        \mathcal T_i(\gamma(t))(\gamma'(t))\omega(t)\,dt ,
\]
where
\[
        \mathcal T_i(x)
        =
        p_i|_{\mathcal F_x}
        -
        \mathcal D_i(x).
\]
Combining this with
\[
        \lim_{\epsilon\to0}\mathcal N_{\mathcal V,\epsilon}
        =
        \lim_{\epsilon\to0}\mathcal N_{\mathcal U,\epsilon},
\]
we obtain
\[
\begin{aligned}
\int_{-\delta}^{\delta}
        H_i(\gamma(t))\omega'(t)\,dt
&=
\int_{-\delta}^{\delta}
        \mathcal T_i(\gamma(t))(\gamma'(t))\omega(t)\,dt
-
\int_{-\delta}^{\delta}
        p_i(\gamma'(t))\omega(t)\,dt \\
&=
-
\int_{-\delta}^{\delta}
        \mathcal D_i(\gamma(t))(\gamma'(t))\omega(t)\,dt .
\end{aligned}
\]
By Remark~\ref{re:4}, the same identity holds with \(H\) in place of \(H_i\).
This proves Lemma~\ref{th:7}.

\subsection{Notation and basic facts} We collect the notation and basic facts that will be used in the proof.

\subsubsection{Absolute continuity of $\mathcal W_i^f$}\label{sec:15}

We explain why the conditional measures of Lebesgue measure along
$\mathcal W_i^f$ are absolutely continuous, under the assumption $H(\mathcal W_i^f)=\mathcal W_i^A$. The argument is similar to those in \cite{KS20} and \cite{Yang}.

Let $\xi$ be a measurable partition subordinate to $\mathcal W_i^f$, and let $\xi^A:=H\xi$
be the corresponding measurable partition subordinate to the linear foliation
$\mathcal W_i^A$. Recall that $\mu=(H^{-1})_*\mathfrak m$ is $f$-invariant.
We have
\[
H_\mu(f^{-1}\xi\mid \xi)
=
H_{\mathfrak m}(A^{-1}\xi^A\mid \xi^A).
\]
For the linear foliation $\mathcal W_i^A$, the conditional measures of
$\mathfrak m$ are Lebesgue measures on the affine leaves. Hence Ledrappier's
entropy formula gives
\[
H_{\mathfrak m}(A^{-1}\xi^A\mid \xi^A)
=
\log |\det(A|_{E_i})|.
\]
On the other hand, since the Lyapunov exponents of $f$ along
$\mathcal W_i^f$ agree with the corresponding Lyapunov exponents of $A$, we
have
\[
\log |\det(A|_{E_i})|
=
\int \log \operatorname{Jac}(f|_{\mathcal W_i^f})(x)\,d\mu(x).
\]
Therefore,
\[
H_\mu(f^{-1}\xi\mid \xi)
=
\int \log \operatorname{Jac}(f|_{\mathcal W_i^f})(x)\,d\mu(x).
\]
By Ledrappier's criterion \cite{L}, the conditional measures of $\mu$ along the leaves
of $\mathcal W_i^f$ are absolutely continuous with respect to leafwise
Lebesgue measure. Since $\mu$ is equivalent to Lebesgue measure
$\mathfrak m$ (see Section \ref{sec:13}), the same absolute-continuity statement holds for the
conditional measures of $\mathfrak m$ along $\mathcal W_i^f$.

\subsubsection{Foliation charts for $\mathcal{W}^f_i$} \label{sec:16}
Since $\mathcal W_i^f$ is a \text{H\"older} foliation with uniformly
$C^{1+\alpha}$ leaves, we may choose a finite cover $\{U_k\}_{k\in I}$ of
$\TT^N$ by foliation charts
\[
\Gamma_k:O_k\subset
\RR^{\dim\mathcal E_i}\times \RR^{N-\dim\mathcal E_i}
\to U_k.
\]
We write points in $O_k$ as $(x,y)$, where $x\in\RR^{\dim\mathcal E_i}$ and $y\in\RR^{N-\dim\mathcal E_i}$.
For each fixed $y$, the plaque
\[
\mathfrak r_y
:=
\bigl(\RR^{\dim\mathcal E_i}\times\{y\}\bigr)\cap O_k
\]
is mapped by $\Gamma_k$ into a local leaf of $\mathcal W_i^f$. Moreover,
$\Gamma_k$ is $C^{1+\alpha}$ in the leaf variable $x$, uniformly in the
foliation chart.

Since $\mathcal W_i^f$ is absolutely continuous (see Section \ref{sec:15}), the restriction of
$\mathfrak{m}$ to each foliation box admits a disintegration along the local
plaques. Thus, for every continuous function $g$ compactly supported in $O_k$
and every bounded measurable function $\omega$ on $U_k$, we have
\begin{align}\label{for:167}
 \int_{U_k}
\omega(z)g(\Gamma_k^{-1}z)\,d\mathfrak m(z)
=
\int
\left(
\int
\omega(\Gamma_k(x,y))g(x,y)J_k(x,y)\,dx
\right)
d\nu_k(y),
\end{align}
where $\nu_k$ is a transverse measure and $J_k(x,y)>0$ is the leafwise
density of the conditional measure of $\mathfrak m$ on the plaque
$\Gamma_k(\mathfrak r_y)$.

Moreover, the leafwise densities may be chosen to be H\"older in the leaf
variable \(x\), uniformly in the transverse parameter \(y\). Indeed,
Ledrappier's criterion gives conditional densities for \(\mu\) along
\(\mathcal W_i^f\). As in \cite{KS20}, these densities are given, up to
normalization, by ratios of leafwise Jacobians. Since
\(\mu=\kappa\,\mathfrak m\), where \(\kappa\) is positive and \(C^\alpha\) by
Section~\ref{sec:13}, the corresponding conditional densities for
\(\mathfrak m\) are obtained, up to plaque-dependent normalization, by
dividing by \(\kappa\) along the plaques. The normalization constants depend
only on the plaque and therefore do not affect H\"older regularity in the leaf
variable. Hence the densities \(J_k(x,y)\) may be chosen H\"older in the leaf
variable \(x\), uniformly in the transverse parameter \(y\).

We say that a function $\omega$ on $\TT^N$ is $C^{1+\alpha}$ along
$\mathcal W_i^f$ if, in every foliation chart $\Gamma_k$, the function $x\mapsto \omega(\Gamma_k(x,y))$
is $C^{1+\alpha}$ for each fixed $y$, with uniform $C^{1+\alpha}$ bounds in
the leaf variable.

\subsubsection{Rearranging coordinates along a curve}\label{sec:11}

Let $\gamma:(-1,1)\to \TT^N$ be a $C^{1+\alpha}$ curve tangent to
$\mathcal E_i$, that is, $\gamma'(t)\in (\mathcal E_i)_{\gamma(t)}$. Assume that $\gamma'(0)\neq0$. Choose a foliation chart
$(U_k,\Gamma_k)$ such that $\gamma(0)\in U_k$. After shrinking the interval of
definition of $\gamma$, we may assume that $\gamma(t)\in U_k$ for all relevant
$t$.

Since $\gamma$ is tangent to $\mathcal E_i=T\mathcal W_i^f$, the curve
$\gamma$ is contained in one local leaf of $\mathcal W_i^f$. Hence, in the
foliation chart, we may write
\[
\Gamma_k^{-1}\gamma(t)=(a(t),y_0),
\qquad a(t)\in \RR^{\dim\mathcal E_i}.
\]
After a linear change of the leaf coordinate \(x\in \RR^{\dim\mathcal E_i}\), we may assume that the
first component of \(a'(0)\) is nonzero. Shrinking the interval again if
necessary, we may assume that the first component of \(a'(t)\) is nonzero for
all relevant \(t\). Write
\[
w=(x_2,\ldots,x_{\dim\mathcal E_i})\in\RR^{\dim\mathcal E_i-1}.
\]
Define a new chart by
\[
\Phi_k(t,w,y)
=
\Gamma_k\bigl(a(t)+(0,w),\,y_0+y\bigr),
\]
on a sufficiently small domain. Then
\[
\Phi_k(t,0,0)=\gamma(t).
\]
Moreover, the map $(t,w)
\mapsto
a(t)+(0,w)$ is a local \(C^{1+\alpha}\) diffeomorphism in the leaf variables. Therefore,
after possibly shrinking the domain, \(\Phi_k\) is again a foliation chart for
\(\mathcal W_i^f\), with the same regularity properties as \(\Gamma_k\). In
particular, \(\Phi_k\) is \(C^{1+\alpha}\) in the leaf variables and its
leafwise derivative is \(\alpha\)-H\"older.

The disintegration formula remains valid in the new chart. Namely, for every
continuous function \(g\) compactly supported in the domain of \(\Phi_k\) and
every bounded measurable function \(\omega\) on \(U_k\), we have
\begin{align}\label{for:138}
\int_{U_k}
\omega(z)g(\Phi_k^{-1}z)\,d\mathfrak m(z)
=
\int
\left(
\int
\omega(\Phi_k(t,w,y))g(t,w,y)J_k(t,w,y)\,dt\,dw
\right)
d\nu_k(y),
\end{align}
where, after renaming the density, \(J_k(t,w,y)>0\) is
\(\alpha\)-\text{H\"older} in the leaf variable \((t,w)\), uniformly in \(y\).

\subsubsection{Smooth approximations}\label{sec:17}  We will use standard smooth approximations by convolution. Let
\(\phi\in C_c^\infty(\mathbb R^N)\) be a nonnegative bump function supported
in the unit ball and satisfying \(\int_{\mathbb R^N}\phi=1\). For
\(0<\varepsilon<1\), set
\[
        \phi_\varepsilon(x)=\varepsilon^{-N}\phi(x/\varepsilon),
        \qquad
        \omega_\varepsilon=\omega*\phi_\varepsilon .
\]
We note that $\phi_\varepsilon$
is supported on the ball of radius $\varepsilon$ centered
at $0$ and $\int_{\RR^N} \phi_\varepsilon=1$. Moreover,
\begin{align}\label{for:108}
 \norm{(I-\Delta)^m\phi_\varepsilon}_{L^1}\leq c_m\varepsilon^{-2m},
\end{align}
where $\Delta$ is the Laplacian on $\RR^N$, and $c_m$ a constant dependent only on $m$. If $\omega\in C^a$, $a>0$, then $\omega_\varepsilon$ is $C^\infty$  with estimates
\begin{align}
 \norm{\omega_\varepsilon-\omega}_{C^0}&\leq C_a\varepsilon^a\norm{\omega}_{C^a},\label{for:107}\\
 \norm{\omega_\varepsilon}_{C^r}&\leq C_r\varepsilon^{-r}\norm{\omega}_{C^0},\quad\text{for all }r\geq0. \label{for:142}
\end{align}

\subsection{Proof of Lemma \ref{th:7}}

\subsubsection{Preparatory step}\label{sec:25}
Let $\gamma:(-1,1)\to \TT^N$ be a regular curve along
  $\mathcal F$.  Choose a
foliation chart $(U_k,\Phi_k)$ as in Section~\ref{sec:11}. We denote by $O_k$
the domain of the rearranged chart $\Phi_k$ associated with $\gamma$.  After
shrinking the interval if necessary, there exists \(\delta>0\) such that
\begin{align*}
 \gamma((-\delta,\delta))\subset U_k\quad\text{and}\quad \Phi_k(t,0,0)=\gamma(t),
\qquad t\in(-\delta,\delta).
\end{align*}
We write the coordinates of $\Phi_k$ as
\[
(t,w,y),
\qquad
w\in\RR^{\dim\mathcal E_i-1},
\qquad
y\in\RR^{N-\dim\mathcal E_i}.
\]
Let $\phi_1(w)\geq0$ and $\phi_2(y)\geq0$ be two $C^\infty$ nonnegative bump functions such that
\[
\phi_1\equiv 1 \quad \text{on } B(0,1/2),
\qquad
\operatorname{supp}\phi_1\subset B(0,1),
\]
and
\[
\phi_2\equiv 1 \quad \text{on } B(0,1/2),
\qquad
\operatorname{supp}\phi_2\subset B(0,1).
\]
For $\epsilon>0$, set
\begin{align*}
 \phi_1^\epsilon(w)
:=
\phi_1\left(\frac{w}{\epsilon}\right)\quad\text{and}\quad \phi_2^\epsilon(y)=\phi_2\left(\frac{y}{\epsilon} \right).
\end{align*}
Choose $\epsilon_0>0$ sufficiently small so that
\[
        O_{\epsilon_0}
        :=
        \{(t,w,y): |t|\leq \delta,\ \|w\|\leq \epsilon_0,\ \|y\|\leq \epsilon_0\}\subset O_k.
\]
Define the product measure in these coordinates by
\begin{align}\label{for:101}
        d\widehat\mu(t,w,y):=dt\,dw\,d\nu_k(y).
\end{align}

\begin{remark}\label{re:5}
In the computations below, we use the product measure
$d\widehat\mu(t,w,y)$ rather than the full foliation-box measure
\[
        J_k(t,w,y)\,dt\,dw\,d\nu_k(y).
\]
The reason is that we need to differentiate in the leaf direction and integrate
by parts. If the factor $J_k(t,w,y)$ were included in the measure, then
integration by parts would produce additional terms involving derivatives of
$J_k$.

This causes no loss. Whenever we need to compare the product measure
$d\widehat\mu$ with Lebesgue measure $\mathfrak m$, we absorb the Jacobian
factor $J_k(t,w,y)$ into the test function. Since $J_k$ is regular along the
leaves, with estimates uniform in the transverse variable $y$, multiplying by
$J_k$ preserves the H\"older class and the relevant estimates. Thus the use of
$d\widehat\mu$ is only a technical convenience and does not affect the
averaging argument.
\end{remark}

For $0<\epsilon\leq\epsilon_0$, define the normalizing constant
\[
c_\epsilon:= \Big(\int\phi_1^\epsilon(w)\phi_2^\epsilon(y)dw\,d\nu_k(y)\Big)^{-1}.
\]
Equivalently,
\[
c_\epsilon=
\epsilon^{-(\dim\mathcal E_i-1)}
\left(\int \phi_1(w)\,dw\right)^{-1}
\left(
\int \phi_2\left(\frac{y}{\epsilon}\right)d\nu_k(y)
\right)^{-1}.
\]
Recall the local identifications of nearby fibers from Section~\ref{sec:6}. Applying them to the bundle $\mathcal F$, and after possibly shrinking the
domain once more, we assume that, for every
$(t,w,y)\in O_k$, the two points
\[
        \Phi_k(t,0,0)
        \quad\text{and}\quad
        \Phi_k(t,w,y)
\]
lie in a common trivializing neighborhood for $\mathcal F$. Thus the local
fiber identification
\[
        \mathcal I^{\mathcal F}_{\Phi_k(t,0,0),\,\Phi_k(t,w,y)}
\]
is well defined throughout the chart.

Next, we introduce two $\alpha$-H\"older vector
fields on $\Phi_k(O_k)$. First, let
\[
\mathcal V_{\Phi_k(t,w,y)}
:=
D\Phi_k(t,w,y)(\partial_t).
\]
Since $t$ is a leaf coordinate, we have
\[
\mathcal V_{\Phi_k(t,w,y)}
\in
(\mathcal E_i)_{\Phi_k(t,w,y)}.
\]
However, except on the curve $\Phi_k(t,0,0)=\gamma(t)$, the vector field
$\mathcal V$ need not belong to $\mathcal F$. This is why we introduce a second
vector field.

 Define a vector
field $\mathcal U$ on $\Phi_k(O_{k})$ by
\[
\mathcal U_{\Phi_k(t,w,y)}
:=
\mathcal I^{\mathcal F}_{\Phi_k(t,0,0),\,\Phi_k(t,w,y)}
\bigl(\gamma'(t)\bigr).
\]
Then $\mathcal U_{\Phi_k(t,w,y)}
\in
\mathcal F_{\Phi_k(t,w,y)}$ for all $(t,w,y)\in O_{k}$. Moreover, since
\begin{align*}
 \Phi_k(t,0,0)=\gamma(t)\quad\text{and}\quad D\Phi_k(t,0,0)(\partial_t)=\gamma'(t),
\end{align*}
we have
\begin{equation}\label{for:105}
\mathcal U_{\Phi_k(t,0,0)}
=
\mathcal V_{\Phi_k(t,0,0)},
\qquad -\delta\leq t\leq \delta.
\end{equation}
Equivalently,
\begin{equation}\label{for:104}
\mathcal U_{\gamma(t)}
=
\mathcal V_{\gamma(t)},
\qquad -\delta\leq t\leq \delta.
\end{equation}
Both vector fields are $\alpha$-H\"older on $\Phi_k(O_{k})$. For
$\mathcal V$, this follows from the $\alpha$-H\"older regularity of the
leafwise derivative $D_{(t,w)}\Phi_k$. For $\mathcal U$, it follows from the
$C^{1+\alpha}$ regularity of $\gamma$, the $\alpha$-H\"older regularity of the
bundle $\mathcal F$, and the $\alpha$-H\"older dependence of the local fiber
identifications $\mathcal I^{\mathcal F}_{x,y}$.

At the end of this section, we prove the following decay estimate,  which will be used in the subsequent parts. We use the product measure $d\widehat\mu$ defined in \eqref{for:101}.
\begin{lemma}\label{le:6}  Let \(\mathcal Q\) be an \(\alpha\)-H\"older vector field on
\(\Phi_k(O_k)\) taking values in \(\mathcal E_i\). Then, for every
$\omega\in C_c^1(-\delta,\delta)$, for every \(m\geq0\)
and every sufficiently small \(\epsilon>0\), we have
\begin{align*}
 &\Big\|c_\epsilon\int_{O_\epsilon} A_i^{-(m+1)}D(R_i\circ f^{m})_{\Phi_k(t,w,y)}(\mathcal Q_{\Phi_k(t,w,y)})\omega(t)\phi_1^\epsilon(w)\phi_2^\epsilon(y)d\widehat\mu\Big\|\\
 &\leq C_{\phi_1}\epsilon^{-1}\rho_i^{-\frac{\alpha(m+1)}{2}}(|m|+1)^{\dim \mathcal{E}_i-1}\norm{R_i}_{C^1}\norm{\omega}_{C^1}\norm{\mathcal{Q}}_{C^\alpha}.
\end{align*}
\end{lemma}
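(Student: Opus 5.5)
The estimate comes from trading the H\"older regularity of $\mathcal Q$ against the gap between the expansion of $f^m$ along $\mathcal W_i^f$ and the contraction of $A_i^{-(m+1)}$. We smooth $\mathcal Q$ at a scale $\eta$ depending on $m$, integrate the smooth part by parts along the leaf, and optimize $\eta$.

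\textbf{Step 1 (splitting).} Fix $m$. In the chart $\Phi_k$, the leaf variables $(t,w)$ parametrize $\mathcal W_i^f$. So $\mathcal Q_{\Phi_k(t,w,y)}=D\Phi_k(t,w,y)(\widehat{\mathcal Q}(t,w,y))$ for an $\alpha$-H\"older map $\widehat{\mathcal Q}$ with values in $\RR^{\dim\mathcal E_i}$. Mollify $\widehat{\mathcal Q}$ in the leaf variables at scale $\eta=\rho_i^{-(m+1)/2}$, as in Section~\ref{sec:17}, and write $\widehat{\mathcal Q}=\widehat{\mathcal Q}_\eta+(\widehat{\mathcal Q}-\widehat{\mathcal Q}_\eta)$. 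By \eqref{for:107}, the remainder has $C^0$ norm at most $C\eta^\alpha\norm{\mathcal Q}_{C^\alpha}$. For the remainder term we do not integrate by parts. We only use
\[
\|A_i^{-(m+1)}D(R_i\circ f^m)\|\le C\rho_i^{-(m+1)}\norm{R_i}_{C^1}\,\|Df^m|_{\mathcal E_i}\|\le C\norm{R_i}_{C^1}(|m|+1)^{\dim\mathcal E_i-1}.
\]
This bound comes from \eqref{for:93} and the flag bound in \eqref{for:33} of Section~\ref{sec:12}: the block $\mathcal E_i$ has at most $\dim\mathcal E_i$ layers, giving polynomial growth $|m|^{j_i-1}$. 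Since $c_\epsilon\int\phi_1^\epsilon\phi_2^\epsilon\,dw\,d\nu_k=1$, the remainder contributes at most $C\rho_i^{-\alpha(m+1)/2}(|m|+1)^{\dim\mathcal E_i-1}\norm{R_i}_{C^1}\norm{\omega}_{C^0}\norm{\mathcal Q}_{C^\alpha}$.

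\textbf{Step 2 (smooth part).} Here $D(R_i\circ f^m)(D\Phi_k\widehat{\mathcal Q}_\eta)=\sum_r(\widehat{\mathcal Q}_\eta)_r\,\partial_r(R_i\circ f^m\circ\Phi_k)$. This is legitimate because $\Phi_k$ is $C^{1+\alpha}$ in the leaf variables, so $R_i\circ f^m\circ\Phi_k$ is $C^1$ in $(t,w)$ on each plaque. The measure $d\widehat\mu=dt\,dw\,d\nu_k$ has no leafwise density, as explained in Remark~\ref{re:5}, so on each plaque we integrate by parts in $(t,w)$. The derivative then falls on $(\widehat{\mathcal Q}_\eta)_r\,\omega(t)\phi_1^\epsilon(w)$.

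Three terms arise:
\begin{itemize}
\item[(a)] Derivatives of $\widehat{\mathcal Q}_\eta$ cost $C\eta^{\alpha-1}\norm{\mathcal Q}_{C^\alpha}$, the standard improvement of \eqref{for:142} for H\"older functions.
\item[(b)] Derivatives of $\omega$ cost $\norm{\omega}_{C^1}$.
\item[(c)] Derivatives of $\phi_1^\epsilon$ cost $C_{\phi_1}\epsilon^{-1}$.
\end{itemize}
The undifferentiated factor is $A_i^{-(m+1)}R_i\circ f^m$, whose norm is at most $C\rho_i^{-(m+1)}\norm{R_i}_{C^0}$.

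With $\eta=\rho_i^{-(m+1)/2}$, term (a) gives $\rho_i^{-(m+1)}\eta^{\alpha-1}=\rho_i^{-(m+1)(1+\alpha)/2}\le\rho_i^{-\alpha(m+1)/2}$. Terms (b) and (c) are bounded by $\epsilon^{-1}\rho_i^{-(m+1)}$, using $\epsilon<1$. These rates are valid because $i\geq i_0$, so $\rho_i>1$. The normalization $c_\epsilon$ again makes the averaging mass $1$. Summing Steps 1 and 2 gives the claimed bound
\[
C_{\phi_1}\epsilon^{-1}\rho_i^{-\alpha(m+1)/2}(|m|+1)^{\dim\mathcal E_i-1}\norm{R_i}_{C^1}\norm{\omega}_{C^1}\norm{\mathcal Q}_{C^\alpha}.
\]

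\textbf{Main obstacle.} The delicate step is the leafwise integration by parts. It requires two things. First, the transverse measure $\nu_k$ must be independent of the leaf variables, which holds by the disintegration \eqref{for:138}. Second, the chart must be only $C^{1+\alpha}$ in the leaf directions, so we must avoid differentiating $D\Phi_k$ itself. This is exactly why $\mathcal Q$ is expressed in chart coordinates $\widehat{\mathcal Q}$ before mollifying: the mollification and the integration by parts then act only on coordinate functions, never on $D\Phi_k$. The boundary terms vanish because $\omega$ and $\phi_1^\epsilon$ have compact support.
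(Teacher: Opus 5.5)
Your proposal is correct and follows essentially the paper's proof. You express $\mathcal Q$ in the leafwise coordinate frame $D\Phi_k(\partial_t), D\Phi_k(\partial_{w_j})$ and mollify the coefficients only in the leaf variables at scale $\eta=\rho_i^{-(m+1)/2}$. You then integrate the smooth part by parts against $d\widehat\mu$ and bound the remainder using $\|A_i^{-(m+1)}D(R_i\circ f^m)|_{\mathcal E_i}\|\le C(|m|+1)^{\dim\mathcal E_i-1}\|R_i\|_{C^1}$. The only difference is cosmetic: you use the sharper bound $\eta^{\alpha-1}$ for derivatives of the mollified H\"older coefficients, whereas the paper uses $\eta^{-1}$, which already suffices for the stated rate.
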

\begin{proof}
It follows from \eqref{for:33} of Section~\ref{sec:12} and the fact that
\(A_i\) is diagonalizable with all eigenvalues of modulus \(\rho_i\) (see Section \ref{sec:14}) that, for
every \(m\geq0\),
\begin{align}\label{for:34}
 \norm{A_i^{-(m+1)}D(R_i\circ f^{m})|_{\mathcal{E}_i}}\leq C (|m|+1)^{\dim \mathcal{E}_i-1}\norm{R_i}_{C^1}.
\end{align}
For $(t,w,y)\in O_{k}$, define the leafwise coordinate vector fields
\begin{align*}
  \mathcal W^0_{\Phi_k(t,w,y)}
        :=
        D\Phi_k(t,w,y)(\partial_t),
\end{align*}
and, for $1\leq j\leq \dim \mathcal{E}_i-1$,
\begin{align*}
  \mathcal W^j_{\Phi_k(t,w,y)}
        :=
        D\Phi_k(t,w,y)(\partial_{w_j}).
\end{align*}
Since \(\mathcal Q\) takes values in \(\mathcal E_i\), we can write
\begin{align*}
 \mathcal Q_{\Phi_k(t,w,y)}=\sum_{j=0}^{\dim \mathcal{E}_i-1} a_j(t,w,y)\mathcal W^j_{\Phi_k(t,w,y)}.
\end{align*}
The coefficients \(a_j\) are \(\alpha\)-H\"older in the leaf variables
\((t,w)\), uniformly in \(y\), and satisfy
\[
\|a_j\|_{C^0}\leq C\|\mathcal Q\|_{C^0},
\qquad
\|a_j\|_{C^\alpha_{t,w}}\leq C\|\mathcal Q\|_{C^\alpha}.
\]
We smooth \(a_j\) only in the leaf variables \((t,w)\), treating \(y\) as a
parameter. Let \(a_{j,\eta}\) be the
corresponding \(C^\infty\) approximation as in Section~\ref{sec:17}. Then the corresponding estimates hold uniformly in
\(y\):
\begin{align}
 \norm{a_{j,\eta}-a_j}_{C^0}&\leq C\eta^\alpha\norm{a_j}_{C^\alpha},\label{for:109}\\
 \norm{a_{j,\eta}}_{C^1_{t,w}}&\leq C\eta^{-1}\norm{a_j}_{C^0}\label{for:115}.
\end{align}
For each \(j\), set
\[
b_j(t,w,y)
:=
a_j(t,w,y)\omega(t)\phi_1^\epsilon(w)\phi_2^\epsilon(y),
\]
and
\[
b_{j,\eta}(t,w,y)
:=
a_{j,\eta}(t,w,y)\omega(t)\phi_1^\epsilon(w)\phi_2^\epsilon(y).
\]
Then we have
\begin{align*}
 &\int_{O_\epsilon} A_i^{-(m+1)}D(R_i\circ f^{m})_{\Phi_k(t,w,y)}(\mathcal Q_{\Phi_k(t,w,y)})\omega(t)\phi_1^\epsilon(w)\phi_2^\epsilon(y)d\widehat\mu\\
 &=\sum_{j=0}^{\dim \mathcal{E}_i-1}\int_{O_\epsilon} A_i^{-(m+1)}D(R_i\circ f^{m})_{\Phi_k(t,w,y)}(\mathcal W^j_{\Phi_k(t,w,y)})b_j(t,w,y)d\widehat\mu\\
&=\sum_{j=0}^{\dim \mathcal{E}_i-1}\mathcal{X}_j+\sum_{j=0}^{\dim \mathcal{E}_i-1}\mathcal{Z}_j,
\end{align*}
where
\begin{align*}
 \mathcal{X}_j&=\int_{O_\epsilon} A_i^{-(m+1)}D(R_i\circ f^{m})_{\Phi_k(t,w,y)}(\mathcal W^j_{\Phi_k(t,w,y)})b_{j,\eta}(t,w,y)d\widehat\mu\\
 \mathcal{Z}_j&=\int_{O_\epsilon} A_i^{-(m+1)}D(R_i\circ f^{m})_{\Phi_k(t,w,y)}(\mathcal W^j_{\Phi_k(t,w,y)})(b_j-b_{j,\eta})(t,w,y)d\widehat\mu.
\end{align*}
We first estimate \(\|c_\epsilon\mathcal X_j\|\).  Write \(z_0=t\) and \(z_j=w_j\) for
\(1\leq j\leq \dim \mathcal{E}_i-1\). We note that
\[
D(R_i\circ f^m)_{\Phi_k(t,w,y)}
\bigl(\mathcal W^j_{\Phi_k(t,w,y)}\bigr)
=
\partial_{z_j}(R_i\circ f^m\circ\Phi_k)(t,w,y).
\]
Then we have
\begin{align*}
 \mathcal{X}_j=-\int_{O_\epsilon} A_i^{-(m+1)}(R_i\circ f^{m}\circ\Phi_k)(t,w,y)\big(\partial_{z_j}b_{j,\eta}(t,w,y)\big)d\widehat\mu.
\end{align*}
Since \(\omega\) is compactly supported in \(t\) and
\(\phi_1^\epsilon\) is compactly supported in \(w\), no boundary terms appear
when integrating by parts in the leaf variables.

Hence,
\begin{align*}
 \norm{c_\epsilon\mathcal{X}_j}&\leq \Big\|A_i^{-(m+1)}(R_i\circ f^{m}\circ\Phi_k)(t,w,y)\big(\partial_{z_j}b_{j,\eta}(t,w,y)\Big)\big\|_{C^0}\\
 &\leq C\norm{A_i^{-(m+1)}}\norm{R_i}_{C^0}\cdot \big(\epsilon^{-1}\norm{a_{j,\eta}}_{C^1_{t,w}} \norm{\omega}_{C^1}\norm{\phi_1}_{C^1}\big)\\
 &\overset{\text{(1)}}{\leq} C\epsilon^{-1}\eta^{-1}\rho_i^{-(m+1)} \norm{R_i}_{C^0}\|\mathcal Q\|_{C^0}\norm{\omega}_{C^1}\norm{\phi_1}_{C^1}.
\end{align*}
Here in $(1)$ we use \eqref{for:115}.

We next estimate each $\norm{c_\epsilon\mathcal{Z}_j}$. It follows from \eqref{for:34} and \eqref{for:109} that
\begin{align*}
\norm{c_\epsilon\mathcal{Z}_j} &\leq \Big\| A_i^{-(m+1)}D(R_i\circ f^{m})_{\Phi_k(t,w,y)}(\mathcal W^j_{\Phi_k(t,w,y)})(b_j-b_{j,\eta})(t,w,y)  \Big\|_{C^0}\\
&\leq C\big\|A_i^{-(m+1)}D(R_i\circ f^{m})|_{\mathcal{E}_i}\big\|\cdot \norm{a_{j,\eta}-a_j}_{C^0}\norm{\omega}_{C^0}\notag\\
 &\leq C_1\eta^\alpha(|m|+1)^{\dim \mathcal{E}_i-1}\norm{R_i}_{C^1}\norm{\omega}_{C^0}\norm{\mathcal{Q}}_{C^\alpha}.
\end{align*}
Combining the estimates for \(\mathcal X_j\) and \(\mathcal Z_j\), and summing
over \(0\leq j\leq \dim \mathcal{E}_i-1\), we obtain
\begin{align*}
 &\Big\|c_\epsilon\int_{O_\epsilon} A_i^{-(m+1)}D(R_i\circ f^{m})_{\Phi_k(t,w,y)}(\mathcal Q_{\Phi_k(t,w,y)})\omega(t)\phi_1^\epsilon(w)\phi_2^\epsilon(y)d\widehat\mu\Big\|\\
 &\leq C_{\phi_1}\big(\epsilon^{-1}\eta^{-1}\rho_i^{-(m+1)}+ \eta^\alpha(|m|+1)^{\dim \mathcal{E}_i-1}\big)\norm{R_i}_{C^1}\norm{\omega}_{C^1}\norm{\mathcal{Q}}_{C^\alpha}.
\end{align*}
Let $\eta=\rho_i^{-\frac{m+1}{2}}$. Then
\begin{align*}
 \rho_i^{-(m+1)}\eta^{-1}
=
\rho_i^{-\frac{m+1}{2}},\quad \eta^\alpha
=
\rho_i^{-\frac{\alpha(m+1)}{2}}, \quad\text{and}\quad \rho_i^{-\frac{m+1}{2}}
\leq
\rho_i^{-\frac{\alpha(m+1)}{2}}.
\end{align*}

Also, since \(\epsilon>0\) is small, we have
\begin{align*}
&\left\|
c_\epsilon
\int_{O_\epsilon}
A_i^{-(m+1)}
D(R_i\circ f^m)_{\Phi_k(t,w,y)}
\bigl(\mathcal Q_{\Phi_k(t,w,y)}\bigr)
\omega(t)\phi_1^\epsilon(w)\phi_2^\epsilon(y)
\,d\widehat\mu
\right\| \\
&\qquad\leq
C_{\phi_1}
\epsilon^{-1}
\rho_i^{-\frac{\alpha(m+1)}{2}}
(m+1)^{\dim\mathcal{E}_i-1}
\|R_i\|_{C^1}
\|\omega\|_{C^1}
\|\mathcal Q\|_{C^\alpha}.
\end{align*}
This proves the lemma.
\end{proof}
Lemma \ref{le:6} shows that for any $\alpha$-H\"older vector field $\mathcal{Q}$ on $\Phi_k(O_k)$ taking values in \(\mathcal E_i\), the
following series converges absolutely for every fixed sufficiently small
$\epsilon>0$:
\begin{align}\label{for:15}
 \mathcal{N}_{\mathcal{Q},\epsilon}&=-c_\epsilon \sum_{m=0}^\infty\int_{O_\epsilon} A_i^{-(m+1)}D(R_i\circ f^{m})_{\Phi_k(t,w,y)}(\mathcal Q_{\Phi_k(t,w,y)})\omega(t)\phi_1^\epsilon(w)\phi_2^\epsilon(y)d\widehat\mu.
\end{align}

\subsubsection{Step 1: Transfer to vector field}\label{sec:30}  Suppose $\omega(t)$ is compacted supported on $(-\delta,\delta)$.  In this step, we show that
\begin{align*}
 \int_{-\delta}^\delta H_i(\gamma(t))\omega'(t)dt=\lim_{\epsilon\to0 }\mathcal{N}_{\mathcal{V},\epsilon}-\int_{-\delta}^\delta p_i(\gamma'(t))\omega(t)dt.
\end{align*}
Recall that $\mathcal{V}$ is defined in Section \ref{sec:25}.

Here, as in \eqref{for:7} of Section~\ref{sec:39}, we write
\[
        H_i:=p_i\circ H=p_i+h_i .
\]
First, we show that
\begin{align}\label{for:111}
 \int_{-\delta}^\delta H_i(\gamma(t))\omega'(t)dt=\mathcal{Y}_1+\mathcal{Y}_2,
\end{align}
where
\begin{align*}
 \mathcal{Y}_1&=\lim_{\epsilon\to0 }c_\epsilon\int_{O_\epsilon} (h_i\circ \Phi_k)(t,w,y)\omega'(t)\phi_1^\epsilon(w)\phi_2^\epsilon(y)d\widehat\mu.\\
  \mathcal{Y}_2&=\lim_{\epsilon\to0 }c_\epsilon\int_{O_\epsilon} (p_i\circ \Phi_k)(t,w,y)\omega'(t)\phi_1^\epsilon(w)\phi_2^\epsilon(y)d\widehat\mu.
\end{align*}
Indeed, the normalized measures
\[
        c_\epsilon\phi_1^\epsilon(w)\phi_2^\epsilon(y)\,dw\,d\nu_k(y)
\]
converge weakly to the Dirac mass at $(w,y)=(0,0)$. Since
$\Phi_k(t,0,0)=\gamma(t)$, we obtain
\[
\begin{aligned}
        \int_{-\delta}^{\delta}
        H_i(\gamma(t))\omega'(t)\,dt
        &=
        \lim_{\epsilon\to0}
        c_\epsilon
        \int_{O_\epsilon}
        (H_i\circ\Phi_k)(t,w,y)\omega'(t)
        \phi_1^\epsilon(w)\phi_2^\epsilon(y)
        \,d\widehat\mu  \\
        &=
        \mathcal Y_1+\mathcal Y_2 .
\end{aligned}
\]
This proves \eqref{for:111}.

We now compute $\mathcal Y_1$. Using \eqref{for:92} of Section \ref{sec:14},  we have
\begin{align}\label{for:150}
 \mathcal{Y}_1&=\lim_{\epsilon\to0 }c_\epsilon \sum_{m=0}^\infty\int_{O_\epsilon} A_i^{-(m+1)}(R_i\circ f^{m}\circ \Phi_k)(t,w,y)\omega'(t)\phi_1^\epsilon(w)\phi_2^\epsilon(y)d\widehat\mu\notag\\
 &=-\lim_{\epsilon\to0 }c_\epsilon \sum_{m=0}^\infty\int_{O_\epsilon} A_i^{-(m+1)}D(R_i\circ f^{m})_{\Phi_k(t,w,y)}(\mathcal V_{\Phi_k(t,w,y)})\omega(t)\phi_1^\epsilon(w)\phi_2^\epsilon(y)d\widehat\mu\notag\\
 &=\lim_{\epsilon\to0 }\mathcal{N}_{\mathcal{V},\epsilon}.
\end{align}
Finally, we compute $\mathcal Y_2$. Since
$\Phi_k(t,0,0)=\gamma(t)$, the same averaging argument gives
\begin{align*}
\mathcal{Y}_2=\int_{-\delta}^{\delta}
        p_i(\gamma(t))\omega'(t)\,dt=-\int_{-\delta}^\delta p_i(\gamma'(t))\omega(t)dt.
\end{align*}
Combining this identity with \eqref{for:111} and \eqref{for:150} proves the result.
\subsubsection{Step 2: Replacing $\mathcal V$ by $\mathcal U$}\label{sec:31} In this step, we show that
\begin{align}\label{for:17}
 \lim_{\epsilon\to0 }\mathcal{N}_{\mathcal{V},\epsilon}&=\lim_{\epsilon\to0 }\mathcal{N}_{\mathcal{U},\epsilon}.
\end{align}
Recall that $\mathcal{V}$ and $\mathcal{U}$ are defined in Section \ref{sec:25}.  Since
\begin{align*}
 \mathcal N_{\mathcal V,\epsilon}
-
\mathcal N_{\mathcal U,\epsilon}
=
\mathcal N_{\mathcal V-\mathcal U,\epsilon},
\end{align*}
to prove \eqref{for:17}, it suffices to show that: for any $\epsilon>0$
\begin{align}\label{for:137}
\norm{\mathcal{N}_{(\mathcal{V}-\mathcal{U}),\epsilon}}\leq C_{\phi_1}\epsilon^{\frac{\alpha}{2}}\norm{R_i}_{C^1}\norm{\omega}_{C^1}\max\{\norm{\mathcal{U}}_{C^\alpha}, \,\norm{\mathcal{V}}_{C^\alpha}\}.
\end{align}
Let $n_\epsilon=\lfloor\epsilon^{-\frac{\alpha}{2\dim\mathcal{E}_i}}\rfloor+1$. Then
\begin{align}\label{for:132}
 (n_\epsilon+1)^{\dim \mathcal{E}_i}\epsilon^\alpha\leq C\epsilon^{\frac{\alpha}{2}}.
\end{align}
Moreover, since \(n_\epsilon+2\geq \epsilon^{-\frac{\alpha}{2\dim\mathcal{E}_i}}\), we have
\begin{align}\label{for:135}
 \epsilon^{-1}
\rho_i^{-\frac{\alpha(n_\epsilon+2)}{4}}
\leq
\epsilon^{-1}
e^{-c\epsilon^{-\frac{\alpha}{2\dim\mathcal{E}_i}}}\leq C\epsilon^{\frac{\alpha}{2}},
\end{align}
where \(c=\frac{\alpha\log\rho_i}{4}>0\), for all sufficiently small \(\epsilon>0\).

We split
\begin{align*}
\mathcal{N}_{(\mathcal{V}-\mathcal{U}),\epsilon}=\mathcal{M}_1+\mathcal{M}_2+\mathcal{M}_3,
\end{align*}
where
\begin{align*}
  \mathcal M_1&:=
        c_\epsilon
        \sum_{m=0}^{n_\epsilon}
        \int_{O_\epsilon}
        A_i^{-(m+1)}
        D(R_i\circ f^m)_{\Phi_k(t,w,y)}
        \bigl(\mathcal U_{\Phi_k(t,w,y)}
        -\mathcal V_{\Phi_k(t,w,y)}\bigr)  \\
        &\hspace{4.5cm}\cdot
        \omega(t)\phi_1^\epsilon(w)\phi_2^\epsilon(y)
        \,d\widehat\mu,\\
  \mathcal M_2
        &:=
        c_\epsilon
        \sum_{m=n_\epsilon+1}^{\infty}
        \int_{O_\epsilon}
        A_i^{-(m+1)}
        D(R_i\circ f^m)_{\Phi_k(t,w,y)}
        \bigl(\mathcal U_{\Phi_k(t,w,y)}\bigr)  \\
        &\hspace{4.5cm}\cdot
        \omega(t)\phi_1^\epsilon(w)\phi_2^\epsilon(y)
        \,d\widehat\mu,
\end{align*}
and
\[
\begin{aligned}
        \mathcal M_3
        &:=
        -c_\epsilon
        \sum_{m=n_\epsilon+1}^{\infty}
        \int_{O_\epsilon}
        A_i^{-(m+1)}
        D(R_i\circ f^m)_{\Phi_k(t,w,y)}
        \bigl(\mathcal V_{\Phi_k(t,w,y)}\bigr)  \\
        &\hspace{4.5cm}\cdot
        \omega(t)\phi_1^\epsilon(w)\phi_2^\epsilon(y)
        \,d\widehat\mu.
\end{aligned}
\]
The sign of $\mathcal M_3$ will play no role in the estimates.

We first estimate \(\mathcal M_1\).  On $\Phi_k(O_\epsilon)$ we have
\begin{align}\label{for:131}
\norm{\mathcal U_{\Phi_k(t,w,y)}&-\mathcal V_{\Phi_k(t,w,y)}}\overset{\text{(1)}}{=}\|\mathcal U_{\Phi_k(t,w,y)}-\mathcal U_{\Phi_k(t,0,0)}+\mathcal V_{\Phi_k(t,0,0)}-\mathcal V_{\Phi_k(t,w,y)}\|\notag\\
&\leq \|\mathcal U_{\Phi_k(t,w,y)}-\mathcal U_{\Phi_k(t,0,0)}\|+\|\mathcal V_{\Phi_k(t,0,0)}-\mathcal V_{\Phi_k(t,w,y)}\|\notag\\
&\overset{\text{(2)}}{\leq} C\epsilon^\alpha\max\{\norm{\mathcal{U}}_{C^\alpha}, \,\norm{\mathcal{V}}_{C^\alpha}\}.
\end{align}
Here in $(1)$ we use \eqref{for:105}; in $(2)$ we note that the points $\Phi_k(t,w,y)$ and $\Phi_k(t,0,0)$ are at distance
$O(\epsilon)$ in the chart, and both vector fields are $\alpha$-H\"older.

Then it follows that
\begin{align*}
 \norm{\mathcal{M}_1}&\leq \sum_{m=0}^{n_\epsilon}\big\|A_i^{-(m+1)}D(R_i\circ f^{m})|_{\mathcal{E}_i}\big\|\cdot \norm{\mathcal U-\mathcal V}_{C^0}\norm{\omega}_{C^0}\\
 &\overset{\text{(1)}}{\leq} \sum_{m=0}^{n_\epsilon} C (|m|+1)^{\dim \mathcal{E}_i-1}\norm{R_i}_{C^1}\cdot C\epsilon^\alpha\max\{\norm{\mathcal{U}}_{C^\alpha}, \,\norm{\mathcal{V}}_{C^\alpha}\}\cdot\norm{\omega}_{C^0}\\
 &\leq C_{1}(n_\epsilon+1)^{\dim \mathcal{E}_i}\epsilon^\alpha\norm{R_i}_{C^1}\norm{\omega}_{C^0}\max\{\norm{\mathcal{U}}_{C^\alpha}, \,\norm{\mathcal{V}}_{C^\alpha}\}\\
 &\overset{\text{(2)}}{\leq} C_{2}\epsilon^{\frac{\alpha}{2}}\norm{R_i}_{C^1}\norm{\omega}_{C^0}\max\{\norm{\mathcal{U}}_{C^\alpha}, \,\norm{\mathcal{V}}_{C^\alpha}\}.
\end{align*}
Here in $(1)$ we use \eqref{for:34} and \eqref{for:131}; in $(2)$ we use \eqref{for:132}.

Next, it follows from Lemma \ref{le:6} that
\begin{align*}
 &\norm{\mathcal{M}_2}+\norm{\mathcal{M}_3}\\
 &\leq \sum_{m=n_\epsilon+1}^{\infty}C_{\phi_1}\epsilon^{-1}\rho_i^{-\frac{\alpha(m+1)}{2}}(|m|+1)^{\dim \mathcal{E}_i-1}\norm{R_i}_{C^1}\norm{\omega}_{C^1}\max\{\norm{\mathcal{U}}_{C^\alpha}, \,\norm{\mathcal{V}}_{C^\alpha}\}\\
 &\leq C_{\phi_1,1}\epsilon^{-1}\rho_i^{-\frac{\alpha(n_\epsilon+2)}{4}}\norm{R_i}_{C^1}\norm{\omega}_{C^1}\max\{\norm{\mathcal{U}}_{C^\alpha}, \,\norm{\mathcal{V}}_{C^\alpha}\}\\
 &\overset{\text{(1)}}{\leq} C_{\phi_1,2}\epsilon^{\frac{\alpha}{2}}\norm{R_i}_{C^1}\norm{\omega}_{C^1}\max\{\norm{\mathcal{U}}_{C^\alpha}, \,\norm{\mathcal{V}}_{C^\alpha}\}.
\end{align*}
Here in $(1)$ we use \eqref{for:135}.

Combining the estimates for \(\mathcal M_1,\mathcal M_2,\mathcal M_3\), we get
\eqref{for:137}. Hence
\[
        \lim_{\epsilon\to0}
        \bigl(\mathcal N_{\mathcal V,\epsilon}
        -
        \mathcal N_{\mathcal U,\epsilon}\bigr)=0,
\]
which proves \eqref{for:17}.

\subsubsection{Step 3: Computation of  $\lim_{\epsilon\to0 }\mathcal{N}_{\mathcal{U},\epsilon}$} We first list two facts that will be used in the proof.
\begin{enumerate}
  \item\label{for:71} From \eqref{for:5} of Lemma \ref{for:8}, there is a sequence $k_n\to +\infty$ such that
\begin{align*}
 \rho_i^{-k_m}A_i^{k_m}\to I_{id}|_{E_i}\quad\text{and}\quad \rho_i^{k_m}A_i^{-k_m}\to I_{id}|_{E_i}
\end{align*}
as $k_m\to +\infty$.

\smallskip
  \item\label{for:133} It follows from Theorem \ref{th:9} that
there exist an $A$-invariant subspace $V_{i,1}\subset E_i$ and a bi-$\alpha$-\text{H\"older} subbundle   map
\[
\mathcal C_{i,1}(x):\mathcal F_x\to V_{i,1}
\]
(see \eqref{for:33} of Section~\ref{sec:12}) such that each $\mathcal C_{i,1}(x)$ is a linear isomorphism and
\begin{align*}
\mathcal C_{i,1}(fx)\circ Df_x|_{\mathcal F_x}
=
A_i|_{V_{i,1}}\circ \mathcal C_{i,1}(x),
\qquad x\in\mathbb T^N.
\end{align*}
\end{enumerate}
Define
\begin{align}\label{for:96}
 P(x):=
p_i\circ\bigl(\mathcal C_{i,1}(x)\bigr)^{-1}:V_{i,1}\to E_i\quad\text{and}\quad B_i:=
\int_{\TT^N}P(H^{-1}z)\,d\mathfrak m(z).
\end{align}
Thus $B_i:V_{i,1}\to E_i$ is a fixed linear map. Define
\begin{align}\label{for:145}
\mathcal T_i(x):
=p_i|_{\mathcal F_x}-B_i\circ \mathcal C_{i,1}(x)\quad\text{and}\quad \mathcal D_i(x):
=B_i\circ \mathcal C_{i,1}(x).
\end{align}
Since \(B_i\) is a fixed linear map and \(\mathcal C_{i,1}\) is
\(\alpha\)-H\"older, the maps \(\mathcal T_i\) and \(\mathcal D_i\) are both
\(\alpha\)-H\"older bundle maps on \(\mathcal F\).

In this step, we prove that
\begin{align}\label{for:146}
 \lim_{\epsilon\to0}\mathcal N_{\mathcal U,\epsilon}
=
\int_{-\delta}^{\delta}
\mathcal T_i(\gamma(t))(\gamma'(t))\omega(t)\,dt.
\end{align}
Let
\begin{align*}
J(x)=-p_i|_{\mathcal{F}_x}.
\end{align*}
Then $J$ satisfies the twisted coboundary equation
\begin{align}\label{for:69}
 A_i\circ J(x)-J(fx)\circ Df|_{\mathcal{F}_x}=DR_i|_{\mathcal{F}_x}, \qquad x\in \TT^N.
\end{align}
Iterating \eqref{for:69},  for any $m\geq1$ we have
\begin{align}\label{for:75}
 J(x)&+A_i^{-m}p_i\circ Df^{m}|_{\mathcal{F}_x}=\sum_{n=0}^{m-1}A_i^{-(n+1)}DR_i(f^{n}x)\circ Df^n(x)|_{\mathcal{F}_x}\notag\\
 &=\sum_{n=0}^{m-1}A_i^{-(n+1)}D(R_i\circ f^{n})|_{\mathcal{F}_x}
\end{align}
for any $x\in\TT^N$. Let
\begin{align*}
 \mathcal{S}_l(t,w,y)=\sum_{m=0}^lA_i^{-(m+1)}D(R_i\circ f^{m})_{\Phi_k(t,w,y)}(\mathcal U_{\Phi_k(t,w,y)}).
\end{align*}
Taking $m=k_n$ in \eqref{for:75}, we get
\begin{align*}
\mathcal S_{k_m-1}(t,w,y)
&=
-p_i\bigl(\mathcal U_{\Phi_k(t,w,y)}\bigr)+
A_i^{-k_m}p_i\circ Df_{\Phi_k(t,w,y)}^{k_m}
\bigl(\mathcal U_{\Phi_k(t,w,y)}\bigr).
\end{align*}
By the absolute convergence given by Lemma~\ref{le:6}, we may pass to the
subsequence of partial sums $k_m-1$. Hence
\begin{align}\label{for:141}
 \mathcal{N}_{\mathcal{U},\epsilon}&=-c_\epsilon \lim_{m\to +\infty}\int_{O_\epsilon}\mathcal{S}_{k_m-1}(t,w,y)\omega(t)\phi_1^\epsilon(w)\phi_2^\epsilon(y)d\widehat\mu\notag\\
 &=c_\epsilon \int_{O_\epsilon}p_i(\mathcal U_{\Phi_k(t,w,y)})\omega(t)\phi_1^\epsilon(w)\phi_2^\epsilon(y)d\widehat\mu-\mathcal{Z}_\epsilon
\end{align}
where
\begin{align*}
 \mathcal{Z}_\epsilon=c_\epsilon \lim_{m\to +\infty}\int_{O_\epsilon}\Big( A_i^{-k_m}p_i\circ Df^{k_m}|_{\mathcal{F}_{\Phi_k(t,w,y)}}(\mathcal U_{\Phi_k(t,w,y)}) \Big)\omega(t)\phi_1^\epsilon(w)\phi_2^\epsilon(y)d\widehat\mu.
\end{align*}
Moreover, by the approximate-identity argument, we have
\begin{align}\label{for:143}
&\lim_{\epsilon\to0 }c_\epsilon \int_{O_\epsilon}p_i(\mathcal U_{\Phi_k(t,w,y)})\omega(t)\phi_1^\epsilon(w)\phi_2^\epsilon(y)d\widehat\mu=\int_{-\delta}^\delta p_i(\mathcal U_{\Phi_k(t,0,0)})\omega(t)dt\notag\\
&=\int_{-\delta}^\delta p_i(\gamma'(t))\omega(t)dt.
\end{align}
It remains to compute \(\lim_{\epsilon\to0}\mathcal Z_\epsilon\).

Then, by \eqref{for:133}, for \(\tau=(t,w,y)\), we have
\begin{align*}
  &A_i^{-k_m}p_i\circ Df^{k_m}|_{\mathcal{F}_{\Phi_k(\tau)}}(\mathcal U_{\Phi_k(\tau)})\\
  &\overset{\text{(1)}}{=}A_i^{-k_m}p_i\circ \big(\mathcal C_{i,1}(f^{k_m}\Phi_k(\tau))\big)^{-1}\circ A_i^{k_m}\circ \mathcal C_{i,1}(\Phi_k(\tau))(\mathcal U_{\Phi_k(\tau)})\\
 &=\mathcal M_{1,m}(\tau)+\mathcal M_{2,m}(\tau)+\mathcal M_{3,m}(\tau),
\end{align*}
where
\begin{align*}
\mathcal M_{1,m}(\tau)
&:=
\bigl(\rho_i^{k_m}A_i^{-k_m}-I_{id}|_{E_i}\bigr)
\circ P(f^{k_m}\Phi_k(\tau))
\circ \bigl(\rho_i^{-k_m}A_i^{k_m}|_{V_{i,1}}\bigr)  \\
&\qquad\qquad\circ
\mathcal C_{i,1}(\Phi_k(\tau))
\bigl(\mathcal U_{\Phi_k(\tau)}\bigr),\\
\mathcal M_{2,m}(\tau)
&:=
P(f^{k_m}\Phi_k(\tau))
\circ
\bigl(\rho_i^{-k_m}A_i^{k_m}|_{V_{i,1}}-I_{id}|_{V_{i,1}}\bigr)\circ
\mathcal C_{i,1}(\Phi_k(\tau))
\bigl(\mathcal U_{\Phi_k(\tau)}\bigr),
\end{align*}
and
\[
\mathcal M_{3,m}(\tau)
:=
P(f^{k_m}\Phi_k(\tau))
\circ
\mathcal C_{i,1}(\Phi_k(\tau))
\bigl(\mathcal U_{\Phi_k(\tau)}\bigr).
\]
For \(\mathcal M_{1,m}\) and \(\mathcal M_{2,m}\), we have
\begin{align*}
\max\{\|\mathcal M_{1,m}\|_{C^0},\|\mathcal M_{2,m}\|_{C^0}\}
&\leq
C
\max\left\{
\|\rho_i^{k_m}A_i^{-k_m}-I_{id}|_{E_i}\|,
\|\rho_i^{-k_m}A_i^{k_m}-I_{id}|_{E_i}\|
\right\} \\
&\qquad\cdot
\|P\|_{C^0}
\|\mathcal C_{i,1}\|_{C^0}
\|\mathcal U\|_{C^0}.
\end{align*}
This, together with the convergence in \eqref{for:71}, implies that   for every fixed
$\epsilon>0$,
\begin{align}\label{for:D12-zero}
\lim_{m\to\infty}
\int_{O_\epsilon}
\mathcal M_{j,m}(t,w,y)
\omega(t)\phi_1^\epsilon(w)\phi_2^\epsilon(y)
\,d\widehat\mu
=
0,
\qquad j=1,2.
\end{align}
We now compute the contribution of \(\mathcal M_{3,n}\). This is where the
mixing property of \(A\) enters the argument.  Let
\[
\Psi_\epsilon(t,w,y)
:=
\frac{\omega(t)\phi_1^\epsilon(w)\phi_2^\epsilon(y)}
{J_k(t,w,y)},
\]
where \(J_k(t,w,y)\) is the density in the local disintegration formula (see \eqref{for:138}). We extend
$\Psi_\epsilon\circ\Phi_k^{-1}$ by zero outside $U_k$. Using
\eqref{for:138}, we get
\begin{align}\label{for:100}
&\lim_{m\to\infty}
\int_{O_\epsilon}
\mathcal M_{3,m}(t,w,y)
\omega(t)\phi_1^\epsilon(w)\phi_2^\epsilon(y)
\,d\widehat\mu \notag\\
&=
\lim_{m\to\infty}
\int_{\TT^N}
P(f^{k_m}z)
\circ\mathcal C_{i,1}(z)(\mathcal U_z)
\,
\Psi_\epsilon(\Phi_k^{-1}z)
\,d\mathfrak m(z)\notag\\
&\overset{\text{(1)}}{=}\lim_{m\to\infty}
\int_{\TT^N} P(H^{-1}A^{k_m}q)
\circ
\mathcal C_{i,1}(H^{-1}q)
\bigl(\mathcal U_{H^{-1}q}\bigr)\notag\\
&\qquad\qquad\cdot
\Psi_\epsilon\bigl(\Phi_k^{-1}(H^{-1}q)\bigr)
\frac{1}{\kappa(H^{-1}q)}
\,d\mathfrak m(q).
\end{align}
Here in $(1)$ we used:
\begin{enumerate}
\item[(a)] the conjugacy relation $H\circ f=A\circ H$, so that
\[
        f^{k_m}z=H^{-1}A^{k_m}H(z);
\]
\item[(b)] the change of variables $q=H(z)$;
\item[(c)] the identity
\[
        H_*\mathfrak m=(\kappa\circ H^{-1})^{-1}\mathfrak m,
\]
which follows from $(H^{-1})_*\mathfrak m=\kappa\,\mathfrak m$.
\end{enumerate}
Set
\[
G(q):=P(H^{-1}q)
\]
and
\[
F_\epsilon(q)
:=
\mathcal C_{i,1}(H^{-1}q)
\bigl(\mathcal U_{H^{-1}q}\bigr)
\Psi_\epsilon\bigl(\Phi_k^{-1}(H^{-1}q)\bigr)
\frac{1}{\kappa(H^{-1}q)}.
\]
For each fixed \(\epsilon>0\), the function \(F_\epsilon\) belongs to
\(L^\infty(\TT^N,\,V_{i,1})\). Moreover, \(G\) is H\"older, since \(P\) and
\(H^{-1}\) are H\"older.

Then the preceding expression is
\[
\lim_{m\to\infty}
\int_{\TT^N}
G(A^{k_m}q)F_\epsilon(q)\,d\mathfrak m(q).
\]
By definition
\[
B_i:=\int_{\TT^N}G(q)\,d\mathfrak m(q)
=
\int_{\TT^N}P(H^{-1}q)\,d\mathfrak m(q).
\]
Then \(G-B_i\) has zero average. By the mixing property of \(A\), applied componentwise, we have
\[
\lim_{m\to\infty}
\int_{\TT^N}
G(A^{k_m}q)F_\epsilon(q)\,d\mathfrak m(q)
=
B_i
\Big(\int_{\TT^N}F_\epsilon(q)\,d\mathfrak m(q)\Big).
\]
Finally, using again \((H^{-1})_*\mathfrak m=\kappa\,\mathfrak m\), we get
\begin{align*}
\int_{\TT^N}F_\epsilon(q)\,d\mathfrak m(q)
&=
\int_{\TT^N}
\mathcal C_{i,1}(z)(\mathcal U_z)
\Psi_\epsilon(\Phi_k^{-1}z)
\,d\mathfrak m(z) \\
&=
\int_{O_\epsilon}
\mathcal C_{i,1}(\Phi_k(t,w,y))
\bigl(\mathcal U_{\Phi_k(t,w,y)}\bigr)
\omega(t)\phi_1^\epsilon(w)\phi_2^\epsilon(y)
\,d\widehat\mu.
\end{align*}
Therefore,
\begin{align}\label{for:Zeps-limit}
\mathcal Z_\epsilon
=
c_\epsilon B_i
\int_{O_\epsilon}
&\mathcal C_{i,1}(\Phi_k(t,w,y))
\bigl(\mathcal U_{\Phi_k(t,w,y)}\bigr)
\omega(t)\phi_1^\epsilon(w)\phi_2^\epsilon(y)
\,d\widehat\mu .
\end{align}
Passing to the limit \(\epsilon\to0\), we obtain
\begin{align}\label{for:144}
\lim_{\epsilon\to0}\mathcal Z_\epsilon
&=
\int_{-\delta}^{\delta}
B_i\circ \mathcal C_{i,1}(\Phi_k(t,0,0))
\bigl(\mathcal U_{\Phi_k(t,0,0)}\bigr)
\omega(t)\,dt \notag\\
&=
\int_{-\delta}^{\delta}
B_i\circ \mathcal C_{i,1}(\gamma(t))
\bigl(\gamma'(t)\bigr)
\omega(t)\,dt.
\end{align}
Combining \eqref{for:141}, \eqref{for:143}, and \eqref{for:144}, we have
\begin{align*}
\lim_{\epsilon\to0}\mathcal N_{\mathcal U,\epsilon}
&=
\int_{-\delta}^{\delta}
p_i(\gamma'(t))\omega(t)\,dt  -
\int_{-\delta}^{\delta}
B_i\circ \mathcal C_{i,1}(\gamma(t))
\bigl(\gamma'(t)\bigr)
\omega(t)\,dt\\
&=
\int_{-\delta}^{\delta}
\mathcal T_i(\gamma(t))
\bigl(\gamma'(t)\bigr)
\omega(t)\,dt
\end{align*}
 This proves \eqref{for:146}.
\subsubsection{Step 4: Conclusion} It follows from \eqref{for:111}, \eqref{for:150} and \eqref{for:17} that
we obtain
\begin{align*}
\int_{-\delta}^{\delta}
H_i(\gamma(t))\omega'(t)\,dt
&=
\lim_{\epsilon\to 0}\mathcal{N}_{\mathcal V,\epsilon}
-
\int_{-\delta}^{\delta}
p_i(\gamma'(t))\omega(t)\,dt \\
&=\lim_{\epsilon\to 0}\mathcal{N}_{\mathcal U,\epsilon}
-
\int_{-\delta}^{\delta}
p_i(\gamma'(t))\omega(t)\,dt \\
&=
\int_{-\delta}^{\delta}
\mathcal T_i(\gamma(t))(\gamma'(t))\omega(t)\,dt
-
\int_{-\delta}^{\delta}
p_i(\gamma'(t))\omega(t)\,dt \\
&=
-\int_{-\delta}^{\delta}
B_i\circ\mathcal C_{i,1}(\gamma(t))
\bigl(\gamma'(t)\bigr)\omega(t)\,dt,\\
&=-\int_{-\delta}^{\delta}
\mathcal D_i(\gamma(t))(\gamma'(t))\omega(t)\,dt.
\end{align*}
By Remark~\ref{re:4}, the left-hand side agrees with
\[
        \int_{-\delta}^{\delta}
        H(\gamma(t))\omega'(t)\,dt .
\]
Therefore Lemma~\ref{th:7} follows.

\section{Partial $C^{1+\text{H\"older}}$ regularity of $H$ along $\mathcal{W}^f_i$}\label{sec:35}
We recall the following notation:
\begin{enumerate}

  \item The foliations $\mathcal W_i^f$ and $\mathcal W_i^A$ are defined in
  \eqref{for:23} of Section~\ref{sec:12}.
  \item The subbundle $\mathcal F_{i,1}$ is defined in \eqref{for:33} of
  Section~\ref{sec:12}.

  \item The maps $A_i$ and $p_i$ are defined in Section~\ref{sec:14}.
  \item \(\mathcal C_{i,1}\) is the bundle map given by
  Theorem~\ref{th:9}.
  \item \(B_i:V_{i,1}\to E_i\) is the linear map
  defined in Proposition~\ref{po:2}.

  \item $H_i$ is defined in \eqref{for:7} of Section \ref{sec:39}.
\end{enumerate}

\subsection{Main result}

\begin{theorem}\label{th:1} Suppose $H(\mathcal{W}^f_i) = \mathcal{W}^A_i$ for some $i_0\leq i\leq \ell$.   Then:
\begin{enumerate}
  \item \(\mathcal F_{i,1}\) is uniquely integrable to a foliation with
  uniformly \(C^{1+\alpha}\) leaves, denoted by
  \(\mathcal W_{\mathcal F_{i,1}}\).
  \item \(H\) is a \(C^{1+\alpha}\) diffeomorphism along
  \(\mathcal W_{\mathcal F_{i,1}}\). Moreover, for every
  \(x\in\TT^N\) and \(u\in \mathcal F_{i,1}(x)\),
  \[
  D(H_i)_x(u)=B_i\circ \mathcal C_{i,1}(x)(u).
  \]
   In addition, $B_i$ is injective.

\smallskip
  \item The subspace $W:=B_i(V_{i,1})$
is $A_i$-invariant, and \(H(\mathcal W_{\mathcal F_{i,1}})=W^{L}\), where $W^{L}$ denotes the linear foliation tangent to
  $W$.

\end{enumerate}
\end{theorem}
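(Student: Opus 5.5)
The plan is to work inside a single leaf of $\mathcal W^f_i$ and read $H_i$ there as a local homeomorphism. By Proposition~\ref{po:2}, $H$ is curve differentiable along every $u\in\mathcal F_{i,1}(x)$, and for every regular curve $\gamma$ along $\mathcal F:=\mathcal F_{i,1}$ we have $(H\circ\gamma)'(t)=\mathcal D_i(\gamma(t))(\gamma'(t))$, where $\mathcal D_i=B_i\circ\mathcal C_{i,1}$. Since $H(\mathcal W_i^f)=\mathcal W_i^A$ and $H$ is a homeomorphism, the restriction of $H_i$ to a local leaf of $\mathcal W_i^f$ is a homeomorphism onto an open subset of an affine leaf $x+E_i$.

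\textbf{Step 1: $B_i$ is injective.} First prove that $\ker(\mathcal D_i(x))$ is $Df$-invariant. This follows from the relation $\mathcal C_{i,1}(fx)Df_x=A_i\mathcal C_{i,1}(x)$ together with the chain rule for curves: $H\circ f\circ\gamma=A\circ H\circ\gamma$, and $f\circ\gamma$ is again a regular curve along $\mathcal F$, so $\mathcal D_i(fx)Df_x=A_i\mathcal D_i(x)$ on $\mathcal F_x$. Because $\mathcal C_{i,1}(x)$ is an isomorphism onto $V_{i,1}$, we get $\ker\mathcal D_i(x)=\mathcal C_{i,1}(x)^{-1}(\ker B_i)$. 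Comparing the last two relations shows $\ker B_i$ is $A_i$-invariant. Now suppose $0\neq v\in\ker B_i$ and set $u=\mathcal C_{i,1}(x)^{-1}v$. Choose a regular curve $\gamma$ along $\mathcal F$ that stays tangent to the H\"older line field $\mathcal C_{i,1}^{-1}(v)$; such a curve exists by Peano, but it is only $C^1$, so I would first approximate by regular curves or extend Proposition~\ref{po:2} to $C^1$ curves with H\"older tangent. On this curve $H_i$ is constant. The other components $H_j$, $j\ne i$, are also constant along the curve, since it lies in a leaf of $\mathcal W_i^f$. Hence $H$ is constant along a nonconstant curve, contradicting injectivity of $H$. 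So $B_i$ is injective, and $\mathcal D_i(x)$ is an isomorphism onto $W=B_i(V_{i,1})$.

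\textbf{Step 2: $W$ is $A_i$-invariant.} Since $\mathcal D_i(fx)Df_x=A_i\mathcal D_i(x)$, we get $A_iW=\mathcal D_i(fx)Df_x\mathcal F_x=\mathcal D_i(fx)\mathcal F_{fx}=W$.

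\textbf{Step 3: integrability, and the main obstacle.} The hardest step is integrability of $\mathcal F$ together with $H(\mathcal W_{\mathcal F})=W^L$. The approach is to pull back the linear foliation. For $x$ fixed, consider the set $L_x:=H^{-1}\big((H(x)+W)\cap H(\text{local leaf})\big)$, which is contained in the local leaf of $\mathcal W^f_i$ through $x$. The goal is to show that $L_x$ is a $C^{1+\alpha}$ submanifold tangent to $\mathcal F$. The key tool is the flow-box argument. For any regular curve $\gamma$ along $\mathcal F$, $(H_i\circ\gamma)'\in W$, so $H_i\circ\gamma$ stays in $H_i(x)+W$; hence every $\mathcal F$-curve stays inside $L_x$. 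Conversely, the set of endpoints of $\mathcal F$-curves from $x$ (concatenated broken curves built from $\dim\mathcal F$ H\"older line fields spanning $\mathcal F$) forms a set $S_x\subset L_x$. The map $(s_1,\dots,s_d)\mapsto H_i(\text{endpoint})$ is $C^1$ with derivative at $0$ equal to the isomorphism $\mathcal D_i(x)$ onto $W$. By invariance of domain applied to the injective map $H_i$, $S_x$ contains a neighbourhood of $x$ in $L_x$ and is a topological $d$-manifold. Now $H_i^{-1}$ restricted to $H_i(x)+W$ has, along every curve, derivative $\mathcal D_i^{-1}$, which is H\"older. Lipschitz-type and absolute continuity along curves then give that $H^{-1}|_{H(x)+W}$ is $C^{1+\alpha}$, so $L_x$ is a $C^{1+\alpha}$ immersed disc tangent to $\mathcal F$. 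Uniqueness of integral manifolds follows because any $\mathcal F$-curve lies in $L_x$.

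\textbf{Step 4: conclusion.} $H$ is $C^{1+\alpha}$ along $\mathcal W_{\mathcal F_{i,1}}$, and $D(H_i)_x|_{\mathcal F}=B_i\mathcal C_{i,1}(x)$. Its inverse is $C^{1+\alpha}$ as well, since $\mathcal D_i$ is an isomorphism onto $W$ with H\"older inverse, and $H$ maps leaves onto plaques of $W^L$. Uniform $C^{1+\alpha}$ bounds on the leaves come from the uniform H\"older bounds on $\mathcal C_{i,1}^{\pm1}$.
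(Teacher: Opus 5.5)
Your Steps 1, 2 and 4 essentially match the paper. Injectivity of $B_i$ comes from an integral curve of $\mathcal C_{i,1}(\cdot)^{-1}(v)$ along which $H$ would be constant, and $A_iW=W$ follows from $\mathcal D_i(fx)Df_x=A_i\mathcal D_i(x)$. The worry in Step 1 is unfounded. If $\gamma'=X\circ\gamma$ with $X$ $\alpha$-H\"older, then $\gamma$ is Lipschitz, so $\gamma'=X\circ\gamma$ is $\alpha$-H\"older in $t$. Hence $\gamma$ is already a regular curve and Proposition~\ref{po:2} applies directly, with no approximation.

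The gap is in Step 3, which is where the content lies.

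\textbf{What fails as written.}
\begin{enumerate}
\item The endpoint map of broken curves built from arbitrary H\"older line fields spanning $\mathcal F$ is not well defined. Peano solutions of an ODE with H\"older right-hand side need not be unique, and there is no differentiable, or even continuous, dependence on the starting point. So the claim that $s\mapsto H_i(\text{endpoint})$ is $C^1$ with derivative $\mathcal D_i(x)$ at $0$ is unjustified.
\item The statement ``$H^{-1}|_{H(x)+W}$ has derivative $\mathcal D_i^{-1}$ along every curve'' assumes what must be shown. Proposition~\ref{po:2} differentiates $H$ along regular curves in $\mathbb T^N$. It says nothing about $H^{-1}$ along a curve in the affine plaque unless you produce a regular $\mathcal F$-curve that $H$ maps onto it.
\item Invariance of domain does not help. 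That $L_x$ is a topological $d$-disc is not the difficulty; what is needed is its smoothness.
\end{enumerate}

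\textbf{The missing idea, which is the paper's.} Fix $v\in W$ and use the H\"older field $X_v(y):=\mathcal D_i(y)^{-1}(v)$. It is tangent to $\mathcal F$ and well defined because, by Step 1, $\mathcal D_i(y)\colon\mathcal F_y\to W$ is an isomorphism. Let $\gamma$ be any integral curve of $X_v$ with $\gamma(0)=y$.
\begin{enumerate}
\item $\gamma$ is regular, and $(H\circ\gamma)'=\mathcal D_i(\gamma)(X_v(\gamma))=v$.
\item Hence $H(\gamma(t))=H(y)+tv$, and by injectivity of $H$, $H^{-1}(H(y)+tv)=\gamma(t)$. This also shows these integral curves are unique.
\item So at every point $z$ of every $W$-plaque, $H^{-1}$ has directional derivatives in all directions of $W$, equal to $\mathcal D_i(H^{-1}z)^{-1}$, which is continuous in $z$.
\item Therefore $H^{-1}$ is $C^1$ along the plaques, hence Lipschitz there. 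Its derivative $\mathcal D_i(H^{-1}z)^{-1}$ is then $\alpha$-H\"older in $z$.
\end{enumerate}
This makes $H^{-1}(H(x)+W)$ a $C^{1+\alpha}$ leaf tangent to $\mathcal F$ with $DH|_{\mathcal F}=\mathcal D_i$, with no broken curves or invariance of domain. If you take your line fields to be $X_{w_1},\dots,X_{w_d}$ for a basis $w_1,\dots,w_d$ of $W$, your endpoint map becomes the well-defined map $s\mapsto H^{-1}\bigl(H(x)+\sum_k s_kw_k\bigr)$, and your Step 3 reduces to this argument.
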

\begin{remark}
Since \(H(\mathcal W_i^f)=\mathcal W_i^A\), the components \(p_j\circ H\),
\(j\neq i\), are constant along the leaves of \(\mathcal W_i^f\). In
particular, they are constant along curves tangent to \(\mathcal F_{i,1}\).
Therefore, for every \(u\in\mathcal F_{i,1}(x)\),
\[
        p_j(DH_x(u))=0 \quad (j\neq i),
        \qquad
        p_i(DH_x(u))=D(H_i)_x(u).
\]
Equivalently, after identifying \(E_i\subset\mathbb R^N\), we have
\[
        DH_x(u)=D(H_i)_x(u),
        \qquad u\in\mathcal F_{i,1}(x).
\]
\end{remark}

\subsection{Role of Theorem~\ref{th:1}}

The partial differentiability obtained in Theorem~\ref{th:1}, together with
the \(A_i\)-invariant subspace \(W=B_i(V_{i,1})\), plays a crucial role in the
proof of Theorem~\ref{th:3}. In particular, the Diophantine property of \(W\)
is used to control the negative-time distributional series
\[
        h_i^-:=
        -\sum_{j=-1}^{-\infty}A_i^{-(j+1)}R_i\circ f^j .
\]
More precisely, the partial \(C^{1+\alpha}\) regularity of \(H^{-1}\) along the
linear foliation tangent to \(W\), together with the Diophantine property of
\(W\), allows us to prove that \(h_i^-\) is a well-defined distribution and that
\[
        h_i^-=h_i
        \qquad\text{as distributions}.
\]
This identity is the key step in proving the equality of the positive- and
negative-time distributions in Theorem~\ref{th:3}.

\subsection{Proof of Theorem \ref{th:1}}

Throughout the proof, we write $\mathcal F:=\mathcal F_{i,1}$ for simplicity. By Proposition \ref{po:2}, $H$ is curve differentiable along $\mathcal{F}$,
and
\[
dH_x^{\mathfrak c}=\mathcal D_i(x)=B_i\circ \mathcal C_{i,1}(x),
\qquad x\in\TT^N.
\]
Applying the curve derivative to the conjugacy equation
\[
H\circ f=A\circ H
\]
along \(\mathcal F\), we have
\begin{align}\label{for:4}
\mathcal D_i(fx)\circ Df|_{\mathcal{F}_x}=A_i\circ \mathcal D_i(x),\qquad \forall\,x\in\TT^N.
\end{align}
Since
\[
        \ker \mathcal D_i(x)
        =
        \mathcal C_{i,1}(x)^{-1}(\ker B_i),
\]
it is enough to prove that $\ker B_i=\{0\}$. Suppose, by contradiction, that
there exists $0\neq v\in\ker B_i$. Define a local vector
field \(X\) tangent to \(\mathcal F\) by
\[
X_x:=\mathcal C_{i,1}(x)^{-1}(v).
\]
By
Peano's existence theorem, there exists a local integral curve $\gamma$ of
$X$. Moreover, since $X$ is H\"older and nonvanishing, $\gamma$ is a nonconstant
regular curve along $\mathcal F$.  Along this curve,
\begin{align*}
\frac{d}{dt}(H\circ\gamma)(t)
=
\mathcal D_i(\gamma(t))(\gamma'(t))
=
B_i\circ\mathcal C_{i,1}(\gamma(t))
\bigl(\mathcal C_{i,1}(\gamma(t))^{-1}(v)\bigr)
=
B_iv
=
0.
\end{align*}
Hence \(H\circ\gamma\) is constant, contradicting the fact that \(H\) is a
homeomorphism and $\gamma$ is nonconstant. Therefore \(\ker B_i=\{0\}\), and consequently
\(\mathcal D_i(x)\) is injective on \(\mathcal F_x\) for every \(x\).

Since \(\mathcal{D}_i(x)\) is continuous and \(\mathcal F\) is a continuous
bundle over the compact space \(\TT^N\), the injectivity is uniform. Then
\begin{equation}\label{for:59}
\|\mathcal{D}_i(x)(u)\|
\geq
C^{-1}\|u\|,
\qquad
x\in\TT^N,\quad u\in\mathcal F_x.
\end{equation}
Define
\[
W:=B_i(V_{i,1})\subset E_i.
\]
Then, for every \(x\in\TT^N\),
\[
\mathcal D_i(x)(\mathcal F_x)=W.
\]
Using \eqref{for:4}, we see that
\begin{align*}
 A_i W
        =
        A_i\mathcal D_i(x)(\mathcal F_x)
        =
        \mathcal D_i(fx)(Df_x\mathcal F_x)
        =
        \mathcal D_i(fx)(\mathcal F_{fx})
        =
        W.
\end{align*}
Hence $W$ is $A_i$-invariant.

Let $W^L$ be the linear foliation tangent to $W$. We define
\[
        \mathcal W_{\mathcal F}(x)
        :=
        H^{-1}\bigl(W^L(H(x))\bigr).
\]
Since $H$ is
a homeomorphism, the sets $\mathcal W_{\mathcal F}(x)$ form a topological
foliation. We now show that this foliation is tangent to $\mathcal F$ and has
uniformly $C^{1+\text{H\"older}}$ leaves.

For each \(x\in\TT^N\), the map $\mathcal{D}_i:\mathcal F_x\to W$ is a linear isomorphism. By \eqref{for:59}, its inverse is uniformly bounded.
Since \(\mathcal{D}_i(x)\) is \(\alpha\)-H\"older, the family of
inverses
\[
\bigl(\mathcal{D}_i(x)\bigr)^{-1}:W\to\mathcal F_x
\]
is also \(\alpha\)-\text{H\"older} in \(x\). Fix \(v\in W\). Define a vector field \(X_v\) on \(\TT^N\) by
\[
X_v(x):=
\bigl(\mathcal{D}_i(x)\bigr)^{-1}(v).
\]
Then \(X_v\) is \(\alpha\)-H\"older and tangent to \(\mathcal F\). Let
\(\gamma\) be a local integral curve of \(X_v\), with \(\gamma(0)=x\). Then
\[
\frac{d}{dt}(H\circ\gamma)(t)
=
\mathcal{D}_i(\gamma(t))(\gamma'(t))
=
\mathcal{D}_i(\gamma(t))(X_v(\gamma(t)))
=
v.
\]
Therefore, for \(t\) sufficiently small,
\[
H(\gamma(t))=H(x)+tv.
\]
Equivalently,
\[
H^{-1}(H(x)+tv)=\gamma(t).
\]
Thus \(H^{-1}\) is differentiable along the affine \(W\)-leaves, and
\[
D(H^{-1})_{H(x)}|_W
=
\bigl(\mathcal{D}_i(x)\bigr)^{-1}.
\]
Since the right-hand side is \(\alpha\)-H\"older in \(x\), it follows
that \(H^{-1}\) is \(C^{1+\alpha}\) along affine \(W\)-leaves.
Consequently, the leaves of
\[
\mathcal W_{\mathcal F}(x)=H^{-1}(H(x)+W)
\]
are uniformly \(C^{1+\alpha}\) and tangent to \(\mathcal F\). Since \(H\)
maps each leaf of \(\mathcal W_{\mathcal F}\) diffeomorphically onto an
affine \(W\)-leaf, \(H\) is \(C^{1+\alpha}\) along
\(\mathcal W_{\mathcal F}\) and
\begin{align}\label{for:10}
 DH_x|_{\mathcal F_x}=\mathcal D_i(x).
\end{align}
It remains to prove unique integrability. Let \(\theta\) be any \(C^1\) curve tangent to \(\mathcal F\). By \eqref{for:10},  \(H\circ\theta\) is differentiable and \[ \frac{d}{dt}(H\circ\theta)(t)  = \mathcal D_i(\theta(t))(\theta'(t)) \in W . \] Hence \(H\circ\theta\) is contained in the affine subspace \(H(\theta(0))+W\). Therefore \[ \theta(t)\in H^{-1}(H(\theta(0))+W) = \mathcal W_{\mathcal F}(\theta(0)) \] for all \(t\). This proves unique integrability of \(\mathcal F\).

Taking $\mathcal W_{\mathcal F_{i,1}}:=\mathcal W_{\mathcal F}$,
we obtain the desired foliation. Moreover, \(H\) maps each leaf of
\(\mathcal W_{\mathcal F_{i,1}}\) onto an affine leaf parallel to the fixed
\(A_i\)-invariant subspace \(W\), and \(H\) is
\(C^{1+\alpha}\) along \(\mathcal W_{\mathcal F_{i,1}}\). This
completes the proof.

\section{Invariant distributions along $\mathcal{E}_i$}\label{sec:37}
We recall the following notation:
\begin{enumerate}
\item $H$ is a bi-$\eta$-H\"older conjugacy; see Section \ref{sec:5}.
  \item $\mathfrak m$ and $\mu$ are defined in Section~\ref{sec:12}.
  \item The numbers $\rho_i$, the Lyapunov blocks $\mathcal E_i$, and the
  linear subspaces $E_i$ are defined in Section~\ref{sec:12}.
   \item The maps $A_i$ and $p_i$ are defined in Section~\ref{sec:14}.
  \item The foliations $\mathcal W_i^f$ and $\mathcal W_i^A$ are defined in
  \eqref{for:23} of Section~\ref{sec:12}.
  \item The subbundle $\mathcal F_{i,j}$ is defined in \eqref{for:33} of
  Section~\ref{sec:12}.
\item $H_i$ is defined in \eqref{for:7} of Section \ref{sec:39}.
\end{enumerate}
\begin{theorem}\label{th:3}
Suppose that \(H(\mathcal W_i^f)=\mathcal W_i^A\) for some
\(i_0\leq i\leq \ell\). Let
\(0<\beta<1\). If \(\mathcal V\) is an \(\alpha\)-H\"older vector
field on \(\TT^N\) taking values in \(\mathcal E_i\), then the following
formulas define \(E_i\)-valued bounded linear functionals on
\(C^{\beta}(\TT^N)\):
\begin{align*}
\mathfrak D_{\mathcal V}^+(\omega)
&:=
\sum_{m=0}^{\infty}
\int_{\mathbb T^N}
A_i^{-(m+1)}
D(R_i\circ f^m)_{H^{-1}z}
\bigl(\mathcal V_{H^{-1}z}\bigr)
\,\omega(z)\,d\mathfrak m(z), \\
\mathfrak D_{\mathcal V}^-(\omega)
&:=
-
\sum_{m=-1}^{-\infty}
\int_{\mathbb T^N}
A_i^{-(m+1)}
D(R_i\circ f^m)_{H^{-1}z}
\bigl(\mathcal V_{H^{-1}z}\bigr)
\,\omega(z)\,d\mathfrak m(z), \\
\widetilde{\mathfrak D}_{\mathcal V}^+(\omega)
&:=
\sum_{m=0}^{\infty}
\int_{\mathbb T^N}
A_i^{-(m+1)}
D(R_i\circ f^m)_z
\bigl(\mathcal V_z\bigr)
\,\omega(z)\,d\mathfrak m(z),\\
\widetilde{\mathfrak D}_{\mathcal V}^-(\omega)
&:=
-
\sum_{m=-1}^{-\infty}
\int_{\mathbb T^N}
A_i^{-(m+1)}
D(R_i\circ f^m)_z
\bigl(\mathcal V_z\bigr)
\,\omega(z)\,d\mathfrak m(z).
\end{align*}
More precisely, for every \(\omega\in C^{\beta}(\TT^N)\),
\[
\max\{
\|\mathfrak D^+_{\mathcal V}(\omega)\|,
\|\mathfrak D^-_{\mathcal V}(\omega)\|, \|\widetilde{\mathfrak D}_{\mathcal V}^+(\omega)\|,\|\widetilde{\mathfrak D}_{\mathcal V}^-(\omega)\|
\}
\leq
C_{\beta}
\|\mathcal V\|_{C^{\alpha}}
\|\omega\|_{C^{\beta}}.
\]
Moreover,
\[
\mathfrak D^+_{\mathcal V}
=
\mathfrak D^-_{\mathcal V},\quad \widetilde{\mathfrak D}_{\mathcal V}^+=\widetilde{\mathfrak D}_{\mathcal V}^-
\]
as \(E_i\)-valued distributions on \(C^{\beta}(\TT^N)\).
\end{theorem}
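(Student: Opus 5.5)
The plan is to deduce all four statements from a single telescoping identity coming from the conjugacy equation, and to estimate the two endpoint terms of that identity separately. Since \(\bar f=A+\bar R\), we have \(R_i=p_i\circ f-A_ip_i\) after lifting, and constants disappear once we differentiate. For \(n\le -1<M\) this gives, as functions,
\[
\sum_{m=n}^{M}A_i^{-(m+1)}R_i\circ f^m=A_i^{-(M+1)}p_i\circ f^{M+1}-A_i^{-n}p_i\circ f^{n}.
\]
Differentiating along \(\mathcal V\) and pairing with \(\omega\), both the partial sums of \(\widetilde{\mathfrak D}^{+}_{\mathcal V}\) and those of \(\widetilde{\mathfrak D}^{-}_{\mathcal V}\) reduce to endpoint terms. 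This uses \eqref{for:75} with \(\mathcal E_i\) in place of \(\mathcal F\), together with the analogous backward identity. So the whole statement follows once we show two things: the forward endpoint \(\int A_i^{-(M+1)}D(p_i\circ f^{M+1})(\mathcal V)\,\omega\,d\mathfrak m\) tends to a limit at an exponential rate, and the backward endpoint \(\int A_i^{-n}D(p_i\circ f^{n})(\mathcal V)\,\omega\,d\mathfrak m\) also converges, with a limit compatible with the forward one. The hatted and unhatted versions are related by the change of variables \(z\mapsto H^{-1}z\). That change replaces \(\omega\) by \((\omega\circ H)\,\kappa\), which is \(C^{\beta\eta}\), and \(\mathcal V\) by \(\mathcal V\circ H^{-1}\). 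So it suffices to treat \(\widetilde{\mathfrak D}^\pm\) for Hölder test functions of some positive exponent, and the bounds for \(\mathfrak D^\pm\) follow with \(C_\beta\) adjusted.

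For the positive series I would repeat the proof of Lemma~\ref{le:6}, now globally. Cover \(\TT^N\) by finitely many foliation charts of \(\mathcal W_i^f\) and take a partition of unity. Use \eqref{for:167} and absolute continuity (Section~\ref{sec:15}), absorbing the density \(J_k\) into the test function as in Remark~\ref{re:5}. Mollify \(\omega J_k\) and the coefficients of \(\mathcal V\) in the leaf variables at scale \(\eta\). On the smooth part, integrate by parts along the leaves; this gives a bound \(\rho_i^{-(m+1)}\eta^{-1}\). On the remainder, use \eqref{for:34}, which gives \(\eta^{\min(\alpha,\beta)}(m+1)^{\dim\mathcal E_i-1}\). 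Taking \(\eta=\rho_i^{-(m+1)/2}\) yields the summable bound \(C\rho_i^{-c(m+1)}(m+1)^{\dim\mathcal E_i-1}\|\mathcal V\|_{C^\alpha}\|\omega\|_{C^\beta}\). This proves that \(\widetilde{\mathfrak D}^+_{\mathcal V}\), and hence \(\mathfrak D^+_{\mathcal V}\), is bounded on \(C^\beta\). By the telescoping identity, the same estimate shows that the forward endpoint converges.

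The negative side is the main obstacle. There \(\|A_i^{-(m+1)}\|\sim\rho_i^{|m|}\) exactly compensates the contraction of \(Df^m|_{\mathcal E_i}\), so leafwise integration by parts in the \(f\)-picture gives no decay. I would move to the linear picture. By the conjugacy, \(p_i\circ f^{n}\circ H^{-1}=p_i\circ H^{-1}\circ A^{n}\), and \(p_i\circ H^{-1}=p_i-\tilde h\) with \(\tilde h:=h_i\circ H^{-1}\). The linear part \(A_i^{-n}p_iA^{n}=p_i\) contributes a term that is independent of \(n\) and cancels against the forward limit. What remains is \(A_i^{-n}\int \tilde h(A^{n}q)\,\psi(q)\,dq\), where \(\psi\) is the divergence-type adjoint of the test data; in particular \(\psi\) has zero mean. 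Substituting \(q'=A^nq\) turns this into \(\rho_i^{|n|}\)-size factors against \(\psi\circ A^{|n|}\), which oscillates at frequency \(\rho_i^{|n|}\) along \(E_i\). By Theorem~\ref{th:1}, \(\tilde h\) is \(C^{1+\alpha}\) along the linear foliation \(W^L\) with \(W=B_i(V_{i,1})\subset E_i\). Irreducibility of \(A\) implies that \(W\) is Diophantine: \(|P_W k|\ge c|k|^{-\tau}\) for \(k\in\ZZ^N\setminus\{0\}\). So after mollifying \(\psi\) at a scale \(\eta_n\), I can solve \(D_W\Psi=\psi\) on the torus with a polynomial loss \(\eta_n^{-\tau'}\). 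Then \(\psi\circ A^{|n|}=(A^{-|n|})^{*}\) applied to \(D_W(\Psi\circ A^{|n|})\), which carries a factor \(\rho_i^{-|n|}\) coming from the contraction of \(A^{-|n|}\) on \(W\). Integrating by parts once along \(W\) onto \(\tilde h\) cancels the growth \(\rho_i^{|n|}\). A second gain is still needed. I would try to get it by using the \(\alpha\)-Hölder modulus of \(D_W\tilde h\) against the oscillation of \(\Psi\circ A^{|n|}\), giving a factor \(\rho_i^{-\alpha|n|}\), or alternatively by one more integration by parts after mollifying \(D_W\tilde h\) at scale \(\rho_i^{-|n|/2}\). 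The exponential gain has to beat the polynomial loss \(\eta_n^{-\tau'}\), and choosing \(\eta_n\) as a small power of \(\rho_i^{-|n|}\) should balance the two.

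With exponential convergence of both endpoints established, the remaining steps are routine. The negative series converges with the same \(C_\beta\) bound, which gives boundedness of \(\widetilde{\mathfrak D}^-_{\mathcal V}\) and \(\mathfrak D^-_{\mathcal V}\). The telescoping identity then shows that \(\widetilde{\mathfrak D}^+_{\mathcal V}-\widetilde{\mathfrak D}^-_{\mathcal V}\) equals the difference of the two endpoint limits, and this difference is zero because both limits reduce to the same linear term \(\int p_i(\mathcal V)\,\omega\). Transporting through \(H\) as in the first paragraph gives \(\mathfrak D^+_{\mathcal V}=\mathfrak D^-_{\mathcal V}\).
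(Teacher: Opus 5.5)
There is a genuine gap, and it is in the step that carries the theorem: the convergence of the backward endpoint. Your telescoping reduction, your positive-side estimate and the change of variables between the hatted and unhatted functionals all match the paper. Write \(l=|n|\) for the negative time.

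First, the density \(\psi\) you integrate against does not exist at the given regularity. For \(\omega\in C^\beta\) and \(\mathcal V\in C^\alpha\), the adjoint of \(D_{\mathcal V}\) applied to \(\omega\) is an order-one distribution, not a function. It cannot be pushed to the linear picture through the merely H\"older map \(H\). You can restrict, as the paper does, to the coordinate fields \(\mathcal W^j\) of a foliation chart and to test functions that are \(C^1\) along leaves. Even then, the density in the \(q\)-variables is only bounded measurable, or at best H\"older with some small exponent \(\gamma\).

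Your mollification then leaves the error \(A_i^{l}\int\tilde h(A^{-l}q)\,(\psi-\psi_{\eta_l})(q)\,dq\). This term carries the full twist \(\rho_i^{l}\) and gets no gain from \(W\). Killing it forces \(\eta_l\) to be exponentially small in \(l\); for instance, you need \(\eta_l^{\gamma}=o(\rho_i^{-l})\) if you use the regularity of \(\psi\). The small-divisor loss is polynomial in \(\eta_l^{-1}\) of degree at least \(N\) (Lemma~\ref{le:8} already loses \(\|n\|^{(1+\alpha)N}\) per mode). So that loss is at least \(\rho_i^{Nl/\gamma}\), which the net gain \(\rho_i^{-\alpha l}\) cannot absorb. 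Choosing \(\eta_l\) as a small power of \(\rho_i^{-l}\) balances only the small-divisor side.

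Some regularity of the test function in linear coordinates is unavoidable. The partial sums of the function-level negative series are \(h_i-A_i^{l}h_i\circ f^{-l}\), and their \(L^1\)-norms are at least \(c\rho_i^{l}\|h_i\|_{L^1(\mu)}-C\). By Banach--Steinhaus, that series therefore cannot converge against all bounded test functions unless \(h_i\equiv0\).

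The second gain you leave open is exactly Lemma~\ref{le:8}. On a single mode \(e_n\), integrate by parts along \(w_{j_0}\in W\). Rewrite the result as a finite difference with step \(t=1/(2\,n\cdot w_{j_0})\), and use the \(\alpha\)-H\"older modulus of \(D_W\tilde h\) along \(tA^{-l}w_{j_0}\). This gives \(C\|n\|^{(1+\alpha)N}\rho_i^{-(1+\alpha)l}\).

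Separately, your closing claim is misstated. The common limit of the two endpoints is \(\int p_i(\mathcal V)\omega+\widetilde{\mathfrak D}^+_{\mathcal V}(\omega)\), not \(\int p_i(\mathcal V)\omega\). What the two endpoints share is the \(l\)-independent term \(-\int p_i\,\psi\).

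The paper never attempts a direct derivative-level decay estimate. It runs the Diophantine argument on the undifferentiated \(h_i\), and only on exact Fourier modes (Step~1 in the proof of Theorem~\ref{th:3}). Nothing is mollified there, and the loss is polynomial in \(\|n\|\). This gives \(h_i^-\circ H^{-1}=h_i\circ H^{-1}\) against smooth test functions.

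The paper then reaches derivatives by integrating by parts along leaves in a foliation chart. For \(\mathcal W^j\) and \(\varphi\in C^{c,1}_x\), this turns \(\mathfrak K^-_{\mathcal W^j}(\varphi)\) into the function-level negative series against \(\chi_\psi\). It obtains the \(C^\beta\) bound for the negative side by transferring the positive-side bound of Lemma~\ref{le:2} through the resulting equality and extending by density. A general field is handled by writing it as \(\sum_j a_j\mathcal W^j\). Relative to the paper, the idea you are missing is this: prove the identity at the level of \(h_i\) on exact modes, and let the negative side inherit its bound from the positive side.

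Be aware, however, that the paper's passage from smooth test functions to the merely bounded \(\chi_\psi\) (Remark~\ref{re:2} and Step~2) and its density extension in Step~3 do not control the partial sums of the negative series. They only yield a bounded functional equal to \(\widetilde{\mathfrak D}^+_{\mathcal V}\). By the Banach--Steinhaus remark above, literal convergence of that series for all \(C^\beta\) data needs an estimate that exploits the structure of the test data. That is what your sketch was reaching for, but it does not supply it.
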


We understand the integrals in the theorem componentwise. More precisely, if
\(g\) is vector-valued and \(\varphi\) is scalar-valued, then \(g\varphi\)
denotes their componentwise product, and the integral of \(g\varphi\) is taken
componentwise.

\begin{remark}
We will see that
\[
        D_{\mathcal V}h_i
        =
        \widetilde{\mathfrak D}_{\mathcal V}^+,
        \qquad
        D_{\mathcal V}h_i^-
        =
        \widetilde{\mathfrak D}_{\mathcal V}^-
\]
as distributions, where \(h_i^-\) is defined in \eqref{for:102}.

The distributions $\mathfrak D_{\mathcal V}^{\pm}$ are the conjugated versions
of $\widetilde{\mathfrak D}_{\mathcal V}^{\pm}$ under the Franks--Manning
conjugacy $H$. More precisely, let $\kappa$ denote the density determined by
\[
        H^{-1}_*\mathfrak m=\kappa\,\mathfrak m .
\]
Equivalently,
\[
        H_*\mathfrak m=(\kappa\circ H^{-1})^{-1}\mathfrak m .
\]
Then, for every test function $\omega\in C^\beta(\mathbb T^N)$,
\[
        \mathfrak D_{\mathcal V}^{\pm}(\omega)
        =
        \widetilde{\mathfrak D}_{\mathcal V}^{\pm}
        \bigl((\omega\circ H)\kappa\bigr).
\]
Indeed, this follows from the change of variables $z=H(x)$:
\begin{align*}
 &\int_{\mathbb T^N}
A_i^{-(m+1)}
D(R_i\circ f^m)_{H^{-1}z}
\bigl(\mathcal V_{H^{-1}z}\bigr)
\omega(z)\,d\mathfrak m(z) \\
&=
\int_{\mathbb T^N}
A_i^{-(m+1)}
D(R_i\circ f^m)_x
\bigl(\mathcal V_x\bigr)
(\omega\circ H)(x)\kappa(x)\,d\mathfrak m(x).
\end{align*}
Thus $\widetilde{\mathfrak D}_{\mathcal V}^{\pm}$ are the distributions in the
original $f$-coordinates, and they are the ones directly related to the
distributional derivative of $H_i$. The distributions
$\mathfrak D_{\mathcal V}^{\pm}$ are introduced because, after conjugating by
$H$, the stable and unstable foliations become the linear foliations of $A$.
This allows us to use translations on the linear torus and to apply the
distribution-to-H\"older criterion to the candidate derivative in the proof of
Theorem~\ref{th:4}.
\end{remark}

\subsection{Role of Theorem~\ref{th:3}}

Theorem~\ref{th:3} is the main distributional input in the proof of
Theorem~\ref{th:4}. It identifies the positive- and negative-time
distributional expressions for the candidate derivative of \(H_i\). Together
with the triangular reduction of \(Df|_{\mathcal E_i}\) (see Theorem \ref{th:9}), this equality allows
us to obtain stable-direction translation estimates from
\(\mathfrak D_{\mathcal V}^{+}\) and unstable-direction translation estimates
from \(\mathfrak D_{\mathcal V}^{-}\) on the linear side. Since
\[
        \mathfrak D_{\mathcal V}^{+}
        =
        \mathfrak D_{\mathcal V}^{-},
\]
these one-sided estimates combine to give the two-sided estimates required by
the distribution-to-H\"older criterion. This is what ultimately shows that the distribution
$D_{\mathcal V}h_i$ is in fact a H\"older map.

\subsection{Proof strategy for Theorem~\ref{th:3}}

The positive-time series are controlled directly by an averaging estimate along
the foliation $\mathcal W_i^f$. More precisely, Lemma~\ref{le:2} gives
summability for local test functions in foliation boxes, and
Corollary~\ref{cor:2} converts this local estimate into a global estimate.
This proves that the positive-time distributions $\mathfrak D_{\mathcal V}^+$ and $\widetilde{\mathfrak D}_{\mathcal V}^+$
are bounded $E_i$-valued linear maps on $C^\beta(\mathbb T^N)$.

The negative-time series are more delicate. We first define formally
\begin{align}\label{for:102}
  h_i^-
        =
        -
        \sum_{j=-1}^{-\infty}
        A_i^{-(j+1)}R_i\circ f^j .
\end{align}
The key point is to prove that this negative-time expression represents the
same distribution as $h_i$. To do this, we conjugate by $H$ and show that
\[
        h_i^-\circ H^{-1}
        =
        h_i\circ H^{-1}
\]
as distributions. The proof uses a two-sided telescoping identity coming from
the cohomological equation
\[
        A_i h_i-h_i\circ f=R_i.
\]
The positive endpoint of the telescoping sum converges to zero by expansion of
$A_i$. The negative endpoint is handled using the partial
$C^{1+\alpha}$ regularity of $H^{-1}$ along the linear foliation tangent to the
subspace $W\subset E_i$ obtained in Theorem~\ref{th:1}. The Diophantine
property of $W$ then allows us to integrate by parts against Fourier modes and
show that the negative endpoint also tends to zero. This gives
\[
        h_i^-\circ H^{-1}=h_i\circ H^{-1}.
\]
Changing variables back through $H$, we obtain $ h_i^-=h_i$ as distributions. This is one place where irreducibility of $A$ is used in an essential way: it guarantees the Diophantine property for the invariant subspaces that arise in the proof. A new feature of the proof is that the exponential mixing required to realize the formal negative-time
expression as a distribution is
not obtained from standard decay of correlations. Instead, partial
\(C^{1+\alpha}\) regularity along a Diophantine linear foliation is combined
with a Fourier integration-by-parts argument. The Diophantine property controls
the small divisors with only polynomial loss, while the contraction of the
linear dynamics along the same foliation provides the exponential gain needed
to control the twisted negative-time series; see Remark \ref{re:6}.

We then take distributional derivatives along an $\alpha$-H\"older vector field
$\mathcal V$. By the positive-time expansion of $h_i$, we have
\[
        D_{\mathcal V}h_i
        =
        \widetilde{\mathfrak D}_{\mathcal V}^+,
\]
while by the negative-time expansion of $h_i^-$,
\[
        D_{\mathcal V}h_i^-
        =
        \widetilde{\mathfrak D}_{\mathcal V}^-.
\]
Since $h_i^-=h_i$, these derivatives coincide. Hence $\widetilde{\mathfrak D}_{\mathcal V}^+
        =
        \widetilde{\mathfrak D}_{\mathcal V}^-$. This also gives boundedness of $\widetilde{\mathfrak D}_{\mathcal V}^-$. Finally, the conjugated distributions satisfy
\[
        \mathfrak D_{\mathcal V}^{\pm}(\omega)
        =
        \widetilde{\mathfrak D}_{\mathcal V}^{\pm}
        \bigl((\omega\circ H)\kappa\bigr),
\]
where $H^{-1}_*\mathfrak m=\kappa\,\mathfrak m$. Since $(\omega\circ H)\kappa\in C^{\beta\eta}(\mathbb T^N)$,
the equality for $\widetilde{\mathfrak D}_{\mathcal V}^{\pm}$, applied with
the exponent $\beta\eta$, gives $\mathfrak D_{\mathcal V}^+
        =
        \mathfrak D_{\mathcal V}^-$.
The same change-of-variables relation and the boundedness of the
$\widetilde{\mathfrak D}_{\mathcal V}^{\pm}$ give the estimates for
$\mathfrak D_{\mathcal V}^{\pm}$. This proves the boundedness and equality
claims in Theorem~\ref{th:3}.

\subsection{Notations and basic facts}

\subsubsection{Diophantine subspace} Suppose $L$ is a subspace of $\RR^N$. We say that $L$ has Diophantine property, i.e., for any $0\neq m\in\ZZ^N$ and any basis $\{v_1,\cdots, v_{\dim L}\}$ of $L$, we have
\begin{align*}
 \sum_{i=1}^{\dim L}|m\cdot v_i|\geq C_{v_1,\cdots,v_{\dim L}}\norm{m}^{-N}.
\end{align*}
We will use the following arithmetic fact later in the proof.
\begin{lemma}[Katznelson's Lemma]\label{Katz} Let $P$ be a $N\times N$ integer matrix.
Assume that $\R^N$ splits as $\R^N=V_1\oplus V_2$, where $V_1$ and $V_2$ are invariant under $P$,
and  $P|_{V_1}$ and $P|_{V_2}$ have no common eigenvalues. If $V_1\cap\Z^N=\{0\}$, then
there exists a constant $K$ such that
\begin{align*}
dist(n,V_1)\geq K\norm{n}^{-N} \quad\text{for all } \,0\ne n\in\Z^N,
\end{align*}
where $\norm{n}$ denotes
Euclidean norm and $dist$ is Euclidean distance.
\end{lemma}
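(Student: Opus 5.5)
We argue directly with Krylov subspaces generated by integer vectors, comparing an integer lower bound on a wedge product with an upper bound coming from the splitting $V_1\oplus V_2$. Let $\pi:\R^N\to V_2$ be the projection along $V_1$. Since $V_1$ and $V_2$ are fixed complementary subspaces, $\|\pi n\|\le C\,dist(n,V_1)$ for all $n$. So it suffices to show $\|\pi n\|\ge K\|n\|^{-N}$ for $0\neq n\in\Z^N$. Note $\|n\|\ge 1$ for such $n$.

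\emph{Step 1 (the Krylov space of $n$).} Fix $0\ne n\in\Z^N$ and let $L_n$ be the span of $n,Pn,P^2n,\dots$. Let $d=\dim L_n\le N$. Then $L_n$ is $P$-invariant, and $u_j:=P^jn$, $0\le j\le d-1$, is a basis of $L_n$ consisting of integer vectors. Hence $\Omega_n:=u_0\wedge\cdots\wedge u_{d-1}$ is a nonzero element of the integer lattice $\Lambda^d\Z^N$, so $\|\Omega_n\|\ge1$.

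\emph{Step 2 (splitting of $L_n$).} Since $P|_{V_1}$ and $P|_{V_2}$ have no common eigenvalues, the projections onto $V_1$ and $V_2$ along the splitting are polynomials in $P$ (obtained from the coprime characteristic polynomials via Bézout over $\mathbb C$, then taking real parts). Hence every $P$-invariant subspace splits, and in particular
\[
L_n=(L_n\cap V_1)\oplus(L_n\cap V_2).
\]
Let $r=\dim(L_n\cap V_2)$. Then $r\ge1$: otherwise $L_n\subset V_1$, so $n\in V_1\cap\Z^N=\{0\}$, a contradiction.

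\emph{Step 3 (upper bound on $\Omega_n$).} Write $u_j=a_j+b_j$ with $a_j=(I-\pi)P^jn\in L_n\cap V_1$ and $b_j=\pi P^jn=P^j\pi n$. We have $\|a_j\|\le C\|P\|^{N}\|n\|$ and $\|b_j\|\le \|P\|^{N}\|\pi n\|$. Expand $\Omega_n$ multilinearly into $2^d$ wedge products. Any term containing more than $d-r$ of the $a_j$'s vanishes, since these lie in the $(d-r)$-dimensional space $L_n\cap V_1$. So every surviving term contains at least $r\ge1$ factors $b_j$. Using $\|\pi n\|\le C\|n\|$, each surviving term has norm at most $C\|\pi n\|\,\|n\|^{d-1}$, with $C$ depending only on $P$, $N$ and the splitting. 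Therefore
\[
1\le\|\Omega_n\|\le C\,\|\pi n\|\,\|n\|^{d-1}.
\]
This gives $\|\pi n\|\ge C^{-1}\|n\|^{-(d-1)}\ge C^{-1}\|n\|^{-N}$, since $d\le N$ and $\|n\|\ge1$. Consequently $dist(n,V_1)\ge K\|n\|^{-N}$.

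\emph{Main obstacle.} The delicate point is Step 2: the $P$-invariant rational space $L_n$ must split compatibly with $V_1\oplus V_2$. This is exactly where the hypothesis of disjoint spectra is used, through the polynomial-in-$P$ projections. Once the splitting holds, the vanishing of wedge terms with too many $V_1$-components is automatic, and all constants are uniform in $n$ because only $\|P\|^j$ with $j<N$ enters.
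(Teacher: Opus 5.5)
Your proof is correct. The paper gives no proof of this lemma: it quotes it as a known arithmetic fact (Katznelson's lemma) and only uses it in the proof of Lemma~\ref{le:4}, so there is no internal argument to compare with.

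What you wrote is the standard volume, or Liouville-type, argument. The integer vectors $n,Pn,\dots,P^{d-1}n$ span a parallelepiped whose $d$-volume $\|\Omega_n\|$ is at least $1$. Two facts then force a factor of size $\|\pi n\|$ into every nonvanishing term of the multilinear expansion:
\begin{enumerate}
\item the splitting $L_n=(L_n\cap V_1)\oplus(L_n\cap V_2)$, which you correctly derive from the spectral projections being polynomials in $P$ (the characteristic polynomials of $P|_{V_1}$ and $P|_{V_2}$ are coprime);
\item $L_n\cap V_2\neq\{0\}$, which comes from $V_1\cap\Z^N=\{0\}$.
\end{enumerate}
The constants depend only on $P$, $N$ and the splitting, as you say, and invertibility of $P$ is never used.

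Your argument actually gives more than the statement asks. Each surviving term contains exactly $d-r$ factors $a_j$ and exactly $r$ factors $b_j$, since more $b_j$'s would be linearly dependent in the $r$-dimensional space $L_n\cap V_2$. Hence $\|\pi n\|\geq c\,\|n\|^{-(d-r)/r}\geq c\,\|n\|^{-(N-1)}$, an exponent $N-1$ instead of $N$. For a $2\times 2$ hyperbolic integer matrix this recovers the Liouville bound $|q\alpha-p|\geq c/q$ for quadratic irrationals.
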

\begin{lemma}\label{le:4} If \(L\) is an \(A\)-invariant subspace, then \(L\) has the Diophantine property.
\end{lemma}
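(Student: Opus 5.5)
Since \(A\) is irreducible, the plan is to feed the splitting \(\mathbb R^N=L\oplus L'\) into Katznelson's Lemma~\ref{Katz}. The Diophantine bound for an arbitrary basis of \(L\) then follows by linear algebra. Throughout, assume \(L\neq\{0\}\); the trivial subspace satisfies the condition vacuously.

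\textbf{Step 1 (complement and rationality).} Since \(A\) is irreducible over \(\mathbb Q\), its characteristic polynomial \(\chi\) is irreducible, hence separable. So \(A\) is diagonalizable over \(\mathbb C\) with distinct eigenvalues. For an \(A\)-invariant real subspace \(L\), the restriction \(A|_L\) has characteristic polynomial \(g\) dividing \(\chi\) over \(\mathbb R\). Let \(L'\) be the sum of the generalized eigenspaces for the remaining roots, so that \(\mathbb R^N=L\oplus L'\), with \(L'\) \(A\)-invariant. Because the roots of \(\chi\) are distinct, \(A|_L\) and \(A|_{L'}\) have no common eigenvalues.

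Next we show \(L\cap\mathbb Z^N=\{0\}\) when \(L\neq\mathbb R^N\). The set \(L\cap\mathbb Q^N\) spans a rational \(A\)-invariant subspace. By irreducibility this subspace is either \(0\) or \(\mathbb Q^N\), and the latter would force \(L=\mathbb R^N\). If \(L=\mathbb R^N\), the Diophantine property is immediate: \(\sum|m\cdot v_i|\ge c\|m\|\ge c\) for \(m\in\mathbb Z^N\setminus\{0\}\), since the \(v_i\) form a basis.

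\textbf{Step 2 (apply Katznelson to the correct subspace).} The quantity \(\sum_i|m\cdot v_i|\) measures the orthogonal projection of \(m\) onto \(L\), not the distance from \(m\) to \(L\). Indeed \(\sum_i|m\cdot v_i|\ge c_{v}\,\|\pi_L m\|=c_v\,\operatorname{dist}(m,L^\perp)\), with \(c_v>0\) depending only on the basis.

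We therefore need \(\operatorname{dist}(m,L^\perp)\ge K\|m\|^{-N}\), which calls for Katznelson's lemma applied to \(P=A^{T}\) and the subspace \(V_1=L^\perp\). Note that \(L^\perp\) is \(A^T\)-invariant, since \(L\) is \(A\)-invariant. Moreover \((L')^\perp\) is also \(A^T\)-invariant, and \(\mathbb R^N=L^\perp\oplus(L')^\perp\).

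The eigenvalues of \(A^T\) on \(L^\perp\cong(\mathbb R^N/L)^*\) are those of \(A|_{L'}\). The eigenvalues of \(A^T\) on \((L')^\perp\) are those of \(A|_L\). These two sets are disjoint, as established in Step 1.

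Finally, \(A^T\) is also irreducible over \(\mathbb Q\), since it has the same characteristic polynomial \(\chi\). Hence, by the argument of Step 1, \(L^\perp\cap\mathbb Z^N=\{0\}\) provided \(L^\perp\neq\mathbb R^N\), i.e.\ provided \(L\neq 0\).

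Katznelson's lemma then gives \(\operatorname{dist}(m,L^\perp)\ge K\|m\|^{-N}\) for all \(0\neq m\in\mathbb Z^N\).

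\textbf{Step 3 (conclusion).} Combining Steps 1 and 2 yields \(\sum_i|m\cdot v_i|\ge c_vK\|m\|^{-N}\), which is the Diophantine property of \(L\) with constant \(C_{v_1,\dots,v_{\dim L}}=c_vK\).

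The only genuinely delicate point is the one handled in Step 2. One must realize that Katznelson's lemma should be applied to \(L^\perp\) under the transpose \(A^{T}\), and then verify both the disjointness of the spectra and the trivial intersection with \(\mathbb Z^N\). Both facts follow from the separability and irreducibility of \(\chi\).
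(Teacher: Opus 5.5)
Your proof is correct and follows essentially the same route as the paper: apply Katznelson's lemma to $A^{T}$ with $V_1=L^\perp$ and the $A^{T}$-invariant complement carrying the remaining (disjoint) eigenvalues, use irreducibility of $A^{T}$ to get $L^\perp\cap\ZZ^N=\{0\}$, and conclude via $\sum_i|m\cdot v_i|\geq c\,\|p_L(m)\|=c\,\operatorname{dist}(m,L^\perp)$. One small quibble: for $L=\{0\}$ the empty sum is $0$, so that case is degenerate rather than vacuously true, but it never arises since the lemma is only applied to $W\neq\{0\}$.
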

\begin{proof} Since \(L\) is \(A\)-invariant, \(L^\perp\) is invariant under \(A^\tau\), the transpose of $A$. We claim that
\[
L^\perp\cap\ZZ^N=\{0\}.
\]
Indeed, if \(0\neq z\in L^\perp\cap\ZZ^N\), then the rational span of the
\(A^\tau\)-orbit of \(z\) gives a nonzero proper rational \(A^\tau\)-invariant
subspace. This contradicts the irreducibility of \(A^\tau\), which is equivalent
to the irreducibility of \(A\).

We have a generalize eigenspace decomposition of $A^\tau$:
\begin{align*}
 \RR^N=\bigoplus_{\lambda\in\Delta} V_\lambda,
\end{align*}
where
$\Delta$ denotes the set of eigenvalues of $A$. Since $A^\tau$ is irreducible, each $V_\lambda$ is minimal, i.e., $V_\lambda$ has no nontrivial $A^\tau$-invariant subspace. In fact,
each $V_\lambda$ is one or two dimensional and $A^\tau|_{V_\lambda}$ is conformal.  Let $\Delta_L$ denote the set of eigenvalues of $A^\tau|_{L^\bot}$. Let $L'$ be the subspace spanned by generalized eigenvectors corresponding to eigenvalues inside $\Delta\backslash \Delta_{L^\bot}$. It is clear that $L'$ is also an invariant subspace of $A^\tau$ and
\begin{align*}
\RR^N=L^\bot\bigoplus L'.
\end{align*}
Moreover, the restrictions of $A^\tau$ to $L^\perp$ and $L'$ have no common
eigenvalues.

Applying Lemma~\ref{Katz} to $P=A^\tau$, $V_1=L^\perp$, and
$V_2=L'$, we obtain
\begin{align}\label{for:57}
 d(m,L^\bot)\geq C_L\norm{m}^{-N},\qquad \forall\, \,0\neq m\in\ZZ^{N}.
\end{align}
Let $p_L$ denote the projection to $L$. Then for any $0\neq m\in\ZZ^{N}$ we have
\begin{align*}
 \sum_{i=1}^{\dim L}|m\cdot v_i|&\geq C_{v_1,\cdots,v_{\dim L}}\norm{p_{L}(m)}=C_{v_1,\cdots,v_{\dim L}}d(m,L^\bot)\\
 &\overset{\text{(1)}}{\geq} C_{v_1,\cdots,v_{\dim L},1}\norm{m}^{-N}.
\end{align*}
Here in $(1)$ we use \eqref{for:57}. Hence, we finish the proof.
\end{proof}

\subsubsection{Decay estimate}  We recall notations in Section \ref{sec:16}. In this subsection, we prove two decay estimates that will be used later.
The proof is similar to that of Lemma~\ref{le:6}, but we include the details
for completeness. As in Remark~\ref{re:5}, we use the product measure \(dx\,d\nu_k(y)\) in the foliation box, rather than the full foliation-box measure
 \begin{align*}
  J_k(x,y)\,dx\,d\nu_k(y).
 \end{align*}
 The Jacobian factor \(J_k(x,y)\) can be absorbed into the test function when one compares the product measure with \(\mathfrak m\).

\begin{lemma}\label{le:2} Fix a foliation chart $(\Gamma_k,\,O_k)$ for $\mathcal{W}^f_i$.  Let \(\mathcal Q\) be an \(\alpha\)-H\"older vector field on
\(\Gamma_k(O_k)\) taking values in \(\mathcal E_i\). Let \(\varphi\) be a function compactly
supported on \(O_k\), which is \(\beta\)-H\"older in the
leaf variable \(x\), uniformly in \(y\). Then, for every \(m\geq0\), we have
\begin{align*}
 &\Big\|\int_{O_k} A_i^{-(m+1)}D(R_i\circ f^{m})_{\Gamma_k(x,y)}(\mathcal Q_{\Gamma_k(x,y)})\varphi(x,y)dxd\nu_k(y)\Big\|\\
 &\leq C_{\beta}\rho_i^{-\frac{\varrho(m+1)}{2}}(|m|+1)^{\dim \mathcal{E}_i-1}\norm{R_i}_{C^1}\norm{\mathcal{Q}}_{C^\alpha}\norm{\varphi}_{C^\beta_x},
\end{align*}
where $\varrho=\min\{\alpha,\,\beta\}$.
\end{lemma}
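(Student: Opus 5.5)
The plan is to mimic the proof of Lemma~\ref{le:6}. There are two changes: the curve localization $\omega(t)\phi_1^\epsilon(w)\phi_2^\epsilon(y)$ is replaced by a general test function $\varphi$, which is only $\beta$-H\"older in the leaf variable, and the normalization $c_\epsilon$ is dropped, so no factor $\epsilon^{-1}$ appears.

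\emph{Frame decomposition.} Write $d=\dim\mathcal E_i$. In the chart $\Gamma_k$, introduce the leafwise coordinate frame
\[
\mathcal W^j_{\Gamma_k(x,y)}:=D\Gamma_k(x,y)(\partial_{x_j}),\qquad 1\leq j\leq d .
\]
This frame is $\alpha$-H\"older because $\Gamma_k$ is $C^{1+\alpha}$ in the leaf variable. Expand $\mathcal Q=\sum_j a_j\mathcal W^j$, where the coefficients satisfy $\|a_j\|_{C^\alpha_x}\leq C\|\mathcal Q\|_{C^\alpha}$ uniformly in $y$. Set $b_j=a_j\varphi$. Then $b_j$ is $\varrho$-H\"older in $x$, uniformly in $y$, with
\[
\|b_j\|_{C^\varrho_x}\leq C\|\mathcal Q\|_{C^\alpha}\|\varphi\|_{C^\beta_x},
\]
and it is compactly supported in $O_k$.

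\emph{Smoothing.} Mollify $b_j$ in the leaf variable only, treating $y$ as a parameter, to obtain $b_{j,\eta}$ as in Section~\ref{sec:17}. We need the support of $b_{j,\eta}$ to stay inside $O_k$. To arrange this, restrict to $\eta$ smaller than a fixed fraction of the distance from $\operatorname{supp}\varphi$ to $\partial O_k$. For large $\eta$ (small $m$) no smoothing is needed: use the crude bound instead. The estimates are
\[
\|b_j-b_{j,\eta}\|_{C^0}\leq C\eta^{\varrho}\|b_j\|_{C^\varrho_x},\qquad \|\partial_{x_j}b_{j,\eta}\|_{C^0}\leq C\eta^{-1}\|b_j\|_{C^0}.
\]

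\emph{Splitting and bounds.} Split the integral into $\sum_j\mathcal X_j+\sum_j\mathcal Z_j$, exactly as in Lemma~\ref{le:6}.

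For $\mathcal X_j$, use the identity $D(R_i\circ f^m)(\mathcal W^j)=\partial_{x_j}(R_i\circ f^m\circ\Gamma_k)$ and integrate by parts in $x_j$. There are no boundary terms, by compact support, and no Jacobian terms, because the measure is the product measure $dx\,d\nu_k(y)$. This gives
\[
\|\mathcal X_j\|\leq C\rho_i^{-(m+1)}\eta^{-1}\|R_i\|_{C^0}\|b_j\|_{C^0}.
\]
The constant absorbs the finite total mass of $\nu_k$ on the chart.

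For $\mathcal Z_j$, use \eqref{for:34}, which follows from the polynomial growth $\|Df^m|_{\mathcal E_i}\|\leq C\rho_i^m(m+1)^{d-1}$ in \eqref{for:33} and \eqref{for:93}. This gives
\[
\|\mathcal Z_j\|\leq C(m+1)^{d-1}\|R_i\|_{C^1}\,\eta^{\varrho}\|b_j\|_{C^\varrho_x}.
\]

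\emph{Choice of $\eta$.} Take $\eta=\rho_i^{-(m+1)/2}$, capped by the fixed support margin; small $m$ are absorbed into $C_\beta$. Then the first bound becomes $\rho_i^{-(m+1)/2}\leq\rho_i^{-\varrho(m+1)/2}$, and the second becomes $\rho_i^{-\varrho(m+1)/2}(m+1)^{d-1}$. Summing over $j$ yields the claimed estimate.

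\emph{Main obstacle.} The only delicate point is the regularity bookkeeping. The coefficients $a_j$ and the test function $\varphi$ carry different H\"older exponents, which forces the use of $\varrho=\min\{\alpha,\beta\}$. One also has to check that leafwise mollification keeps the $C^\varrho_x$ control uniform in the transverse parameter $y$, even though $\nu_k$ is merely a measure in $y$. Since every operation is performed leaf by leaf, this uniformity should hold.
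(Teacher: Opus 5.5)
Your proposal is correct and follows essentially the same route as the paper. Both arguments expand $\mathcal Q$ in the leafwise frame $D\Gamma_k(\partial_{x_j})$ and mollify $b_j=a_j\varphi$ in the leaf variable only. The smoothed part is integrated by parts, so that only $\|R_i\|_{C^0}$ and $\|A_i^{-(m+1)}\|\leq C\rho_i^{-(m+1)}$ enter. The remainder is controlled by the polynomial bound on $\|A_i^{-(m+1)}D(R_i\circ f^m)|_{\mathcal E_i}\|$, and one then chooses $\eta=\rho_i^{-(m+1)/2}$.

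Your explicit cap on $\eta$, with small $m$ handled by the crude bound, is slightly more careful than the paper's ``taking $\eta$ sufficiently small.'' It makes the constant depend on $\operatorname{dist}(\operatorname{supp}\varphi,\partial O_k)$, but the paper's proof depends on this quantity implicitly as well. The dependence is harmless: the lemma is only applied to test functions supported in a fixed compact part of the chart, and alternatively one can use a chart defined on a slightly larger domain.
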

\begin{proof}
It follows from \eqref{for:33} of Section~\ref{sec:12} and the fact that
\(A_i\) is diagonalizable with all eigenvalues of modulus \(\rho_i\) (see Section \ref{sec:14}) that, for
every \(m\geq0\),
\begin{align}\label{for:147}
 \norm{A_i^{-(m+1)}D(R_i\circ f^{m})|_{\mathcal{E}_i}}\leq C (|m|+1)^{\dim \mathcal{E}_i-1}\norm{R_i}_{C^1}.
\end{align}
Set
\begin{align*}
 \mathcal W^j_{\Gamma_k(x,y)}
:=
D\Gamma_k(x,y)(\partial_{x_j}),\qquad 1\leq j\leq \dim \mathcal{E}_i.
\end{align*}
Since \(\mathcal Q\) takes values in \(\mathcal E_i\), we can write
\begin{align*}
 \mathcal Q_{\Gamma_k(x,y)}=\sum_{j=1}^{\dim \mathcal{E}_i} a_j(x,y)\mathcal W^j_{\Gamma_k(x,y)}.
\end{align*}
The coefficients \(a_j\) are \(\alpha\)-\text{H\"older} in the leaf variables
\(x\), uniformly in \(y\), and satisfy
\[
\|a_j\|_{C^0}\leq C\|\mathcal Q\|_{C^0},
\qquad
\|a_j\|_{C^\alpha_x}\leq C\|\mathcal Q\|_{C^\alpha}.
\]
For each $j$, define
\[
        b_j(x,y):=a_j(x,y)\varphi(x,y).
\]
Then $b_j$ is $\varrho$-H\"older in the leaf variable $x$, uniformly in $y$,
and
\[
        \|b_j\|_{C^\varrho_x}
        \leq
        C_\beta
        \|\mathcal Q\|_{C^\alpha}
        \|\varphi\|_{C^\beta_x}.
\]
We smooth $b_j$ only in the leaf variables $x$, treating $y$ as a parameter.
Let $b_{j,\eta}$ be the corresponding $C^\infty$ approximation as in Section~\ref{sec:17}. Since
$\varphi$ is compactly supported in $O_k$, after extending $b_j$ by zero in
the leaf variables and taking $\eta$ sufficiently small, there are no boundary
terms in the integration by parts below. The standard smoothing estimates give
\begin{align}
        \|b_{j,\eta}-b_j\|_{C^0}
        &\leq
        C_\beta\eta^\varrho
        \|\mathcal Q\|_{C^\alpha}
        \|\varphi\|_{C^\beta_x},
        \label{for:155}\\
        \|b_{j,\eta}\|_{C^1_x}
        &\leq
        C\eta^{-1}
        \|\mathcal Q\|_{C^0}
        \|\varphi\|_{C^0}.
        \label{for:6}
\end{align}
We split
\begin{align*}
 &\int_{O_k} A_i^{-(m+1)}D(R_i\circ f^{m})_{\Gamma_k(x,y)}(\mathcal Q_{\Gamma_k(x,y)})\varphi(x,y)dxd\nu_k(y)\\
 &=\sum_{j=1}^{\dim \mathcal{E}_i}\int_{O_k} A_i^{-(m+1)}D(R_i\circ f^{m})_{\Gamma_k(x,y)}(\mathcal W^j_{\Gamma_k(x,y)})b_j(x,y)dxd\nu_k(y)\\
&=\sum_{j=1}^{\dim \mathcal{E}_i}\mathcal{X}_j+\sum_{j=1}^{\dim \mathcal{E}_i}\mathcal{Z}_j,
\end{align*}
where
\begin{align*}
 \mathcal{X}_j&=\int_{O_k} A_i^{-(m+1)}D(R_i\circ f^{m})_{\Gamma_k(x,y)}(\mathcal W^j_{\Gamma_k(x,y)})b_{j,\eta}(x,y)dxd\nu_k(y)\\
 \mathcal{Z}_j&=\int_{O_k} A_i^{-(m+1)}D(R_i\circ f^{m})_{\Gamma_k(x,y)}(\mathcal W^j_{\Gamma_k(x,y)})(b_j-b_{j,\eta})(x,y)dxd\nu_k(y).
\end{align*}
We first estimate \(\|\mathcal X_j\|\).   We note that
\[
D(R_i\circ f^m)_{\Gamma_k(x,y)}
\bigl(\mathcal W^j_{\Gamma_k(x,y)}\bigr)
=
\partial_{x_j}(R_i\circ f^m\circ\Gamma_k)(x,y).
\]
Then we have
\begin{align*}
 \mathcal{X}_j=-\int_{O_k} A_i^{-(m+1)}(R_i\circ f^{m}\circ\Gamma_k)(x,y)\big(\partial_{x_j}b_{j,\eta}(x,y)\big)dxd\nu_k(y).
\end{align*}
Hence,
\begin{align*}
 \norm{\mathcal{X}_j}&\leq C\norm{A_i^{-(m+1)}}\norm{R_i}_{C^0}\norm{b_{j,\eta}}_{C^1_{x}}
 \overset{\text{(1)}}{\leq} C\eta^{-1}\rho_i^{-(m+1)} \norm{R_i}_{C^0}\|\mathcal Q\|_{C^0}\norm{\varphi}_{C^0}.
\end{align*}
Here in $(1)$ we use \eqref{for:6}.

We next estimate each $\norm{\mathcal{Z}_j}$. It follows from \eqref{for:147} and \eqref{for:155} that
\begin{align*}
\norm{\mathcal{Z}_j} &\leq C\big\|A_i^{-(m+1)}D(R_i\circ f^{m})|_{\mathcal{E}_i}\big\|\cdot \norm{b_{j,\eta}-b_j}_{C^0}\notag\\
 &\leq C_\beta\eta^\varrho(|m|+1)^{\dim \mathcal{E}_i-1}\norm{R_i}_{C^1}\norm{\mathcal{Q}}_{C^\alpha}\norm{\varphi}_{C^\beta_x}.
\end{align*}
Combining the estimates for \(\mathcal X_j\) and \(\mathcal Z_j\), and summing
over \(1\leq j\leq \dim \mathcal{E}_i\), we obtain
\begin{align*}
 &\Big\|\int_{O_k} A_i^{-(m+1)}D(R_i\circ f^{m})_{\Gamma_k(x,y)}(\mathcal Q_{\Gamma_k(x,y)})\varphi(x,y)dxd\nu_k(y)\Big\|\\
 &\leq C_{\beta}\big(\eta^{-1}\rho_i^{-(m+1)}+ \eta^\varrho(|m|+1)^{\dim \mathcal{E}_i-1}\big)\norm{R_i}_{C^1}\norm{\mathcal{Q}}_{C^\alpha}\norm{\varphi}_{C^\beta_x}.
\end{align*}
Let $\eta=\rho_i^{-\frac{m+1}{2}}$. Then
\begin{align*}
 \rho_i^{-(m+1)}\eta^{-1}
=
\rho_i^{-\frac{m+1}{2}},\quad \eta^\varrho
=
\rho_i^{-\frac{\varrho(m+1)}{2}}, \quad\text{and}\quad \rho_i^{-\frac{m+1}{2}}
\leq
\rho_i^{-\frac{\varrho(m+1)}{2}}.
\end{align*}
Thus, we have
\begin{align*}
&\left\|
\int_{O_k}
A_i^{-(m+1)}
D(R_i\circ f^m)_{\Gamma_k(x,y)}
\bigl(\mathcal Q_{\Gamma_k(x,y)}\bigr)
\varphi(x,y)dxd\nu_k(y)
\right\| \\
&\qquad\leq
C_{\beta}
\rho_i^{-\frac{\varrho(m+1)}{2}}
(m+1)^{\dim\mathcal{E}_i-1}
\|R_i\|_{C^1}
\|\mathcal Q\|_{C^\alpha}\norm{\varphi}_{C^\beta_x}.
\end{align*}
This proves the lemma.
\end{proof}
The following is a direct consequence of Lemma \ref{le:2}.
\begin{corollary}\label{cor:2} Fix a foliation chart $(\Gamma_k,\,O_k)$ for $\mathcal{W}^f_i$.  Let \(\mathcal Q\) be an \(\alpha\)-H\"older vector field on
\(\Gamma_k(O_k)\) taking values in \(\mathcal E_i\). Let \(\varphi\) be a function compactly
supported on \(\Gamma_k(O_k)\), such that $\varphi\circ \Gamma_k$ is \(\beta\)-H\"older in the
leaf variable \(x\), uniformly in \(y\). Then, for every \(m\geq0\), we have
\begin{align*}
 &\Big\|\int_{\TT^N} A_i^{-(m+1)}D(R_i\circ f^{m})_{z}(\mathcal Q_{z})\varphi(z)d\mathfrak{m}\Big\|\\
 &\leq C_{\beta,k}\rho_i^{-\frac{\varrho(m+1)}{2}}(|m|+1)^{\dim \mathcal{E}_i-1}\norm{R_i}_{C^1}\norm{\mathcal{Q}}_{C^\alpha}\norm{\varphi\circ \Gamma_k}_{C^\beta_x},
\end{align*}
where $\varrho=\min\{\alpha,\,\beta\}$.
\end{corollary}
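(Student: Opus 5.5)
The proof reduces Corollary~\ref{cor:2} to Lemma~\ref{le:2} through the local disintegration formula \eqref{for:167}. The only new ingredient is the Jacobian factor $J_k$, which has to be absorbed into the test function; Remark~\ref{re:5} and the remarks before Lemma~\ref{le:2} already indicate this.

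\textbf{Step 1: move the integral into the chart.} Since $\varphi$ is compactly supported in $\Gamma_k(O_k)$, the integral over $\TT^N$ is an integral over $U_k$. We apply \eqref{for:167} with the bounded function
\[
z\mapsto A_i^{-(m+1)}D(R_i\circ f^m)_z(\mathcal Q_z),
\]
and with $g=\varphi\circ\Gamma_k$, which is continuous and compactly supported in $O_k$. This rewrites the integral as
\[
\int_{O_k}A_i^{-(m+1)}D(R_i\circ f^{m})_{\Gamma_k(x,y)}(\mathcal Q_{\Gamma_k(x,y)})\,\widetilde\varphi(x,y)\,dx\,d\nu_k(y),
\qquad
\widetilde\varphi:=(\varphi\circ\Gamma_k)\,J_k .
\]
If $\varphi$ is only measurable in $y$, an approximation argument, or the fact that \eqref{for:167} holds for bounded measurable compactly supported $g$, handles this point. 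It is routine.

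\textbf{Step 2: control the new test function.} We must check that $\widetilde\varphi$ satisfies the hypotheses of Lemma~\ref{le:2}. It is compactly supported in $O_k$. It is $\beta'$-H\"older in $x$, uniformly in $y$, where $\beta'=\min\{\beta,\text{H\"older exponent of }J_k\}$. Here we use that the leafwise densities $J_k$ are H\"older in the leaf variable uniformly in $y$, as established in Section~\ref{sec:16}. After shrinking $\alpha$ once at the start, as the paper does repeatedly, we may assume $J_k$ is $\alpha$-H\"older in $x$. Then $\min\{\alpha,\beta'\}=\varrho$ is unchanged. We also use that $J_k$ is bounded on the (slightly shrunk) compact support, with $C^\alpha_x$ norm bounded by a constant depending only on $k$. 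This gives
\[
\|\widetilde\varphi\|_{C^\varrho_x}\le C_k\|\varphi\circ\Gamma_k\|_{C^\beta_x}.
\]

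\textbf{Step 3: apply Lemma~\ref{le:2}.} Applying Lemma~\ref{le:2} to $\widetilde\varphi$ gives the bound
\[
C_\beta\rho_i^{-\varrho(m+1)/2}(|m|+1)^{\dim\mathcal E_i-1}\|R_i\|_{C^1}\|\mathcal Q\|_{C^\alpha}\|\widetilde\varphi\|_{C^\varrho_x}.
\]
The lemma's proof only uses the H\"older exponent through $\varrho$, so applying it with exponent $\varrho$ in place of $\beta$ loses nothing. Combining with Step~2 yields the stated estimate with constant $C_{\beta,k}$.

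\textbf{Main obstacle.} The only nontrivial point is the regularity of $J_k$ in the leaf variable, uniformly in $y$. It must hold with an exponent that does not destroy $\varrho$. If $J_k$ is only $\alpha'$-H\"older with $\alpha'<\alpha$, the remedy is to replace $\alpha$ by $\min\{\alpha,\alpha'\}$ throughout, as the paper's convention allows.
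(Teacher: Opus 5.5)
Your proposal is correct and is essentially the paper's own argument: rewrite the integral in the chart via the disintegration formula \eqref{for:167}, then absorb the leafwise density into the test function. Since $J_k$ is $\alpha$-H\"older in $x$ uniformly in $y$, this gives $\|(\varphi\circ\Gamma_k)J_k\|_{C^\varrho_x}\leq C\|\varphi\circ\Gamma_k\|_{C^\beta_x}\|J_k\|_{C^\alpha_x}$, and Lemma~\ref{le:2} applied with exponent $\varrho$ finishes the proof, with $\|J_k\|_{C^\alpha_x}$ absorbed into $C_{\beta,k}$. Your side remark about $\varphi\circ\Gamma_k$ possibly not being continuous in $y$ (which \eqref{for:167} formally requires of $g$) is a point the paper passes over silently, and your handling of it is harmless.
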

\begin{proof} By using the measure-change formula \eqref{for:167} in Section \ref{sec:16}, we have
\begin{align*}
 &\int_{\TT^N} A_i^{-(m+1)}D(R_i\circ f^{m})_{z}(\mathcal Q_{z})\varphi(z)d\mathfrak{m}\\
 &=\int_{O_k} A_i^{-(m+1)}D(R_i\circ f^{m})_{\Gamma_k(x,y)}(\mathcal Q_{\Gamma_k(x,y)})\varphi(\Gamma_k(x,y))J_k(x,y)dxd\nu_k(y).
\end{align*}
The function
\[
(\varphi\circ\Gamma_k)(x,y)J_k(x,y)
\]
is \(\varrho\)-H\"older in the leaf variable \(x\), uniformly in \(y\). Moreover,
since $J_k$ is uniformly $\alpha$-H\"older in the leaf variable,
\begin{align*}
 \norm{(\varphi\circ \Gamma_k)J_k}_{C^\varrho_x}\leq C\norm{\varphi\circ \Gamma_k}_{C^\beta_x}\norm{J_k}_{C^\alpha_x}.
\end{align*}
The desired estimate now follows from Lemma~\ref{le:2}, with the part $\norm{J_k(x,y)}_{C^\alpha_x}$
absorbed into \(C_{\beta,k}\).
\end{proof}

\subsubsection{A decay estimate from the Diophantine property} Recall the  $A_i$-invariant subspace $W$ defined in Theorem \ref{th:1}. Let \(C^{1+\alpha,W}(\TT^N)\) denote the space of continuous functions
\(\psi\) such that
\[
\begin{aligned}
 \|\psi\|_{C^{1+\alpha,W}}
 &:=
 \|\psi\|_{C^0}
 +
 \sup_{w\in W,\ \|w\|=1}\|\partial_w\psi\|_{C^0}  \\
 &\quad+
 \sup_{\substack{x\in\TT^N,\ 0\neq v\in W\\ w\in W,\ \|w\|=1}}
 \frac{|\partial_w\psi(x+v)-\partial_w\psi(x)|}{\|v\|^\alpha}
 <\infty .
\end{aligned}
\]
\begin{lemma}\label{le:8} Suppose $\xi:\TT^N\to\RR^N$ is $C^{1+\alpha}$ along $W$. Then for any $l>0$ and any $0\neq n\in\ZZ^N$, we have
\begin{align*}
 \Big\|\int_{\TT^N}
\xi(A^{-l}z)e_n(z)\,d\mathfrak m(z)\Big\|\leq
        C\norm{n}^{(1+\alpha)N}\rho_i^{-(\alpha+1) l}\|\xi\|_{C^{1+\alpha,W}}.
\end{align*}
\end{lemma}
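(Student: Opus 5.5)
The plan is to move the dynamics onto the test function and then integrate by parts once along $W$.

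\emph{Change of variables.} Since $A$ preserves $\mathfrak m$, substitute $z=A^{l}u$:
\[
\int_{\TT^N}\xi(A^{-l}z)e_n(z)\,d\mathfrak m(z)=\int_{\TT^N}\xi(u)\,e_{(A^\tau)^l n}(u)\,d\mathfrak m(u).
\]
Put $n_l:=(A^\tau)^l n\in\ZZ^N\setminus\{0\}$; this is nonzero because $A$ is invertible. The problem is now to bound the Fourier coefficient $\hat\xi(n_l)$ of a function that is only $C^{1+\alpha}$ in the directions of $W$.

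\emph{One integration by parts along $W$.} Fix a basis $v_1,\dots,v_d$ of $W$ and choose $j$ with $|n_l\cdot v_j|$ maximal. Since
\[
\partial_{v_j}e_{n_l}=2\pi\sqrt{-1}\,(n_l\cdot v_j)\,e_{n_l},
\]
integrating by parts along the linear flow in direction $v_j$ gives
\[
\hat\xi(n_l)=\frac{-1}{2\pi\sqrt{-1}\,n_l\cdot v_j}\int_{\TT^N}\partial_{v_j}\xi\, e_{n_l}\,d\mathfrak m .
\]
No boundary terms appear, because $\mathfrak m$ is invariant under this flow.

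\emph{Extracting the H\"older gain.} To use the $\alpha$-H\"older regularity of $\partial_{v_j}\xi$ along $W$, use the standard translation trick. Pick $v\in W$ with $n_l\cdot v=1/2$, for example $v=v_j/(2\,n_l\cdot v_j)$, so that $e_{n_l}(u+v)=-e_{n_l}(u)$. Then
\[
\int \partial_{v_j}\xi\,e_{n_l}\,d\mathfrak m=\tfrac12\int\bigl(\partial_{v_j}\xi(u)-\partial_{v_j}\xi(u+v)\bigr)e_{n_l}(u)\,d\mathfrak m(u),
\]
and this is bounded by $C\|v\|^\alpha\|\xi\|_{C^{1+\alpha,W}}\le C|n_l\cdot v_j|^{-\alpha}\|\xi\|_{C^{1+\alpha,W}}$. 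Altogether,
\[
|\hat\xi(n_l)|\le C\,\Bigl(\max_j|n_l\cdot v_j|\Bigr)^{-(1+\alpha)}\|\xi\|_{C^{1+\alpha,W}}.
\]

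\emph{Lower bound for $\max_j|n_l\cdot v_j|$ (the main step).} Write $n_l\cdot v_j=n\cdot A^l v_j$. The basis $\{A^l v_j\}$ varies with $l$, so apply Lemma~\ref{le:4} in its projection form rather than with a fixed basis. By Lemma~\ref{le:4}, $\|p_W(n)\|\ge C\|n\|^{-N}$, where $p_W$ is the orthogonal projection onto $W$.

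Now use that $W\subset E_i$ and $\rho_i^{-1}A_i$ is an isometry for a suitable fixed norm. For every $w\in W$ this gives $\|A^lw\|\approx\rho_i^l\|w\|$. Take $w$ in the direction of $p_W(n)$ and write $w=A^l w'$ with $w'\in W$; then
\[
|n\cdot A^l w'|=|n\cdot w|=\|p_W(n)\|\,\|w\|\quad\text{and}\quad \|w'\|\approx\rho_i^{-l}\|w\|.
\]
Hence $\sup_{w'\in W,\ \|w'\|=1}|n\cdot A^lw'|\ge C\rho_i^l\|p_W(n)\|\ge C\rho_i^l\|n\|^{-N}$. Equivalently, $\max_j|n_l\cdot v_j|\ge C\rho_i^l\|n\|^{-N}$, with the constant depending only on the fixed basis.

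Raising this to the power $-(1+\alpha)$ gives
\[
|\hat\xi(n_l)|\le C\,\|n\|^{(1+\alpha)N}\rho_i^{-(1+\alpha)l}\|\xi\|_{C^{1+\alpha,W}},
\]
which is the stated bound.

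The delicate point is that the smallest expansion rate of $A^l$ on $W$ be exactly $\rho_i^{l}$, with no polynomial loss. This holds because $A$ is diagonalizable over $\CC$ (irreducibility) and $W\subset E_i$ is invariant, so $A|_W$ is $\rho_i$ times an isometry. Without that, only $\rho_i^{l}l^{-k}$ would be available.
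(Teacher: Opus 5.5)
Your proof is correct and is essentially the paper's argument: one integration by parts along a direction of $W$ chosen via the Diophantine property of $W$ (Lemma~\ref{le:4}), the half-period translation trick to gain the extra factor $|\cdot|^{-\alpha}$, and the conformality of $A$ on $W\subset E_i$ to produce $\rho_i^{-(1+\alpha)l}$. The only difference is bookkeeping. You substitute $u=A^{-l}z$, so you must bound $\max_j|n\cdot A^l v_j|$ from below uniformly in $l$, which you do correctly via $\|p_W(n)\|\geq C\|n\|^{-N}$. The paper instead keeps the frequency $n$ fixed, picks $w_{j_0}$ with $|n\cdot w_{j_0}|\geq C\|n\|^{-N}$, and collects the factors $\|A^{-l}w_{j_0}\|\leq C\rho_i^{-l}$ through the chain rule.
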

\begin{proof}
Fix a basis $\{w_1,\cdots, w_{\dim W}\}$ of $W$.
It follows from Lemma \ref{le:4} that $W$ has Diophantine property. Hence, for each $0\neq n\in\mathbb Z^N$, there exists \(j_0=j_0(n)\) such that
\begin{align}\label{for:112}
 |n\cdot w_{j_0}|\geq C_{W}\norm{n}^{-N}.
\end{align}
Let
\[
I_{l,n}
:=
\int_{\TT^N}
\xi(A^{-l}z)e_n(z)\,d\mathfrak m(z).
\]
Since
\[
\partial_{w_{j_0}}e_n(z)
=
2\pi\mathbf i(n\cdot w_{j_0})e_n(z),
\]
we have
\begin{align*}
 I_{l,n}
&=
\frac{1}{2\pi\mathbf i(n\cdot w_{j_0})}
\int_{\TT^N}
\xi(A^{-l}z)\partial_{w_{j_0}}e_n(z)\,d\mathfrak m(z)\\
&=-\frac{1}{2\pi\mathbf i(n\cdot w_{j_0})}
\int_{\TT^N}
D\xi_{A^{-l}z}(A^{-l}w_{j_0})
e_n(z)\,d\mathfrak m(z).
\end{align*}
Set
\begin{align*}
 a_{l}:=\|A^{-l}w_{j_0}\|,
\qquad
\widetilde w_{l}:=
\frac{A^{-l}w_{j_0}}{\|A^{-l}w_{j_0}\|},\qquad g_{l}:=\partial_{\widetilde w_{l}}\xi.
\end{align*}
Then
\begin{align}\label{for:62}
I_{l,n}
=
-\frac{1}{2\pi\mathbf i(n\cdot w_{j_0})}
a_{l_2}
\int_{\TT^N}
g_{l_2}(A^{-l}z)e_n(z)\,d\mathfrak m(z).
\end{align}
Since $\xi$ is \(C^{1+\alpha}\) along \(W\),
\[
\sup_{l\geq1}\|g_{l}\|_{C^{\alpha,W}}<\infty.
\]
Let \(t=\frac{1}{2(n\cdot w_{j_0})}\). Then
\[
e^{2\pi\mathbf i\,t(n\cdot w_{j_0})}-1=-2.
\]
Then
\begin{align*}
&\int_{\TT^N}g_{l}(A^{-l}z)e_n(z)\,d\mathfrak m(z) \\
&\quad =
\frac{1}{e^{2\pi\mathbf i\,t(n\cdot w_{j_0})}-1}
\int_{\TT^N}
\Big[
g_{l}(A^{-l}(z-tw_{j_0}))
-
g_{l_2}(A^{-l}z)
\Big]
e_n(z)\,d\mathfrak m(z)\\
&=-\frac{1}{2}
\int_{\TT^N}
\Big[
g_{l}(A^{-l}(z-tw_{j_0}))
-
g_{l}(A^{-l}z)
\Big]
e_n(z)\,d\mathfrak m(z).
\end{align*}
Therefore,
\begin{align*}
 \left\|
\int_{\TT^N}g_{l_2}(A^{-l}z)e_n(z)\,d\mathfrak m(z)
\right\|
&\overset{\text{(1)}}{\leq} C\|g_{l}\|_{C^{\alpha,W}}
\|tA^{-l}w_{j_0}\|^\alpha\\
&\leq C_1|n\cdot w_{j_0}|^{-\alpha}\|\xi\|_{C^{1+\alpha,W}}
\|A^{-l}w_{j_0}\|^\alpha.
\end{align*}
Here in $(1)$ we note that $A^{-l}w_{j_0}\in W$ since $W\subset E_i$ is $A_i$-invariant.

This, together with \eqref{for:62} gives
\begin{align}\label{for:110}
 \|I_{l,n}\|
&\leq
C|n\cdot w_{j_0}|^{-(1+\alpha)}\|\xi\|_{C^{1+\alpha,W}}
\|A^{-l}w_{j_0}\|^{1+\alpha}\notag\\
&\overset{\text{(1)}}{\leq}C_1|n\cdot w_{j_0}|^{-(1+\alpha)}\rho_i^{-l(\alpha+1)}\|\xi\|_{C^{1+\alpha,W}}\notag\\
&\overset{\text{(2)}}{\leq}C_2\norm{n}^{(1+\alpha)N}\rho_i^{-l(\alpha+1)}\|\xi\|_{C^{1+\alpha,W}}.
\end{align}
Here in  $(1)$ we use \eqref{for:93} of Section \ref{sec:8} and the fact that $W\subset E_i$ is $A_i$-invariant; in $(2)$ we use \eqref{for:112}.
\end{proof}
\begin{remark}\label{re:6}
The Diophantine property of \(W\) is crucial in this argument. It allows us,
for each nonzero Fourier mode \(n\), to choose a direction \(w_{j_0}\in W\)
such that
\[
        |n\cdot w_{j_0}|
        \geq
        C\|n\|^{-N}.
\]
Thus the small divisor \(n\cdot w_{j_0}\) has only polynomial loss in the
Fourier frequency.

Equivalently, this property allows us to solve the elementary equation
\[
        e_n(z)
        =
        \partial_{w_{j_0}}
        \bigl(\varphi(z+tw_{j_0})-\varphi(z)\bigr)
\]
with controlled loss. Indeed, for \(t=\frac{1}{2(n\cdot w_{j_0})}\), we have
\[
        e^{2\pi\mathbf i\,t(n\cdot w_{j_0})}-1=-2,
\]
and the function
\[
        \varphi(z)
        =
        \frac{e_n(z)}{2\pi\mathbf i(n\cdot w_{j_0})}
        \frac{1}{e^{2\pi\mathbf i\,t(n\cdot w_{j_0})}-1}
        =
        -\frac{1}{2}
        \frac{e_n(z)}{2\pi\mathbf i(n\cdot w_{j_0})}
\]
satisfies the desired identity.

Applying this identity and integrating by parts gives
\[
\begin{aligned}
 \int_{\TT^N}\xi(A^{-l}z)e_n(z)\,d\mathfrak m(z)
 &=
 -
 \int_{\TT^N}
 \partial_{w_{j_0}}\bigl(\xi(A^{-l}z)\bigr)
 \bigl(\varphi(z+tw_{j_0})-\varphi(z)\bigr)
 \,d\mathfrak m(z).
\end{aligned}
\]
The derivative \(\partial_{w_{j_0}}\bigl(\xi(A^{-l}z)\bigr)\) produces the
factor \(\|A^{-l}w_{j_0}\|\), which is of order \(\rho_i^{-l}\), because
\(W\subset E_i\) is \(A_i\)-invariant. The finite-difference step then uses
the \(C^\alpha\) regularity of the leafwise derivative of \(\xi\) and produces
an additional factor of order \(\rho_i^{-\alpha l}\). Together these two
effects give the decay rate $ \rho_i^{-(1+\alpha)l}$.
After multiplication by the twist, whose norm grows at most like
\(\rho_i^l\), one still obtains the decay rate \(\rho_i^{-\alpha l}\). This
is the key point that allows us to control the formal negative-time expression
and to identify it with \(h_i\circ H^{-1}\) as a distribution.

We emphasize that this estimate is not obtained from a standard exponential
mixing estimate for the toral automorphism \(A\). Standard mixing estimates
usually require global H\"older regularity of the observables in order to
obtain exponential decay. Moreover, obtaining prescribed decay rates typically
requires controlling higher global regularity norms; see
\cite{FKS,Damjanovic4}. In the present argument, the available regularity is
only leafwise: \(h_i\circ H^{-1}\) is \(C^{1+\alpha}\) along the linear
foliation tangent to \(W\). Thus the required decay follows from partial
leafwise regularity together with the Diophantine property of \(W\), rather
than from a standard global mixing estimate.

\end{remark}
\subsection{Proof of Theorem \ref{th:3}} Integrating the cohomological equation \eqref{for:156} of Section \ref{sec:14} with respect to \(\mu\),
we get
\begin{align*}
 A_i\int_{\TT^N}h_i\,d\mu-\int_{\TT^N}h_i\circ f\,d\mu
=
\int_{\TT^N}R_i\,d\mu.
\end{align*}
Since \(\mu\) is \(f\)-invariant, this gives
\begin{align*}
 A_i\int_{\TT^N}h_id\mu-\int_{\TT^N}h_id\mu=\int_{\TT^N}R_id\mu.
\end{align*}
Therefore, replacing \(h_i\) and $R_i$ by
\begin{align*}
 h_i-\int_{\TT^N}h_id\mu\quad\text{and}\quad R_i-\int_{\TT^N}R_id\mu
\end{align*}
respectively, preserves the cohomological equation. This normalization does not affect the
argument below. Thus, from now on, we assume
\begin{align*}
 \int_{\TT^N}h_id\mu=\int_{\TT^N}R_id\mu=0.
\end{align*}

\subsubsection{Step 1:  Construction of the negative-time distribution} Define formally
\begin{align*}
 h_i^-\circ H^{-1}=-\sum_{j=-1}^{-\infty}A_i^{-(j+1)}R_i\circ f^j\circ H^{-1}.
\end{align*}
In this step, we show that this series converges as a distribution and that
\begin{align}\label{for:158}
 h_i^-\circ H^{-1}=h_i\circ H^{-1}\qquad\text{as distributions.}
\end{align}
First, we show that
\begin{align}\label{for:94}
  \sum_{j=-\infty}^{\infty}
        A_i^{-(j+1)}R_i\circ f^j\circ H^{-1}
        =
        0
\end{align}
as a distribution.

From equation \eqref{for:156} of Section \ref{sec:14}
for any $l_1\geq0$ and $l_2\geq1$ we have
\begin{align*}
 \sum_{j=-l_2}^{l_1} A_i^{-(j+1)}R_i\circ f^j=A_i^{l_2}h_i\circ f^{-l_2}-A_i^{-(l_1+1)}  h_i\circ f^{l_1+1}.
\end{align*}
This together with the conjugacy equation \eqref{for:140} of Section \ref{sec:5} give
\begin{align*}
 \sum_{j=-l_2}^{l_1} A_i^{-(j+1)}R_i\circ f^j\circ H^{-1}&=A_i^{l_2}(h_i\circ H^{-1})\circ A^{-l_2}-A_i^{-(l_1+1)}  (h_i\circ H^{-1})\circ A^{l_1+1}.
\end{align*}
Let
\[
e_n(z):=e^{2\pi\mathbf i\,n\cdot z},
\qquad n\in\ZZ^N.
\]
To prove \eqref{for:94}, it suffices to show that for any $n\in\ZZ^N$, the two endpoint terms
converge to zero on each Fourier mode. The estimates below are polynomial in
the Fourier frequency \(n\), and therefore this convergence extends from
Fourier modes to arbitrary \(C^\infty\) test functions.
\begin{align}
 &\lim_{l_1\to \infty}\int_{\TT^N} A_i^{-(l_1+1)}  (h_i\circ H^{-1})(A^{l_1+1}z)e_n(z) d\mathfrak{m}=0,\qquad \text{and}\label{for:60}\\
 &\lim_{l_2\to \infty}\int_{\TT^N} A_i^{l_2}(h_i\circ H^{-1})(A^{-l_2}z)e_n(z)d\mathfrak{m}=0, \label{for:61}
\end{align}
We recall \eqref{for:93} of Section \ref{sec:8}. Therefore \eqref{for:60} follows from
\begin{align*}
&\left\|
\int_{\TT^N}
A_i^{-(l_1+1)}
(h_i\circ H^{-1})(A^{l_1+1}z)e_n(z)\,d\mathfrak m(z)
\right\|  \\
&\qquad\leq
C\rho_i^{-(l_1+1)}
\|h_i\circ H^{-1}\|_{L^1}
\to 0.
\end{align*}
We now prove \eqref{for:61}. If \(n=0\), then by the normalization above, we have
\begin{align*}
 \int_{\mathbb T^N}
A_i^{l_2}(h_i\circ H^{-1})(A^{-l_2}z)\,d\mathfrak m(z)
&=
A_i^{l_2}
\int_{\mathbb T^N}h_i\circ H^{-1}\,d\mathfrak m =
A_i^{l_2}
\int_{\mathbb T^N}h_i\,d\mu
=
0.
\end{align*}
Now assume \(0\neq n\in\ZZ^N\).

By Theorem~\ref{th:1}, there exists an \(A_i\)-invariant subspace
\(W\subset E_i\) such that \(H\) maps the foliation
\(\mathcal W_{\mathcal F_{i,1}}\) onto the linear foliation \(W^L\), and
\(H\) is a \(C^{1+\alpha}\) diffeomorphism along these leaves. Hence \(H^{-1}\) is \(C^{1+\alpha}\) along
\(W\). Moreover, since
\[
p_i\circ H=p_i+h_i,
\]
we have
\[
h_i\circ H^{-1}(z)
=
p_i(z)-p_i\circ H^{-1}(z).
\]
This identity is understood after choosing compatible local lifts. Since we
use it only to obtain leafwise regularity along the linear \(W\)-foliation,
the conclusion is independent of the choice of lift.

Since \(p_i\) is linear and $H^{-1}$ is $C^{1+\alpha}$ along
$W$-leaves, it follows that \(h_i\circ H^{-1}\) is
\(C^{1+\alpha}\) along \(W\)-leaves.

It follows from Lemma~\ref{le:8}, applied to
\(\xi=h_i\circ H^{-1}\), that for each \(0\neq n\in\ZZ^N\),
\begin{align*}
 &\left\|
 \int_{\TT^N}
 (h_i\circ H^{-1})(A^{-l_2}z)e_n(z)\,d\mathfrak m(z)
 \right\|\notag\\
 &\leq
 C\|n\|^{(1+\alpha)N}
 \rho_i^{-(1+\alpha)l_2}
 \|h_i\circ H^{-1}\|_{C^{1+\alpha,W}} .
\end{align*}
Therefore
\begin{align*}
 &\Big\|\int_{\TT^N}
A_i^{l_2}(h_i\circ H^{-1})(A^{-l_2}z)e_n(z)\,d\mathfrak m(z)
\Big\|  \\
&\leq
\|A_i^{l_2}\|
\left\|
 \int_{\TT^N}
 (h_i\circ H^{-1})(A^{-l_2}z)e_n(z)\,d\mathfrak m(z)
\right\| \\
&\overset{\text{(1)}}{\leq}C\rho_i^{l_2}\|n\|^{(1+\alpha)N}
\rho_i^{-(\alpha+1) l_2}
\|h_i\circ H^{-1}\|_{C^{1+\alpha,W}}\\
&=C\|n\|^{(1+\alpha)N}
\rho_i^{-\alpha l_2}
\|h_i\circ H^{-1}\|_{C^{1+\alpha,W}}\to0 .
\end{align*}
Here in $(1)$ we use \eqref{for:93} of Section \ref{sec:8}.

This proves \eqref{for:61}. Hence, we obtain \eqref{for:94}.  On the other hand, the positive-time expansion gives (see \eqref{for:92} of Section \ref{sec:14})
\[
h_i\circ H^{-1}
=
\sum_{j=0}^{\infty}
A_i^{-(j+1)}R_i\circ f^j\circ H^{-1}.
\]
Therefore,
\[
h_i^-\circ H^{-1}
=
h_i\circ H^{-1}
\]
as distributions. This proves \eqref{for:158}.
\begin{remark}\label{re:2}   By \eqref{for:158}, the distribution \(h_i^-\circ H^{-1}\) extends
continuously to \(L^\infty(\TT^N)\). More precisely, for every
\(\omega\in L^\infty(\TT^N)\), we have
\[
(h_i^-\circ H^{-1})(\omega)=
\int_{\TT^N}
h_i(H^{-1}z)\omega(z)\,d\mathfrak m(z).
\]
Consequently,
\[
\|(h_i^-\circ H^{-1})(\omega)\|
\leq
\|h_i\|_{C^0}\|\omega\|_{L^\infty}.
\]
\end{remark}

\subsubsection{Step 2:  Definition of $h_i^-$}\label{sec:1} Define formally
\begin{align*}
 h_i^-=-\sum_{j=-1}^{-\infty}A_i^{-(j+1)}R_i\circ f^j.
\end{align*}
In this step, we show that this series converges as a distribution  and that
\begin{align}\label{for:164}
h_i^-=h_i
\end{align}
as bounded linear functionals on \(L^\infty(\TT^N)\).
More precisely, for every
\(\omega\in L^\infty(\TT^N)\), we have
\[
h_i^-(\omega)
=
\int_{\TT^N}
h_i(z)\omega(z)\,d\mathfrak m(z).
\]
Consequently,
\[
\|h_i^-(\omega)\|
\leq
\|h_i\|_{C^0}\|\omega\|_{L^\infty}.
\]
Let \(\omega\in L^\infty(\TT^N)\). Set
\[
\Psi_\omega(z)
:=
\omega(H^{-1}z)\bigl(\kappa(H^{-1}z)\bigr)^{-1}.
\]
We note that \(\Psi_\omega\in L^\infty(\TT^N)\) since \(d\mu=\kappa\,d\mathfrak m\) with $\kappa\in C^{\alpha}(\TT^N)$ and $\kappa>0$.

Then we have
\begin{align*}
h_i^-(\omega)
&=
-\sum_{j=-1}^{-\infty}
\int_{\TT^N}
A_i^{-(j+1)}R_i\circ f^j(x)\omega(x)\,d\mathfrak m(x) \\
&\overset{\text{(1)}}{=}
-\sum_{j=-1}^{-\infty}
\int_{\TT^N}
A_i^{-(j+1)}
R_i\circ f^j\circ H^{-1}(z)
\Psi_\omega(z)\,d\mathfrak m(z) \\
&\overset{\text{(2)}}{=}
(h_i^-\circ H^{-1})(\Psi_\omega)\\
&\overset{\text{(3)}}{=}(h_i\circ H^{-1})(\Psi_\omega) \\
&=
\int_{\TT^N}
h_i(H^{-1}z)\Psi_\omega(z)\,d\mathfrak m(z) \\
&\overset{\text{(4)}}{=}
\int_{\TT^N}
h_i(x)\omega(x)\,d\mathfrak m(x).
\end{align*}
Here, in $(1)$ we use the change of variables $z=H(x)$ and
\begin{align*}
 H_*\mathfrak m
        =
        (\kappa\circ H^{-1})^{-1}\mathfrak m;
\end{align*}
in \((2)\), we use the fact that \(\Psi_\omega\in L^\infty(\TT^N)\) and
Remark~\ref{re:2}, which allows \(h_i^-\circ H^{-1}\) to be evaluated on
$L^\infty(\TT^N)$ test functions; in \((3)\), we use \eqref{for:158} together with
Remark~\ref{re:2}; in $(4)$ we recall that \(d\mu=\kappa\,d\mathfrak m\) and \(H_*\mu=\mathfrak m\).

Hence \(h_i^-=h_i\) as bounded linear functionals on \(L^\infty(\TT^N)\), and
hence also as distributions.

\subsubsection{Step 3: Equality of the local positive and negative distributions}\label{sec:3} We recall notations in Section \ref{sec:16}.  Fix a foliation chart $(\Gamma_k,\,O_k)$ for $\mathcal{W}^f_i$.  Let \(\mathcal Q\) be an \(\alpha\)-H\"older vector field on
\(\Gamma_k(O_k)\) taking values in \(\mathcal E_i\). Let \(C^{c,\beta}_x(O_k)\) denote the space of functions
compactly supported in \(O_k\) that are \(\beta\)-H\"older in the leaf
variable \(x\), uniformly in \(y\). When \(\beta=1\), we interpret
\(C^{c,1}_x(O_k)\) as the space of functions compactly supported in \(O_k\)
that are \(C^1\) in the leaf variable \(x\), uniformly in \(y\).

It follows from Lemma \ref{le:2} that for every \(\varphi\in C^{c,\beta}_x(O_k)\), the series
\begin{align*}
\mathfrak{K}^+_\mathcal Q(\varphi):=\sum_{m=0}^{\infty}\int_{O_k} A_i^{-(m+1)}D(R_i\circ f^{m})_{\Gamma_k(x,y)}\big(\mathcal Q_{\Gamma_k(x,y)})\varphi(x,y)dxd\nu_k(y)
\end{align*}
is convergent and satisfies
\begin{align}\label{for:165}
\norm{\mathfrak{K}^+_\mathcal Q(\varphi)}\leq C_{\beta}\norm{R_i}_{C^1}\norm{\mathcal{Q}}_{C^\alpha}\norm{\varphi}_{C^\beta_x}.
\end{align}
In this step, we show that the series
\begin{align*}
\mathfrak{K}^-_\mathcal Q(\varphi):=-\sum_{m=-1}^{-\infty}\int_{O_k} A_i^{-(m+1)}D(R_i\circ f^{m})_{\Gamma_k(x,y)}\big(\mathcal Q_{\Gamma_k(x,y)})\varphi(x,y)dxd\nu_k(y)
\end{align*}
is convergent as well for any $\varphi\in C^{c,\beta}_x(O_k)$; and
\begin{align*}
 \mathfrak{K}^-_\mathcal Q(\varphi)=\mathfrak{K}^+_\mathcal Q(\varphi).
\end{align*}
Set
\begin{align*}
 \mathcal W^j_{\Gamma_k(x,y)}
:=
D\Gamma_k(x,y)(\partial_{x_j}),\qquad 1\leq j\leq \dim \mathcal{E}_i.
\end{align*}
Firstly, we show that for any $1\leq j\leq \dim \mathcal{E}_i$ and any $\varphi\in C^{c,1}_x(O_k)$, we have
\begin{align*}
 \mathfrak{K}^-_{\mathcal W^j}(\varphi)=\mathfrak{K}^+_{\mathcal W^j}(\varphi).
\end{align*}
Moreover,
\begin{align*}
\Big\|\mathfrak{K}^-_{\mathcal W^j}(\varphi)\Big\|\leq C_{\beta}\norm{R_i}_{C^1}\norm{\varphi}_{C^\beta_x}.
\end{align*}
for any $0<\beta<1$.

For $\varphi\in C_x^{c,1}(O_k)$, the negative-time series satisfies
\begin{align*}
\mathfrak{K}^-_{\mathcal W^j}(\varphi)&=-\sum_{m=-1}^{-\infty}\int_{O_k} A_i^{-(m+1)}D(R_i\circ f^{m})_{\Gamma_k(x,y)}\big(\mathcal W^j_{\Gamma_k(x,y)})\varphi(x,y)dxd\nu_k(y)\\
&=-\sum_{m=-1}^{-\infty}\int_{O_k} A_i^{-(m+1)} \partial_{x_j}\big(R_i\circ f^{m}\circ \Gamma_k(x,y)\big) \varphi(x,y) dxd\nu_k(y)\\
&\overset{\text{(1)}}{=}\sum_{m=-1}^{-\infty}\int_{O_k} A_i^{-(m+1)} R_i\circ f^{m}\circ \Gamma_k(x,y)\psi(x,y) dxd\nu_k(y)\\
&\overset{\text{(2)}}{=}\sum_{m=-1}^{-\infty}\int_{\TT^N} A_i^{-(m+1)} R_i\circ f^{m}(z)\chi_\psi(z)d\mathfrak{m}\\
&=-h_i^-( \chi_\psi ) \\
&\overset{\text{(3)}}{=}-h_i( \chi_\psi  ).
\end{align*}
We explain steps:
\begin{enumerate}

  \item Set $\psi=\partial_{x_j}\varphi$. Then $\psi$ is uniformly bounded and compactly supported on $O_k$.
  \item We recall the measure-change formula \eqref{for:167} in Section \ref{sec:16}. Define
  \[
        \chi_\psi(z)
        :=
        \begin{cases}
        \displaystyle
        \frac{\psi(\Gamma_k^{-1}z)}
        {J_k(\Gamma_k^{-1}z)}, & z\in U_k=\Gamma_k(O_k),\\[1.2em]
        0, & z\notin U_k .
        \end{cases}
\]
Then $\chi_\psi\in L^\infty(\mathbb T^N)$.

  \item By step 2, (see Section \ref{sec:1}) $h_i^-=h_i$ as bounded linear functionals on $L^\infty(\TT^N)$.
 \end{enumerate}
 On the other hand, using \eqref{for:92} of Section \ref{sec:14}, we have
\begin{align*}
 -h_i(\chi_\psi)
&=
-\sum_{m=0}^{\infty}
\int_{\mathbb T^N}
A_i^{-(m+1)}
R_i\circ f^m(z)\chi_\psi(z)\,d\mathfrak m(z) \\
&=
-\sum_{m=0}^{\infty}
\int_{O_k}
A_i^{-(m+1)}
R_i\circ f^m\circ\Gamma_k(x,y)
\psi(x,y)\,dx\,d\nu_k(y) \\
&=
\sum_{m=0}^{\infty}
\int_{O_k}
A_i^{-(m+1)}
D(R_i\circ f^m)_{\Gamma_k(x,y)}
\bigl(\mathcal W^j_{\Gamma_k(x,y)}\bigr)
\varphi(x,y)\,dx\,d\nu_k(y) \\
&=
\mathfrak K_{\mathcal W^j}^+(\varphi).
\end{align*}
 Therefore,
\begin{align*}
 \mathfrak K_{\mathcal W^j}^-(\varphi)
        =
        \mathfrak K_{\mathcal W^j}^+(\varphi),
        \qquad
        \varphi\in C_x^{c,1}(O_k).
\end{align*}
 Combining this equality with \eqref{for:165}, applied to
$\mathcal Q=\mathcal W^j$, gives, for every $0<\beta<1$,
 \begin{align}\label{for:166}
\|\mathfrak K^-_{\mathcal W^j}(\varphi)\|
\leq
C_\beta
\|R_i\|_{C^1}
\|\varphi\|_{C^\beta_x}.
\end{align}
 Since \(C_x^{c,1}(O_k)\) is dense in \(C_x^{c,\beta}(O_k)\) in the
\(C_x^\beta\)-topology, the estimate \eqref{for:166} allows
\(\mathfrak K^-_{\mathcal W^j}\) to extend uniquely to
\(C_x^{c,\beta}(O_k)\). Moreover,
\begin{align}\label{for:95}
 \mathfrak K_{\mathcal W^j}^-(\varphi)
        =
        \mathfrak K_{\mathcal W^j}^+(\varphi),
        \qquad
        \varphi\in C_x^{c,\beta}(O_k).
\end{align}
Now let \(\mathcal Q\) be an \(\alpha\)-H\"older vector field taking
values in \(\mathcal E_i\). We can write
\[
\mathcal Q_{\Gamma_k(x,y)}
=
\sum_{j=1}^{\dim\mathcal E_i}
a_j(x,y)\mathcal W^j_{\Gamma_k(x,y)},
\]
where the coefficients \(a_j\) are \(\alpha\)-H\"older in the leaf
variable \(x\), uniformly in \(y\). Hence, if $\varphi\in C_x^{c,\beta}(O_k)$, then
$a_j\varphi\in C_x^{c,\varrho}(O_k)$, where $\varrho=\min\{\alpha,\beta\}$.
Since \(a_j\varphi\) is \(\varrho\)-H\"older in the leaf
variable, \eqref{for:95} gives
\[
\mathfrak K^-_{\mathcal W^j}(a_j\varphi)
=
\mathfrak K^+_{\mathcal W^j}(a_j\varphi)
\]
for every \(j\). Therefore
\[
\mathfrak K^-_{\mathcal Q}(\varphi)
=
\mathfrak K^+_{\mathcal Q}(\varphi).
\]
The convergence and the estimate for \(\mathfrak K^-_{\mathcal Q}\) follow
from \eqref{for:165}.

 \subsubsection{Step 4: Equality of the global positive and negative distributions}\label{sec:18}
 We recall notations in Section \ref{sec:16}.  Fix a foliation chart $(\Gamma_k,\,O_k)$ for $\mathcal{W}^f_i$.
 Let \(\mathcal Q\) be an \(\alpha\)-\text{H\"older} vector field on
\(\Gamma_k(O_k)\) taking values in \(\mathcal E_i\). Let \(\varphi\) be a function compactly supported in
\(\Gamma_k(O_k)\), and assume that \(\varphi\circ\Gamma_k\) is
\(\beta\)-\text{H\"older} in the leaf variable \(x\), uniformly in \(y\). From Corollary \ref{cor:2} the series
\begin{align*}
\widetilde{\mathfrak D}_{\mathcal Q}^+(\varphi):=\sum_{m=0}^{\infty}
\int_{\TT^N}
A_i^{-(m+1)}
D(R_i\circ f^m)_z(\mathcal Q_z)
\,\varphi(z)\,d\mathfrak m(z)
\end{align*}
is convergent and satisfies
\begin{align}\label{for:168}
\norm{\widetilde{\mathfrak D}_{\mathcal Q}^+(\varphi)}\leq C_{\beta}\norm{R_i}_{C^1}\norm{\mathcal{Q}}_{C^\alpha}\norm{\varphi}_{C^\beta}.
\end{align}
In this part, we show that the series
\begin{align*}
\widetilde{\mathfrak D}_{\mathcal Q}^-(\varphi):=-\sum_{m=-1}^{-\infty}\int_{\TT^N}
A_i^{-(m+1)}
D(R_i\circ f^m)_z(\mathcal Q_z)
\,\varphi(z)\,d\mathfrak m(z)
\end{align*}
is convergent as well; and satisfies
\begin{align*}
 \widetilde{\mathfrak D}_{\mathcal Q}^-(\varphi)=\widetilde{\mathfrak D}_{\mathcal Q}^+(\varphi).
\end{align*}
By using the measure-change formula \eqref{for:167} in Section \ref{sec:16}, we have
\begin{align*}
 &\int_{\TT^N} A_i^{-(m+1)}D(R_i\circ f^{m})_{z}(\mathcal Q_{z})\varphi(z)d\mathfrak{m}\\
 &=\int_{O_k} A_i^{-(m+1)}D(R_i\circ f^{m})_{\Gamma_k(x,y)}(\mathcal Q_{\Gamma_k(x,y)})\varphi(\Gamma_k(x,y))J_k(x,y)dxd\nu_k(y).
\end{align*}
Since $J_k$ is H\"older in the leaf variable, we have
\[
        (\varphi\circ\Gamma_k)(x,y)J_k(x,y)\in C_x^{c,\varrho}(O_k),\qquad \varrho=\min\{\alpha,\beta\}.
\]
Then it follows from step 3 (see Section \ref{sec:3}) that
\begin{align*}
 \widetilde{\mathfrak D}_{\mathcal Q}^-(\varphi)=\mathfrak{K}^-_{\mathcal Q}\bigl((\varphi\circ\Gamma_k)J_k\bigr)=\mathfrak{K}^+_{\mathcal Q}\bigl((\varphi\circ\Gamma_k)J_k\bigr)=
 \widetilde{\mathfrak D}_{\mathcal Q}^+(\varphi).
\end{align*}
It remains to pass from functions supported in one foliation box to arbitrary
\(\beta\)-H\"older functions on \(\TT^N\). Let
\(\{\Gamma_k(O_k)\}_{k=1}^M\) be a finite cover of \(\TT^N\) by foliation
boxes, and let \(\{\chi_k\}_{k=1}^M\) be a \(C^\infty\) partition of unity
subordinate to this cover. For a global \(\alpha\)-\text{H\"older} vector field
\(\mathcal V\) taking values in \(\mathcal E_i\) and for
\(\varphi\in C^\beta(\TT^N)\), we write
\[
\varphi=\sum_{k=1}^M \chi_k\varphi.
\]
Each \(\chi_k\varphi\) is supported in \(\Gamma_k(O_k)\), and
\[
\|(\chi_k\varphi)\circ\Gamma_k\|_{C^\beta_x}
\leq
C_k\|\varphi\|_{C^\beta}.
\]
Applying the local equality to
\(\mathcal Q=\mathcal V|_{\Gamma_k(O_k)}\) and to \(\chi_k\varphi\), and then
summing over \(k\), gives
\[
\widetilde{\mathfrak D}_{\mathcal V}^-(\varphi)
=
\widetilde{\mathfrak D}_{\mathcal V}^+(\varphi).
\]
Moreover, summing the local estimates \eqref{for:168} over the finite cover
gives
\[
\|\widetilde{\mathfrak D}_{\mathcal V}^-(\varphi)\|
\leq
C_\beta
\|R_i\|_{C^1}
\|\mathcal V\|_{C^\alpha}
\|\varphi\|_{C^\beta}.
\]
Together with the corresponding estimate for \(\widetilde{\mathfrak D}_{\mathcal V}^+\),
this completes the proof.

\subsubsection{Step 5: Conclusion} We now return to the distributions in Theorem~\ref{th:3}. For every
$m\in\mathbb Z$, changing variables $z=H(x)$ gives
\begin{align*}
&\int_{\TT^N}
A_i^{-(m+1)}
D(R_i\circ f^m)_{H^{-1}z}(\mathcal V_{H^{-1}z})
\,\omega(z)\,d\mathfrak m(z)\\
&=\int_{\TT^N}
A_i^{-(m+1)}
D(R_i\circ f^m)_{z}(\mathcal V_{z})
\,\omega\circ H(z)\kappa(z)\,d\mathfrak m(z).
\end{align*}
Thus
\[
        \mathfrak D_{\mathcal V}^{\pm}(\omega)
        =
        \widetilde{\mathfrak D}_{\mathcal V}^{\pm}
        \bigl((\omega\circ H)\kappa\bigr).
\]
Recall that $H$ is bi-$\eta$ H\"older (see Section \ref{sec:5}). Since
$\omega\in C^\beta(\mathbb T^N)$ and $\kappa\in C^{\alpha}(\mathbb T^N)$ is
positive, we have
\[
       (\omega\circ H)\kappa\in C^{\bar\beta},
        \qquad
        \bar\beta:=\min\{\beta\eta,\alpha\}.
\]
Moreover,
\[
        \|(\omega\circ H)\kappa\|_{C^{\bar\beta}}
        \leq
        C_\beta\|\omega\|_{C^\beta}.
\]
Applying step 4 (see Section \ref{sec:18}) with exponent \(\beta\eta\), we obtain
\[
\widetilde{\mathfrak D}_{\mathcal V}^-\big((\omega\circ H)\cdot\kappa\big)
=
\widetilde{\mathfrak D}_{\mathcal V}^+\big((\omega\circ H)\cdot\kappa\big),
\]
which is exactly
\[
\mathfrak D^-_{\mathcal V}(\omega)
=
\mathfrak D^+_{\mathcal V}(\omega).
\]
The same argument, together with \eqref{for:168} in Step~4, gives
\[
\max\{
\|\mathfrak D^+_{\mathcal V}(\omega)\|,
\|\mathfrak D^-_{\mathcal V}(\omega)\|
\}
\leq
C_\beta
\|\mathcal V\|_{C^\alpha}
\|\omega\|_{C^\beta}.
\]
This completes the proof of Theorem~\ref{th:3}.

\section{$C^{1+\text{H\"older}}$ regularity of $H$ along $\mathcal{W}^f_i$}\label{sec:38}
We recall the following notation:
\begin{enumerate}
  \item $\mathfrak m$ and $\mu$ are defined in Section~\ref{sec:12}.
  \item The numbers $\rho_i$, the Lyapunov blocks $\mathcal E_i$, and the
  linear subspaces $E_i$ are defined in Section~\ref{sec:12}.

  \item The maps $A_i$ and $p_i$ are defined in Section~\ref{sec:14}.

  \item $H_i$ is defined in \eqref{for:7} of Section \ref{sec:39}.

  \item The foliations $\mathcal W_i^f$ and $\mathcal W_i^A$ are defined in
  \eqref{for:23} of Section~\ref{sec:12}.
  \item The subbundles $\mathcal F_{i,j}$ are defined in \eqref{for:33} of
  Section~\ref{sec:12}.
  \item The subspaces $V_{i,j}$ are defined in Theorem \ref{th:9}.
  \item The distribution $\widetilde{\mathfrak D}_{\mathcal V}^+$ is defined in Theorem~\ref{th:3}.
\end{enumerate}
\begin{theorem}\label{th:4}
Suppose that \(H(\mathcal W_i^f)=\mathcal W_i^A\) for some
\(i_0\leq i\leq \ell\). Then \(H\) is a \(C^{1+\alpha}\) diffeomorphism along
  \(\mathcal W^f_{i}\).
\end{theorem}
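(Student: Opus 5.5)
The plan is to reduce Theorem~\ref{th:4} to H\"older regularity of the distributional derivatives $D_{\mathcal V}h_i$, for $\alpha$-H\"older vector fields $\mathcal V$ taking values in $\mathcal E_i$. Since $H(\mathcal W_i^f)=\mathcal W_i^A$, the components $H_j$ with $j\neq i$ are constant along $\mathcal W_i^f$ (Remark~\ref{re:4}), so only $H_i=p_i+h_i$ matters. By the positive-time expansion \eqref{for:92} and Theorem~\ref{th:3}, $D_{\mathcal V}h_i=\widetilde{\mathfrak D}^+_{\mathcal V}=\widetilde{\mathfrak D}^-_{\mathcal V}$ as distributions.

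It is more convenient to work with the conjugated distribution $g_{\mathcal V}:=\mathfrak D^+_{\mathcal V}=\mathfrak D^-_{\mathcal V}$ on the linear torus. Formally,
\[
g_{\mathcal V}=\sum_{m\ge0}A_i^{-(m+1)}D(R_i\circ f^m)_{H^{-1}z}(\mathcal V_{H^{-1}z}).
\]
Under $H$ the stable and unstable foliations of $f$ become the linear foliations $W^{s,A}$ and $W^{u,A}$. I will choose $\mathcal V=\mathcal C_i(x)^{-1}u$ with $u\in E_i$ fixed; these fields span $\mathcal E_i$ and are $\alpha$-H\"older by Theorem~\ref{th:9}. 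Then $Df^m_x\mathcal V_x=\mathcal C_i(f^mx)^{-1}\widetilde A_i^{(m)}(x)u$, where $\widetilde A_i^{(m)}$ is the block upper-triangular product. Hence $\|Df^m|_{\mathcal E_i}\|\le C\rho_i^{m}(|m|+1)^{\dim\mathcal E_i-1}$ for all $m\in\mathbb Z$, and the same holds for the H\"older dependence of these products along stable or unstable leaves.

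The core step is a pair of one-sided translation estimates. For $v\in E^{s,A}$ small, I will bound the translate difference $\tau_vg_{\mathcal V}-g_{\mathcal V}$ in $(C^\beta)^*$ using the positive-time series. The points $H^{-1}z$ and $H^{-1}(z+v)$ lie on a common $f$-stable leaf at distance $\lesssim|v|^\eta$, so their $f^m$-images approach at rate $\nu^m$. Each term of the difference is then bounded by
\[
(m+1)^{c}\,\nu^{m\alpha}\,|v|^{\eta\alpha}.
\]
This bound uses the H\"older continuity of $DR_i$, $\mathcal C_i$ and $\mathcal V$, the normalized bound on $Df^m|_{\mathcal E_i}$, and fiber-bunching as in Lemma~\ref{le:5}. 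These bounds are absolutely summable, which gives $\|\tau_vg-g\|\le C|v|^{\theta}$. For $v\in E^{u,A}$ I will do the same with the negative-time representation $\mathfrak D^-_{\mathcal V}$. There $\|A_i^{-(m+1)}\|\,\|Df^m|_{\mathcal E_i}\|$ is again only polynomial in $|m|$, while backward iterates contract unstable distances.

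Because $E^{s,A}\oplus E^{u,A}=\mathbb R^N$, these estimates combine into $\|\tau_vg-g\|\le C|v|^\theta$ for all small $v$. The distribution-to-H\"older criterion (bounded translation moduli of a distribution in negative H\"older dual spaces implies a H\"older function, via Littlewood--Paley or smoothing by $\phi_\varepsilon$ as in Section~\ref{sec:17}) then shows that $g_{\mathcal V}$ is a H\"older function. Composing with $H$ shows that $D_{\mathcal V}h_i$ is H\"older.

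The main obstacle is making the termwise difference estimates legitimate. The individual terms of the series converge only distributionally: they do not decay in norm, and convergence came from integration by parts in Lemma~\ref{le:2}. I would first pair each term with a test function mollified at scale $\varepsilon$. I would split the sum at $m\sim\log(1/\varepsilon)$, using the absolutely summable difference bound for small $m$ and the decay estimate of Lemma~\ref{le:2} (the gain $\rho_i^{-\varrho m/2}$ against the polynomial loss) for the tail, and then optimize over $\varepsilon$ in terms of $|v|$. This mirrors the $n_\epsilon$ splitting in Step~2 of the proof of Lemma~\ref{th:7}.

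Once $D_{\mathcal V}h_i$ is a continuous function for each such $\mathcal V$, the weak leafwise derivative of $H_i$ is H\"older, so $H$ is $C^{1+\alpha}$ along $\mathcal W_i^f$ (after possibly shrinking $\alpha$). It remains to show that $DH|_{\mathcal E_i}$ is injective, so that $H$ is a diffeomorphism along the leaves. I will argue as in Theorem~\ref{th:1}: the kernel of $DH|_{\mathcal E_i}$ is a $Df$-invariant H\"older subbundle. If it were nontrivial, an integral curve inside it would be mapped by $H$ to a point, contradicting that $H$ is a homeomorphism. Hence $DH|_{\mathcal E_i}$ is uniformly invertible.
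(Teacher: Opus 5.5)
\paragraph{Verdict.} There is a genuine gap: the decisive translation estimate in your plan is false, and the distribution-to-H\"older criterion you invoke is also false.

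\paragraph{The per-term estimate.} Take $x=H^{-1}z$, $y=H^{-1}(z+v)$ with $v\in E^{s,A}$, and write the $m$-th term as $\Lambda_m(x)\mathcal V_x$, where $\Lambda_m(x)=A_i^{-(m+1)}DR_i(f^mx)\,Df^m_x|_{\mathcal E_i}$.
\begin{itemize}
\item The difference of the $m$-th terms contains $\Lambda_m(x)(\mathcal V_x-\mathcal V_y)$. As you note yourself, $\Lambda_m$ does not decay in norm, while $\mathcal V_x-\mathcal V_y$ is of order $\|v\|^{\eta\alpha}$ for every $m$.
\item It also contains $A_i^{-(m+1)}DR_i(f^my)\bigl(Df^m_x-Df^m_y\bigr)\mathcal V_y$. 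The normalized difference $\rho_i^{-m}(Df^m_x-Df^m_y)$ is accumulated during the first iterates, when $f^kx$ and $f^ky$ are still far apart. For bounded cocycles, fiber bunching as in Lemma~\ref{le:5} bounds it only by $C\,d(x,y)^\alpha$. For the full triangular block the bound is a polynomial in $m$ times $d(x,y)^\alpha$.
\item A factor $\nu^{m\alpha}$ appears only for quantities evaluated at $f^mx,f^my$, such as $DR_i$ or $\mathcal C_i^{-1}$.
\end{itemize}
So there is no absolutely summable bound, and the small-$m$ part of your $\log(1/\varepsilon)$ splitting does not close as claimed.

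\paragraph{The criterion.} A translation modulus measured in $(C^\beta)^*$ does not even produce a continuous function. For example, $g=\mathbf 1_{\{x_1\in[0,1/2)\}}$ satisfies $\|\tau_vg-g\|_{(C^\beta)^*}\le\|\tau_vg-g\|_{L^1}\le 2\|v\|$. Lemma~\ref{le:7} needs each $F_v$ to be a genuine $C^\alpha$ function with uniform bounds. In the paper these bounds come from control of translation differences in $C^0$.

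\paragraph{How the paper gets $C^0$ control.} It uses two ingredients your plan lacks.
\begin{itemize}
\item \emph{Telescoped partial sums.} For the translation estimate the paper does not work term by term. It uses $A_i^{-m}p_i\circ Df^m_{H^{-1}z}\circ\mathcal O_{i,j+1}^{-1}(H^{-1}z)$, which converge distributionally to the candidate $\mathcal T$ (forward for $E^{s,A}$, backward for $E^{u,A}$, with the two limits equal by Theorem~\ref{th:3}). It bounds their translation differences pointwise, uniformly in $m$, which keeps the cancellations that the triangle inequality over $m$ destroys.
\item \emph{Induction along the flag} (Proposition~\ref{le:3}). The inductive derivative $\mathcal B_{i,j}$ conjugates $Df|_{\mathcal F_{i,j}}$ to the constant $A_i|_{S_{i,j}}$. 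Hence, in the trivialization $\mathcal O_{i,j+1}$, the cocycle on $\mathcal F_{i,j+1}$ is $2\times2$ block triangular with constant diagonal blocks $\rho_iM_j$ and $\rho_iL_{i,j+1}$, both with bounded normalized powers.
\end{itemize}
The only base-point dependence is then of two kinds.
\begin{itemize}
\item Through $\mathcal O_{i,j+1}^{-1}(f^mx)$, with variation $O(m\nu^{m\alpha\eta}\|v\|^{\alpha\eta})\to0$.
\item Through $\theta_{j,m}=\sum_kM_j^{m-1-k}\theta_j(f^kx)L_{i,j+1}^k$, with variation at most $\sum_k C\nu^{k\alpha\eta}\|v\|^{\alpha\eta}$, using bounded weights.
\end{itemize}
This gives $\|F_v\|_{C^0}\le C\|v\|^{\alpha\eta}$. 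If you use the full $j_i$-block form of Theorem~\ref{th:9} at once, as you propose, then for $j_i\ge3$ the variation of the normalized products is only bounded by a polynomial in $m$. You would then need a genuinely different matched-scale argument, which your proposal does not contain.

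\paragraph{A smaller point.} $\ker DH|_{\mathcal E_i}$ need not have constant rank, so it is not a subbundle as you assume. To get injectivity everywhere, the paper goes through generic points, a maximal-rank neighbourhood, and the Kalinin--Sadovskaya regularity of the measurable image bundle.
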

The key step in the proof of Theorem~\ref{th:4} is the following proposition.
\begin{proposition}\label{le:3}
For each $1\leq j\leq j_i$, there exist an
\(A_i\)-invariant subspace \(S_{i,j}\subset E_i\), a constant linear
isomorphism
\[
L_j:V_{i,j}\to S_{i,j},
\]
and a bi-\(\alpha\)-H\"older bundle
isomorphism
\[
\mathcal B_{i,j}:\mathcal F_{i,j}\to S_{i,j}
\]
 such that for any \(\alpha\)-H\"older vector
field \(\mathcal V\) on \(\TT^N\)  taking values in \(\mathcal{F}_{i,j}\), we have
\begin{align*}
 \mathcal{B}_{i,j}(\cdot)(\mathcal V)=\widetilde{\mathfrak D}_{\mathcal V}^++p_{i}(\mathcal V)\qquad\text{as distributions}.
\end{align*}
More precisely, for every \(\omega\in C^\beta(\TT^N)\),
\[
\widetilde{\mathfrak D}_{\mathcal V}^+(\omega)
+
\int_{\TT^N}p_{i}(\mathcal V_x)\omega(x)\,d\mathfrak m(x)
=
\int_{\TT^N}
\mathcal B_{i,j}(x)(\mathcal V_x)\omega(x)\,d\mathfrak m(x).
\]
\end{proposition}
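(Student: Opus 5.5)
Induction on $j$. For $j=1$, set $S_{i,1}:=W=B_i(V_{i,1})$, $L_1:=B_i|_{V_{i,1}}$ (injective by Theorem~\ref{th:1}), and $\mathcal B_{i,1}:=B_i\circ\mathcal C_{i,1}$. Theorem~\ref{th:1} gives $D(H_i)|_{\mathcal F_{i,1}}=\mathcal B_{i,1}$. The positive-time expansion \eqref{for:92}, differentiated along $\mathcal V$ and paired with $\omega$, gives $D_{\mathcal V}h_i=\widetilde{\mathfrak D}^+_{\mathcal V}$ as distributions; this needs the justification already used in Lemma~\ref{le:2}, namely absolute convergence and integration by parts in foliation boxes. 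Since $H_i=p_i+h_i$, this proves the case $j=1$.

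For the inductive step, assume the statement holds for $j-1$. I would first prove the distributional identity
\[
\widetilde{\mathfrak D}^+_{\mathcal V}+p_i(\mathcal V)=\lim_{m\to\infty}A_i^{-k_m}p_i\circ Df^{k_m}(\mathcal V)
\]
along the sequence $k_m$ of Lemma~\ref{for:8}. This follows from the telescoping identity \eqref{for:75}, now applied on $\mathcal F_{i,j}$. Next I use Theorem~\ref{th:9} to write $Df^{k}|_{\mathcal F_{i,j}}=\mathcal C_{i}(f^kx)^{-1}\widetilde A_i^{(k)}(x)\mathcal C_i(x)$, where the cocycle $\widetilde A_i^{(k)}$ is block upper triangular. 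Its new top diagonal block is $A_{i,j}$ on $W_{i,j}$, and the off-diagonal part maps $W_{i,j}$ into $V_{i,j-1}$. Split $\mathcal V=\mathcal V'+\mathcal V''$, with $\mathcal V'\in\mathcal F_{i,j-1}$ and $\mathcal V''$ in the H\"older complement $\mathcal G_{i,j}$.

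The $\mathcal V'$ part is handled by induction. For $\mathcal V''$, the diagonal term $A_i^{-k_m}p_i\mathcal C_i(f^{k_m}x)^{-1}A_{i,j}^{k_m}\mathcal C_i(x)\mathcal V''$ has the same structure as $\mathcal M_{3,m}$ in Lemma~\ref{th:7}. Using mixing of $A$ after conjugating by $H$, its limit is $B_{i,j}\mathcal C_i(x)\mathcal V''$, where $B_{i,j}=\int P_j(H^{-1}z)\,d\mathfrak m(z)$. This should be a H\"older section.

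The off-diagonal contribution is $\sum_{r<k_m}A_i^{-k_m}p_i\,Df^{k_m-r}|_{\mathcal F_{i,j-1}}(\cdots)A_{i,j}^{r}$. Rewriting $A_i^{-k_m}=A_i^{-(k_m-r)}A_i^{-r}$ and using the inductive representation of $\lim A_i^{-n}p_iDf^n|_{\mathcal F_{i,j-1}}=\mathcal B_{i,j-1}$, the sum becomes an ergodic-type series. Its summands involve $\mathcal B_{i,j-1}(f^{r}x)\circ(\text{off-diagonal block})(f^rx)\circ A_{i,j}^{r}$, twisted by $A_i^{-r}$. Since the twist is isometric up to $\rho_i$, this series does not converge absolutely; the polynomial bound $|n|^{m-1}$ from Section~\ref{sec:12} reflects exactly this.

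This is where I would use Theorem~\ref{th:3}. The positive- and negative-time distributions coincide, so the same quantity equals the negative-time limit $\lim_{m\to-\infty}$. After conjugating by $H$, the positive-time expression gives H\"older translation estimates along $W^{s,A}$, because forward iterates contract stable holonomies and Lemma~\ref{le:5} controls the cocycle. The negative-time expression gives the analogous estimates along $W^{u,A}$. Combining the two through local product structure, a distribution-to-H\"older criterion shows that $\widetilde{\mathfrak D}^+_{\mathcal V''}+p_i(\mathcal V'')$ is represented by a H\"older function that depends linearly on $\mathcal V''_x$. Call this bundle map $\mathcal B_{i,j}$ on $\mathcal G_{i,j}$, and extend it to all of $\mathcal F_{i,j}$ using $\mathcal B_{i,j-1}$ on $\mathcal F_{i,j-1}$.

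To finish, I would set $S_{i,j}:=\mathcal B_{i,j}(x)(\mathcal F_{i,j}(x))$. The distributional cohomology relation $\mathcal B_{i,j}(fx)Df_x=A_i\mathcal B_{i,j}(x)$, inherited from the series, shows that $S_{i,j}$ is constant and $A_i$-invariant. Injectivity of $\mathcal B_{i,j}$ follows by the integral-curve argument of Theorem~\ref{th:1}, and $L_j$ is read off from the constant part. The main obstacle is the off-diagonal summation: turning the one-sided translation estimates into a genuine H\"older representative, given that the stable and unstable estimates hold only distributionally.
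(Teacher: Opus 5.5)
Your base case and the overall architecture match the paper's: induction over the flag, the equality $\mathfrak D^+_{\mathcal V}=\mathfrak D^-_{\mathcal V}$ from Theorem~\ref{th:3}, one-sided translation estimates on the linear torus, and Lemma~\ref{le:7}. However, the inductive step is missing its central estimate.

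\textbf{The uniform translation bound is not justified.} You assert uniform bounds $\|\mathcal T(z)-\mathcal T(z+v)\|\le C\|v\|^{\alpha\eta}$ for $v\in E^{s,A}$, "because forward iterates contract stable holonomies and Lemma~\ref{le:5} controls the cocycle". Lemma~\ref{le:5} is derived from condition~(3) of Proposition~\ref{po:5}, namely $\|\rho_i^{-m}Df^m|_{\mathcal L}\|\le C$. On the new layer (your $\mathcal F_{i,j}$) the only a priori bound is $\|\rho_i^{-m}Df^m|_{\mathcal F_{i,j}}\|\le C|m|^{j-1}$. The holonomy comparison therefore only gives $C|m|^{j-1}\|v\|^{\alpha\eta}$, which is useless as $m\to\infty$. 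Uniform boundedness on $\mathcal F_{i,j}$ is a \emph{consequence} of the proposition, so it cannot be used as an input.

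Your mixing computation of the diagonal term is fine, but the difficulty lies entirely in the off-diagonal twisted Birkhoff sum. Substituting the weak limit $\mathcal B_{i,j-1}$ inside the sum over $r$ is also not legitimate: that limit is only distributional, and it is taken in the variable $f^rx$, which moves with $r$. Conversely, the obstacle you single out at the end is the easy part. Once the approximants satisfy $C^0$ translation bounds uniformly in $m$, Lemma~\ref{le:7} finishes immediately.

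\textbf{The missing idea is an adapted trivialization.} You need coordinates in which the normalized cocycle is explicitly triangular and its only non-constant entry is the off-diagonal block. For the step $\mathcal F_{i,j}\to\mathcal F_{i,j+1}$ (paper's indexing), the paper trivializes $\mathcal F_{i,j}$ by the inductive conjugacy $\mathcal B_{i,j}$ itself, pulled back to $V_{i,j}$ by the constant isomorphism $L_j$, rather than by $\mathcal C_i$. On a H\"older complement it uses the quotient trivialization of Theorem~\ref{th:9}. This gives
\[
\rho_i^{-m}\mathcal O_{i,j+1}(f^mx)\,Df^m_x\,\mathcal O_{i,j+1}(x)^{-1}
=
\begin{pmatrix} M_j^m & \rho_i^{-1}\theta_{j,m}(x)\\ 0 & L_{i,j+1}^m\end{pmatrix},
\qquad
\theta_{j,m}(x)=\sum_{k=0}^{m-1}M_j^{m-1-k}\theta_j(f^kx)L_{i,j+1}^k ,
\]
where $M_j$ and $L_{i,j+1}$ have bounded powers. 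After conjugating by $H$, take $v\in E^{s,A}$:
\begin{itemize}
\item The $k$-th term of $\theta_{j,m}(H^{-1}z)-\theta_{j,m}(H^{-1}(z+v))$ is $O(\nu^{k\alpha\eta}\|v\|^{\alpha\eta})$, which is summable uniformly in $m$.
\item The outer factor $p_i\circ\mathcal O_{i,j+1}^{-1}\circ H^{-1}$ changes by $O(\nu^{m\alpha\eta})$ between $A^mz$ and $A^m(z+v)$. This beats the linear growth of the matrix.
\item The negative-time formula gives the same bound along $E^{u,A}$, and Theorem~\ref{th:3} glues the two.
\end{itemize}
Testing a $\operatorname{Hom}(V_{i,j+1},E_i)$-valued distribution in this trivialization is also what makes translation on the linear torus meaningful. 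Translation is not meaningful for bundle-valued fields $\mathcal V''$.

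\textbf{Your closing step is also incomplete in three places.}
\begin{itemize}
\item The relation $\mathcal B_{i,j}(fx)Df_x=A_i\mathcal B_{i,j}(x)$ only makes the image bundle $A_i$-equivariant, not constant. Constancy needs the paper's periodic-point argument: dense periodic points of powers $A^{s_n}$ that share $A$'s invariant subspaces, combined with the finiteness of $A_i$-invariant subspaces of a given dimension, which comes from irreducibility.
\item Injectivity cannot be copied from Theorem~\ref{th:1}, because the rank of $\mathcal B_{i,j}$ may vary. The paper handles this with a generic point, a maximal-rank neighbourhood, and \cite{KS13}.
\item Lemma~\ref{le:7} yields only the exponent $\alpha\eta$. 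The bi-$\alpha$-H\"older claim requires the upgrade via \cite[Theorem~2.7]{S15}.
\end{itemize}
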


\begin{remark}
Equivalently, $\mathcal B_{i,j}$ represents the derivative of
$H_i=p_i\circ H=p_i+h_i$ along $\mathcal F_{i,j}$.
\end{remark}

\subsection{Proof of Theorem \ref{th:4}} We prove Theorem \ref{th:4} assuming Proposition \ref{le:3}. By Proposition \ref{le:3},  applied with \(j=j_i\), there exists an
\(A_i\)-invariant subspace \(S_{i,j_i}\subset E_i\), a constant linear
isomorphism
\[
L_{j_i}:V_{i,j_i}\to S_{i,j_i},
\]
and a bi-\(\alpha\)-H\"older bundle isomorphism
\[
\mathcal B_{i,j_i}:\mathcal F_{i,j_i}\to S_{i,j_i}
\]
such that \(\mathcal B_{i,j_i}\) represents the derivative of
\(H_i=p_i\circ H\) along \(\mathcal F_{i,j_i}\).

Since
\[
\mathcal F_{i,j_i}=\mathcal E_i=T\mathcal W_i^f
\qquad\text{and}\qquad
V_{i,j_i}=E_i,
\]
we have
\[
\dim S_{i,j_i}=\dim V_{i,j_i}=\dim E_i.
\]
Because \(S_{i,j_i}\subset E_i\), it follows that
\[
S_{i,j_i}=E_i.
\]
Therefore
\[
\mathcal B_{i,j_i}:\mathcal E_i\to E_i
\]
is a bi-\(\alpha\)-\text{H\"older} bundle isomorphism, and for every
\(x\in\TT^N\),
\[
D(H_i)_x|_{\mathcal E_i(x)}
=
\mathcal B_{i,j_i}(x).
\]
Hence \(H_i\) is \(C^{1+\alpha}\) along \(\mathcal W_i^f\), and its
leafwise derivative is everywhere invertible.

Since \(H(\mathcal W_i^f)=\mathcal W_i^A\), the components
\(p_k\circ H\), \(k\neq i\), are constant along each leaf of
\(\mathcal W_i^f\). Thus, for every
\(u\in\mathcal E_i(x)\),
\[
DH_x(u)
=
D(H_i)_x(u)
=
\mathcal B_{i,j_i}(x)(u).
\]
Consequently \(H\) is \(C^{1+\alpha}\) along \(\mathcal W_i^f\), and
its leafwise derivative is an isomorphism at every point. The inverse function
theorem on the leaves implies that \(H\) is a
\(C^{1+\alpha}\) diffeomorphism along \(\mathcal W_i^f\).

It remains to prove Proposition~\ref{le:3}; this will occupy the rest of the section.

\subsection{Proof strategy for Proposition~\ref{le:3}}

We prove Proposition~\ref{le:3} by induction on $j$. The base case $j=1$
follows from Theorem~\ref{th:1}; see Section~\ref{sec:27}.

Assume that Proposition~\ref{le:3} holds for some $j<j_i$. Then
$\mathcal B_{i,j}$ represents the derivative of $H_i$ along
$\mathcal F_{i,j}$. Differentiating the conjugacy equation
\[
        H_i\circ f=A_i\circ H_i
\]
along $\mathcal F_{i,j}$, we obtain
\[
        \mathcal B_{i,j}(fx)\circ Df_x|_{\mathcal F_{i,j}(x)}
        =
        A_i\circ \mathcal B_{i,j}(x).
\]
This gives a H\"older conjugacy between $Df|_{\mathcal F_{i,j}}$ and the
constant cocycle $A_i|_{S_{i,j}}$.

Using this conjugacy, together with the quotient trivialization supplied by
Theorem~\ref{th:9}, we choose a suitable trivialization
\[
        \mathcal O_{i,j+1}:\mathcal F_{i,j+1}\to V_{i,j+1}
\]
in which the cocycle $Df|_{\mathcal F_{i,j+1}}$ becomes upper triangular:
\[
        \widehat A_{i,j+1}(x)
        =
        \begin{pmatrix}
        \rho_i M_j & \theta_j(x)\\
        0 & \rho_i L_{i,j+1}
        \end{pmatrix},
\]
where the diagonal blocks have uniformly bounded normalized iterates. This
triangular form gives precise formulas for both positive and negative iterates
of $Df|_{\mathcal F_{i,j+1}}$; see Sections~\ref{sec:26} and~\ref{sec:28}.
It is the main input for controlling the growth of
$Df^m|_{\mathcal F_{i,j+1}}$ as $m\to\pm\infty$.

A key observation is that, for every $\alpha$-H\"older vector field
$\mathcal V$ taking values in $\mathcal F_{i,j+1}$, the positive-time
normalized expression satisfies
\[
        \lim_{m\to\infty}
        A_i^{-m}p_i\circ Df_z^m(\mathcal V_z)
        =
        p_i(\mathcal V_z)+\widetilde{\mathfrak D}_{\mathcal V}^+
\]
in the sense of distributions. Similarly, the negative-time expression
satisfies
\[
        \lim_{m\to-\infty}
        A_i^{-m}p_i\circ Df_z^m(\mathcal V_z)
        =
        p_i(\mathcal V_z)+\widetilde{\mathfrak D}_{\mathcal V}^- .
\]
The positive-time formula has good H\"older control along stable leaves of
$f$, while the negative-time formula has good H\"older control along unstable
leaves of $f$. However, the distribution-to-H\"older criterion is most
naturally applied on the linear torus, where the stable and unstable foliations
are linear and translation differences can be used.

For this reason, we trivialize the bundle using $\mathcal O_{i,j+1}$ and pass
to the linear base using $H^{-1}$. In these coordinates we work with the
conjugated distributions
\[
        \mathfrak D_{\mathcal V}^{+},
        \qquad
        \mathfrak D_{\mathcal V}^{-},
\]
and with the candidate distribution
\[
\begin{aligned}
 \mathcal T
 &:=
 \lim_{m\to \infty}
 A_i^{-m}p_i\circ Df^{m}_{H^{-1}z}\circ
 \mathcal O_{i,j+1}^{-1}(H^{-1}z)|_{V_{i,j+1}}  \\
 &=
 \lim_{m\to -\infty}
 A_i^{-m}p_i\circ Df^{m}_{H^{-1}z}\circ
 \mathcal O_{i,j+1}^{-1}(H^{-1}z)|_{V_{i,j+1}},
\end{aligned}
\]
where the equality is proved in Section~\ref{sec:20}. The identity
\[
        \mathfrak D_{\mathcal V}^{+}
        =
        \mathfrak D_{\mathcal V}^{-}
\]
combines the stable-direction estimates coming from the positive-time formula
with the unstable-direction estimates coming from the negative-time formula.
This gives the two-sided translation estimates required by the
distribution-to-H\"older criterion. Hence $\mathcal T$ is represented by a
H\"older function; see Section~\ref{sec:19}.

We then define
\[
        \mathcal B_{i,j+1}(x)
        :=
        \mathcal T(Hx)\circ \mathcal O_{i,j+1}(x).
\]
By construction, for every $\alpha$-H\"older vector field $\mathcal V$ taking
values in $\mathcal F_{i,j+1}$,
\[
\widetilde{\mathfrak D}_{\mathcal V}^+(\omega)
+
\int_{\TT^N}p_i(\mathcal V_x)\omega(x)\,d\mathfrak m(x)
=
\int_{\TT^N}
\mathcal B_{i,j+1}(x)(\mathcal V_x)\omega(x)\,d\mathfrak m(x).
\]
Thus $\mathcal B_{i,j+1}$ represents
$D(H_i)|_{\mathcal F_{i,j+1}}$.

It remains to prove that $\mathcal B_{i,j+1}$ is a bi-$\alpha$-H\"older bundle
isomorphism onto a constant $A_i$-invariant subspace. First, one proves
injectivity on a full-measure set of generic points. The corresponding
measurable image bundle is $A_i$-invariant. Since $A_i|_{E_i}$ has only one
Lyapunov exponent and is fiber bunched, the measurable image bundle is
H\"older by the regularity theorem for invariant subbundles of
Kalinin--Sadovskaya~\cite{KS13}. This regularity, together with continuity and
full support of $\mu$, promotes injectivity to every point.

Finally, using periodic points of suitable iterates of $A$, one shows that the
H\"older image bundle over the linear system is actually constant. Thus the
image is a fixed $A_i$-invariant subspace $S_{i,j+1}\subset E_i$.
Theorem~2.7 of \cite{S15} then upgrades $\mathcal B_{i,j+1}$ to an
$\alpha$-H\"older conjugacy, and compactness gives $\alpha$-H\"older
regularity of the inverse. Hence $\mathcal B_{i,j+1}$ is bi-$\alpha$-H\"older.
This completes the induction step, and therefore proves
Proposition~\ref{le:3}. The details of this final step are given in
Section~\ref{sec:29}.

\subsection{Proof of Proposition \ref{le:3}}
\subsubsection{A criterion for distributions to be H\"older}

\begin{lemma}\label{le:7}
Let \(\mathcal{T}\) be a distribution on \(\TT^N\). Suppose that, for every
\(v\in\RR^N\) with \(\|v\|\leq 1\), the distribution
\[
F_v(x):=\mathcal{T}(x)-\mathcal{T}(x+v)
\]
is a \(C^\alpha\) function, and assume moreover that
\[
\sup_{\|v\|\leq 1}\|F_v\|_{C^\alpha}<\infty.
\]
Then \(\mathcal{T}\) is a \(C^\alpha\) function.
\end{lemma}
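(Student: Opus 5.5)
The plan is to work entirely on the Fourier side and to reduce the statement to a smoothing argument. First we show that $\mathcal T$ is a genuine $L^\infty$ (indeed continuous) function, and then we read off its H\"older regularity from the differences $F_v$.

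\emph{Step 1: nonzero Fourier modes.} Write $\hat{\mathcal T}(n)=\mathcal T(e_{-n})$. Translation acts diagonally, so
\[
\hat F_v(n)=\bigl(1-e^{2\pi\mathbf i\,n\cdot v}\bigr)\hat{\mathcal T}(n).
\]
For $0\neq n\in\ZZ^N$, choose $v=v_n$ with $\|v_n\|\le 1$ and $n\cdot v_n=\tfrac12$. For instance, take $v_n=\frac{n}{2\|n\|^2}$, which has norm $\le \tfrac12$. Then $1-e^{2\pi\mathbf i n\cdot v_n}=2$, and hence
\[
\hat{\mathcal T}(n)=\tfrac12\hat F_{v_n}(n).
\]
This is the same device used in the proof of Lemma~\ref{le:8}. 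It expresses every nonzero mode of $\mathcal T$ through one of the functions $F_v$, all of which lie in a bounded subset of $C^\alpha$, say with $\sup_{\|v\|\le1}\|F_v\|_{C^\alpha}\le K$.

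\emph{Step 2: reconstruction of $\mathcal T$.} We cannot simply sum $\hat F_{v_n}(n)e_n$, since the coefficients come from different functions. Instead we average over $v$. Fix a smooth probability density $\sigma$ on the unit ball, and set
\[
G:=\int F_v\,\sigma(v)\,dv=\mathcal T-\mathcal T*\check\sigma ,
\]
where $\check\sigma(x)=\sigma(-x)$. This is an average of $C^\alpha$ functions with uniform bounds. It requires measurability of $v\mapsto F_v$, which holds because $v\mapsto F_v$ is continuous in the distribution topology, so the Bochner/weak integral is fine. Consequently $G\in C^\alpha$ with $\|G\|_{C^\alpha}\le K$. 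Moreover $\mathcal T*\check\sigma$ is $C^\infty$, because a distribution convolved with a smooth kernel is smooth. Hence
\[
\mathcal T=G+\mathcal T*\check\sigma
\]
is a sum of a $C^\alpha$ function and a $C^\infty$ function, and therefore $\mathcal T$ is a $C^\alpha$ function.

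\emph{Main obstacle.} The delicate point is the justification in Step 2. We must check that the averaged identity holds as distributions, by testing against $e_n$ and using Fubini, and that the $C^\alpha$ bound passes to the average. Both reduce to the uniform bound $\sup_{\|v\|\le1}\|F_v\|_{C^\alpha}\le K$ together with weak continuity in $v$.

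Should one wish to avoid this measurability issue, there is a cleaner alternative. Take $\sigma$ to be a finite average of point masses, $\sum_k c_k\delta_{v_k}$. The averaged identity is then a finite sum, and it suffices that $\sum_k c_k(1-e^{2\pi\mathbf i n\cdot v_k})$ stays uniformly away from zero for all $n\neq0$. Arranging that last condition is the main thing to verify. In practice, the continuous averaging version, combined with smoothness of $\mathcal T*\check\sigma$, settles the argument directly. Indeed the mode $n=0$, and all large modes, of $\mathcal T*\check\sigma$ are harmless because that function is smooth.
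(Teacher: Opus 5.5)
Your Step~2 is exactly the paper's argument: average $F_v$ against a smooth bump to get $G=\mathcal T-\mathcal T*\check\sigma\in C^\alpha$ with $\|G\|_{C^\alpha}\le\sup_{\|v\|\le1}\|F_v\|_{C^\alpha}$, then note that the convolution term is $C^\infty$; Step~1 is not needed. Drop the finite point-mass alternative, though: for any $v_1,\dots,v_K$, a pigeonhole argument on $\TT^K$ gives nonzero $n\in\ZZ^N$ with every $n\cdot v_k$ arbitrarily close to an integer, so $\sum_k c_k(1-e^{2\pi\mathbf i\,n\cdot v_k})$ can never stay bounded away from zero.
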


\begin{proof}
Let \(\phi\in C_c^\infty(\RR^N)\) be supported in the ball
\(\{\|v\|\leq 1\}\), with \(\phi\geq 0\) and
\[
\int_{\RR^N}\phi(v)\,dv=1.
\]
Translations are understood modulo \(\ZZ^N\). Define
\[
G(x)
:=
\mathcal{T}(x)-\int_{\RR^N}\mathcal{T}(x+v)\phi(v)\,dv.
\]
Then, in the sense of distributions,
\[
G(x)
=
\int_{\RR^N}\bigl(\mathcal{T}(x)-\mathcal{T}(x+v)\bigr)\phi(v)\,dv
=
\int_{\RR^N}F_v(x)\phi(v)\,dv.
\]
By the uniform \(C^\alpha\) bound on \(F_v\), it follows that \(G\in C^\alpha\).
Indeed,
\[
\|G\|_{C^\alpha}
\leq
\int_{\RR^N}\|F_v\|_{C^\alpha}\phi(v)\,dv
\leq
\sup_{\|v\|\leq 1}\|F_v\|_{C^\alpha}.
\]
On the other hand,
\[
x\mapsto \int_{\RR^N}\mathcal{T}(x+v)\phi(v)\,dv
\]
is the convolution of the distribution \(T\) with a smooth compactly supported
kernel, and is therefore \(C^\infty\). Hence
\[
\mathcal{T}
=
G+
\int_{\RR^N}\mathcal{T}(\cdot+v)\phi(v)\,dv
\]
is represented by a \(C^\alpha\) function.
\end{proof}

\subsubsection{Step 1: Preparatory step}\label{sec:26} We recall notations in Theorem \ref{th:9}. By Theorem~\ref{th:9}, there exists a bi-\(\alpha\)-H\"older bundle
isomorphism
\[
\mathcal C_i:\mathcal E_i\to E_i
\]
such that, for every \(x\in\TT^N\), the map
\[
\mathcal C_i(x):\mathcal E_i(x)\to E_i
\]
is a linear isomorphism satisfying
\begin{align}\label{for:67}
 \mathcal C_i(fx)\circ Df_x|_{\mathcal E_i(x)}
=
\widetilde A_i(x)\circ \mathcal C_i(x),
\qquad x\in\TT^N,
\end{align}
where \(\widetilde A_i(x):E_i\to E_i\) is block upper triangular and its
diagonal blocks coincide with the corresponding diagonal blocks of \(A_i\).
Moreover, for every \(0\leq j\leq j_i\),
\[
\mathcal C_i(x)(\mathcal F_{i,j}(x))=V_{i,j}.
\]
We denote the restriction by
\[
\mathcal C_{i,j}(x):=\mathcal C_i(x)|_{\mathcal F_{i,j}(x)}.
\]
We prove Proposition \ref{le:3} by induction in the following steps.
\subsubsection{Step 2: The base case \(j=1\) and the inductive assumption}\label{sec:27} By Theorem~\ref{th:1}, \(H\) is a \(C^{1+\text{H\"older}}\) diffeomorphism along
  \(\mathcal W_{\mathcal F_{i,1}}\). Moreover, for every
  \(x\in\TT^N\) and \(u\in \mathcal F_{i,1}(x)\),
  \[
  D(H_i)_x(u)=B_i\circ \mathcal C_{i,1}(x)(u).
  \]
Set
\begin{align*}
 L_1:=B_i,\quad \mathcal{B}_{i,1}:=B_i\circ \mathcal C_{i,1},\quad S_{i,1}:=B_i(V_{i,1}).
\end{align*}
Since
\begin{align*}
 H_i=p_i+h_i=p_i+\sum_{m=0}^\infty A_i^{-(m+1)}R_i\circ f^{m},
\end{align*}
we have, in the sense of distributions along \(\mathcal F_{i,1}\),
 \begin{align*}
  D(H_i)_x(u)=p_{i}(u)+\sum_{m=0}^\infty A_i^{-(m+1)}D(R_i\circ f^{m})(u).
 \end{align*}
Therefore, for every \(\alpha\)-H\"older vector field
\(\mathcal V\) taking values in \(\mathcal F_{i,1}\) and every
\(\omega\in C^\beta(\TT^N)\),
\[
\widetilde{\mathfrak D}_{\mathcal V}^+(\omega)
+
\int_{\TT^N}p_i(\mathcal V_x)\omega(x)\,d\mathfrak m(x)
=
\int_{\TT^N}
\mathcal B_{i,1}(x)(\mathcal V_x)\omega(x)\,d\mathfrak m(x).
\]
Thus Proposition~\ref{le:3} holds for \(j=1\).

Now suppose that Proposition~\ref{le:3} holds for some
\(1\leq j\leq j_i-1\). Then \(H_i\) is \(C^{1+\alpha}\) along \(\mathcal F_{i,j}\). Differentiating the conjugacy equation
\[
H_i\circ f=A_i\circ H_i
\]
along $\mathcal{F}_{i,j}$, we have
\begin{align*}
\mathcal B_{i,j}(fx)\circ Df_x|_{\mathcal{F}_{i,j}}=(A_i|_{S_{i,j}})\circ\mathcal B_{i,j}(x)|_{\mathcal{F}_{i,j}},\qquad \forall\,x\in\TT^N.
\end{align*}
Equivalently,
\begin{align}\label{for:64}
B_j^{-1}\mathcal B_{i,j}(fx)\circ Df_x|_{\mathcal{F}_{i,j}}=(B_j^{-1}A_iB_j)|_{V_{i,j}}\circ B_j^{-1}\mathcal B_{i,j}(x)|_{\mathcal{F}_{i,j}},\qquad \forall\,x\in\TT^N.
\end{align}
\subsubsection{Step 3: Expressions for $Df^m|_{\mathcal{F}_{i,j+1}}$ }\label{sec:28}
Choose an \(\alpha\)-\text{H\"older} splitting
\[
\mathcal F_{i,j+1}
=
\mathcal F_{i,j}\oplus \mathcal G_{i,j+1}.
\]
By Theorem~\ref{th:9}, we also have the \(A\)-invariant decomposition
\[
V_{i,j+1}=V_{i,j}\oplus W_{i,j+1}.
\]
On \(\mathcal F_{i,j}\), define
\[
\mathcal O_{i,j+1}(x)|_{\mathcal F_{i,j}(x)}
=
B_j^{-1}\mathcal B_{i,j}(x).
\]
On the quotient \(\mathcal F_{i,j+1}/\mathcal F_{i,j}\), the map
\(\mathcal C_i\) induces a bi-\(\alpha\)-\text{H\"older} bundle isomorphism
\[
\overline{\mathcal C}_{i,j+1}(x):
\mathcal F_{i,j+1}(x)/\mathcal F_{i,j}(x)
\to
V_{i,j+1}/V_{i,j}.
\]
Using the decomposition \(V_{i,j+1}=V_{i,j}\oplus W_{i,j+1}\), we identify
\(V_{i,j+1}/V_{i,j}\) with \(W_{i,j+1}\). Also, using the splitting
\[
\mathcal F_{i,j+1}
=
\mathcal F_{i,j}\oplus\mathcal G_{i,j+1},
\]
we identify \(\mathcal G_{i,j+1}(x)\) with
\(\mathcal F_{i,j+1}(x)/\mathcal F_{i,j}(x)\). We then define
\[
\mathcal O'_{i,j+1}(x):\mathcal G_{i,j+1}(x)\to W_{i,j+1}
\]
to be the corresponding quotient trivialization. With this choice, the
induced quotient cocycle $\overline{Df}:\mathcal F_{i,j+1}/\mathcal F_{i,j}\to \mathcal F_{i,j+1}/\mathcal F_{i,j}$ is conjugated to the next diagonal block
\(\rho_iL_{i,j+1}\) of \(\widetilde A_i(x)\), as follows from \eqref{for:67}. Define
\begin{align*}
 \mathcal O_{i,j+1}(x)(u+v)
=
B_j^{-1}\mathcal B_{i,j}(x)u
+
\mathcal O'_{i,j+1}(x)v,
\end{align*}
where $u\in\mathcal F_{i,j}(x)$, $v\in\mathcal G_{i,j+1}(x)$.

Then \(\mathcal O_{i,j+1}\) is a bi-\(\alpha\)-H\"older bundle
isomorphism
\[
\mathcal O_{i,j+1}:\mathcal F_{i,j+1}\to V_{i,j+1}
\]
satisfying
\[
\mathcal O_{i,j+1}(x)|_{\mathcal F_{i,j}(x)}
=
B_j^{-1}\mathcal B_{i,j}(x).
\]
Consider the cocycle
\[
\widehat A_{i,j+1}(x)
:=
\mathcal O_{i,j+1}(fx)\circ Df_x|_{\mathcal F_{i,j+1}(x)}
\circ\mathcal O_{i,j+1}(x)^{-1}.
\]
With respect to the decomposition
\[
V_{i,j+1}=V_{i,j}\oplus W_{i,j+1},
\]
this cocycle has block upper-triangular form
\[
\widehat A_{i,j+1}(x)
=
\begin{pmatrix}
\rho_iM_j & \theta_j(x)\\
0 & \rho_iL_{i,j+1}
\end{pmatrix}.
\]
Indeed, the lower-left block is zero because
\[
Df_x\mathcal F_{i,j}(x)=\mathcal F_{i,j}(fx).
\]
The upper-left block is
\[
\rho_iM_j
=
(B_j^{-1}A_iB_j)|_{V_{i,j}},
\]
by \eqref{for:64}, where
\[
M_j:=B_j^{-1}\circ(\rho_i^{-1}A_i)|_{S_{i,j}}\circ B_j.
\]
The lower-right block is \(\rho_iL_{i,j+1}\) by the choice of
\(\mathcal O'_{i,j+1}\) and the conjugacy relation \eqref{for:67}. Finally,
the off-diagonal term \(\theta_j(x)\) is \(\alpha\)-\text{H\"older}, because
\(Df\), \(\mathcal O_{i,j+1}\), and \(\mathcal O_{i,j+1}^{-1}\) are
\(\alpha\)-H\"older.

Therefore, for every \(m\geq1\),
\begin{align}\label{for:68}
 Df_x^m|_{\mathcal F_{i,j+1}(x)}
=
\mathcal O_{i,j+1}(f^m x)^{-1}
\widetilde A_{i,j+1,m}(x)
\mathcal O_{i,j+1}(x),
\end{align}
where
\[
\widetilde A_{i,j+1,m}(x)
=
\prod_{k=0}^{m-1}
\begin{pmatrix}
\rho_i M_j & \theta_j(f^k x)\\
0 & \rho_i L_{i,j+1}
\end{pmatrix}.
\]
Equivalently,
\[
\widetilde A_{i,j+1,m}(x)
=\rho_i^m
\begin{pmatrix}
M_j^m & \rho_i^{-1}\theta_{j,m}(x)\\
0 & L_{i,j+1}^m
\end{pmatrix},
\]
where
\begin{align}\label{for:74}
 \theta_{j,m}(x)
=
\sum_{k=0}^{m-1}
M_j^{m-1-k}\theta_j(f^k x)L_{i,j+1}^k.
\end{align}
This together with \eqref{for:68} show that for every \(m\geq1\),
\begin{align}\label{for:66}
Df_x^m|_{\mathcal F_{i,j+1}(x)}
=\rho_i^m
\mathcal O_{i,j+1}(f^m x)^{-1}
\begin{pmatrix}
M_j^m & \rho_i^{-1}\theta_{j,m}(x)\\
0 & L_{i,j+1}^m
\end{pmatrix}
\mathcal O_{i,j+1}(x),
\end{align}
By construction, the normalized blocks $M_j$ and $L_{i,j+1}$ have uniformly bounded
integer powers. Thus
\begin{align}\label{for:76}
 \max\{\norm{M_j^m},\, \norm{L_{i,j+1}^m}\}\leq C,\qquad \forall\,m\in\ZZ.
\end{align}
Similarly, for negative iterates, writing \(n\geq1\), we have
\begin{align}
Df_x^{-n}|_{\mathcal F_{i,j+1}(x)}
=
\rho_i^{-n}
\mathcal O_{i,j+1}(f^{-n}x)^{-1}
\begin{pmatrix}
M_j^{-n} & \eta_{j,n}(x)\\
0 & L_{i,j+1}^{-n}
\end{pmatrix}
\mathcal O_{i,j+1}(x),
\end{align}
where
\[
\eta_{j,n}(x)
=
-\rho_i^{-1}
\sum_{k=1}^{n}
M_j^{-(n-k+1)}\theta_j(f^{-k}x)L_{i,j+1}^{-k}.
\]

\subsubsection{Step 4: Equality of the positive- and negative-time limits}\label{sec:20} In this part, we show that
\begin{align*}
 \mathcal{T}:&=\lim_{m\to \infty}A_i^{-m}p_i\circ Df^{m}_{H^{-1}z}\circ \mathcal{O}_{i,j+1}^{-1}(H^{-1}z)|_{V_{i,j+1}}\\
 &=\lim_{m\to -\infty}A_i^{-m}p_i\circ Df^{m}_{H^{-1}z}\circ \mathcal{O}_{i,j+1}^{-1}(H^{-1}z)|_{V_{i,j+1}}
\end{align*}
as distributions.

We first recall that the trivialization
\[
        \mathcal O_{i,j+1}:\mathcal F_{i,j+1}\to V_{i,j+1}
\]
identifies $\alpha$-H\"older vector fields tangent to
$\mathcal F_{i,j+1}$ with $\alpha$-H\"older
$V_{i,j+1}$-valued functions on $\mathbb T^N$. More precisely, if
$\mathcal V$ is an $\alpha$-H\"older vector field taking values in
$\mathcal F_{i,j+1}$, then
\[
        \mathfrak v_x:=\mathcal O_{i,j+1}(x)(\mathcal V_x)
\]
is an $\alpha$-H\"older $V_{i,j+1}$-valued function. Conversely, if
$\mathfrak v$ is an $\alpha$-H\"older $V_{i,j+1}$-valued function, then
\[
        \mathcal V_x:=\mathcal O_{i,j+1}^{-1}(x)(\mathfrak v_x)
\]
is an $\alpha$-H\"older vector field taking values in
$\mathcal F_{i,j+1}$. Moreover, since both $\mathcal O_{i,j+1}$ and
$\mathcal O_{i,j+1}^{-1}$ are H\"older bundle maps over the compact base, the
corresponding $C^\alpha$ norms are comparable. Thus it is equivalent to test
the candidate limit against $\alpha$-H\"older
$V_{i,j+1}$-valued functions or against $\alpha$-H\"older vector fields
tangent to $\mathcal F_{i,j+1}$.

More precisely, let \(\mathfrak v\) be an \(\alpha\)-\text{H\"older}
\(V_{i,j+1}\)-valued function on \(\TT^N\), and let
\(\omega\in C^\beta(\TT^N)\). We prove that
\begin{align*}
&\lim_{m\to \infty} \int_{\TT^N}A_i^{-m}p_i\circ Df^{m}_{H^{-1}z}\circ \mathcal{O}_{i,j+1}^{-1}(H^{-1}z)(\mathfrak{v}_{H^{-1}z})\omega(z)d\mathfrak{m}\\
&=\lim_{m\to -\infty} \int_{\TT^N}A_i^{-m}p_i\circ Df^{m}_{H^{-1}z}\circ \mathcal{O}_{i,j+1}^{-1}(H^{-1}z)(\mathfrak{v}_{H^{-1}z})\omega(z)d\mathfrak{m}.
\end{align*}
Let
\begin{align*}
J(x)=-p_i|_{(\mathcal{F}_{i,j+1})_x}.
\end{align*}
Then \(J\) satisfies the following twisted cohomological equation over \(f\):
\begin{align}\label{for:65}
 A_i\circ J(x)-J(fx)\circ Df|_{(\mathcal{F}_{i,j+1})_x}=DR_i|_{(\mathcal{F}_{i,j+1})_x}, \qquad x\in \TT^N.
\end{align}
Iterating \eqref{for:65} forward, for every \(m\geq1\), we have
\[
J(x)
+
A_i^{-m}p_i\circ Df_x^m|_{\mathcal F_{i,j+1}(x)}
=
\sum_{n=0}^{m-1}
A_i^{-(n+1)}
D(R_i\circ f^n)_x|_{\mathcal F_{i,j+1}(x)}.
\]
Now take \(x=H^{-1}z\) and compose on the right with
\[
\mathcal O_{i,j+1}^{-1}(H^{-1}z)|_{V_{i,j+1}}.
\]
Then we have
\begin{align}\label{for:72}
 &-p_i\circ \mathcal{O}_{i,j+1}^{-1}(H^{-1}z)|_{V_{i,j+1}}+A_i^{-m}p_i\circ Df^{m}_{H^{-1}z}\circ \mathcal{O}_{i,j+1}^{-1}(H^{-1}z)|_{V_{i,j+1}}\notag\\
 &=\sum_{n=0}^{m-1}A_i^{-(n+1)}D(R_i\circ f^{n})_{H^{-1}z}\circ \mathcal{O}_{i,j+1}^{-1}(H^{-1}z)|_{V_{i,j+1}}.
\end{align}
On the other hand, iterating \eqref{for:65} backward, for every
\(m\geq 1\), we obtain
\[
J(x)
+
A_i^{m}p_i\circ Df_x^{-m}|_{\mathcal F_{i,j+1}(x)}
=
-
\sum_{n=-1}^{-m}
A_i^{-(n+1)}
D(R_i\circ f^n)_x|_{\mathcal F_{i,j+1}(x)}.
\]
Now take again \(x=H^{-1}z\) and compose on the right with
\[
\mathcal O_{i,j+1}^{-1}(H^{-1}z)|_{V_{i,j+1}}.
\]
Then
we get
\begin{align}\label{for:73}
 &-
p_i\circ
\mathcal O_{i,j+1}^{-1}(H^{-1}z)|_{V_{i,j+1}}+A_i^{m}p_i\circ Df^{-m}_{H^{-1}z}
\circ
\mathcal O_{i,j+1}^{-1}(H^{-1}z)|_{V_{i,j+1}}\notag\\
&=-
\sum_{n=-m}^{-1}
A_i^{-(n+1)}
D(R_i\circ f^n)_{H^{-1}z}
\circ
\mathcal O_{i,j+1}^{-1}(H^{-1}z)|_{V_{i,j+1}}.
\end{align}
Now define an \(\alpha\)-H\"older vector field
\[
\mathcal V_y
:=
\mathcal O_{i,j+1}^{-1}(y)(\mathfrak v_y),
\qquad y\in\TT^N.
\]
Then \(\mathcal V\) takes values in \(\mathcal F_{i,j+1}\). By using \eqref{for:72} and the notation of Theorem \ref{th:3}, we have
\begin{align*}
 &\lim_{m\to \infty} \int_{\TT^N}A_i^{-m}p_i\circ Df^{m}_{H^{-1}z}\circ \mathcal{O}_{i,j+1}^{-1}(H^{-1}z)(\mathfrak{v}_{H^{-1}z})\omega(z)d\mathfrak{m}\\
  &=\int_{\TT^N}p_i (\mathcal{V}_{H^{-1}z})\omega(z)d\mathfrak{m}+\mathfrak D^+_{\mathcal V}(\omega).
 \end{align*}
Similarly, by using \eqref{for:73} we have
\begin{align*}
 &\lim_{m\to\infty}
\int_{\TT^N}
A_i^{m}p_i\circ Df^{-m}_{H^{-1}z}
\circ
\mathcal O_{i,j+1}^{-1}(H^{-1}z)(\mathfrak v_{H^{-1}z})
\,\omega(z)\,d\mathfrak m(z)\\
&=
\int_{\TT^N}
p_i(\mathcal V_{H^{-1}z})\omega(z)\,d\mathfrak m(z)
+
\mathfrak D^-_{\mathcal V}(\omega).
\end{align*}
By Theorem~\ref{th:3}, we have
\[
\mathfrak D^+_{\mathcal V}=\mathfrak D^-_{\mathcal V}.
\]
Therefore the positive- and negative-time limits are equal as distributions.
\subsubsection{Step 5: H\"older regularity of \(\mathcal T\)}\label{sec:19}
In this step, we show that the distribution \(\mathcal T\) is
a \(C^{\alpha\eta}\) function (see Section \ref{sec:5}). Let
\begin{align*}
  F_v(z):=\mathcal{T}(z)-\mathcal{T}(z+v),\qquad v\in\RR^N.
\end{align*}
We first prove that
\begin{align}\label{for:80}
\norm{ F_v}_{C^0}\leq C\norm{v}^{\alpha\eta}, \qquad v\in\RR^N,\quad \norm{v}\leq1.
\end{align}
We recall for any $v\in E^{s,A}$ and any $w\in E^{u,A}$ and any $m\geq0$ we have
\begin{align}\label{for:81}
 \norm{A^mv}\leq C\nu_0^m\norm{v},\quad \norm{A^{-m}w}\leq C\nu_0^{m}\norm{w},
\end{align}
see \eqref{for:77} of Section \ref{sec:8}.

First, we show that \eqref{for:80} holds for any $v\in E^{s,A}$. Using \eqref{for:66}, the
positive-time expression for $\mathcal T$ is
\begin{align*}
 \mathcal{T}&=\lim_{m\to \infty}A_i^{-m}p_i\circ Df^{m}_{H^{-1}z}\circ \mathcal{O}_{i,j+1}^{-1}(H^{-1}z)|_{V_{i,j+1}}\\
 &=\lim_{m\to \infty}\rho_i^mA_i^{-m}
\big(p_i\circ \mathcal O_{i,j+1}\circ H^{-1}\big)(A^mz)
\begin{pmatrix}
M_j^m & \rho_i^{-1}\theta_{j,m}(H^{-1}z)\\
0 & L_{i,j+1}^m
\end{pmatrix}.
\end{align*}
Then we have
\begin{align*}
 \mathcal{T}(z)-\mathcal{T}(z+v)=\mathcal{Y}_1+\mathcal{Y}_2,
\end{align*}
where
\begin{align*}
 \mathcal{Y}_1&=\lim_{m\to \infty}\rho_i^mA_i^{-m}
\big(p_i\circ \mathcal O_{i,j+1}\circ H^{-1}\big)(A^mz)
\begin{pmatrix}
M_j^m & \rho_i^{-1}\theta_{j,m}(H^{-1}z)\\
0 & L_{i,j+1}^m
\end{pmatrix}\\
&-\lim_{m\to \infty}\rho_i^mA_i^{-m}
\big(p_i\circ \mathcal O_{i,j+1}\circ H^{-1}\big)(A^m(z+v))
\begin{pmatrix}
M_j^m & \rho_i^{-1}\theta_{j,m}(H^{-1}z)\\
0 & L_{i,j+1}^m
\end{pmatrix}
\end{align*}
and
\begin{align*}
   \mathcal{Y}_2&=\lim_{m\to \infty}\rho_i^mA_i^{-m}
\big(p_i\circ \mathcal O_{i,j+1}\circ H^{-1}\big)(A^m(z+v))
\begin{pmatrix}
M_j^m & \rho_i^{-1}\theta_{j,m}(H^{-1}z)\\
0 & L_{i,j+1}^m
\end{pmatrix}\\
&-\lim_{m\to \infty}\rho_i^mA_i^{-m}
\big(p_i\circ \mathcal O_{i,j+1}\circ H^{-1}\big)(A^m(z+v))
\begin{pmatrix}
M_j^m & \rho_i^{-1}\theta_{j,m}(H^{-1}(z+v))\\
0 & L_{i,j+1}^m
\end{pmatrix}.
\end{align*}
Now we estimate $\mathcal{Y}_1$. It follows from \eqref{for:74} and \eqref{for:76} that
\begin{align*}
\Big \|\begin{pmatrix}
M_j^m & \rho_i^{-1}\theta_{j,m}(H^{-1}z)\\
0 & L_{i,j+1}^m
\end{pmatrix}\Big\|\leq Cm, \qquad \forall\,m\geq1.
\end{align*}
Since \(H^{-1}\) is \(\eta\)-H\"older and
\(\mathcal C_{i,j+1}^{-1}\) is \(\alpha\)-H\"older, the map
\[
p_i\circ\mathcal O_{i,j+1}^{-1}\circ H^{-1}
\]
is \(\alpha\eta\)-H\"older. Then we have
\begin{align*}
 &\left\|
\bigl(p_i\circ\mathcal O_{i,j+1}^{-1}\circ H^{-1}\bigr)(A^m(z+v))
-
\bigl(p_i\circ\mathcal O_{i,j+1}^{-1}\circ H^{-1}\bigr)(A^mz)
\right\|\\
&\leq \big\|p_i\circ\mathcal O_{i,j+1}^{-1}\circ H^{-1}\big\|_{C^{\alpha\eta}}\|A^mv\|^{\alpha\eta}\\
&\overset{\text{(1)}}{\leq} C\nu^{m\alpha\eta}\|v\|^{\alpha\eta}.
\end{align*}
Here in $(1)$ we use \eqref{for:81}. The above discussion, together with \eqref{for:93} of Section \ref{sec:8}, gives
\begin{align*}
 \norm{\mathcal{Y}_1}\leq \lim_{m\to \infty} C\cdot C\nu^{m\alpha\eta}\|v\|^{\alpha\eta}\cdot Cm=0.
\end{align*}
Next, we estimate $\mathcal{Y}_2$. From \eqref{for:74}, we have
\begin{align*}
& \big\|\theta_{j,m}(H^{-1}(z+v))-\theta_{j,m}(H^{-1}(z))\big\|\\
 &=\Big\|
\sum_{k=0}^{m-1}
M_j^{m-1-k}(\theta_j\circ H^{-1})(A^k(z+v))L_{i,j+1}^k\\
&\qquad-\sum_{k=0}^{m-1}
M_j^{m-1-k}(\theta_j\circ H^{-1})(A^k(z))L_{i,j+1}^k\Big\|\\
&\leq  \sum_{k=0}^{m-1}\|M_j^{m-1-k}\|\cdot \big\|(\theta_j\circ H^{-1})(A^k(z+v))-(\theta_j\circ H^{-1})(A^kz)\big\| \cdot \|L_{i,j+1}^k\| \\
&\overset{\text{(1)}}{\leq} \sum_{k=0}^{m-1} C\cdot \norm{\theta\circ H^{-1}}_{C^{\alpha\eta}}\norm{A^kv}^{\alpha\eta}\cdot C\\
&\overset{\text{(2)}}{\leq} \sum_{k=0}^{m-1} C_1\norm{\theta\circ H^{-1}}_{C^{\alpha\eta}}\nu^{k\alpha\eta}\|v\|^{\alpha\eta}\\
&\leq C_2\|v\|^{\alpha\eta}.
\end{align*}
Here in $(1)$ we use \eqref{for:76}; in $(2)$ we use \eqref{for:81}.

The above discussion shows that
\begin{align*}
 \norm{\mathcal{Y}_2}&\leq \lim_{m\to \infty} \norm{\rho_i^mA_i^{-m}}\cdot \norm{p_i\circ \mathcal O_{i,j+1}\circ H^{-1}}_{C^0}\\
 &\cdot \big\|\theta_{j,m}(H^{-1}(z+v))-\theta_{j,m}(H^{-1}(z))\big\|_{C^0}\\
 &\leq C\|v\|^{\alpha\eta}.
\end{align*}
This proves \eqref{for:80} for $v\in E^{s,A}$. Using the negative-time expression
for $\mathcal T$ and the second estimate in \eqref{for:81}, the same argument
proves \eqref{for:80} for $v\in E^{u,A}$.

Now let \(v\in\RR^N\), \(\|v\|\leq1\). Write
\[
v=v^s+v^u,
\qquad v^s\in E^s,\quad v^u\in E^u.
\]
Since the projections onto \(E^{s,A}\) and \(E^{u,A}\) are bounded,
\[
\|v^s\|+\|v^u\|\leq C\|v\|.
\]
Then
\begin{align*}
F_v(z)
=
F_{v^s}(z)+F_{v^u}(z+v^s).
\end{align*}
Therefore
\[
\|F_v\|_{C^0}
\leq
C\|v^s\|^{\alpha\eta}
+
C\|v^u\|^{\alpha\eta}
\leq
C_1\|v\|^{\alpha\eta}.
\]
Thus \eqref{for:80} holds for every \(v\in\RR^N\) with \(\|v\|\leq1\).

We next show that $F_v$ is uniformly $C^{\alpha\eta}$. Let $z,z_1\in\mathbb T^N$ with $\norm{z-z_1}\leq1$. Then
\[
\begin{aligned}
F_v(z)-F_v(z_1)
&=
F_{z_1-z}(z)-F_{z_1-z}(z+v).
\end{aligned}
\]
Using \eqref{for:80} with the translation vector $z_1-z$, we obtain
\[
        \|F_v(z)-F_v(z_1)\|
        \leq
        C\|z-z_1\|^{\alpha\eta}.
\]
Together with \eqref{for:80}, this gives
\[
        \sup_{\|v\|\leq1}\|F_v\|_{C^{\alpha\eta}}<\infty .
\]
Therefore, by Lemma \ref{le:7}, applied componentwise,
$\mathcal T$ is represented by a $C^{\alpha\eta}$ function.

\subsubsection{Step 6: Conclusion}\label{sec:29}

By the definition of \(\mathcal T\) (see Section \ref{sec:20}) and the H\"older regularity of
\(\mathcal T\) (see Section \ref{sec:19}), the distributional derivative of \(H_i\) along
\(\mathcal F_{i,j+1}\) is represented by a H\"older bundle map.
Thus \(H_i\) is \(C^{1+\text{H\"older}}\) along \(\mathcal F_{i,j+1}\).

More precisely, define
\[
\mathcal B_{i,j+1}(x)
:=
\mathcal T(Hx)\circ \mathcal O_{i,j+1}(x),
\qquad x\in\TT^N.
\]
Then \(\mathcal B_{i,j+1}:\mathcal F_{i,j+1}\to E_i\) is
\text{H\"older}, and for every \(\alpha\)-\text{H\"older} vector field
\(\mathcal V\) taking values in \(\mathcal F_{i,j+1}\), we have
\[
\widetilde{\mathfrak D}_{\mathcal V}^+(\omega)
+
\int_{\TT^N}p_i(\mathcal V_x)\omega(x)\,d\mathfrak m(x)
=
\int_{\TT^N}
\mathcal B_{i,j+1}(x)(\mathcal V_x)\omega(x)\,d\mathfrak m(x).
\]
Therefore \(\mathcal B_{i,j+1}\) represents
\(D(H_i)|_{\mathcal F_{i,j+1}}\), and \(H_i\) is
\(C^{1+\text{H\"older}}\) along \(\mathcal F_{i,j+1}\).

Since \(H(\mathcal W_i^f)=\mathcal W_i^A\), the components
\(p_k\circ H\), \(k\neq i\), are constant along \(\mathcal W_i^f\). Hence the
regularity of \(H_i=p_i\circ H\) implies the corresponding leafwise regularity
of \(H\) along \(\mathcal F_{i,j+1}\).

Next, we  show that \(D(H_i)|_{\mathcal F_{i,j+1}}\) is injective.
Differentiating the conjugacy equation
\[
H_i\circ f=A_i\circ H_i
\]
along \(\mathcal F_{i,j+1}\), we obtain
\begin{align}\label{for:83}
 \mathcal B_{i,j+1}(fx)\circ Df_x|_{\mathcal F_{i,j+1}(x)}
=
A_i\circ \mathcal B_{i,j+1}(x),
\qquad x\in\TT^N.
\end{align}
Consequently,
\begin{align}\label{for:82}
Df_x^n
\bigl(\ker \mathcal B_{i,j+1}(x)\bigr)
=
\ker \mathcal B_{i,j+1}(f^n x),
\qquad n\in\mathbb Z.\end{align}
Choose a generic $x_0$ with respect to $\mu$. Suppose, for
contradiction, that
\[
\ker \mathcal B_{i,j+1}(x_0)\neq \{0\}.
\]
Since \(x_0\) has dense \(f\)-orbit, \eqref{for:82} implies that
\(\ker\mathcal B_{i,j+1}(f^n x_0)\neq\{0\}\) for every \(n\in\mathbb Z\).
By continuity of \(\mathcal B_{i,j+1}\), we have
\begin{align*}
  \ker \mathcal B_{i,j+1}(x)\neq \{0\},\qquad \forall\,x\in\TT^N.
\end{align*}
Choose a point where $\operatorname{rank}\mathcal B_{i,j+1}$ is maximal. On a
small neighborhood of this point, the rank is locally constant, and hence
$\ker\mathcal B_{i,j+1}$ is a nontrivial continuous subbundle. Therefore, we
may choose a nonzero continuous vector field $\mathcal X$ tangent to
$\ker\mathcal B_{i,j+1}$ on this neighborhood. Let $\gamma(t)$ be a
nonconstant integral curve of $\mathcal X$. Then
\[
D(H_i)_{\gamma(t)}(\gamma'(t))=0.
\]
Since the other components \(p_k\circ H\), \(k\neq i\), are constant along
\(\mathcal W_i^f\), we also have
\[
DH_{\gamma(t)}(\gamma'(t))=0.
\]
Thus \(H(\gamma(t))\) is constant. This contradicts the injectivity of \(H\).
Hence
\[
\ker \mathcal B_{i,j+1}(x_0)=\{0\}.
\]
Let $\mathcal R$ denote the set of $\mu$-generic points.  On $\mathcal R$, the
map $\mathcal B_{i,j+1}$ is injective. Define the measurable
image bundle over $\mathcal R$ by
\[
        \mathcal S_x
        :=
        \mathcal B_{i,j+1}(x)
        \bigl(\mathcal F_{i,j+1}(x)\bigr).
\]
Then \eqref{for:83} gives
\[
        A_i(\mathcal S_x)=\mathcal S_{fx},
        \qquad x\in\mathcal R.
\]
Thus $\mathcal S$ is a $\mu$-measurable invariant subbundle of the constant
cocycle $A_i$ on \(E_i\).

Since all eigenvalues of $A_i$ have modulus $\rho_i$, the constant cocycle
$A_i|_{E_i}$ has only one Lyapunov exponent. Hence
\[
        \lambda^+(A_i|_{E_i},\mu)
        =
        \lambda^-(A_i|_{E_i},\mu).
\]
Moreover, $A_i|_{E_i}$ is fiber bunched. Therefore, by Theorem~3.3 and
Corollary~3.8 of \cite{KS13}, the measurable invariant subbundle $\mathcal S$
coincides $\mu$-almost everywhere with a H\"older continuous invariant
subbundle. We still denote this H\"older subbundle by $\mathcal S$.

We now show that $\mathcal B_{i,j+1}(x)$ is injective for every
$x\in\mathbb T^N$. Since $\mathcal S$ is H\"older and $\mu$ has full support,
the equality
\[
        \mathcal S_x
        =
        \mathcal B_{i,j+1}(x)
        \bigl(\mathcal F_{i,j+1}(x)\bigr)
\]
extends from $\mathcal R$ to every $x\in\mathbb T^N$. Since
\[
        \dim \mathcal S_x=\dim\mathcal F_{i,j+1}(x),
\]
it follows that $\mathcal B_{i,j+1}(x)$ has full rank, and hence is injective,
for every $x\in\mathbb T^N$.

Define a subbundle \(\tau\subset E_i\) by
\begin{align*}
 \tau_x=\mathcal B_{i,j+1}(H^{-1}x)(\mathcal{F}_{i,j+1}|_{H^{-1}x}),\qquad \forall\,x\in\TT^N.
\end{align*}
Using \eqref{for:83}, we see that $\tau$ is $A$-invariant. Since \(\mathcal F_{i,j+1}\), \(\mathcal B_{i,j+1}\), and \(H^{-1}\) are
H\"older, the bundle \(\tau\) is H\"older. Moreover, by
the injectivity of \(\mathcal B_{i,j+1}\),
\[
\dim \tau_x=\dim \mathcal F_{i,j+1},
\qquad x\in\TT^N.
\]
 Let
\begin{align*}
 \mathbb{S}=\{n\in\NN:\, A\text{ and }A^n\text{ have exactly the same invariant subspaces}\}.
\end{align*}
Choose a sequence $s_n\in \mathbb{S}$ such that $s_n\to \infty$.  Let
\begin{align*}
 P_{s_n}=\{x\in \TT^N:\,A^{s_n}(x)=x\}\quad\text{and}\quad P=\bigcup_{n}P_{s_n}.
\end{align*}
By Lemma \ref{th:2}, $P$ is dense in $\TT^N$. The $A$-invariancy of $\tau$ implies that
\begin{align*}
 A_i^{s_n}(\tau_x)=\tau_x.
\end{align*}
Since \(A\) and \(A^{s_n}\) have the same invariant subspaces, it follows that
\begin{align*}
 A_i(\tau_x)=\tau_x,\qquad \forall\,x\in P.
\end{align*}
Since $A$ has no repeated eigenvalues,  $A_i$ has only finitely many $A_i$-invariant subspaces of dimension
$\dim\mathcal F_{i,j+1}$ inside $E_i$. The H\"older continuity of the bundle \(\tau\) and density of $P$
implies that
\(\tau\) is constant. Hence there exists
an \(A_i\)-invariant subspace \(S_{i,j+1}\subset E_i\) such that
\[
\tau_x=S_{i,j+1},
\qquad x\in\TT^N.
\]
Thus, for every \(x\in\TT^N\),
\[
\mathcal B_{i,j+1}(x):\mathcal F_{i,j+1}(x)\to S_{i,j+1}
\]
is a linear isomorphism.  Next, we show that $\mathcal B_{i,j+1}$ is $\alpha$-H\"older as a bundle map.
The cocycle $Df|_{\mathcal F_{i,j+1}}$ is $\alpha$-H\"older and fiber bunched,
while the target cocycle $A_i|_{S_{i,j+1}}$ is uniformly quasiconformal, indeed
conformal after choosing an adapted inner product. By \eqref{for:83}, the measurable bundle map \(\mathcal B_{i,j+1}\) conjugates
the cocycle \(Df|_{\mathcal F_{i,j+1}}\) to the constant cocycle
\(A_i|_{S_{i,j+1}}\). Since $\mu$ is ergodic, has
full support, and has local product structure, Theorem~2.7 of \cite{S15}
implies that the measurable conjugacy $\mathcal B_{i,j+1}$ coincides
$\mu$-almost everywhere with an $\alpha$-H\"older continuous conjugacy. Since
$\mathcal B_{i,j+1}$ is already continuous and $\mu$ has full support, the two
conjugacies agree everywhere. Hence $\mathcal B_{i,j+1}$ is $\alpha$-H\"older
on $\mathbb T^N$.

Since $\mathbb T^N$ is compact and $\mathcal B_{i,j+1}(x)$ is injective for
every $x$, there exists $c>0$ such that
\[
        \|\mathcal B_{i,j+1}(x)u\|
        \geq
        c\|u\|,
        \qquad
        x\in\mathbb T^N,\quad u\in\mathcal F_{i,j+1}(x).
\]
Equivalently,
\[
        \sup_{x\in\mathbb T^N}
        \|\mathcal B_{i,j+1}(x)^{-1}\|
        <\infty .
\]
This implies that the inverse bundle map is $\alpha$-H\"older, and so
$\mathcal B_{i,j+1}$ is bi-$\alpha$-H\"older.

Since \(\mathcal C_{i,j+1}(x):\mathcal F_{i,j+1}(x)\to V_{i,j+1}\) is a
linear isomorphism, we have
\[
\dim S_{i,j+1}
=
\dim \mathcal F_{i,j+1}
=
\dim V_{i,j+1}.
\]
Therefore we may choose a linear isomorphism
\[
L_{j+1}:V_{i,j+1}\to S_{i,j+1}.
\]
Hence the triple
\[
\bigl(S_{i,j+1},\,L_{j+1},\,\mathcal B_{i,j+1}\bigr)
\]
satisfies the conclusion of Proposition~\ref{le:3} for \(j+1\). This completes the
induction step.

\section{Proof of Theorems~\ref{th:6} and \ref{th:5}}
We first recall the following form of Journ\'e's lemma.
\begin{lemma}[Journ\'e \cite{J88}]\label{le:journe}
Let \(M_1\) and \(M_2\) be manifolds, and for \(j=1,2\), let
\(\mathcal F_j^s\) and \(\mathcal F_j^u\) be continuous transverse foliations
on \(M_j\) with uniformly smooth leaves. Suppose that
\(h:M_1\to M_2\) is a homeomorphism such that
\[
        h(\mathcal F_1^s)\subset \mathcal F_2^s,
        \qquad
        h(\mathcal F_1^u)\subset \mathcal F_2^u.
\]
Assume moreover that the restrictions of \(h\) to the leaves of
\(\mathcal F_1^s\) and \(\mathcal F_1^u\) are uniformly \(C^{r+\nu}\), where
\(r\in\mathbb N\) and \(0<\nu<1\). Then \(h\) is \(C^{r+\nu}\).
\end{lemma}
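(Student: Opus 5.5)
Since the conclusion is local and concerns regularity of \(h\) as a map into \(M_2\), I would first reduce Lemma~\ref{le:journe} to a statement about real-valued functions. In a chart of \(M_2\), each coordinate function \(g=x_k\circ h\) is uniformly \(C^{r+\nu}\) along the leaves of \(\mathcal F_1^s\) and of \(\mathcal F_1^u\). This holds because \(h\) maps leaves into leaves, is uniformly \(C^{r+\nu}\) on them, and the target leaves are themselves uniformly smooth submanifolds. Thus it suffices to prove the following. Let \(g\) be a continuous function near a point \(p\in M_1\), and suppose \(g\) is uniformly \(C^{r+\nu}\) along two continuous transverse foliations with uniformly \(C^{r+\nu}\) leaves. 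Then \(g\) is \(C^{r+\nu}\) near \(p\).

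\textbf{Step 1: local product coordinates.} I would use the local product structure. Near \(p\), every point is written uniquely as \([a,b]=\mathcal F^s_{\mathrm{loc}}(a)\cap\mathcal F^u_{\mathrm{loc}}(b)\), with \(a\) on a fixed unstable plaque through \(p\) and \(b\) on a fixed stable plaque through \(p\). The map \((a,b)\mapsto[a,b]\) is only a homeomorphism. However, for each fixed \(a\) it is uniformly \(C^{r+\nu}\) in \(b\), and for each fixed \(b\) it is uniformly \(C^{r+\nu}\) in \(a\).

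\textbf{Step 2: the regularity criterion.} I would use a Campanato/Whitney-type characterization. A continuous function \(g\) is \(C^{r+\nu}\) on a neighbourhood if and only if there is a constant \(C\) with the following property: for every point \(q\) and every scale \(\varepsilon>0\), there is a polynomial \(P_{q,\varepsilon}\) (in a fixed smooth chart) of degree at most \(r\) satisfying
\[
|g-P_{q,\varepsilon}|\le C\varepsilon^{r+\nu}\quad\text{on } B(q,\varepsilon).
\]
The task is then to build such polynomials from the two leafwise Taylor expansions.

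\textbf{Step 3: constructing the approximating polynomial.} This is the main obstacle. I would choose interpolation nodes in Journé's manner. Fix \(q\) and \(\varepsilon\). On the unstable plaque through \(q\), pick a configuration of points \(a_1,\dots,a_M\) at mutual distances of order \(\varepsilon\) that is unisolvent for polynomials of degree at most \(r\) in the unstable directions. Through each \(a_\ell\), take the stable-leaf Taylor polynomial \(T_\ell\) of \(g\). Each \(T_\ell\) approximates \(g\) on its stable plaque to order \(\varepsilon^{r+\nu}\).

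Next, on each stable plaque through a point of \(B(q,\varepsilon)\), the values of \(g\) at the transported nodes \([a_\ell,b]\) are recovered to order \(\varepsilon^{r+\nu}\) by Lagrange interpolation along the unstable leaf through \(b\). This uses the leafwise \(C^{r+\nu}\) bound of \(g\) along \(\mathcal F^u\). Combining the stable Taylor data with unstable Lagrange interpolation yields a polynomial \(P_{q,\varepsilon}\) in the chart coordinates. The inevitable \(\varepsilon^{r+\nu}\) error terms are controlled provided the Lagrange basis stays uniformly bounded.

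The delicate point is this last requirement. After holonomy transport, the nodes \([a_\ell,b]\) lie on different, nonsmoothly varying unstable leaves, so the interpolation configuration must remain uniformly nondegenerate at scale \(\varepsilon\). I expect to get this from two facts:
\begin{itemize}
  \item the leaves are uniformly \(C^{r+\nu}\), hence at scale \(\varepsilon\) they are \(O(\varepsilon^{1+\nu})\)-close to their tangent planes after rescaling;
  \item the tangent distributions are continuous and transverse.
\end{itemize}
Together these keep the rescaled node configurations in a compact set of unisolvent configurations, uniformly in \(q\), \(\varepsilon\) and \(b\).

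\textbf{Step 4: conclusion.} Once the uniform bound \(|g-P_{q,\varepsilon}|\le C\varepsilon^{r+\nu}\) is established, the criterion of Step 2 gives \(g\in C^{r+\nu}\). Applying this to every coordinate function of \(h\) proves Lemma~\ref{le:journe}.
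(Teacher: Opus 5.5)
The paper does not prove this lemma; it quotes it from \cite{J88}. So your proposal has to stand on its own, and it does not. The step that carries the whole difficulty is asserted rather than proved.

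\textbf{Step~1 is false.} For fixed $a$, the map $b\mapsto[a,b]$ is the holonomy of $\mathcal F^u$ from $\mathcal F^s_{\mathrm{loc}}(p)$ to $\mathcal F^s_{\mathrm{loc}}(a)$, and the same holds with the roles of $a$ and $b$ swapped. Holonomies of merely continuous foliations are only homeomorphisms; for Anosov systems they are typically H\"older but not $C^1$. The image lies in a smooth leaf, but the parametrization is not smooth.

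\textbf{This breaks Step~3.} The following all depend on the plaque $b=b(z)$ through this non-smooth holonomy:
\begin{itemize}
\item the transported nodes $[a_\ell,b]$;
\item the Lagrange basis $\Lambda^{(b)}_\ell$ these nodes determine on $\mathcal F^u(b)$;
\item the points at which the stable data $T_\ell$ are read off.
\end{itemize}
What you actually build is $z\mapsto\sum_\ell\Lambda^{(b(z))}_\ell(z)\,T_\ell([a_\ell,b(z)])$. This is a polynomial along each unstable plaque, with coefficients that vary only continuously from plaque to plaque. It is not a polynomial in chart coordinates, so the criterion of Step~2 cannot be applied to it.

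\textbf{Your ``delicate point'' is not the real issue.} Uniform unisolvence of the node configurations follows by compactness and only affects constants. Suppose instead you force a genuine polynomial, using a fixed Lagrange basis relative to $E^s(q)\oplus E^u(q)$ and extending each $T_\ell$ by a fixed linear projection. Then comparing it with the leafwise interpolants fails at first order. The comparison involves the angles between tangent planes of $\mathcal F^s$ (resp.\ $\mathcal F^u$) at different points of $B(q,\varepsilon)$. For continuous foliations these angles are only $o(1)$, with no rate. The resulting error is of order $\omega(\varepsilon)\,\varepsilon$, where $\omega$ is the modulus of continuity of the distributions, not $O(\varepsilon^{r+\nu})$. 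Nothing in a single-scale interpolation argument makes it cancel.

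\textbf{What is missing.} You need an argument that the leafwise polynomial data on different leaves are mutually compatible, in effect control of the mixed terms of the jet, despite the non-smooth holonomy. That is the actual content of Journ\'e's lemma.

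\textbf{What your argument does prove.} It handles the special case of a smooth (for example linear) product structure. There the nodes $[a_\ell,b]$ are translates of the $a_\ell$, and $\sum_\ell\Lambda_\ell T_\ell$ is a tensor-product polynomial. Incidentally, this case is enough for the paper if you apply it to $H^{-1}$, whose source foliations are linear. Even there, the polynomial has degree up to $2r$, not $r$, so Step~2 must be stated for polynomials of a fixed degree $N\ge r$. That version is valid because $r+\nu\notin\ZZ$.
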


The final induction-and-Journ\'e argument is standard and follows the same
strategy as in \cite{GKS11}. We include the details for completeness.

It follows from \eqref{for:87} in Section~\ref{sec:12} that
\[
        H(\mathcal W_{i_0}^f)=\mathcal W_{i_0}^A.
\]
Hence, by Theorem~\ref{th:4}, $H$ is a
$C^{1+\text{H\"older}}$ diffeomorphism along $\mathcal W_{i_0}^f$. This gives
the base case of the induction.

We now proceed by induction. Suppose that, for some
$i_0\leq j<\ell$, $H$ is a $C^{1+\text{H\"older}}$ diffeomorphism along
$\mathcal W_{i_0,j}^f$. By \eqref{for:86} in Section~\ref{sec:12}, applied successively inside \(\mathcal W_{i_0,j+1}^f\), we have
\[
        H(\mathcal W_{j+1}^f)=\mathcal W_{j+1}^A.
\]
Therefore, by Theorem~\ref{th:4}, $H$ is a
$C^{1+\text{H\"older}}$ diffeomorphism along $\mathcal W_{j+1}^f$.

The foliations $\mathcal W_{i_0,j}^f$ and $\mathcal W_{j+1}^f$ are transverse
subfoliations inside $\mathcal W_{i_0,j+1}^f$. Similarly,
$\mathcal W_{i_0,j}^A$ and $\mathcal W_{j+1}^A$ are transverse subfoliations
inside $\mathcal W_{i_0,j+1}^A$. Since $H$ maps each of these foliations to
the corresponding linear foliation and is $C^{1+\text{H\"older}}$ along both
subfoliations, Journ\'e's lemma implies that $H$ is
$C^{1+\text{H\"older}}$ along $\mathcal W_{i_0,j+1}^f$. Applying the same
argument to the leafwise inverse, we obtain that $H$ is a
$C^{1+\text{H\"older}}$ diffeomorphism along $\mathcal W_{i_0,j+1}^f$. This
proves the induction step.

Thus $H$ is a $C^{1+\text{H\"older}}$ diffeomorphism along
\[
        \mathcal W^{u,f}=\mathcal W_{i_0,\ell}^f.
\]
Applying the same argument to $f^{-1}$ and $A^{-1}$, and using the fact that
$H$ also conjugates $f^{-1}$ to $A^{-1}$, we obtain that $H$ is a
$C^{1+\text{H\"older}}$ diffeomorphism along
\[
        \mathcal W^{s,f}=\mathcal W^{u,f^{-1}}.
\]
Finally, since the stable and unstable foliations are transverse and have
uniformly smooth leaves, Journ\'e's lemma implies that $H$ is globally
$C^{1+\text{H\"older}}$ on $\mathbb T^N$. Applying the same reasoning to the
inverse conjugacy gives the same regularity for $H^{-1}$. Hence $H$ is a
$C^{1+\text{H\"older}}$ diffeomorphism of $\mathbb T^N$. This completes the
proof.

\end{document}